\documentclass[12pt]{amsart}

\usepackage{amssymb,amscd,amsthm, verbatim,amsmath,color,fancyhdr, mathrsfs}
\usepackage{graphicx}
\usepackage{tikz}
\usepackage{tikz-cd}
\usetikzlibrary{cd}
\usetikzlibrary{shapes.geometric,positioning} 
\usepackage{appendix}
\usepackage[colorlinks=true,allcolors=black]{hyperref}
\makeatletter \def\l@subsection{\@tocline{2}{0pt}{1pc}{5pc}{}} \def\l@subsection{\@tocline{2}{0pt}{2pc}{6pc}{}} \makeatother

\DeclareRobustCommand{\SkipTocEntry}[5]{}
\usepackage[foot]{amsaddr}
\usepackage[letterpaper, left=2.5cm, right=2.5cm, top=2.5cm,
bottom=2.5cm,dvips]{geometry}

\setcounter{section}{-1}

\newtheorem{thm}{Theorem}[section]
\newtheorem{prop}[thm]{Proposition}
\newtheorem{lem}[thm]{Lemma}

\newtheorem{cor}[thm]{Corollary}

\newtheorem{exmp}[thm]{Example}
\newtheorem{defn}[thm]{Definition}
\theoremstyle{remark}
\newtheorem*{remark}{Remark}

\newcommand\textcat[1]{\textnormal{\textbf{#1}}}
\newcommand\texthom[1]{\underline{\textnormal{#1}}}
\title{Koszul Monoids in Quasi-abelian Categories}

\author{Rhiannon Savage}
\address{Rhiannon Savage, The Mathematical Institute, The University of Oxford}
\email{rhiannon.savage@maths.ox.ac.uk}
\date{}

\begin{document}
\maketitle
\vskip.5in
\begin{abstract}Suppose that we have a bicomplete closed symmetric monoidal quasi-abelian category $\mathcal{E}$ with enough flat projectives, such as the category of complete bornological spaces $\textcat{CBorn}_k$ or the category of inductive limits of Banach spaces $\textcat{IndBan}_k$. Working with monoids in $\mathcal{E}$, we can generalise and extend the Koszul duality theory of Beilinson, Ginzburg, Soergel \cite{beilinsonginzburg96}. We use an element-free approach to define the notions of Koszul monoids, and quadratic monoids and their duals. Schneiders'  embedding \cite{schneiders99} of a quasi-abelian category into an abelian category, its left heart, allows us to prove an equivalence of certain subcategories of the derived categories of graded modules over Koszul monoids and their duals.
\end{abstract}
\tableofcontents

\section{Introduction}

\addtocontents{toc}{\SkipTocEntry}\subsection{Koszul Duality}

Suppose that we have a finite dimensional vector space $V$ over a field $k$. Then, there exists the following projective resolution of $k$
\begin{equation}\label{thiscomplexsymmetric}
    \dots\rightarrow{S(V)\otimes_k\bigwedge^2(V)}\rightarrow{S(V)\otimes_k V}\rightarrow{S(V)}\twoheadrightarrow{k}
\end{equation}In 1978, Bernstein, Gelfand, and Gelfand \cite{bernshteingelfand78} showed the following equivalence of bounded derived categories over the categories of graded $S(V)$ and $\bigwedge V^*$ modules.
\begin{equation*}
    \textcat{D}^b(\textcat{gr}S(V)\textcat{-Mod})\simeq{ \textcat{D}^b({\textcat{gr}\bigwedge V^*}\textcat{-Mod})  }
\end{equation*}The seminal paper of Beilinson, Ginzburg, and Soergel \cite{beilinsonginzburg96} extends these constructions to a more general collection of rings known as Koszul rings, of which $S(V)$ and $\bigwedge V$ are toy examples.

\begin{defn}\cite[Definition 1.1.2]{beilinsonginzburg96}\label{koszulringbgs}
A \textbf{Koszul ring} is a positively graded ring $A=\bigoplus_{j\geq 0}A_j$ such that $A_0$ is semisimple and $A_0$, considered as a graded left $A$-module, admits a graded projective resolution 
\begin{equation*}
\dots\rightarrow{P^2}\rightarrow{P^1}\rightarrow{P^0}\twoheadrightarrow{A_0}    
\end{equation*}such that each $P^i$ is generated by its degree $i$ component, i.e. $P^i=AP^i_i$.
\end{defn}These Koszul rings are examples of objects called \textit{quadratic rings} which, in \cite{beilinsonginzburg96}, are positively graded rings $A=\bigoplus_{j\geq 0} A_j$ such that $A_0$ is semisimple, and $A$ is generated over $A_0$ by $A_1$ with relations of degree two. Quadratic rings were originally referred to by Priddy in \cite{priddy70} as \textit{homogeneous pre-Koszul algebras}. Left finite quadratic rings, i.e. those rings $A$ such that each $A_i$ is finitely generated as a left $A_0$-module, admit dual objects $A^!$, called their \textit{quadratic duals}. These rings are dual in the sense that ${}^!(A^!)\simeq A\simeq ({}^!A)^!$.

Quadratic rings produce projective resolutions called \textit{Koszul complexes}. A quadratic ring over a semisimple ring $A_0=k$ is a Koszul ring if and only if its Koszul complex is a resolution of $k$. We note that the symmetric and exterior algebras are examples of Koszul rings. The quadratic dual of $S(V)$ is $\bigwedge V^*$ and the complex in Equation \ref{thiscomplexsymmetric} is the Koszul complex of $S(V)$. We have a slightly weaker equivalence of derived categories for Koszul rings than in the symmetric/exterior algebra case.

\begin{thm}
\cite[Theorem 2.12.1]{beilinsonginzburg96} Suppose that $A$ is a left finite Koszul ring. Then, there exists an equivalence of triangulated categories
\begin{equation*}
    \textcat{D}^\downarrow(\textcat{grA-Mod})\simeq{ \textcat{D}^\uparrow({\textcat{grA}}^!\textcat{-Mod})  }
\end{equation*}where $\textcat{D}^\downarrow(\textcat{grA-Mod})$ and ${ \textcat{D}^\uparrow({\textcat{grA}}^!\textcat{-Mod})  }$ denote certain subcategories of the derived categories of graded $A$ and $A^!$ modules. 
\end{thm}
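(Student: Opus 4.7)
The plan is to construct, following the strategy of Beilinson, Ginzburg and Soergel, a pair of functors between the appropriate graded derived categories using the Koszul complex of $A$, and to show that they restrict to quasi-inverse equivalences on the subcategories $\textcat{D}^\downarrow(\textcat{grA-Mod})$ and $\textcat{D}^\uparrow(\textcat{grA}^!\textcat{-Mod})$. The bulk of the work is the bookkeeping of the interaction between the internal grading and the homological degree.

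First I would build the Koszul complex $K^\bullet$ of $A$. The canonical pairing between $A_1$ and $A^!_1$ produces a privileged element of $A_1 \otimes_{A_0} A^!_1$, and left multiplication by this element endows the graded object with components $K^i = A \otimes_{A_0} (A^!_i)^*$ with a differential. The Koszul hypothesis on $A$ is essentially equivalent to the statement that $K^\bullet$ is a graded projective resolution of $A_0$ as a graded left $A$-module, and this provides the key homological input for the rest of the argument.

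The Koszul complex then determines two functors $F$ and $G$ between the two derived categories, with $F$ being a derived $\textnormal{Hom}$ functor built from $K^\bullet$ and $G$ a corresponding derived tensor product functor. The asymmetry between $\downarrow$ and $\uparrow$ is forced by convergence considerations: $\textcat{D}^\downarrow$ consists of complexes bounded along one diagonal direction of the bigrading, and pairing such complexes with the unbounded but appropriately filtered $K^\bullet$ produces well-defined objects lying in $\textcat{D}^\uparrow$, with the analogous statement holding for $G$ in the other direction.

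To finish, I would construct unit and counit natural transformations $\textnormal{id} \to GF$ and $FG \to \textnormal{id}$ and show they are isomorphisms. By reducing to the generators of the hearts of the natural $t$-structures, namely shifts of $A_0$ and $A^!_0 = A_0$, the verification reduces on one side to the fact that $K^\bullet$ resolves $A_0$, and on the other to the Koszul property of $A^!$ together with the reflexivity isomorphism ${}^!(A^!) \simeq A$ available in the left finite case. The main obstacle, in my view, is precisely the convergence bookkeeping that dictates the definitions of $\textcat{D}^\downarrow$ and $\textcat{D}^\uparrow$: the homological content is essentially handed to us by the Koszul hypothesis, but one must carefully propagate bigraded boundedness along $F$ and $G$ so that the candidate equivalence is defined and its unit and counit make sense.
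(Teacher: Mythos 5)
Your overall architecture matches the paper's: the paper also builds the Koszul complex with terms $A\otimes_{A_0}{}^*(A_i^!)$, proves that Koszulity of $A$ is equivalent to this complex resolving $A_0$, defines an adjoint pair of functors (a twisted tensor $M^\bullet\mapsto A^!\otimes_{A_0}M^\bullet$ and a twisted internal Hom $N^\bullet\mapsto\texthom{Hom}_{\textcat{grA}_0\textcat{-Mod}}(A,N^\bullet)$, each realised as the total complex of a double complex whose ``vertical'' differential comes from the Koszul differential), and checks exactly the bigraded boundedness bookkeeping you describe to see that these functors carry $\textcat{C}^\downarrow(\textcat{grA-Mod})$ into $\textcat{C}^\uparrow(\textcat{grA}^!\textcat{-Mod})$ and back.

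The genuine divergence, and the gap, is in your last step. You propose to prove that the unit and counit are isomorphisms ``by reducing to the generators of the hearts, namely shifts of $A_0$.'' For the half-bounded categories $\textcat{D}^\downarrow$ and $\textcat{D}^\uparrow$ this reduction is not available off the shelf: objects of these categories are complexes unbounded in one homological direction (subject only to the diagonal support condition), so they are not obtained from shifts of $A_0$ by finitely many cones, and passing to the limit of truncations requires knowing that $F$, $G$ and the adjunction morphisms commute with exactly the limiting process whose convergence is in question. In other words, the reduction to generators does not discharge the convergence problem you yourself identify as the main obstacle; it presupposes it solved. What the paper (following BGS) does instead is verify the unit and counit on an arbitrary object directly: it first observes that the counit $\varepsilon_{N^\bullet}$ is a split strict epimorphism and the unit $\eta_{M^\bullet}$ a split strict monomorphism, so by a two-out-of-three argument for cones it suffices to show the splitting maps $\sigma_{N^\bullet}$ and $\rho_{M^\bullet}$ are quasi-isomorphisms; this is then done by running the first (column) spectral sequence of the double complex computing $FG(N^\bullet)$ (resp. $GF(M^\bullet)$), whose $E_1$ page collapses to $N^\bullet$ (resp. $M^\bullet$) precisely because the Koszul complex of $A$ (resp. of $A^!$ --- one must separately prove that $A$ Koszul implies $A^!$ Koszul) is a resolution of $A_0$, and whose convergence is guaranteed in each internal degree by the first/third-quadrant shape forced by the $\downarrow/\uparrow$ conditions. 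To complete your argument you would need either to supply this spectral-sequence verification or to justify rigorously a generation statement for $\textcat{D}^\downarrow$ and $\textcat{D}^\uparrow$ compatible with the relevant (co)limits; as written, the final step does not go through.
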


{Several generalisations of this Koszul duality theory exist. For example, the various types of non-homogeneous, non-quadratic Koszul duality presented by Positselski in \cite{positselski11}.} Further, Koszul duality for wider classes of objects than graded algebras over semisimple rings have also been discussed. In general these use bar-cobar constructions to establish various kinds of dualities between objects such as algebraic operads \cite{ginzburgkapranov94}, D-modules \cite{kapranov91} or dg categories \cite{holsetinlazarev21}. The aim of this paper is to generalise the Koszul duality theory of Beilinson, Ginzburg, and Soergel to certain algebra-like objects in a type of category called a \textit{quasi-abelian category}.

\addtocontents{toc}{\SkipTocEntry}\subsection{Quasi-abelian Categories}

The theory of quasi-abelian categories was developed by Schneiders \cite{schneiders99} with the intention of developing a cohomological theory of sheaves with values in categories such as the category of filtered modules, or the category of locally convex topological vector spaces. Quasi-abelian categories possess enough structure to be able to do homological algebra in them, explained in Sections \ref{qacs} and \ref{haquasiabeliancats}. They are additive categories with kernels and cokernels, where we specify that only certain morphisms are strict. 

One of the most important properties of quasi-abelian categories is summarised in the following result. 

\begin{prop}\cite[c.f. Section 1.2]{schneiders99}
    There exists an abelian category $\mathcal{LH(E)}$, the `left heart' of $\mathcal{E}$, and a fully faithful functor 
    \begin{equation*}
        I:\mathcal{E}\rightarrow{\mathcal{LH(E)}}
    \end{equation*}inducing an equivalence of categories
    \begin{equation*}
        \textcat{D}(I):\textcat{D}(\mathcal{E})\rightarrow{\textcat{D}(\mathcal{LH(E)})}
    \end{equation*}
\end{prop}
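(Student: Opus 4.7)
The plan is to realise $\mathcal{LH(E)}$ as the heart of a canonical t-structure on $\textcat{D}(\mathcal{E})$ and then promote the inclusion of the heart to an equivalence of derived categories. First I would introduce Schneiders' \emph{left t-structure} on $\textcat{D}(\mathcal{E})$. In a quasi-abelian category every morphism admits an image--coimage factorisation, and this allows one to define left-cohomology functors $LH^i$ that detect when differentials fail to be strict. Setting
\begin{equation*}
\textcat{D}^{\leq 0,L}(\mathcal{E}) = \{X^\bullet : LH^i(X^\bullet)=0 \text{ for } i>0\}, \qquad \textcat{D}^{\geq 0,L}(\mathcal{E}) = \{X^\bullet : LH^i(X^\bullet)=0 \text{ for } i<0\},
\end{equation*}
one checks the t-structure axioms. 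The subtle point is that the naive smart truncation of a complex in $\mathcal{E}$ may fail to be quasi-isomorphic to the original unless one first replaces the cut-off piece by the coimage of the outgoing differential, so truncation functors must be adjusted accordingly.

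Next, define $\mathcal{LH(E)}$ as the heart of this t-structure, which is automatically abelian by the general theory of t-structures. A concrete model represents objects of $\mathcal{LH(E)}$ as strict monomorphisms $E^{-1}\hookrightarrow E^0$ in $\mathcal{E}$, regarded as two-term complexes in degrees $-1$ and $0$, with morphisms being homotopy classes of chain maps. The functor $I$ is then $I(E) = [0 \hookrightarrow E]$, the complex concentrated in degree $0$. Full faithfulness of $I$ is a direct Hom-set computation in this model: any morphism between two degree-$0$ complexes comes, modulo the trivially zero homotopy, from a unique morphism in $\mathcal{E}$.

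The main obstacle is showing that $\textcat{D}(I)$ is an equivalence. Following a Beilinson-type strategy, I would construct a realisation functor $\textcat{D}^b(\mathcal{LH(E)}) \to \textcat{D}^b(\mathcal{E})$ by enhancing $\textcat{D}^b(\mathcal{E})$ with the truncation filtration of the t-structure, and reduce the equivalence to the \emph{ext-agreement} statement
\begin{equation*}
\textnormal{Hom}_{\textcat{D}(\mathcal{E})}(I(E),I(F)[n]) \;\cong\; \textnormal{Ext}^n_{\mathcal{LH(E)}}(I(E),I(F))
\end{equation*}
for all $n \geq 0$ and $E,F \in \mathcal{E}$. This in turn is proved by identifying $n$-fold Yoneda extensions in $\mathcal{LH(E)}$ with roofs in $\textcat{D}(\mathcal{E})$, using the strict factorisation properties of $\mathcal{E}$ to normalise representatives on both sides. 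Passage from the bounded to the unbounded derived category is then obtained by a standard limit argument, which is available because $\mathcal{E}$ has enough flat projectives as assumed in the abstract.
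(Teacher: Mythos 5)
The paper itself offers no proof of this proposition --- it is quoted wholesale from Schneiders --- so the comparison here is with his argument, which the paper only summarises as ``an embedding \ldots using t-structures''. Your overall architecture (the left t-structure on $\textcat{D}(\mathcal{E})$ cut out by left-cohomology functors, $\mathcal{LH(E)}$ as its heart, $I$ as the degree-zero inclusion) is exactly Schneiders'. But your concrete model of the heart contains a real error: objects of $\mathcal{LH(E)}$ are represented by two-term complexes $E^{-1}\to E^{0}$ whose differential is a \emph{monomorphism}, not a \emph{strict} monomorphism. A strict monomorphism $u$ makes that complex quasi-isomorphic to $\mathrm{Coker}(u)$ concentrated in degree zero, i.e.\ to an object of $I(\mathcal{E})$; if every object of the heart admitted such a representative, $I$ would be essentially surjective and $\mathcal{LH(E)}$ would be equivalent to $\mathcal{E}$, which is abelian only when $\mathcal{E}$ already is. The non-strict monomorphisms are precisely the new objects that make the left heart abelian, so this is not a cosmetic slip, and your full-faithfulness computation should be run against the correct model.

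Your strategy for the derived equivalence also genuinely diverges from Schneiders'. He uses neither a realisation functor nor an ext-agreement: instead, the object represented by a monomorphism $u:E^{-1}\to E^{0}$ sits in a short exact sequence $0\to I(E^{-1})\to I(E^{0})\to [u]\to 0$ in $\mathcal{LH(E)}$, so $I(\mathcal{E})$ generates the heart; combining this with the criterion that a sequence in $\mathcal{E}$ is strictly exact if and only if its image under $I$ is exact (Schneiders' Corollary 1.2.27, quoted later in the paper) yields the equivalence by a direct resolution argument, valid for the unbounded derived category of an arbitrary quasi-abelian category. Your Beilinson-style route could be made to work, but two repairs are needed: (i) the ext-agreement must be established for all pairs of objects of the heart, not only those in the image of $I$ --- the short exact sequence above provides the d\'evissage that reduces to your case, but that step must be said; and (ii) your bounded-to-unbounded passage invokes ``enough flat projectives'', a hypothesis the proposition does not carry (it is stated for a bare quasi-abelian category, before that assumption enters the paper) and which Schneiders' argument never uses.
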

This means that certain questions about derived quasi-abelian categories can be answered by considering the analogous theory in the abelian category. In particular it will allow us, in Section \ref{koszuldualityresult}, to discuss converging spectral sequences in the quasi-abelian setting.

\addtocontents{toc}{\SkipTocEntry}\subsection{Koszul Monoids}

For the purposes of this paper we will work within a bicomplete closed symmetric monoidal quasi-abelian category $\mathcal{E}$ with enough flat projectives. We recall the meaning of these words in Sections \ref{closedsymmetricmonoidal} and \ref{haquasiabeliancats}.

The key examples of such categories are the category of complete bornological spaces, $\textcat{CBorn}_k$, and the category of inductive limits of Banach spaces, $\textcat{IndBan}_k$. Recent research by Bambozzi, Ben-Bassat, Kelly, Kremnizer, and Mukherjee (see \cite{bambozzibenbassat15}, \cite{benbassatkremnitzer17}, to name a few) has considered a theory of derived analytic geometry with the main focus being on objects in
these categories. Developing the theory of homological algebra within quasi-abelian categories is extremely vital for this research. 

In the BGS setting \cite{beilinsonginzburg96}, Koszul rings are only defined when the $0$-graded part is semisimple. As well as generalising their theory to monoids in more general quasi-abelian categories, we also extend their theory within the category of rings by working with monoids which are instead \textit{pre-Koszul}, see Definition \ref{pre-koszul}. In Section \ref{koszulmonoids}, we define the Koszul monoid as follows. \begin{defn}
A \textbf{Koszul monoid} in $\mathcal{E}$ is a positively graded pre-Koszul monoid $A$ such that $A_0$, considered as a graded left $A$-module, admits a graded projective strict resolution of $A$-modules.
 \begin{equation*}
     \dots \rightarrow{P^2}\rightarrow{P^1}\rightarrow{P^0}\twoheadrightarrow{A_0}
 \end{equation*}with each $P^i$ generated by its degree $i$ component over $A_0$.
\end{defn}We also prove a sufficient condition for a pre-Koszul monoid $A$ to be Koszul. Namely, 
\begin{prop}
    $A$ is a Koszul monoid if and only if $\textnormal{Ext}^i_{\textcat{grA-Mod}}(A_0,A_0\langle n\rangle)=0$ unless $i=n$.
\end{prop}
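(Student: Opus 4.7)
My approach follows the BGS strategy \cite{beilinsonginzburg96}, suitably adapted to the quasi-abelian setting.

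\textbf{Forward direction.} Let $P^\bullet \twoheadrightarrow A_0$ be a graded projective strict resolution with each $P^i$ generated in degree $i$ over $A_0$. I would compute the Ext groups directly via this resolution. A graded morphism $f \colon P^i \to A_0\langle n\rangle$ must vanish on $A_+ \cdot P^i$, where $A_+ = \bigoplus_{j \geq 1} A_j$, because $A_+$ acts trivially on $A_0\langle n\rangle$. Hence $f$ factors through $P^i / A_+ P^i$. By the generation hypothesis this quotient is concentrated in degree $i$, while $A_0\langle n\rangle$ is concentrated in degree $n$. Therefore $\textnormal{Hom}_{\textcat{grA-Mod}}(P^i, A_0\langle n\rangle) = 0$ unless $i = n$, which forces $\textnormal{Ext}^i(A_0, A_0\langle n\rangle) = 0$ for $i \neq n$.

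\textbf{Backward direction.} Assuming the Ext vanishing, I would construct the required resolution inductively. Take $P^0 := A$ with augmentation $\varepsilon \colon P^0 \twoheadrightarrow A_0$, and suppose $P^0, \dots, P^n$ have been built with $P^j$ generated in degree $j$. Let $K^n$ denote the kernel of $d^n \colon P^n \to P^{n-1}$ (or of $\varepsilon$ when $n=0$). It suffices to show that $K^n$ is generated in degree $n+1$, so that $P^{n+1}$ may be taken to be the free $A$-module on the degree-$(n+1)$ part of $K^n$, with the evident strict surjection onto $K^n$.

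The key lemma splits into two parts: (i) $K^n$ is concentrated in degrees $\geq n+1$, which is a straightforward degree count once each $P^j$ is built minimally in degree $j$; and (ii) the natural map $A \otimes_{A_0} K^n_{n+1} \to K^n$ is a strict epimorphism. Part (ii) is the heart of the argument and I would prove it by contrapositive: if the cokernel $Q$ were nonzero, then using the pre-Koszul structure on $A_0$ one extracts a nonzero morphism $K^n \to A_0\langle k\rangle$ for some $k \geq n+2$. Dimension shifting along the partial projective resolution identifies this with a nonzero element of $\textnormal{Ext}^{n+1}(A_0, A_0\langle k\rangle)$, contradicting the hypothesis since $k \neq n+1$.

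The main obstacle is part (ii), where the quasi-abelian subtleties intervene: extracting a nonzero morphism $Q \to A_0\langle k\rangle$ requires the pre-Koszul hypothesis on $A_0$ to play its role (guaranteeing enough quotient objects of the expected form), and interpreting the ``free'' functor $A \otimes_{A_0} -$ in $\mathcal{E}$ requires care with strictness of the relevant morphisms. I would carry out both the Ext computation and the dimension-shifting argument in the abelian category $\mathcal{LH}(\mathcal{E})$ via the Schneiders embedding $I$, and then transfer conclusions back along the derived equivalence $\textcat{D}(I)$, so that the standard long-exact-sequence manipulations can be applied without modification.
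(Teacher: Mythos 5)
Your proof is correct and follows essentially the same route as the paper: the forward direction computes $\textnormal{Ext}$ from the resolution whose $i$-th term is generated in degree $i$ over $A_0$, and the backward direction builds the resolution inductively, using dimension shifting to identify $\textnormal{Ext}^{n+1}(A_0,A_0\langle k\rangle)$ with $\textnormal{Hom}(K^n,A_0\langle k\rangle)$ and then the pre-Koszul hypothesis --- which is, verbatim, the contrapositive of your step (ii), so no extraction argument needs to be supplied --- to conclude that $K^n$ is generated in degree $n+1$. The only cosmetic difference is that the paper carries out the dimension shifting directly with strict exact sequences in $\textcat{grA-Mod}$ rather than passing through $\mathcal{LH}(\mathcal{E})$, a detour that is not needed for this proposition.
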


{We give several examples of Koszul monoids. In particular, any Koszul ring in the sense of Definition \ref{koszulringbgs} is a Koszul monoid in the category of abelian groups. Moreover, we construct examples of tensor algebras, and symmetric and exterior examples which are Koszul within our quasi-abelian categories. }

This paper is the starting point of a more general exploration of Koszul duality in quasi-abelian categories. In particular, future research will develop Koszul duality for objects with analytic gradings, such as the analytic gradings given for $\textcat{IndBan}_k$ in \cite{kremnitzersmith17}.

\addtocontents{toc}{\SkipTocEntry}\subsection{Quadratic Monoids and their Duals}

One of the main important features of Koszul rings is that they have a dual ring. Rather than defining the Koszul dual of a Koszul monoid to be a certain Ext monoid, which in practice is often hard to write down, we approach the problem of defining a Koszul dual by first defining quadratic monoids, and then considering their quadratic duals. We note that any Koszul monoid is quadratic. In Section \ref{quadraticmonoids}, we make the following definition.
\begin{defn}
$A$ is a \textbf{quadratic monoid} with quadratic data $(A_1,R)$ if it is pre-Koszul, and there exists a strict graded epimorphism 
\begin{equation*}
    \pi:T_{A_0}(A_1)\twoheadrightarrow{A}
\end{equation*}such that there exists a strict epimorphism 
\begin{equation*}
    T_{A_0}(A_1)\otimes_{A_0}R\otimes_{A_0}T_{A_0}(A_1)\twoheadrightarrow{\textnormal{Ker}(\pi)}
\end{equation*}with $R=K_2:=\textnormal{Ker}(A_1\otimes_{A_0} A_1\twoheadrightarrow{A_2})$.
\end{defn}

One motivation for studying quadratic monoids is their potential applications in studying quantum groups. Indeed, Manin proposes in \cite{manin87} that a quadratic ring could be viewed as a `ring of functions on an imaginary space of \textit{noncommutative geometry}, or \textit{quantum space}'. In particular, quadratic monoids in categories such as $\textcat{IndBan}_k$ could provide us with a way of discussing certain types of analytic quantum groups.

There are many different ideas of what a dual object in an arbitrary monoidal category should be. For the purposes of this paper, we want to be able to define dual modules, and indeed we do this concretely in Section \ref{dualquadraticmonoids}. If $A$ is a positively graded monoid in $\mathcal{E}$ and $M$ is a left $A_0$-module, then we define the \textit{left dual $A_0$-module} $M^*$ to be $M^*:=\texthom{Hom}_{\textcat{A}_0\textcat{-Mod}}(M,A_0)$. Moreover, if we impose some extra conditions, making $M$ \textit{left dualisable}, such as reflexivity ${}^*(M^*)\simeq M$ and the existence of well-behaved evaluation and coevaluation morphisms, then our dual modules are easy to work with. In particular, when $A$ is pre-Koszul, we can make the following definition.
\begin{defn}
Suppose that $A$ is a left dualisable quadratic monoid $(A_1,R)$. We say that a positively graded pre-Koszul monoid $A^!$ is the \textbf{left dual quadratic monoid} of $A$ if it is the quadratic monoid with quadratic data $(A_1^*,R^\perp)$, where $R^\perp$ is the kernel of the map $A_1^*\otimes_{A_0} A_1^*\rightarrow{R^*}$.
\end{defn}

\addtocontents{toc}{\SkipTocEntry}\subsection{Koszul Duality}

Suppose that we have a quadratic monoid $A$ with dual $A^!$. In Section \ref{koszulcomplexsection}, we write down an explicit Koszul complex for $A$
\begin{equation*}
    \dots\rightarrow{A\otimes_{A_0} {}^*(A_2^!)}\rightarrow{A\otimes_{A_0} {}^*(A_1^!)}\rightarrow{A}\rightarrow{0}
\end{equation*}We have already noted that any Koszul monoid is quadratic. A weaker converse holds, any quadratic monoid is Koszul if the above complex is a resolution of $A_0$. 

We use the Koszul complex in Section \ref{koszuldualityresult} in our proof of the following theorem.
\begin{thm}
Suppose that $A$ is a left dualisable Koszul monoid with quadratic dual monoid $A^!$. Then, there is an equivalence of triangulated categories
\begin{equation*}
    \textcat{D}^\downarrow(\textcat{grA-Mod})\simeq{ \textcat{D}^\uparrow({\textcat{grA}}^!\textcat{-Mod})  }
\end{equation*}where $\textcat{D}^\downarrow(\textcat{grA-Mod})$ and ${ \textcat{D}^\uparrow({\textcat{grA}}^!\textcat{-Mod})  }$ denote certain subcategories of the derived categories of graded $A$ and $A^!$ modules. 
\end{thm}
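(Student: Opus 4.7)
The plan is to follow the broad outline of Beilinson--Ginzburg--Soergel \cite{beilinsonginzburg96}, replacing finite-dimensional duality over a semisimple ring by the left dualisability hypothesis on $A$, and transporting all spectral-sequence reasoning to the abelian left heart $\mathcal{LH(E)}$ via Schneiders' embedding $I$. The central object is the Koszul complex
\[
A \otimes_{A_0} {}^*(A^!_\bullet)
\]
together with its analogue for $A^!$, both of which, under the Koszul hypothesis, furnish strict projective resolutions of $A_0$ by virtue of the characterisation of a Koszul monoid via the vanishing of $\textnormal{Ext}^i(A_0, A_0\langle n\rangle)$ for $i\neq n$.

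First, mimic the BGS construction to build a pair of Koszul functors $F$ and $G$ between the categories of complexes of graded modules. The functor $F$ sends a graded $A$-module $M$ to a complex of graded $A^!$-modules whose components are built from $A^! \otimes_{A_0} M$, with differential induced by the canonical coevaluation in $A_1 \otimes_{A_0} A_1^*$ supplied by the left dualisability of $A_1$; dually, $G$ is defined using the Koszul complex of $A^!$. Because $\mathcal{E}$ has enough flat projectives, these pass to triangulated functors between the derived categories $\textcat{D}(\textcat{grA-Mod})$ and $\textcat{D}(\textcat{grA}^!\textcat{-Mod})$, and the cohomological together with internal-degree bounds encoded in $\textcat{D}^\downarrow$ and $\textcat{D}^\uparrow$ ensure that $F$ and $G$ restrict to functors between these subcategories.

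To show they are quasi-inverse, verify that the unit $M \to GF(M)$ is an isomorphism for every $M \in \textcat{D}^\downarrow(\textcat{grA-Mod})$. Proceed by d\'evissage: any such $M$ is approximable by iterated extensions of internal shifts $A_0\langle n\rangle$, and the associated spectral sequence is set up in the abelian category $\mathcal{LH(E)}$ after applying $I$, where convergence can be justified by standard arguments. This reduces the verification to $M = A_0$, where $GF(A_0)$ unwinds to the total complex of the double Koszul complex of $A$ against $A^!$, and its quasi-isomorphism with $A_0$ is exactly the defining Koszul property established in Section~\ref{koszulcomplexsection}. A symmetric argument handles the counit $FG(N)\to N$.

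The hard part will be the d\'evissage. In a quasi-abelian category one cannot naively splice short exact sequences or invoke the snake lemma, and strictness of every morphism in the filtration must be controlled. The embedding $I$ into $\mathcal{LH(E)}$ is precisely the tool that circumvents this, but it must still be checked that $F$ and $G$ are compatible with $I$ strongly enough to carry the associated spectral sequence faithfully across the embedding, and that the boundedness conditions on $\textcat{D}^\downarrow$ and $\textcat{D}^\uparrow$ supply the convergence required at the limit. Once this bookkeeping is in place, the equivalence follows from the same formal pattern as in the BGS proof.
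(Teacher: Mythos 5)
Your overall architecture agrees with the paper's: the Koszul functors $F(M)=A^!\otimes_{A_0}M$ and $G(N)=\texthom{Hom}_{\textcat{A}_0\textcat{-Mod}}(A,N)$, descent to the derived categories via the boundedness conditions on $\textcat{D}^\downarrow$ and $\textcat{D}^\uparrow$, and the use of Schneiders' embedding $I$ into $\mathcal{LH(E)}$ to run convergent first/third-quadrant spectral sequences are all exactly what the paper does. The genuine gap is your d\'evissage step. You propose to reduce the verification that the unit $M^\bullet\to GF(M^\bullet)$ is an isomorphism to the case $M=A_0$ by writing an arbitrary object of $\textcat{D}^\downarrow(\textcat{grA-Mod})$ as an iterated extension of shifts $A_0\langle n\rangle$. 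In this setting that reduction is not available: $A_0$ is only assumed self-injective, not semisimple, so the graded pieces $M^i_j$ are arbitrary objects of $\textcat{A}_0\textcat{-Mod}$ and admit no filtration with subquotients isomorphic to $A_0$; even over a semisimple base one would need an exhaustive (generally infinite) filtration together with a commutation of $F$ and $G$ with the resulting limits, which is precisely the kind of convergence issue the boundedness conditions alone do not resolve. Moreover, in a quasi-abelian category the ``approximation by iterated extensions'' would have to consist of strict short exact sequences, which you cannot manufacture for a general module.

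The paper's proof avoids this entirely, and the fix is to let the Koszul resolution act on the \emph{inside} of the functor rather than on the module being resolved. One first establishes a chain-level adjunction $\textcat{C}F\dashv\textcat{C}G$ and observes that the counit is a split strict epimorphism and the unit a split strict monomorphism; by the two-out-of-three argument of Lemma \ref{compquasi} it then suffices to show the splittings are strict quasi-isomorphisms. For an \emph{arbitrary} $N^\bullet$, the double complex computing $(F\circ\textcat{C}G)(N^\bullet)$ has columns of the form $\texthom{Hom}_{\textcat{A}_0\textcat{-Mod}}(A\otimes_{A_0}{}^*(A^!_\bullet),N^p_q)$; since $A$ is Koszul, $A\otimes_{A_0}{}^*(A^!_\bullet)$ is a projective resolution of $A_0$ (Theorem \ref{koszulresolutiontheorem}), so after applying $I$ the first page of the column-wise spectral sequence collapses to $\bigoplus I(N^p_q)$ in degree zero, and comparison with the trivial double complex of $N^\bullet$ gives the quasi-isomorphism with no reduction to $A_0$ needed. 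If you want to salvage your outline, replace the d\'evissage of $M$ by this collapse of the first page of the spectral sequence of the double complex, which is where the Koszul hypothesis actually enters.
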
We have already developed much of the machinery needed to tackle this theorem, and the proof follows in much the same way as in \cite{beilinsonginzburg96}, with some subtle but important changes. In particular, we show that certain functors preserve quasi-isomorphisms by embedding the complexes in question in the left heart of our underlying quasi-abelian category and showing that certain spectral sequences converge there.

\addtocontents{toc}{\SkipTocEntry}\subsection*{Acknowledgements}

The catalyst for this research was my Masters dissertation, supervised by Jack Kelly. I am extremely grateful to him for his support and guidance throughout the entire process of writing this paper. I would also like to thank my PhD supervisor Kobi Kremnizer for his useful comments and his numerous ideas for potential applications of this research. Thanks also to Federico Bambozzi for his comments and corrections.

\addtocontents{toc}{\SkipTocEntry}\subsection*{Funding}

This research was partially conducted during the author's PhD, funded by an EPSRC studentship [EP/W523781/1 - no. 2580843].

\addtocontents{toc}{\SkipTocEntry}\subsection*{Publishing Statement}

This version of the article has been published in the Journal of Applied Categorical Structures, available here \url{https://doi.org/10.1007/s10485-023-09756-7} [10.1007/s10485-023-09756-7].

\section{Closed Symmetric Monoidal Categories}\label{closedsymmetricmonoidal}

We refer to \cite{maclane98} for the definition of a closed symmetric monoidal category. We merely note that a monoidal category consists of enough information to be able to define algebra objects or, as we will call them, monoids.
\subsection{Examples}

We state the following key examples of closed symmetric monoidal categories. Suppose that $k$ is a valued field. See Appendix \ref{banachspaces} for details on the definitions of the categories $\textcat{Ban}_k$,  $\textcat{Ban}_R^A$, and $\textcat{Ban}_R^{nA}$.

\begin{exmp}\label{banachexample}Suppose that $R$ is a Banach ring. The category of Archimedean Banach $R$-modules, $\textcat{Ban}_R^A$, is closed symmetric monoidal \cite[Proposition 3.17]{bambozzibenbassat15}. The monoidal product is the complete Archimedean projective tensor product, denoted by $M\hat{\otimes}_R N$. This is the completion of the $R$-module $M\otimes_R N$ with respect to the semi-norm given by 
\begin{equation*}
    |x|_{M\otimes_R N}=\inf\bigg\{\sum_{i\in I}|m_i|_M|n_i|_N \quad \bigl\lvert\quad x=\sum_{i\in I} m_i\otimes n_i, \, |I|<\infty\bigg\}
\end{equation*} The internal hom functor $\texthom{Hom}_{\textcat{Ban}_R^A}(M,N)$ for $M,N\in \textcat{Ban}_R^A$ is given by the $R$-module $\textnormal{Hom}_{\textcat{Ban}_R^A}(M,N)$ equipped with the semi-norm given by 
\begin{equation*}
    |f|_{\textnormal{sup}}=\sup_{m\in M\backslash\{0\}}\frac{|f(m)|_N}{|m|_M}
\end{equation*} If $R$ is a non-Archimedean Banach ring, we can consider the category $\textcat{Ban}_R^{nA}$ of non-Archimedean $R$-modules. This category is also a closed symmetric monoidal category. The monoidal structure is given by the complete non-Archimedean projective tensor product defined as the completion of $M\otimes_R N$ with respect to the norm given by 
\begin{equation*}
    |x|_{M\otimes_R N}=\inf\{\max_{i\in I}|m_i|_M|n_i|_N \quad \bigl\lvert\quad x=\sum_{i\in I} m_i\otimes n_i, \, |I|<\infty\}
\end{equation*}The internal hom is defined as in the Archimedean case.
\end{exmp}

\begin{prop}\cite[Proposition 3.12]{bambozzibenbassat15}
The category $\textcat{Ban}_R^A$ has all finite limits and colimits. 
\end{prop}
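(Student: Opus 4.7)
The plan is to establish all finite limits and colimits by exhibiting the three ingredients that generate them in any category: a zero object, finite (co)products, and (co)equalizers. Each will be built from a familiar construction on Banach modules, with extra care taken to stay inside the completeness condition.

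First I would take $\{0\}$, with its trivial norm, as the zero object: only the zero map can go into or out of it, so it is simultaneously initial and terminal. For the finite product of $M_1,\dots,M_n$ I would equip the algebraic direct sum with the max-norm $|(m_1,\dots,m_n)|_{\max}=\max_i|m_i|_{M_i}$; completeness is inherited coordinatewise, the projections are bounded of norm one, and a family of bounded maps $f_i\colon N\to M_i$ assembles into a bounded map of operator norm $\max_i\|f_i\|$. For the coproduct I would take the same underlying module endowed with the sum-norm $\sum_i|m_i|_{M_i}$; a symmetric check gives the universal property with respect to the isometric inclusions. The two norms are equivalent, so the resulting objects are isomorphic in $\textcat{Ban}_R^A$, as one expects in an additive category.

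Next, for the equalizer of a parallel pair $f,g\colon M\to N$ I would take the kernel of $f-g$: the subset $\{m\in M:f(m)=g(m)\}$ is closed since $f-g$ is continuous, hence a Banach submodule of $M$ in the restricted norm, and the universal property is routine. For the coequalizer I would take the cokernel of $f-g$, namely $N/\overline{\mathrm{im}(f-g)}$ equipped with the quotient (semi)norm $|[n]|=\inf_{m\in\overline{\mathrm{im}(f-g)}}|n-m|_N$; quotienting by a \emph{closed} submodule is what ensures this is an honest norm and that the quotient is complete. The universal property then follows because any bounded map out of $N$ vanishing on $\mathrm{im}(f-g)$ vanishes on its closure by continuity, and therefore descends uniquely to a bounded map out of the quotient.

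The main subtlety — and the only place one cannot simply transport the construction from abstract $R$-modules — is the cokernel: a bounded morphism need not have closed image, so without taking the closure the quotient norm would only be a seminorm and completeness would fail. This is precisely the reason $\textcat{Ban}_R^A$ is merely quasi-abelian rather than abelian, and it is the phenomenon that the rest of the paper leverages via Schneiders' left heart. The argument for the non-Archimedean category $\textcat{Ban}_R^{nA}$ is entirely parallel, with the ultrametric norm appearing in each step.
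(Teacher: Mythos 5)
Your construction is correct, and it is the standard argument: the paper itself does not prove this statement but merely cites \cite[Proposition 3.12]{bambozzibenbassat15}, where essentially the same explicit construction (zero object, finite biproducts under the equivalent max/sum norms, kernels as closed subspaces, cokernels as quotients by the \emph{closure} of the image) is carried out. Your closing remark correctly identifies the closure step in the cokernel as the source of non-strict morphisms, i.e.\ the reason $\textcat{Ban}_R^A$ is quasi-abelian rather than abelian.
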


However, $\textcat{Ban}_R^A$ and $\textcat{Ban}_R^{nA}$ do not have infinite limits and colimits, see \cite[Lemma A.26]{benbassatkremnitzer17}, which will make it difficult to work with them in this paper. We will mainly work with categories of bornological and IndBanach spaces, see Appendices \ref{inductiveappendix} and \ref{bornologyappendix}.

\begin{exmp}
Consider the category $\textcat{Ban}_R^A$. The category of Ind-objects of Banach-modules $\textcat{IndBan}_R^A$ is closed symmetric monoidal. The monoidal product is given, for $M,N$ in $\textcat{IndBan}_R^A$, by 
\begin{align*}
    ``\varinjlim_{i\in I}"M_i\otimes ``\varinjlim_{j\in J}"N_j&=\underset{(i,j)\in I\times J}{``\varinjlim"}M_i\otimes N_j
\intertext{and the internal hom is given by} 
    \texthom{Hom}(``\varinjlim_{i\in I}"M_i,``\varinjlim_{j\in J}"N_j)&=\varprojlim_{i\in I}``\varinjlim_{j\in J}"\texthom{Hom}(M_i,N_j)
\end{align*}This definition also holds for $\textcat{IndBan}_R^{nA}$.
\end{exmp}
\begin{remark}
We note that the above construction generalises to Ind-objects in any small, closed, symmetric monoidal quasi-abelian (see Section \ref{qacs}) category by \cite[Proposition 2.1.19]{schneiders99}.
\end{remark}

\begin{defn}\cite[Definition 1.20]{meyer07}
Suppose that $V,W$ are bornological vector spaces over $k$. A set $L$ of linear maps $f:V\rightarrow{W}$ is \textbf{equibounded} if, for each $B$ bounded in $V$, the set
\begin{equation*}
  L(B)=\{f(x)\mid f\in L, x\in B\}
\end{equation*}is bounded in $W$.
\end{defn}
\begin{exmp}
Consider the category $\textcat{Born}_k$ of bornological $k$-vector spaces of convex type. This category is closed symmetric monoidal. Given $V,W\in\textcat{Born}_k$ we can endow $V\otimes_k W$ with the `projective tensor product bornology'. A basis is given by the absolutely convex hulls of all subsets of the form 
\begin{equation*}
    X\otimes Y=\{x\otimes_k y\mid x\in X, y\in Y\}
\end{equation*} with $X$ (resp. Y) an element of a basis of bounded absolutely convex subsets for the bornology on $V$ (resp. $W$). The internal hom is the vector space $\textnormal{Hom}_{\textcat{Born}_k}(V,W)$ equipped with the equiboundedness bornology. The closed symmetric monoidal structure on $\textcat{CBorn}_k$ is defined slightly differently. Given $V,W\in \textcat{CBorn}_k$ we define the monoidal product to be the completion (see \cite[Section 1.5.5]{meyer07}) of $V\otimes_kW$ with respect to the projective tensor product bornology. The internal hom is defined as in $\textcat{Born}_k$.
\end{exmp}

\begin{prop}\cite[Lemmas 3.29 and 3.53]{bambozzibenbassat15} The categories $\textcat{IndBan}^A_R$, $\textcat{IndBan}^{nA}_R$ and $\textcat{CBorn}_k$ have all limits and colimits. 
\end{prop}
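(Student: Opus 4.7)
The plan is to handle the three categories by different but closely related strategies: for the two Ind-categories invoke the general theory of Ind-constructions applied to the preceding proposition, and for $\textcat{CBorn}_k$ use a reflection from a larger bornological category together with the completion functor.

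For $\textcat{IndBan}_R^A$ and $\textcat{IndBan}_R^{nA}$, the preceding proposition gives that $\textcat{Ban}_R^A$ and $\textcat{Ban}_R^{nA}$ have all finite limits and colimits. I would then invoke the standard result (for instance as developed in Kashiwara--Schapira, \emph{Categories and Sheaves}) that if $\mathcal{C}$ is an essentially small category admitting finite limits, then $\textcat{Ind}(\mathcal{C})$ admits all small limits, and dually, if $\mathcal{C}$ has finite colimits then $\textcat{Ind}(\mathcal{C})$ admits all small colimits. The colimit half is easy: filtered colimits in $\textcat{Ind}(\mathcal{C})$ are formal by construction, and a general small colimit can be rewritten as a filtered colimit of finite colimits computed in $\mathcal{C}$. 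The limit half is more delicate; the cleanest route is via the description of $\textcat{Ind}(\mathcal{C})$ as the full subcategory of left exact functors $\mathcal{C}^{\mathrm{op}} \to \textcat{Set}$, where limits are computed pointwise and left exactness is preserved under limits.

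For $\textcat{CBorn}_k$, I would first verify that the larger category $\textcat{Born}_k$ of bornological vector spaces of convex type has all small limits and colimits: limits are constructed on the underlying vector space limit equipped with the initial (intersection) bornology, and colimits on the underlying vector space colimit equipped with the final (image/union) bornology. I would then use that the inclusion $\textcat{CBorn}_k \hookrightarrow \textcat{Born}_k$ admits a left adjoint, namely the completion functor of \cite[Section 1.5.5]{meyer07}. This makes $\textcat{CBorn}_k$ a reflective subcategory of $\textcat{Born}_k$; hence limits in $\textcat{CBorn}_k$ are simply computed in $\textcat{Born}_k$ (since limits of complete spaces are complete in this setting), and small colimits in $\textcat{CBorn}_k$ are obtained by computing the corresponding colimit in $\textcat{Born}_k$ and then applying the completion reflector, using the fact that a left adjoint preserves colimits.

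The main obstacle I anticipate is not any single deep theorem but rather a cluster of set-theoretic and analytic technicalities: first, the base category $\textcat{Ban}_R^A$ is not literally small, so one must be careful (either fix a universe or restrict to an essentially small subcategory indexed by cardinality of a dense subset) before invoking the Ind-category theory; and second, one must check that completion genuinely commutes with the quotient step that is forced when colimits in $\textcat{Born}_k$ of complete spaces fail to be complete, i.e.\ that the unit of the completion adjunction really does produce a universal object in $\textcat{CBorn}_k$. Both obstacles are handled in \cite[Lemmas 3.29 and 3.53]{bambozzibenbassat15}, which I would cite for the precise verification.
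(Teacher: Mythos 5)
The paper does not prove this proposition at all: it is stated with a citation to \cite[Lemmas 3.29 and 3.53]{bambozzibenbassat15} and no argument is given, so there is no in-paper proof to compare against. Judged on its own terms, your sketch for $\textcat{CBorn}_k$ is the standard argument and is the one used in the cited source: $\textcat{Born}_k$ is complete and cocomplete via initial/final bornologies on the underlying vector-space (co)limits, $\textcat{CBorn}_k$ is reflective in $\textcat{Born}_k$ with reflector the completion functor, so it inherits all limits (created by the inclusion) and all colimits (completion of the colimit in $\textcat{Born}_k$). That half is fine.

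The $\textcat{IndBan}$ half has a genuine gap in the limit direction. The colimit part is correct (finite colimits in $\textcat{Ban}_R$ plus formal filtered colimits give all small colimits in the Ind-category). But the general principle you invoke for limits is not a theorem in the form you state it. First, the identification of $\textcat{Ind}(\mathcal{C})$ with the left exact functors $\mathcal{C}^{op}\to\textcat{Set}$ requires $\mathcal{C}$ to have finite \emph{colimits}, not finite limits (harmless here, since $\textcat{Ban}_R$ has both), and it requires $\mathcal{C}$ to be small. The standard Kashiwara--Schapira results give only: finite limits in $\mathcal{C}$ yield finite limits in $\textcat{Ind}(\mathcal{C})$, and \emph{small} limits in $\mathcal{C}$ yield small limits in $\textcat{Ind}(\mathcal{C})$ --- and $\textcat{Ban}_R$ has neither small limits nor is it essentially small (there are Banach spaces of arbitrarily large density character). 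This is not a bookkeeping issue one can wave away with a universe: a pointwise product of left exact functors need not admit a small cofinal family of representables, which is exactly what membership in $\textcat{Ind}(\mathcal{C})$ demands. The actual content of the cited Lemma 3.29 is a Banach-specific construction: an infinite product $\prod_\alpha V_\alpha$ is realised in $\textcat{IndBan}_R$ as the small filtered colimit, over rescaling data $(r_\alpha)\in\mathbb{R}_{>0}^A$, of the $\ell^\infty$-type products $\{(v_\alpha):\sup_\alpha r_\alpha\lVert v_\alpha\rVert<\infty\}$ formed in the subcategory of Banach modules with contracting maps, which \emph{is} small-complete; boundedness of an arbitrary compatible family of maps then lets one land in some term of this system. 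Your proposal should either carry out this construction or at least identify it as the missing step, rather than appealing to a general Ind-category completeness theorem that does not apply.
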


\subsection{Monoids}\label{moduleappendix}

Suppose that $(\mathcal{C},\otimes,I)$ is a monoidal category. We can define a monoid, or algebra object, in $\mathcal{C}$. We note that a monoid in the monoidal category of abelian groups $(\textbf{Ab},\otimes_{\mathbb{Z}},\mathbb{Z})$ is a ring. 

\begin{defn}\cite[Section VII.3]{maclane98}\label{monoid}
A  \textbf{(unital associative) monoid} $(A,\mu,\eta)$ in $\mathcal{C}$ consists of:
\begin{itemize}
    \item An object $A$ in $\mathcal{C}$, 
    \item A multiplication morphism $\mu:A\otimes A\rightarrow{A}$, 
    \item A unit morphism $\eta:I\rightarrow{A}$.
\end{itemize} 
satisfying the natural associativity and unit conditions.
\end{defn}

\begin{exmp}
In the monoidal category $\textcat{Ban}_R^A$ (resp. $\textcat{Ban}_R^{nA}$) a monoid object will be called an Archimedean (resp. non-Archimedean) Banach $R$-algebra. It is an object $M\in \textcat{Ban}_R^A$ (resp. $\textcat{Ban}_R^{nA}$) along with a multiplication map $\mu:M\hat{\otimes}_R M\rightarrow{M}$, where the product is equipped with the appropriate norm, and a unit map $\eta:R\rightarrow{M}$. These maps are bounded in the sense of Definition \ref{boundedbanachmap} and satisfy the associativity and unit conditions stated above.
\end{exmp}

\begin{exmp}
A monoid in $\textcat{Born}_k$ will be called a bornological $k$-algebra. It is a $k$-algebra $A$ whose underlying vector space is of convex type, with a bornology such that multiplication $\mu:A\otimes_k A\rightarrow{A}$ is bounded in $V$. A monoid in $\textcat{CBorn}_k$ is a monoid in $\textcat{Born}_k$ whose underlying vector space is complete, and will be called a complete bornological $k$-algebra. Particular examples of bornological algebras come from Fr\'echet algebras and Banach algebras. 
\end{exmp}

\begin{exmp}
We can also define IndBanach algebras. However, an IndBanach algebra is not necessarily an inductive system of Banach algebras. Those that are of this form are called \textit{locally multiplicative}. 
\end{exmp}

\begin{defn}\cite[Section VII.3]{maclane98}
A \textbf{morphism} $f:(A,\mu,\eta)\rightarrow{(A',\mu',\eta')}$ of monoids is a morphism $f:A\rightarrow{A'}$ such that 
\begin{equation*}
 f\circ \mu=\mu'\circ (f\otimes f)\quad \text{and} \quad f\circ\eta=\eta'   
\end{equation*}
\end{defn}

The following definition uses the `left action of a monoid' from \cite[Section VII.4]{maclane98}.

\begin{defn}\label{modulemonoid}
A \textbf{left module} $M$ over a monoid $A$ in $\mathcal{C}$ consists of:
\begin{itemize}
    \item An object $M\in \mathcal{C}$, 
    \item A morphism $\lambda:A\otimes M \rightarrow{M}$
\end{itemize} 
satisfying the natural associativity and unit conditions.
\end{defn}

\begin{defn}
A \textbf{morphism of left $A$-modules} $g:(M,\lambda)\rightarrow{(M',\lambda')}$ is a morphism $g:M\rightarrow{M'}$ such that 
\begin{equation*}
\lambda'\circ(1_A\otimes g)=g\circ \lambda
\end{equation*} 
\end{defn}

\subsection{Graded Monoids}
Analogously to the definition of a graded ring, we can define graded monoids.
\begin{defn} A $(\mathbb{Z})$-\textbf{graded monoid} $A$ in $\mathcal{C}$ consists of:
\begin{itemize}
    \item A family of objects $\{A_i\}_{i\in\mathbb{Z}}$ in $\mathcal{C}$, 
    \item A unit morphism $\eta:I\rightarrow{A_0}$, 
    \item For all $i,j\in\mathbb{Z}$, a multiplication morphism $\mu_{i,j}:A_i\otimes A_j\rightarrow{A_{i+j}}$,
\end{itemize} satisfying the natural associativity and unit conditions. 
\end{defn}

\begin{remark}
If $\mathcal{C}$ has countable direct sums we can write a graded monoid $A$ as
\begin{equation*}
    A=\bigoplus_{i\in \mathbb{Z}}A_i
\end{equation*}
\end{remark}

\begin{defn}
A \textbf{graded morphism} $f:A\rightarrow{A'}$ of graded monoids $A,A'$ consists of a family of morphisms $\{f_i\}_{i\in\mathbb{Z}}$ in $\mathcal{C}$ with $f_i\in\textnormal{Hom}_\mathcal{C}(A_i,A_i')$ satisfying \begin{equation*}
 f_{i+j}\circ \mu_{i,j}=\mu'_{i,j}\circ (f_i\otimes f_j)
\end{equation*} for $i,j\in\mathbb{Z}.$
\end{defn}

\begin{defn}\label{gradedmodule}
A \textbf{left graded $A$-module} $M$ over a graded monoid $A$ in $\mathcal{C}$ consists of:
\begin{itemize}
    \item A family of objects $\{M_i\}_{i\in\mathbb{Z}}$ in $\mathcal{C}$,
    \item For all $i,j\in\mathbb{Z}$, morphisms $\lambda_{i,j}:A_i\otimes M_j\rightarrow{M_{i+j}}$,
\end{itemize}satisfying the natural associativity and unit conditions.
\end{defn}

\begin{exmp}
\begin{itemize}
    \item For a graded monoid $A$, we see that $A$ is a graded $(A_0, A_0)$-bimodule with module actions given by morphisms $\mu_{i,0}:A_i\otimes A_0\rightarrow{A_{i}}$ and $\mu_{0,i}:A_0\otimes A_i\rightarrow{A_i}$. 
    \item Suppose that $M$ is a graded module over a graded monoid $A$. Then, for each $i$, we can consider $M$ as a graded left $A_i$ module with graded module actions given by $\lambda_{i,j}:A_i\otimes M_j\rightarrow{M_{i+j}}$ for each $j$.
\end{itemize}

\end{exmp}

\begin{defn}
A \textbf{graded morphism} $g:M\rightarrow{N}$ of graded left $A$-modules $(M_i,\lambda_i)$ and $(M'_i,\lambda_i')$ consists of a family $\{g_i\}_{i\in\mathbb{Z}}$ of morphisms in $\mathcal{C}$ with $g_i\in \textnormal{Hom}_\mathcal{C}(M_i,M_i')$ satisfying 
\begin{equation*}\label{modulecomm}
    \lambda'_{i,j}\circ (1_{A_i}\otimes g_j)=g_{i+j}\circ \lambda_{i,j}
\end{equation*} for $i,j\in\mathbb{Z}$.
\end{defn}

\subsection{Categories of Modules}
Suppose we have a bicomplete closed symmetric monoidal category $(\mathcal{C},\otimes, I)$. Suppose that $A$ is a monoid in $\mathcal{C}$. We denote the category of left $A$-modules by $\textcat{A-Mod}$.

\begin{defn}
Let $M_1$ be a right $A$-module and $M_2$ be a left $A$-module. Their \textbf{tensor product over $A$}, $M_1\otimes_{A}M_2$, is defined to be the coequaliser of the two maps
\begin{equation*}
M_1\otimes A\otimes M_2\rightrightarrows{M_1\otimes M_2}
\end{equation*} given by the module actions of $A$ on $M_1$ and $M_2$.
\end{defn}

If $M_1$ is a $(B,A)$-bimodule and $M_2$ is an $(A,C)$-bimodule, then $M_1\otimes_{A} M_2$ is naturally a $(B,C)$-bimodule. The actions of $B$ on $M_1$ and $C$ on $M_2$ induce a morphism 
\begin{equation*}
    B\otimes M_1\otimes M_2\otimes C\rightarrow{M_1\otimes M_2}\rightarrow{M_1\otimes_{A}M_2} 
\end{equation*}which coequalizes the two morphisms from $B\otimes M_1\otimes A\otimes M_2\otimes C$ to $B\otimes M_1\otimes M_2\otimes C$, and hence there exists a morphism 
\begin{equation*}
    B\otimes (M_1\otimes_A M_2)\otimes C\rightarrow{M_1\otimes_A M_2}
\end{equation*}which is the bi-module action.
In particular, if $A$ is a commutative monoid and $M_1,M_2$ are left $A$-modules we see that $M_1\otimes_A M_2$ is a left $A$-module.\\

We can define a map $\texthom{Hom}(M_1,M_2)\rightarrow{\texthom{Hom}(A\otimes M_1,M_2)}$ by applying the internal hom functor $\texthom{Hom}(-,M_2)$ to the module action $A\otimes M_1\rightarrow{M_1}$. We can obtain another map $\texthom{Hom}(M_1,M_2)\rightarrow{\texthom{Hom}(A\otimes M_1,M_2)}$ in two stages. First, if we tensor the morphism $id\rightarrow{\texthom{Hom}(A,A)}$ with the identity of $\texthom{Hom}(M_1,M_2)$, we obtain a map
\begin{equation*}
    \texthom{Hom}(M_1,M_2)\rightarrow{\texthom{Hom}(A,A)\otimes\texthom{Hom}(M_1,M_2)}\rightarrow{\texthom{Hom}(A\otimes M_1,A\otimes M_2)}
\end{equation*}We can then use the module action $A\otimes M_2\rightarrow{M_2}$ to obtain a map \begin{equation*}
    \texthom{Hom}(A\otimes M_1,A\otimes M_2)\rightarrow{\texthom{Hom}(A\otimes M_1,M_2)}
\end{equation*}

\begin{defn}
For $M_1$ and $M_2$ left $A$-modules, the \textbf{internal hom} $\texthom{Hom}_{\textcat{A-Mod}}(M_1,M_2)$ is defined to be the equalizer of the two maps
\begin{equation*}
\begin{tikzcd}
    \texthom{Hom}(M_1,M_2) \arrow{dr}\arrow{rr} & & \texthom{{Hom}}( A\otimes M_1,M_2)\\
    & \texthom{Hom}(A\otimes M_1,A\otimes M_2) \arrow{ur}
\end{tikzcd}
\end{equation*}
\end{defn} We remark that if $A$ is a commutative monoid, then $\texthom{Hom}_{\textcat{A-Mod}}(M_1,M_2)$ is a left $A$-module. We collect together the following useful facts.
\begin{lem}\label{usefulfacts}\cite[Lemma 2.4]{benbassatkremnitzer17}
For any left $A$-module $M$ we have the following isomorphisms
\begin{equation*}
    \texthom{Hom}_\textcat{A-Mod}(A,M)\simeq M\quad A\otimes_A M\simeq M
\end{equation*}
\end{lem}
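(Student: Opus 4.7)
The plan is to prove each of the two isomorphisms as a formal consequence of the unit and associativity axioms for the monoid $A$ and the module $M$, using nothing special about $\mathcal{E}$ beyond the fact that it is bicomplete and closed symmetric monoidal. Nowhere do I expect to need any quasi-abelian structure, so the result is really a general fact about monoids in any such ambient category.

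For the second isomorphism $A\otimes_A M\simeq M$, I would first observe that the associativity axiom of the module action reads $\lambda\circ(\mu\otimes 1_M)=\lambda\circ(1_A\otimes\lambda)$, which is exactly the statement that $\lambda:A\otimes M\to M$ coequalises the pair defining $A\otimes_A M$. This yields a canonical morphism $\phi:A\otimes_A M\to M$. In the other direction I would precompose the coequaliser quotient $q:A\otimes M\to A\otimes_A M$ with $\eta\otimes 1_M:M\cong I\otimes M\to A\otimes M$ to obtain $\psi:M\to A\otimes_A M$. The composite $\phi\circ\psi$ is just $\lambda\circ(\eta\otimes 1_M)=1_M$ by the unit axiom. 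For the opposite composite, I would exploit the fact that $q$ is an epimorphism, reducing $\psi\circ\phi=1$ to an equality of morphisms out of $A\otimes M$; this equality is obtained by applying the coequalising relation $q\circ(\mu\otimes 1_M)=q\circ(1_A\otimes\lambda)$ to the auxiliary morphism $\eta\otimes 1_A\otimes 1_M:A\otimes M\to A\otimes A\otimes M$ and invoking the unit axiom on one branch.

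For the internal hom isomorphism $\texthom{Hom}_{\textcat{A-Mod}}(A,M)\simeq M$, the strategy is dual and uses the tensor-hom adjunction. The two parallel arrows in the defining equaliser translate, after adjunction, into the maps $f\mapsto f\circ\mu$ and $f\mapsto \lambda\circ(1_A\otimes f)$, so being in the equaliser is precisely the condition of being $A$-linear. The action $\lambda:A\otimes M\to M$ has adjoint $\tilde\lambda:M\to\texthom{Hom}(A,M)$, and the associativity axiom for $M$ translates under adjunction into the statement that $\tilde\lambda$ equalises the two parallel arrows. Hence it factors through a morphism $\psi:M\to\texthom{Hom}_{\textcat{A-Mod}}(A,M)$. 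For the inverse I would use the unit $\eta:I\to A$ to build $\phi:\texthom{Hom}_{\textcat{A-Mod}}(A,M)\hookrightarrow\texthom{Hom}(A,M)\xrightarrow{\eta^\ast}\texthom{Hom}(I,M)\simeq M$. That $\phi\circ\psi=1_M$ is again the unit axiom for the action, while $\psi\circ\phi=1$ on the equaliser follows by reinterpreting, via adjunction, the identity $\lambda\circ(1_A\otimes f)\circ(1_A\otimes\eta)=f\circ\mu\circ(1_A\otimes\eta)=f$, where the first equality uses the equaliser relation and the second uses the unit axiom of $A$.

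The proofs are genuinely routine; I do not expect a serious obstacle. The only thing to be careful about is bookkeeping: keeping track of which direction of the tensor-hom adjunction is being applied, of how internal composition in $\texthom{Hom}$ interacts with $\mu$ and $\lambda$, and of the fact that coequalisers in $\mathcal{E}$ are epimorphisms (which is automatic). Because no element-wise reasoning is required, everything reduces to chasing a small number of commutative squares built from $\mu$, $\eta$, and $\lambda$.
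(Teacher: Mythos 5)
Your argument is correct: both composites check out exactly as you describe (the unit and associativity axioms, the epimorphy of the coequaliser map, and the tensor--hom adjunction are all that is needed), and nothing about the quasi-abelian structure enters. Note that the paper itself gives no proof of this lemma --- it is cited directly from Ben-Bassat--Kremnizer --- so there is nothing to compare against; your write-up is the standard element-free verification one would expect to find in that reference.
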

We have the following extensions of the tensor-hom adjunction. Suppose that $A$ and $B$ are two monoids in $\mathcal{C}$. Let $M_1$ be a $(B,A)$-bimodule, $M_2$ be a left $A$-module, and $M_3$ be a left $B$-module.

\begin{thm}\label{tensorhomadjunction}
The functor $M_1\otimes_A-:\textcat{A-Mod}\rightarrow{\textcat{B-Mod}}$ is left adjoint to the functor $\texthom{Hom}_{\textcat{B-Mod}}(M_1,-):\textcat{B-Mod}\rightarrow{\textcat{A-Mod}}$. Hence, there is a natural isomorphism 
\begin{equation*}
    \textnormal{Hom}_{\textcat{B-Mod}}(M_1\otimes_A M_2,M_3)\simeq \textnormal{Hom}_{\textcat{A-Mod}}(M_2,\texthom{Hom}_{\textcat{B-Mod}}(M_1,M_3))
\end{equation*}\end{thm}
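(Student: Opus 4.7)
The plan is to derive the module-level adjunction from the closed symmetric monoidal adjunction already available in $\mathcal{C}$, by matching up the extra module conditions on each side. I would start with the underlying $\mathcal{C}$-level natural isomorphism
\begin{equation*}
    \textnormal{Hom}_\mathcal{C}(M_1\otimes M_2,M_3)\simeq \textnormal{Hom}_\mathcal{C}(M_2,\texthom{Hom}(M_1,M_3))
\end{equation*}
which exists since $\mathcal{C}$ is closed symmetric monoidal, and show that it restricts to the claimed bijection.

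Next, I would unpack both sides as subsets of these Hom-sets cut out by universal properties. By the coequaliser defining $M_1\otimes_A M_2$, a $B$-linear map $M_1\otimes_A M_2\to M_3$ corresponds to a $\mathcal{C}$-morphism $f:M_1\otimes M_2\to M_3$ satisfying (i) the coequaliser condition $f\circ(\rho_{M_1}\otimes 1_{M_2})=f\circ(1_{M_1}\otimes \lambda_{M_2})$, where $\rho_{M_1}$ and $\lambda_{M_2}$ are the right/left $A$-actions, and (ii) the $B$-compatibility $f\circ(\lambda_{M_1}\otimes 1_{M_2})$ agrees with the $B$-action on $M_3$ composed with $1_B\otimes f$. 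Dually, by the equaliser defining $\texthom{Hom}_{\textcat{B-Mod}}(M_1,M_3)$ (together with Lemma \ref{usefulfacts}), an $A$-linear map $g:M_2\to\texthom{Hom}_{\textcat{B-Mod}}(M_1,M_3)$ corresponds to a $\mathcal{C}$-morphism into $\texthom{Hom}(M_1,M_3)$ whose image factors through the equaliser (encoding $B$-equivariance) and which intertwines the $A$-actions.

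I would then check, by a straightforward naturality argument, that the $\mathcal{C}$-level adjunction carries condition (ii) on the left to the equaliser condition on the right, and condition (i) to the $A$-linearity of $g$. Both translations amount to transporting the same three-fold module-action equalities $M_1\otimes A\otimes M_2\to M_3$ and $B\otimes M_1\otimes M_2\to M_3$ across the bijection. Naturality of the adjunction in $M_2$ and $M_3$, and its compatibility with composition, then yield the adjunction of functors, not just a bijection of Hom-sets.

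The main obstacle is bookkeeping: I need to see that the coequaliser defining $\otimes_A$ on one side is sent, under $\textnormal{Hom}_\mathcal{C}(-,M_3)$ followed by the internal adjunction, to the equaliser defining $\texthom{Hom}_{\textcat{B-Mod}}$ on the other. This is essentially automatic because internal hom in the first variable sends colimits to limits (it is a right adjoint), but one has to verify that the two maps induced on $\texthom{Hom}(M_1\otimes A\otimes M_2,M_3)\simeq\texthom{Hom}(M_2,\texthom{Hom}(A\otimes M_1,M_3))$ really match the two maps used in the equaliser definition of $\texthom{Hom}_{\textcat{B-Mod}}(M_1,M_3)$. Once this identification is in place, the adjunction and its naturality follow formally.
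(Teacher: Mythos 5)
Your proposal is correct and is essentially the argument the paper intends: the paper's own proof is the single line ``Follows in a similar way to the proof for rings,'' and your unpacking — restricting the $\mathcal{C}$-level adjunction, matching the coequaliser (balancing) condition on the left with $A$-linearity of the adjoint on the right, and the $B$-linearity condition with factoring through the equaliser defining $\texthom{Hom}_{\textcat{B-Mod}}(M_1,M_3)$ — is precisely the element-free version of that classical argument. Nothing is missing; you have simply supplied the bookkeeping the paper omits.
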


\begin{proof}
Follows in a similar way to the proof for rings.
\end{proof}

\begin{cor}\label{tensorhomadjunctioncor}The above isomorphism lifts to an isomorphism 
\begin{equation*}
     \texthom{Hom}_{\textcat{B-Mod}}(M_1\otimes_A M_2,M_3)\simeq \texthom{Hom}_{\textcat{A-Mod}}(M_2,\texthom{Hom}_{\textcat{B-Mod}}(M_1,M_3))
\end{equation*}
 
\end{cor}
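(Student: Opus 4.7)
The plan is to upgrade the set-theoretic adjunction of Theorem \ref{tensorhomadjunction} to an internal statement by means of the Yoneda lemma: it suffices to exhibit a natural bijection, for every test object $X \in \mathcal{C}$, between $\textnormal{Hom}_\mathcal{C}(X,-)$ applied to each side. The underlying principle is that an internal hom in a bicomplete closed symmetric monoidal category is determined up to canonical isomorphism by the functor it corepresents on $\mathcal{C}$.

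First I would verify the enriched adjunction for the internal hom of left modules: for any left $B$-modules $N$ and $M$ and any $X \in \mathcal{C}$,
\begin{equation*}
\textnormal{Hom}_\mathcal{C}(X, \texthom{Hom}_{\textcat{B-Mod}}(N,M)) \simeq \textnormal{Hom}_{\textcat{B-Mod}}(X \otimes N, M),
\end{equation*}
where $X \otimes N$ is equipped with the $B$-action inherited from $N$. This is immediate from the equalizer definition of the internal hom in $\textcat{B-Mod}$ together with the closed structure of $\mathcal{C}$. Applying this to the left-hand side of the claimed isomorphism gives $\textnormal{Hom}_{\textcat{B-Mod}}(X \otimes (M_1 \otimes_A M_2), M_3)$. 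Because $X \otimes -$ is a left adjoint (to $\texthom{Hom}(X,-)$) and hence preserves coequalizers, one obtains a canonical $B$-module isomorphism $X \otimes (M_1 \otimes_A M_2) \simeq (X \otimes M_1) \otimes_A M_2$, where $X \otimes M_1$ inherits a $(B,A)$-bimodule structure from $M_1$.

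Applying the same enriched adjunction to the right-hand side, followed by the ordinary tensor-hom adjunction of Theorem \ref{tensorhomadjunction}, produces
\begin{equation*}
\textnormal{Hom}_{\textcat{A-Mod}}(X \otimes M_2, \texthom{Hom}_{\textcat{B-Mod}}(M_1, M_3)) \simeq \textnormal{Hom}_{\textcat{B-Mod}}(M_1 \otimes_A (X \otimes M_2), M_3).
\end{equation*}
Using the symmetry of $\otimes$ in $\mathcal{C}$ and again the fact that $X \otimes -$ commutes with the coequalizer defining the relative tensor product, there is a natural $B$-module isomorphism $(X \otimes M_1) \otimes_A M_2 \simeq M_1 \otimes_A (X \otimes M_2)$, so both representables coincide as functors of $X$. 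The Yoneda lemma then yields the asserted isomorphism of objects of $\mathcal{C}$, and naturality in $M_1, M_2, M_3$ is inherited from the constructions used.

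The main obstacle is not conceptual but bookkeeping: one must verify that the various bimodule actions on $X \otimes M_1$ and $X \otimes M_2$ are consistent under the symmetry braiding, and that the coequalizers defining the relative tensor products are preserved by $X \otimes -$ in the strongest sense, namely with all remaining module structures intact. Once these compatibilities are in place, the proof reduces to an assembly of adjunctions analogous to the classical case for rings, which Theorem \ref{tensorhomadjunction} has already reduced to an exercise.
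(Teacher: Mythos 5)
Your argument is correct and is essentially the intended one: the paper states this corollary without proof, and the standard way to lift the external adjunction of Theorem \ref{tensorhomadjunction} to the internal homs is exactly the corepresentability argument you give, testing both sides against $\textnormal{Hom}_{\mathcal{C}}(X,-)$ and invoking Yoneda. The only substantive checks beyond Theorem \ref{tensorhomadjunction} are the ones you already flag: that $\textnormal{Hom}_{\mathcal{C}}(X,-)$ carries the equalizer defining $\texthom{Hom}_{\textcat{B-Mod}}(N,M_3)$ to the equalizer computing $\textnormal{Hom}_{\textcat{B-Mod}}(X\otimes N,M_3)$, that $X\otimes-$ preserves the coequalizer defining $\otimes_A$ with the residual module structures intact, and that the symmetry isomorphisms identify $(X\otimes M_1)\otimes_A M_2$ with $M_1\otimes_A(X\otimes M_2)$ compatibly with the $B$-actions; all of these are routine in a bicomplete closed symmetric monoidal category.
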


\begin{cor}\label{tensorhomcor2}
Suppose that $M$ and $N$ are both left $A,B$ modules. Also, suppose that $B$ is a right $A$-module. Then, there exist isomorphisms
\begin{equation*}
    \textnormal{Hom}_{\textcat{B-Mod}}(B\otimes_A M,N)\simeq \textnormal{Hom}_\textcat{A-Mod}(M,N)\simeq \textnormal{Hom}_{\textcat{B-Mod}}(M,\texthom{Hom}_{\textcat{A-Mod}}(B,N))
\end{equation*}natural in $M$ and $N$.
\end{cor}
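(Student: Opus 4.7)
The plan is to deduce both isomorphisms by applying Theorem \ref{tensorhomadjunction} twice, in each case collapsing one side using the cancellation identities of Lemma \ref{usefulfacts}. The symmetric monoidal structure of $\mathcal{C}$ lets us regard $B$ either as a $(B,A)$-bimodule (with left action given by its own multiplication and the specified right $A$-action on the right) or as an $(A,B)$-bimodule (with right action given by its own multiplication and the left $A$-action obtained from the right $A$-action via the braiding). Each presentation is used once.

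For the first isomorphism, take $M_1 := B$ as a $(B,A)$-bimodule, $M_2 := M$, and $M_3 := N$ in Theorem \ref{tensorhomadjunction}. This yields
\begin{equation*}
    \textnormal{Hom}_{\textcat{B-Mod}}(B\otimes_A M, N) \simeq \textnormal{Hom}_{\textcat{A-Mod}}(M, \texthom{Hom}_{\textcat{B-Mod}}(B, N)).
\end{equation*}
By Lemma \ref{usefulfacts}, $\texthom{Hom}_{\textcat{B-Mod}}(B, N)\simeq N$; one then observes that the residual left $A$-module structure on this copy of $N$ is precisely the one transferred from $B$'s right $A$-structure, which matches the hypothesised left $A$-structure on $N$. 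Substitution gives the first isomorphism.

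For the second, take $M_1 := B$ as an $(A,B)$-bimodule, $M_2 := M$ now viewed as a left $B$-module, and $M_3 := N$ viewed as a left $A$-module, applying Theorem \ref{tensorhomadjunction} with the roles of $A$ and $B$ interchanged. This produces
\begin{equation*}
    \textnormal{Hom}_{\textcat{A-Mod}}(B\otimes_B M, N) \simeq \textnormal{Hom}_{\textcat{B-Mod}}(M, \texthom{Hom}_{\textcat{A-Mod}}(B, N)),
\end{equation*}
and Lemma \ref{usefulfacts} collapses $B\otimes_B M$ to $M$. Naturality in $M$ and $N$ is inherited from the naturality of Theorem \ref{tensorhomadjunction} and of the isomorphisms in Lemma \ref{usefulfacts}. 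The only genuine work in the argument is the bookkeeping of bimodule structures on $B$: checking that the left $A$- and right $B$-actions (and dually the right $A$- and left $B$-actions) commute, and that the induced module structures on the internal homs agree with those imposed on $M$ and $N$. This is a diagram-chase using the symmetry of $\otimes$ together with the associativity and unit axioms for the monoid $B$; I expect no real obstacle beyond the care needed to keep the left/right conventions straight.
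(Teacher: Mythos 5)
Your proposal is correct and follows essentially the same route as the paper: two applications of Theorem \ref{tensorhomadjunction} with $B$ viewed respectively as a $(B,A)$-bimodule and an $(A,B)$-bimodule, each collapsed by the cancellation isomorphisms of Lemma \ref{usefulfacts}. The paper's proof is just a terser version of yours, leaving the bimodule bookkeeping implicit.
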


\begin{proof}
We note that, by Theorem \ref{tensorhomadjunction}, we have an isomorphism 
\begin{equation*}
    \textnormal{Hom}_{\textcat{B-Mod}}(B\otimes_A M,N)\simeq \textnormal{Hom}_{\textcat{A-Mod}}(M,\texthom{Hom}_{\textcat{B-Mod}}(B,N))\simeq \textnormal{Hom}_\textcat{A-Mod}(M,N)
\end{equation*}Applying Theorem \ref{tensorhomadjunction} again, we obtain an isomorphism 
\begin{equation*}
   \textnormal{Hom}_{\textcat{B-Mod}}(M,\texthom{Hom}_{\textcat{A-Mod}}(B,N))\simeq \textnormal{Hom}_{\textcat{A-Mod}}(B\otimes_B M,N)\simeq \textnormal{Hom}_\textcat{A-Mod}(M,N)
\end{equation*}
\end{proof}

\begin{prop}\label{amodproperties}Suppose that $\mathcal{C}$ is in addition an additive category. If $A$ is a commutative monoid, then $\textcat{A-Mod}$ is a bicomplete closed symmetric monoidal additive category with monoidal product $\otimes_A$ and hom functor $\texthom{Hom}_{\textcat{A-Mod}}$. The unit object is $A$. 
\end{prop}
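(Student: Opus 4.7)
The plan is to transfer the structure on $\mathcal{C}$ to $\textcat{A-Mod}$ via the free-forgetful adjunction $A \otimes (-) \dashv U$, where $U : \textcat{A-Mod} \to \mathcal{C}$ is the forgetful functor and $A \otimes (-)$ sends $X \in \mathcal{C}$ to the $A$-module whose action is induced from $\mu : A \otimes A \to A$. For additivity, hom-sets $\textnormal{Hom}_{\textcat{A-Mod}}(M,N)$ sit inside $\textnormal{Hom}_{\mathcal{C}}(M,N)$ as the equaliser of two additive maps, hence are themselves abelian groups with bilinear composition; the zero object of $\mathcal{C}$ carries the unique possible $A$-action, and finite biproducts in $\mathcal{C}$ inherit diagonal $A$-actions that realise them as biproducts in $\textcat{A-Mod}$. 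For limits, $U$ creates them: given a diagram $D$ in $\textcat{A-Mod}$, the object $\varprojlim UD$ acquires a canonical action $A \otimes \varprojlim UD \to \varprojlim UD$ by universality from the actions on each $D_i$, and the cone and universal property are routine to verify.

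For colimits, the key observation is that $A \otimes (-)$ has the right adjoint $\texthom{Hom}(A,-)$ by closedness of $\mathcal{C}$, hence preserves all colimits. Given a diagram $D$ in $\textcat{A-Mod}$, one endows $\varinjlim UD$ with the $A$-action
\[ A \otimes \varinjlim UD \;\xrightarrow{\sim}\; \varinjlim (A \otimes UD) \;\longrightarrow\; \varinjlim UD, \]
where the first arrow is the preservation isomorphism and the second is induced componentwise by the individual action maps; the monad axioms for this candidate action are then verified using the universal property of the colimit.

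For the symmetric monoidal structure, the unit axiom $A \otimes_A M \simeq M$ is Lemma \ref{usefulfacts}. Commutativity of $A$ makes each left $A$-module into a symmetric $(A,A)$-bimodule, so $M \otimes_A N$ inherits an $A$-module structure via the discussion preceding the definition of the internal hom. Associativity $(M \otimes_A N) \otimes_A P \simeq M \otimes_A (N \otimes_A P)$ follows by exhibiting both sides as the same iterated coequaliser
\[ M \otimes A \otimes N \otimes A \otimes P \;\rightrightarrows\; M \otimes N \otimes P, \]
which is legitimate because $(-) \otimes X$ and $X \otimes (-)$ preserve colimits in $\mathcal{C}$. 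Symmetry $M \otimes_A N \simeq N \otimes_A M$ is induced by the symmetry of $\mathcal{C}$, with commutativity of $A$ exchanging the two parallel maps in the coequaliser presentation.

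The closed structure is a specialisation of Theorem \ref{tensorhomadjunction} to $B = A$: the adjunction $M \otimes_A (-) \dashv \texthom{Hom}_{\textcat{A-Mod}}(M,-)$ holds at the level of hom-sets, and Corollary \ref{tensorhomadjunctioncor} lifts it to an internal isomorphism. The pentagon and triangle coherence axioms reduce by a diagram chase to the analogous coherences in $\mathcal{C}$, using the universal property of the coequaliser definition of $\otimes_A$. The main technical obstacle throughout is the interchange of colimits with the tensor product; this is underwritten precisely by the closedness of $\mathcal{C}$, which forces each $(-) \otimes X$ to preserve colimits and makes all of the comparison maps intervening in the constructions genuine isomorphisms.
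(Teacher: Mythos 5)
Your proposal is correct and is essentially the standard argument that the paper's one-line proof delegates to its citation of \cite[Lemma 2.3]{benbassatkremnitzer17}: the paper likewise observes that limits and colimits are computed in $\mathcal{C}$ (your creation-of-(co)limits argument via the free--forgetful adjunction and colimit-preservation of $A\otimes-$ makes this precise), and the monoidal and closed structure via $\otimes_A$, Lemma \ref{usefulfacts}, and Theorem \ref{tensorhomadjunction} is exactly the intended content of the cited lemma. You have simply written out in full what the paper leaves implicit.
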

\begin{proof}
We can extend the proof of \cite[Lemma 2.3]{benbassatkremnitzer17}. If $\mathcal{C}$ has all limits and colimits then so does $\textcat{A-Mod}$ since products, equalisers, coproducts, and coequalizers can all be computed in $\mathcal{C}$.
\end{proof}



Suppose that $A$ is a positively graded monoid in $\mathcal{C}$. We denote by $\textcat{grA-Mod}$ the category of graded $A$-modules equipped with graded morphisms. For objects $M,N\in\textcat{grA-Mod}$, we can define a grading on $M\otimes_A N$ by 
\begin{align*}
    (M\otimes_A N)_n&=\bigoplus_{i+j=n}M_i\otimes_{A}N_j
\intertext{Similarly, we can define a grading on $\texthom{Hom}_{\textcat{A-Mod}}(M,N)$ by}
    \texthom{Hom}_{\textcat{A-Mod}}(M,N)_n&=\bigoplus_{i+j=n}\texthom{Hom}_{\textcat{A-Mod}}(M_i,N_{-j})
\end{align*}We will denote the internal hom of graded modules by $\texthom{Hom}_{\textcat{grA-Mod}}$.
\begin{cor}
With the conditions of Proposition \ref{amodproperties}, the category $\textcat{grA-Mod}$ is a bicomplete closed symmetric monoidal additive category with monoidal product $\otimes_A$ and hom functor $\texthom{Hom}_{\textcat{grA-Mod}}$. The unit object is $A$. 
\end{cor}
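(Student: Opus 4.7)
The plan is to reduce the corollary to Proposition \ref{amodproperties} by observing that a positively graded monoid in $\mathcal{C}$ is exactly the same data as a monoid in the category $\textcat{gr}\mathcal{C}$ of $\mathbb{Z}$-graded objects of $\mathcal{C}$, and that graded $A$-modules correspond precisely to modules over this monoid taken internally to $\textcat{gr}\mathcal{C}$. Thus it suffices to show that $\textcat{gr}\mathcal{C}$ itself is a bicomplete closed symmetric monoidal additive category; then Proposition \ref{amodproperties} applied inside $\textcat{gr}\mathcal{C}$ (with the commutative monoid $A$) yields the corollary directly, identifying the resulting unit object with $A$ equipped with its given grading.

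First, I would establish the additive and bicomplete structure on $\textcat{gr}\mathcal{C}$. Since a morphism in $\textcat{gr}\mathcal{C}$ is a family of morphisms in $\mathcal{C}$, the zero object, biproducts, kernels, cokernels, and all limits and colimits can be computed degree-wise in $\mathcal{C}$; bicompleteness and additivity then pass up from $\mathcal{C}$ (which is bicomplete additive by the hypotheses of Proposition \ref{amodproperties}). Second, I would check that the formula $(M\otimes N)_n=\bigoplus_{i+j=n}M_i\otimes N_j$ with unit object $I$ placed in degree $0$ defines a symmetric monoidal structure on $\textcat{gr}\mathcal{C}$: associators, unitors, and the braiding are assembled, degree-by-degree, out of those of $\mathcal{C}$ by the universal property of direct sums, and the pentagon, triangle, and hexagon axioms reduce componentwise to the corresponding axioms in $\mathcal{C}$.

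Next, I would verify the closedness, i.e.\ the tensor-hom adjunction in $\textcat{gr}\mathcal{C}$ with internal hom $\texthom{Hom}(M,N)_n=\bigoplus_{i+j=n}\texthom{Hom}(M_i,N_{-j})$. Unwinding the definition of a graded morphism, one has $\textnormal{Hom}_{\textcat{gr}\mathcal{C}}(M\otimes N,P)\simeq\prod_n\textnormal{Hom}_{\mathcal{C}}\bigl(\bigoplus_{i+j=n}M_i\otimes N_j,P_n\bigr)$, and repeated application of the universal property of direct sums together with the tensor-hom adjunction in $\mathcal{C}$ rewrites this as $\textnormal{Hom}_{\textcat{gr}\mathcal{C}}(M,\texthom{Hom}(N,P))$; naturality in all variables is immediate. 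Having established that $\textcat{gr}\mathcal{C}$ is bicomplete closed symmetric monoidal additive, Proposition \ref{amodproperties} applied to the commutative monoid $A$ in $\textcat{gr}\mathcal{C}$ shows that the category of $A$-modules internal to $\textcat{gr}\mathcal{C}$, which is exactly $\textcat{grA-Mod}$, carries the claimed structure with monoidal product $\otimes_A$, internal hom $\texthom{Hom}_{\textcat{grA-Mod}}$, and unit $A$.

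The only real work is the second step, and the main obstacle will be the bookkeeping involved in checking that the coherence isomorphisms for the monoidal structure on $\textcat{gr}\mathcal{C}$ assemble correctly out of those of $\mathcal{C}$, and in particular that the given grading conventions on $\otimes_A$ and $\texthom{Hom}_{\textcat{A-Mod}}$ are the ones induced by this internal construction; modulo this indexing verification, every categorical property transports automatically from Proposition \ref{amodproperties}.
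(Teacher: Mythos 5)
Your proposal is correct in substance but takes a genuinely different route from the paper's. The paper's own proof is a two-line verification: it asserts directly that $\otimes_A$ and $\texthom{Hom}_{\textcat{grA-Mod}}$ give $\textcat{grA-Mod}$ a closed symmetric monoidal structure, and obtains bicompleteness from that of $\textcat{A-Mod}$ by observing that limits and colimits of graded modules can themselves be graded. You instead factor everything through the intermediate category $\textcat{gr}\mathcal{C}$ of graded objects equipped with the convolution product, identify graded monoids and graded modules with monoids and their modules internal to $\textcat{gr}\mathcal{C}$, and then reapply Proposition \ref{amodproperties}. This buys genuine reuse: none of the $A$-module-level verifications have to be repeated, and all the coherence checking is isolated in the base category $\textcat{gr}\mathcal{C}$, where it is degree-wise bookkeeping. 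One wrinkle you should make explicit in the closedness step: the internal hom on $\textcat{gr}\mathcal{C}$ that is actually right adjoint to the convolution tensor product is $\texthom{Hom}(M,N)_n=\prod_{i+j=n}\texthom{Hom}(M_i,N_{-j})$, a \emph{product} over the grading index rather than a direct sum, because the universal property of $\bigoplus$ in the contravariant variable of $\textnormal{Hom}$ produces a product, and that product must then be pulled inside the covariant variable. Your stated formula (which follows the paper's) uses $\bigoplus$; with that formula the adjunction only holds under boundedness hypotheses making the relevant sums finite. Since you inherit this convention from the paper rather than introduce it, it does not undermine your strategy, but the adjunction you invoke should be stated with the product (or the finiteness caveat recorded).
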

\begin{proof}
We note that the tensor product $\otimes_A$ and $\texthom{Hom}_{\textcat{grA-Mod}}$ give $\textcat{grA-Mod}$ the structure of a closed symmetric monoidal category. Since $\textcat{A-Mod}$ is bicomplete, so is $\textcat{grA-Mod}$ as products, equalisers, coproducts, and coequalizers of graded modules can all be given a graded structure. 
\end{proof}

\section{Quasi-abelian Categories}\label{qacs}

\subsection{Quasi-abelian Categories}
Suppose that $\mathcal{E}$ is an additive category with all kernels and cokernels. 
\begin{defn}
A morphism $f:X\rightarrow{Y}$ in $\mathcal{E}$ is \textbf{strict} if the induced morphism 
\begin{equation*}
    \overline{f}:\textnormal{Coim}(f)\rightarrow{\textnormal{Im}(f)}
\end{equation*} is an isomorphism. 
\end{defn}
\begin{defn}
$\mathcal{E}$ is \textbf{abelian} if every morphism is strict.
\end{defn}
\begin{remark}
We note that, for any morphism $f:X\rightarrow{Y}$ in $\mathcal{E}$, the canonical morphism
\begin{equation*}
    \textnormal{Ker}(f)\rightarrow{X}\quad \text{(resp. $Y\rightarrow{\textnormal{Coker}(f)}$)}
\end{equation*}is a strict monomorphism (resp. strict epimorphism). Moreover, a morphism $f$ is strict if and only if it can be decomposed as $f=m\circ e$ where $m$ is a strict monomorphism and $e$ is a strict epimorphism.
\end{remark}

\begin{prop}\label{isomorphismmonoepi}
If a strict map $f:X\rightarrow{Y}$ is both a monomorphism and an epimorphism, then it is an isomorphism.    
\end{prop}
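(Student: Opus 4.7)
The plan is to exploit the canonical factorisation of an arbitrary morphism in $\mathcal{E}$ through its coimage and image, and show that under the three hypotheses each of the three factors is an isomorphism.

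Recall that every morphism $f : X \to Y$ in $\mathcal{E}$ factors canonically as
\begin{equation*}
X \xrightarrow{\,e\,} \mathrm{Coim}(f) \xrightarrow{\,\overline{f}\,} \mathrm{Im}(f) \xrightarrow{\,m\,} Y,
\end{equation*}
where $e$ is the canonical strict epimorphism $X \to \mathrm{coker}(\ker(f) \to X)$ and $m$ is the canonical strict monomorphism $\ker(Y \to \mathrm{coker}(f)) \to Y$. The hypothesis that $f$ is strict is, by definition, the statement that the middle arrow $\overline{f}$ is an isomorphism. It therefore suffices to show that $e$ and $m$ are isomorphisms.

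For $e$: since $f$ is a monomorphism, its kernel is the zero object, so $\mathrm{Coim}(f) = \mathrm{coker}(0 \to X)$, and this cokernel is canonically $X$ itself with $e$ the identity. Dually, for $m$: since $f$ is an epimorphism, $\mathrm{coker}(f) = 0$, hence $\mathrm{Im}(f) = \ker(Y \to 0) = Y$ with $m$ the identity. Composing three isomorphisms yields that $f$ itself is an isomorphism.

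There is essentially no obstacle here; the only thing to be careful about is that in a non-abelian (merely quasi-abelian) setting one cannot invoke the abelian-category argument directly, but one does not need to: the argument is purely formal from the definitions of kernel, cokernel, coimage, image, and strictness, all of which are available in $\mathcal{E}$ by assumption.
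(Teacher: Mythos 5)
Your proof is correct and follows essentially the same route as the paper's: both run through the canonical factorisation $X\rightarrow{\textnormal{Coim}(f)}\rightarrow{\textnormal{Im}(f)}\rightarrow{Y}$, using strictness for the middle arrow and the mono/epi hypotheses for the outer ones. If anything, your version is slightly more self-contained, since you derive the outer isomorphisms directly from $\textnormal{Ker}(f)=0$ and $\textnormal{Coker}(f)=0$, whereas the paper quotes the identifications $X\simeq\textnormal{Im}(f)$ and $Y\simeq\textnormal{Coim}(f)$ (i.e.\ that strict monomorphisms are kernels and strict epimorphisms are cokernels) without spelling them out.
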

\begin{proof}Indeed, since $f$ is a strict map, $\textnormal{Coim}(f)\simeq\textnormal{Im}(f)$. Moreover, as $f$ is a strict monomorphism, $X\simeq \textnormal{Ker}(Y\rightarrow{\textnormal{Coker}(f))}=\textnormal{Im}(f)$ and, as $f$ is a strict epimorphism, $Y\simeq\textnormal{Coker}(\textnormal{Ker}(f)\rightarrow{X})=\textnormal{Coim}(f)$. Therefore, we see that \begin{equation*}
    X\simeq \textnormal{Im}(f)\simeq \textnormal{Coim}(f)\simeq Y
\end{equation*} 

\end{proof}

Many of the categories we typically meet are abelian, such as the category of modules over a ring. However, we wish to be able to deal with certain categories, such as the categories discussed in Section \ref{closedsymmetricmonoidal}, which have a slightly weaker structure than abelian categories. We introduce the notion of a quasi-abelian category, and state a few results due to Schneiders \cite{schneiders99}.

\begin{defn} \label{quasiabelian} $\mathcal{E}$ is \textbf{quasi-abelian} if it satisfies the following two conditions
\begin{itemize}
    \item[$(\textbf{QA})$] In a pullback square
    \begin{equation*}
    \begin{tikzcd}
    X' \arrow{d}\arrow{r}{f'} & Y'\arrow{d}\\
    X  \arrow[r,"f"] & Y 
    \end{tikzcd}
\end{equation*}
    If $f$ is a strict epimorphism, then $f'$ is a strict epimorphism.
    \item[$(\textbf{QA}^*)$] In a pushout square
    \begin{equation*}
    \begin{tikzcd}
    X \arrow{d}\arrow{r}{f} & Y\arrow{d}\\
    X'  \arrow[r,"f'"'] & Y' 
    \end{tikzcd}
\end{equation*}
    If $f$ is a strict monomorphism, then $f'$ is a strict monomorphism.
\end{itemize}
\end{defn}

\begin{remark}Equivalently, the class of all kernel-cokernel pairs forms a Quillen exact structure on $\mathcal{E}$.
\end{remark}

Fix a quasi-abelian category $\mathcal{E}$. We state the following propositions about strict morphisms in quasi-abelian categories.
\begin{prop}\cite[Proposition 1.1.7]{schneiders99}\label{strictepisstable} The class of strict epimorphisms (resp. monomorphisms) of $\mathcal{E}$ is stable under composition. 
    
\end{prop}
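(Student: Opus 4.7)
The plan is to handle the strict epimorphism case directly, using $(\textbf{QA})$ in an essential way, and then to obtain the strict monomorphism case by dualising: the opposite of a quasi-abelian category is again quasi-abelian with $(\textbf{QA})$ and $(\textbf{QA}^*)$ interchanged, and a strict epimorphism in $\mathcal{E}^{\mathrm{op}}$ is precisely a strict monomorphism in $\mathcal{E}$.

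Given strict epimorphisms $f\colon X\to Y$ and $g\colon Y\to Z$, write $k\colon K_g\to Y$ for $\ker(g)$, so that $g=\mathrm{coker}(k)$ since $g$ is strict. The key construction is the pullback
\begin{equation*}
\begin{tikzcd}
K \arrow{r}{\bar f}\arrow{d}{j} & K_g \arrow{d}{k} \\
X \arrow{r}{f} & Y.
\end{tikzcd}
\end{equation*}
The axiom $(\textbf{QA})$ is exactly what guarantees that $\bar f$ is again a strict epimorphism, and $j$ is a monomorphism as the pullback of one. A short chase using the universal property of the pullback together with $k=\ker(g)$ identifies $j$ with $\ker(gf)$. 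The strategy is then to exhibit $gf$ as the cokernel of $j$; since every cokernel is a strict epimorphism, this will close the argument.

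To establish the cokernel property, the plan is to split the universal property into two halves. The relation $(gf)\circ j=g\circ k\circ \bar f=0$ is immediate. For any $h\colon X\to W$ killing $j$, I would first produce a factorisation $h=h'f$ through $f=\mathrm{coker}(\ker f)$, by observing that any morphism into $X$ killed by $f$ is automatically killed by $gf$, hence factors through $\ker(gf)=K$ via $j$, and is therefore killed by $h$. I would then factor $h'=\psi g$ through $g=\mathrm{coker}(k)$, by noting that $h'k\bar f=h'fj=hj=0$ combined with $\bar f$ being an epimorphism forces $h'k=0$. Uniqueness of the resulting $\psi\colon Z\to W$ is forced by $gf$ being an epimorphism, as a composite of two. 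The main care point — and the only place where $(\textbf{QA})$ is used — is the passage $\bar f$ strict epi $\Rightarrow$ $\bar f$ epi, feeding into the identification $j=\ker(gf)$; the rest is routine bookkeeping around the pullback and the two presentations $f=\mathrm{coker}(\ker f)$ and $g=\mathrm{coker}(k)$.
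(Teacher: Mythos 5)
Your argument is correct and is essentially the proof of the cited result (Schneiders, Proposition 1.1.7), which the paper itself does not reproduce: pull $\ker(g)$ back along $f$, invoke $(\textbf{QA})$ to make the top arrow $\bar f$ a strict epimorphism, identify the pullback with $\ker(gf)$, and verify that $gf$ is the cokernel of that kernel, the monomorphism case following by duality. One small correction of emphasis: the epimorphy of $\bar f$ is not what underlies the identification $j=\ker(gf)$ (that is pure pullback formalism, valid for any $f$); it is used exactly once, in the step $h'k\bar f=0\Rightarrow h'k=0$, which your second paragraph already carries out correctly.
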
\begin{prop}\label{monoepimorphismtriangle}\cite[Proposition 1.1.8]{schneiders99} Let \begin{equation*}
    \begin{tikzcd}
    & Y \arrow{dr}{g}\\
    X \arrow{ur}{f} \arrow{rr}{h}  & & Z
    \end{tikzcd}
\end{equation*}be a commutative diagram in $\mathcal{E}$. If $h$ is a strict epimorphism, then $g$ is a strict epimorphism. Dually, if $h$ is a strict monomorphism, then $f$ is a strict monomorphism.

\end{prop}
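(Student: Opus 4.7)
The plan is to prove the first assertion (with $h$ a strict epimorphism); the second follows formally by passing to $\mathcal{E}^{\mathrm{op}}$, which is again quasi-abelian with $(\textbf{QA})$ and $(\textbf{QA}^*)$ interchanged, and in which $h=g\circ f$ strict mono becomes $h^{\mathrm{op}}=f^{\mathrm{op}}\circ g^{\mathrm{op}}$ strict epi whose ``second factor'' is $f^{\mathrm{op}}$. For the first assertion, I would form the pullback
\[
\begin{tikzcd}
P \arrow{r}{p}\arrow{d}{q} & Y \arrow{d}{g}\\
X \arrow{r}{h} & Z
\end{tikzcd}
\]
By axiom $(\textbf{QA})$, $p$ is a strict epimorphism. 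The equality $g\circ f=h=h\circ\mathrm{id}_X$ together with the universal property of the pullback yields a section $s:X\to P$ of $q$ with $p\circ s=f$. Thus $q$ is a split epimorphism, and a direct computation using the idempotent $s\circ q$ shows that in any additive category a split epimorphism is the cokernel of its kernel, hence is a strict epimorphism. By Proposition~\ref{strictepisstable}, the composite $g\circ p=h\circ q$ is then also a strict epimorphism.

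It remains to prove the cancellation lemma: if $\alpha\circ\beta$ is a strict epimorphism and $\beta$ is an epimorphism, then $\alpha$ is a strict epimorphism; applied to $\alpha=g$ and $\beta=p$, this finishes the argument. I would verify the lemma using the characterization of a strict epimorphism as the cokernel of its own kernel. Given $\phi$ with $\phi\circ\ker(\alpha)=0$ and any $k$ with $\alpha\circ\beta\circ k=0$, the composite $\beta\circ k$ factors through $\ker(\alpha)$, so $\phi\circ\beta\circ k=0$. Hence $\phi\circ\beta$ descends uniquely through the strict epi $\alpha\circ\beta$, producing $\psi$ with $\psi\circ\alpha\circ\beta=\phi\circ\beta$; cancelling the epi $\beta$ yields $\psi\circ\alpha=\phi$, and uniqueness follows because $\alpha$ is itself an epimorphism (as $\alpha\circ\beta$ is). This identifies $\alpha$ with $\mathrm{coker}(\ker(\alpha))$, i.e., as a strict epimorphism.

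The only subtle step is the cancellation lemma: in a quasi-abelian category abelian-style element chases are not available, so everything must be phrased via universal properties of (co)kernels, as above. The essential use of the quasi-abelian structure itself is the appeal to $(\textbf{QA})$, which lets us replace the possibly non-epi $f$ by a strict epi $p$ while preserving a strict epi composite with $g$.
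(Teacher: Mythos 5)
Your argument is correct. Note first that the paper does not prove this proposition at all; it is imported verbatim from Schneiders as \cite[Proposition 1.1.8]{schneiders99}, so there is no in-paper proof to compare against, and what you have written is a legitimate self-contained replacement. The structure is sound: axiom $(\textbf{QA})$ gives that the pullback $p$ of the strict epimorphism $h$ along $g$ is a strict epimorphism; the diagonal $s$ induced by $(\mathrm{id}_X,f)$ splits $q$; your idempotent computation shows $q=\mathrm{coker}(\ker q)$, hence $q$ is a strict epimorphism (importantly, you do \emph{not} invoke Corollary \ref{rightinverseepimorphism} here, which would be circular since that corollary is deduced from this proposition); Proposition \ref{strictepisstable} then makes $g\circ p=h\circ q$ a strict epimorphism; and your cancellation lemma (``$\alpha\circ\beta$ strict epi and $\beta$ epi imply $\alpha$ strict epi''), proved via the universal property of $\mathrm{coker}(\ker(\alpha\circ\beta))$, finishes the job with $\alpha=g$, $\beta=p$. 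The only ingredient you use that the paper does not state explicitly is the characterisation of a strict epimorphism as the cokernel of its own kernel; this follows directly from the definition (for an epimorphism $f$ one has $\mathrm{Im}(f)=Y$, so strictness is exactly the assertion that $\mathrm{Coim}(f)\to Y$ is an isomorphism) and is consistent with the Remark preceding Proposition \ref{isomorphismmonoepi}, but it would be worth one sentence to record it. The dualisation to the strict monomorphism statement via $\mathcal{E}^{\mathrm{op}}$ and $(\textbf{QA}^*)$ is also correct.
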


\begin{cor}\label{rightinverseepimorphism}
    Any morphism in $\mathcal{E}$ with a right inverse is a strict epimorphism. Similarly, any morphism in $\mathcal{E}$ with a left inverse is a strict monomorphism.
\end{cor}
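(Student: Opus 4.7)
The plan is to reduce this directly to Proposition \ref{monoepimorphismtriangle}, since we already know the identity morphism is an isomorphism and hence both a strict epimorphism and a strict monomorphism.

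Suppose $f : X \to Y$ admits a right inverse $g : Y \to X$, so that $f \circ g = \mathrm{id}_Y$. I would form the commutative triangle
\begin{equation*}
\begin{tikzcd}
 & X \arrow{dr}{f} & \\
Y \arrow{ur}{g} \arrow{rr}{\mathrm{id}_Y} & & Y
\end{tikzcd}
\end{equation*}
and note that the bottom map $\mathrm{id}_Y$ is an isomorphism, so in particular a strict epimorphism. Then Proposition \ref{monoepimorphismtriangle} applied to this triangle (with the roles $X \leadsto Y$, $Y \leadsto X$, $Z \leadsto Y$) immediately gives that $f$ is a strict epimorphism.

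The dual statement follows by the dual argument: if $g \circ f = \mathrm{id}_X$, then in the triangle with vertices $X, Y, X$ and composite $\mathrm{id}_X$, the hypothesis of the dual half of Proposition \ref{monoepimorphismtriangle} (that the composite is a strict monomorphism) is satisfied, so $f$ is a strict monomorphism.

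I do not expect any real obstacle here; the content of the corollary is entirely packaged in Proposition \ref{monoepimorphismtriangle}, and the only thing one has to observe is the trivial fact that identity morphisms are strict epimorphisms (and strict monomorphisms), which is immediate since they are isomorphisms and hence $\overline{\mathrm{id}} : \mathrm{Coim}(\mathrm{id}) \to \mathrm{Im}(\mathrm{id})$ is an isomorphism.
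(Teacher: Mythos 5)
Your proof is correct and is essentially the paper's own argument: factor the identity through the given morphism and apply Proposition \ref{monoepimorphismtriangle}, noting that identities are strict epimorphisms (resp. strict monomorphisms). The only difference is notational (the paper names the map with the right inverse $g$ rather than $f$), which is immaterial.
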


\begin{proof}
Indeed, suppose that $g:Y\rightarrow{X}$ is a morphism in $\mathcal{E}$ with a right inverse $f:X\rightarrow{Y}$. Then, these morphisms fit into the following commutative diagram. 
\begin{equation*}
     \begin{tikzcd}
    & Y \arrow{dr}{g}\\
    X \arrow{ur}{f} \arrow{rr}{id_X}  & & X
    \end{tikzcd}
\end{equation*}Since $id_X$ is a strict epimorphism, $g$ is a strict epimorphism by the previous proposition. 
\end{proof}

\begin{exmp}
Suppose that $k$ is a valued field. The category of Banach spaces $\textcat{Ban}_k$, is quasi-abelian \cite[Lemma A.30]{benbassatkremnitzer17}, but isn't abelian since there exist non-strict morphisms. Consider the Banach space $C[0,1]$ of continuous real-valued functions on $[0,1]$ equipped with the sup-norm $|f|_{C[0,1]}=sup_{x\in[0,1]}|f(x)|$. Also consider the Banach space $L^1[0,1]$ of Lebesgue classes of integrable real-valued functions on $[0,1]$ equipped with the norm $|f|_{L^1[0,1]}=\int_0^1 |f(x)|\,dx$. We can see that the image of the inclusion map 
\begin{equation*}
    \iota:C[0,1]\hookrightarrow{L^1[0,1]}
\end{equation*}is dense and non-closed, and hence the inclusion map is not strict \cite[Page. 83]{bourlesmarinescu11}.
\end{exmp}
\begin{exmp}
For $R$ an Archimedean (resp. non-Archimedean) Banach ring, the category of Banach modules over $R$, $\textcat{Ban}^A_R$ (resp. $\textcat{Ban}^{nA}_R$), is quasi-abelian \cite[Proposition 3.15, 3.18]{bambozzibenbassat15}. 
\end{exmp}

\begin{exmp}
If $\mathcal{C}$ is quasi-abelian, then so is $\textcat{Ind}\mathcal{C}$. This follows since, for any morphism $f:X\rightarrow{Y}$ in $\textcat{Ind}\mathcal{C}$, we have that $\textnormal{Ker}(f)_i=\textnormal{Ker}(f_i)$ and $\textnormal{Coker}(f)_i=\textnormal{Coker}(f_i)$. In particular, for $R$ a Banach ring, the categories $\textcat{IndBan}_R^A$ and $\textcat{IndBan}_R^{nA}$ are quasi-abelian.
\end{exmp}

\begin{exmp}The categories $\textcat{Born}_k$ and $\textcat{CBorn}_k$ of bornological and complete bornological spaces respectively are quasi-abelian \cite[Lemma 2.19]{bambozzibenbassatkremnizer18}. 
\end{exmp}
There is the concept of exactness in quasi-abelian categories. Namely, 

\begin{defn} A null sequence in $\mathcal{E}$
 \begin{equation*}
       X\xrightarrow{f}{Y}\xrightarrow{g}{Z}
    \end{equation*}
is \textbf{strictly exact} (resp.\textbf{ coexact}) if $f$ (resp. $g$) is strict and the canonical morphism 
\begin{equation*}
    \textnormal{Im}(f)\rightarrow{\textnormal{Ker}(g)}
\end{equation*}is an isomorphism.
\end{defn}

We can extend this definition to define strictly exact complexes of objects in $\mathcal{E}$. We will say that a chain complex $Y_\bullet$ is a \textbf{strict resolution} of $X$ if the complex $Y_\bullet\rightarrow{X}$ is strictly exact. We note that, in a quasi-abelian category, we don't have a well defined notion of how far away a complex is from being strictly exact, i.e. we have no notion of \textit{homology}. Indeed, for a null sequence
 \begin{equation*}
       X\xrightarrow{f}{Y}\xrightarrow{g}{Z}
    \end{equation*}we could perhaps define homology at $Y$ as $\textnormal{Coker}(\textnormal{Im}(f)\rightarrow{\textnormal{Ker}(g)})$ or $\textnormal{Im}(\textnormal{Ker}(g)\rightarrow{\textnormal{Coker}(f)})$. However, these candidates are not always isomorphic as not all maps are strict.

\begin{defn}
An additive functor $F:\mathcal{E}\rightarrow{\mathcal{F}}$ between quasi-abelian categories is \textbf{left} (resp. \textbf{right}) \textbf{exact} if it transforms any strictly exact (resp. coexact) sequence 
\begin{equation*}
        0\rightarrow{X}\rightarrow{Y}\rightarrow{Z}\rightarrow{0}
    \end{equation*}of $\mathcal{E}$ into a strictly exact (resp. coexact) sequence
    \begin{equation*}
    \begin{aligned}
        0&\rightarrow{ F(X)}\rightarrow{F(Y)}\rightarrow{F(Z)}\\
        (\text{resp. } & F(X)\rightarrow{F(Y)}\rightarrow{F(Z)}\rightarrow{0})
    \end{aligned}
    \end{equation*}
$F$ is \textbf{exact} if it is both left and right exact.
\end{defn}
\begin{defn}
An object $P$ in $\mathcal{E}$ is \textbf{projective} if \begin{equation*}
   \textnormal{Hom}_{\mathcal{E}}(P,-):\mathcal{E}\rightarrow{\textcat{Ab}}
\end{equation*} is exact. Equivalently, $P$ is projective if, for any strict epimorphism $f:X\rightarrow{Y}$, the associated map
\begin{equation*}
    \textnormal{Hom}_{\mathcal{E}}(P,X)\rightarrow{Hom}_{\mathcal{E}}(P,Y)
\end{equation*}is surjective. We say that $\mathcal{E}$ \textbf{has enough projectives} if, for any object $X$ of $\mathcal{E}$, there is a strict epimorphism
\begin{equation*}
    P\twoheadrightarrow{X}
\end{equation*} where $P$ is a projective object of $\mathcal{E}$.
\end{defn}

\begin{defn}
Suppose that $\mathcal{E}$ is closed symmetric monoidal with monoidal product $\otimes:\mathcal{E}\times\mathcal{E}\rightarrow{\mathcal{E}}$. An object $F\in\mathcal{E}$ is \textbf{flat} if the functor \begin{equation*}
    F\otimes -:\mathcal{E}\rightarrow{\mathcal{E}}
\end{equation*}is exact. We say that $\mathcal{E}$ \textbf{has enough flat projectives} if, for any object $X$ of $\mathcal{E}$, there is a strict epimorphism 
\begin{equation*}
    P\twoheadrightarrow{X}
\end{equation*}where $P$ is a flat and projective object of $\mathcal{E}$.
\end{defn}

\begin{lem}\cite[Lemma 2.21]{bambozzibenbassat15}
    Suppose that $\mathcal{E}$ is closed symmetric monoidal with enough flat projectives. Then, any projective object of $\mathcal{E}$ is flat. 
\end{lem}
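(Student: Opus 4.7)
The plan is to show that any projective $P$ is a retract of a flat projective, and then transfer flatness along the retract. First I would invoke the hypothesis that $\mathcal{E}$ has enough flat projectives to produce a strict epimorphism $\pi:Q\twoheadrightarrow P$ with $Q$ both flat and projective. Since $P$ is projective, applying $\textnormal{Hom}_{\mathcal{E}}(P,-)$ to $\pi$ yields a surjection $\textnormal{Hom}_{\mathcal{E}}(P,Q)\to \textnormal{Hom}_{\mathcal{E}}(P,P)$, so the identity of $P$ lifts to a section $s:P\to Q$ with $\pi\circ s=\mathrm{id}_P$. Thus $P$ is a retract of $Q$, and the idempotent $s\circ\pi:Q\to Q$ splits; since $\mathcal{E}$ is additive with kernels, the equalizer of $s\circ\pi$ and $\mathrm{id}_Q$ exists and gives a direct sum decomposition $Q\simeq P\oplus P'$ where $P'=\textnormal{Ker}(s\circ\pi)$.

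Next I would transfer this decomposition across the monoidal product. For any $X\in\mathcal{E}$, since $-\otimes X$ is additive, $Q\otimes X\simeq (P\otimes X)\oplus (P'\otimes X)$, naturally in $X$. Given a strictly exact short exact sequence
\begin{equation*}
    0\rightarrow A\rightarrow B\rightarrow C\rightarrow 0
\end{equation*}
in $\mathcal{E}$, the flatness of $Q$ gives a strictly exact sequence
\begin{equation*}
    0\rightarrow Q\otimes A\rightarrow Q\otimes B\rightarrow Q\otimes C\rightarrow 0.
\end{equation*}
This sequence splits as a direct sum of the corresponding sequences involving $P$ and $P'$, and I would then argue that each summand is itself strictly exact.

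The one nontrivial point is this last step: extracting a strict short exact sequence from a direct summand. Since the kernel and cokernel of a direct sum of morphisms are the direct sums of the kernels and cokernels, and since strict morphisms are stable under retracts in the Quillen exact structure on $\mathcal{E}$ consisting of kernel–cokernel pairs, the summand
\begin{equation*}
    0\rightarrow P\otimes A\rightarrow P\otimes B\rightarrow P\otimes C\rightarrow 0
\end{equation*}
is strictly exact. This is where I expect most of the care to be needed, but it follows from the fact that strict monomorphisms and strict epimorphisms are stable under composition with retractions (Proposition \ref{monoepimorphismtriangle} and Corollary \ref{rightinverseepimorphism}), applied to the canonical inclusion and projection of $P\otimes X$ into $Q\otimes X$. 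Concluding, $P\otimes-$ preserves strictly exact short exact sequences, so $P$ is flat.
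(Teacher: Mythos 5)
Your argument is correct and is essentially the standard proof of this fact (the paper itself gives no proof, citing Lemma 2.21 of Bambozzi--Ben-Bassat, where the same retract argument is used): realise $P$ as a direct summand of a flat projective $Q$ via the splitting of $Q\twoheadrightarrow P$, tensor the decomposition with a strict short exact sequence, and extract strict exactness of the summand using the split inclusions/projections together with Proposition \ref{monoepimorphismtriangle} and Corollary \ref{rightinverseepimorphism}. The one step you flag as delicate is indeed the only delicate one, and your treatment of it is sound.
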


\begin{exmp}Suppose that $k$ is a non-Archimedean valued field and let $V\in \textcat{Ban}_k$. Define the Banach space
\begin{equation*}
    c_0(V)=\{(c_v)_{v\in V-\{0\}}\mid c_v\in k, \lim_{v\in V-\{0\}}|c_vv|=0\}
\end{equation*}equipped with the norm
\begin{equation*}
    |(c_v)_{v\in V-\{0\}}|=\sup_{v\in V-\{0\}}|c_vv|
\end{equation*}{where $\lim_{v\in V-\{0\}}|c_vv|$ is defined as in \cite[Definition A.24]{benbassatkremnitzer17}}. By \cite[Lemma A.38]{benbassatkremnitzer17}, this Banach space is projective in $\textcat{Ban}_k$. Moreover, there is a strict epimorphism $c_0(V)\rightarrow{V}$ for every $V\in\textcat{Ban}_k$. Therefore, $\textcat{Ban}_k$ has enough projectives. When $k$ is Archimedean, $\textcat{Ban}_k$ also has enough projectives \cite[Lemma A.39]{benbassatkremnitzer17}. Moreover, in both cases all projectives are flat.
\end{exmp}

\begin{exmp}
More generally, we also note that the categories $\textcat{Ban}_R^{A}$ and $\textcat{Ban}_R^{nA}$ have enough projectives and that any projective is flat \cite[Lemma 3.42]{benbassatkremnitzer21}. Further, if $M,N$ are projective in $\textcat{Ban}_R^A$ (resp. $\textcat{Ban}_R^{nA}$), then $P\hat{\otimes}_RQ$ is also projective in $\textcat{Ban}_R^A$ (resp. $\textcat{Ban}_R^{nA}$) \cite[Lemma 3.35]{bambozzikremnitzer20}. 
\end{exmp}

\begin{exmp}
The categories $\textcat{IndBan}_R^A$, $\textcat{IndBan}_R^{nA}$ and $\textcat{CBorn}_k$ have enough flat projectives by \cite[Lemmas 3.29 and 3.53]{bambozzibenbassat15}.
\end{exmp}

\begin{lem}\label{projectivesplit}
$P$ is a projective object if and only if every strict epimorphism $f:X\rightarrow{P}$ in $\mathcal{E}$ splits, i.e. there exists $g:P\rightarrow{X}$ such that $f\circ g=id_P$.
\end{lem}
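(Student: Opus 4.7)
The plan is to prove the two directions separately, with the forward direction being an immediate unfolding of definitions and the backward direction relying on the pullback axiom $(\textbf{QA})$ of the quasi-abelian structure.

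For the forward implication, I would assume $P$ is projective and let $f:X\to P$ be a strict epimorphism. By definition of projectivity, the functor $\textnormal{Hom}_\mathcal{E}(P,-)$ is exact, so in particular the induced map $\textnormal{Hom}_\mathcal{E}(P,X)\to\textnormal{Hom}_\mathcal{E}(P,P)$ is surjective. Lifting $id_P$ along this surjection produces a morphism $g:P\to X$ with $f\circ g=id_P$, which is exactly a splitting of $f$.

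For the backward implication, I would assume every strict epimorphism onto $P$ splits and show that $\textnormal{Hom}_\mathcal{E}(P,-)$ sends strict epimorphisms to surjections. Given a strict epimorphism $f:X\to Y$ and a morphism $h:P\to Y$, form the pullback
\begin{equation*}
\begin{tikzcd}
X' \arrow{d}{f'} \arrow{r}{h'} & X \arrow{d}{f}\\
P \arrow{r}{h} & Y
\end{tikzcd}
\end{equation*}
The key step is to invoke axiom $(\textbf{QA})$ from Definition \ref{quasiabelian}: since $f$ is a strict epimorphism, so is its pullback $f':X'\to P$. By the splitting hypothesis, there exists $g:P\to X'$ with $f'\circ g=id_P$, and then $\tilde h:=h'\circ g:P\to X$ satisfies $f\circ\tilde h=f\circ h'\circ g=h\circ f'\circ g=h$, providing the required lift.

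The only non-trivial ingredient is the appeal to $(\textbf{QA})$, which is precisely the feature of quasi-abelian categories that makes this characterization work; in a merely additive category with kernels and cokernels one would have no guarantee that $f'$ remains a strict epimorphism, so the argument would break down. Apart from that, everything else is formal.
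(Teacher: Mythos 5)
Your proposal is correct and follows essentially the same argument as the paper: the forward direction lifts $id_P$ through the surjection $\textnormal{Hom}_\mathcal{E}(P,X)\to\textnormal{Hom}_\mathcal{E}(P,P)$, and the backward direction forms the pullback, uses axiom $(\textbf{QA})$ to see the pulled-back map is a strict epimorphism, splits it, and composes to obtain the lift. No differences worth noting.
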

\begin{proof}
We note that, since $P$ is projective, a strict epimorphism $f:X\rightarrow{P}$ induces a surjection
\begin{equation*}
    \textnormal{Hom}_{\mathcal{E}}(P,X)\rightarrow{\textnormal{Hom}_{\mathcal{E}}(P,P)}
\end{equation*}The preimage of $id_P$ under this surjection is a map $g:P\rightarrow{X}$ such that $g\circ f=id_P$.

To prove the converse, suppose that we have a strict epimorphism $f:X\rightarrow{Y}$ and a morphism $h:P\rightarrow{Y}$. We examine the pullback
\begin{equation*}
    \begin{tikzcd}
    P\times_Y X\arrow{r}{f'}\arrow{d}{h'}& P \arrow[d,"h"]\\
    X \arrow[r,two heads,"f"] & Y
    \end{tikzcd}
\end{equation*}Since $\mathcal{E}$ is quasi-abelian, $f'$ is a strict epimorphism. Hence, by assumption, $f'$ splits. So, there exists a map $g:P\rightarrow{P\times_YX}$ such that $f'\circ g=id_P$. Consider the map $h'\circ g:P\rightarrow{X}$. We see that $f\circ(h'\circ g)=h\circ f'\circ g=h$. Therefore, $P$ is projective. 
\end{proof}
We will denote by $\mathcal{P}$ the collection of projective objects $P$ such that there is a strict epimorphism $P\twoheadrightarrow{X}$ for some object $X\in\mathcal{E}$.
\begin{prop}
    Suppose that $\mathcal{E}$ has enough projectives. A morphism $f:X\rightarrow{Y}$ in $\mathcal{E}$ is a strict epimorphism if and only if the associated morphism 
        \begin{equation*}
            f':\textnormal{Hom}_{\mathcal{E}}(P,X)\rightarrow{\textnormal{Hom}_{\mathcal{E}}(P,Y)}
        \end{equation*}is surjective for any $P\in\mathcal{P}$.
\end{prop}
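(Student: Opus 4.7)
The plan is to prove both directions using the definition of projective objects together with the factorisation property for strict epimorphisms (Proposition \ref{monoepimorphismtriangle}).

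For the forward direction, suppose $f:X\to Y$ is a strict epimorphism. Then for any projective $P$ in $\mathcal{E}$, in particular for any $P\in\mathcal{P}$, the definition of projectivity immediately tells us that the induced map
\begin{equation*}
f':\textnormal{Hom}_{\mathcal{E}}(P,X)\rightarrow{\textnormal{Hom}_{\mathcal{E}}(P,Y)}
\end{equation*}
is surjective. This step is essentially a reformulation of the definition.

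For the converse, suppose that $f':\textnormal{Hom}_{\mathcal{E}}(P,X)\to\textnormal{Hom}_{\mathcal{E}}(P,Y)$ is surjective for every $P\in\mathcal{P}$. Since $\mathcal{E}$ has enough projectives, I can pick a strict epimorphism $\pi:P\twoheadrightarrow Y$ with $P\in\mathcal{P}$. By hypothesis, there exists a morphism $g:P\to X$ lifting $\pi$, i.e.\ satisfying $f\circ g=\pi$. This fits into the commutative triangle
\begin{equation*}
\begin{tikzcd}
& P \arrow{dr}{\pi}\arrow{dl}[swap]{g} & \\
X \arrow[rr,"f"'] & & Y
\end{tikzcd}
\end{equation*}
Since $\pi$ is a strict epimorphism, Proposition \ref{monoepimorphismtriangle} applied to this triangle yields that $f$ is also a strict epimorphism, which completes the proof.

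There is really no obstacle here: the only substantive ingredient beyond the definitions is Proposition \ref{monoepimorphismtriangle}, which handles the delicate point that in a quasi-abelian category an epimorphism need not automatically be strict. The enough-projectives hypothesis is exactly what is needed to produce a test object $P$ covering $Y$ so that surjectivity of $f'$ can be converted into a factorisation of a known strict epimorphism through $f$.
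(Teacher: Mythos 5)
Your proof is correct and follows essentially the same route as the paper: the forward direction is the definition of projectivity, and the converse uses enough projectives to produce a strict epimorphism $P\twoheadrightarrow Y$, lifts it through $f$ via the surjectivity hypothesis, and concludes with Proposition \ref{monoepimorphismtriangle}. No issues.
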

\begin{proof}
The forward direction is clear by definition of a projective object. Conversely, since $\mathcal{E}$ has enough projectives, there is a projective object $P$ and a strict epimorphism $g:P\twoheadrightarrow{Y}$. If $f'$ is surjective, there exists a map $h:P\rightarrow{X}$ such that $f'(h)=f\circ h=g$. Then, by Proposition \ref{monoepimorphismtriangle}, $f$ must be a strict epimorphism.
\end{proof}The following result is obtained similarly using \cite[Lemma 2.27]{bambozzibenbassat15}.
\begin{prop}\label{projectivegenerator}
Suppose that $\mathcal{E}$ is closed symmetric monoidal with enough flat projectives. A morphism $f:X\rightarrow{Y}$ in $\mathcal{E}$ is a strict epimorphism if and only if the associated morphism 
        \begin{equation*}
            f':\textnormal{Hom}_{\mathcal{E}}(P,X)\rightarrow{\textnormal{Hom}_{\mathcal{E}}(P,Y)}
        \end{equation*}is surjective for any $P\in \mathcal{P}$.
\end{prop}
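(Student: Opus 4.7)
The plan is to mirror verbatim the argument given for the previous proposition, substituting ``flat projective'' for ``projective'' wherever a specific lift is produced. The closed symmetric monoidal hypothesis is not used in the formal lifting argument itself; it enters only through the existence of a flat projective cover of $Y$, which is what the cited Lemma 2.27 of Bambozzi--Ben-Bassat provides in this setting.

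For the forward direction, suppose $f : X \to Y$ is a strict epimorphism. Then for any projective object $P$, and in particular for any $P \in \mathcal{P}$ that is flat, the definition of projectivity immediately gives that $\textnormal{Hom}_{\mathcal{E}}(P,-)$ carries $f$ to a surjection, so $f'$ is surjective. This half requires nothing beyond the definition already in hand.

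For the backward direction, assume $f'$ is surjective for every $P \in \mathcal{P}$. Using that $\mathcal{E}$ has enough flat projectives, I would choose a flat projective $P$ together with a strict epimorphism $g : P \twoheadrightarrow Y$, which puts $P$ in $\mathcal{P}$. Surjectivity of $f'$ applied to $g$ produces a morphism $h : P \to X$ with $f \circ h = g$. I then apply Proposition \ref{monoepimorphismtriangle} to the commutative triangle with legs $h$, $f$ and hypotenuse $g$: since the composite $g = f \circ h$ is a strict epimorphism, so is $f$.

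The step I expect to be least routine is in fact not a step at all --- the content of the proposition is essentially identical to its predecessor, and the only genuine work is recognising that the same lifting argument continues to function when the pool of test objects is restricted to flat projectives. The cited Lemma 2.27 supplies precisely this reduction by furnishing a flat projective strict epimorphism onto any object $Y$; flatness of $P$ is never actually invoked inside the argument, and is carried along only to match the ambient hypothesis on $\mathcal{E}$ used throughout the rest of the paper.
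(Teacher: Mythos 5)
Your proof is correct and is essentially the argument the paper intends: the forward direction is immediate from projectivity, and the backward direction takes a flat projective strict epimorphism $g:P\twoheadrightarrow Y$ (so $P\in\mathcal{P}$), lifts it to $h:P\to X$ with $f\circ h=g$, and concludes via Proposition \ref{monoepimorphismtriangle}. Your observation that flatness is never actually used in the lifting step matches the paper, which simply declares the result ``obtained similarly'' to the preceding proposition.
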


\begin{defn}
An object $I$ in $\mathcal{E}$ is \textbf{injective} if \begin{equation*}
   \textnormal{Hom}_{\mathcal{E}}(-,I):\mathcal{E}^{op}\rightarrow{\textcat{Ab}}
\end{equation*} is exact. Equivalently, $I$ is injective if, for any strict monomorphism $f:X\rightarrow{Y}$, the associated map
\begin{equation*}
    \textnormal{Hom}_{\mathcal{E}}(Y,I)\rightarrow{Hom}_{\mathcal{E}}(X,I)
\end{equation*}is surjective. We say that $\mathcal{E}$ \textbf{has enough injectives} if, for any object $X$ of $\mathcal{E}$, there is a strict monomorphism
\begin{equation*}
    X\rightarrow{I}
\end{equation*} where $I$ is an injective object of $\mathcal{E}$.
\end{defn}

\begin{exmp}
Suppose that $k$ is a non-Archimedean field with a non-trivial valuation. Suppose in addition that $k$ is spherically complete, i.e. the intersection of all disks in any chain is nonempty. Let $V\in\textcat{Ban}_k$ and define the Banach space
\begin{equation*}
    \ell_{\infty}(V)=\bigg\{(c_v)_{v\in V-\{0\}}\mid c_v\in k,\, \sup_{v\in V-\{0\}}\frac{|c_v|}{|v|}<\infty\bigg\}
\end{equation*}equipped with the norm 
\begin{equation*}
    |(c_v)_{v\in V-\{0\}}|=\sup_{v\in V-\{0\}}\frac{|c_v|}{|v|}
\end{equation*}By \cite[Lemma A.41]{benbassatkremnitzer17}, $\ell_\infty(V)$ is injective in $\textcat{Ban}_k$. Moreover, if $k$ is any valued field, $\textcat{Ban}_k$ has enough injectives \cite[Lemma A.42]{benbassatkremnitzer17}.
\end{exmp}

\begin{prop}\label{homstrictlyexactoriginal}
    Suppose that $\mathcal{E}$ is a closed symmetric monoidal quasi-abelian category with enough flat projectives. Then $\texthom{Hom}_{\mathcal{E}}(-,I)$ is exact if $I$ is injective.
\end{prop}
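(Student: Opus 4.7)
The plan is to show that applying $\texthom{Hom}_\mathcal{E}(-,I)$ to a strictly exact sequence $0 \to X \to Y \to Z \to 0$ produces a strictly exact sequence $0 \to \texthom{Hom}_\mathcal{E}(Z,I) \to \texthom{Hom}_\mathcal{E}(Y,I) \to \texthom{Hom}_\mathcal{E}(X,I) \to 0$. The argument breaks cleanly into two steps: left exactness of the internal hom, which is automatic from the closed symmetric monoidal structure and does not use injectivity of $I$; and strict epimorphy at the final spot, which is where the injectivity of $I$ and the existence of enough flat projectives both enter essentially.

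For the first step, I would test everything against an arbitrary object $W \in \mathcal{E}$. Since $-\otimes W$ is the left adjoint of $\texthom{Hom}_\mathcal{E}(W,-)$, it preserves the cokernel presentation, so $X \otimes W \to Y \otimes W \to Z \otimes W \to 0$ is strictly coexact. The ordinary Hom bifunctor is always left exact in the contravariant variable, so applying $\textnormal{Hom}_\mathcal{E}(-,I)$ yields a left exact sequence of abelian groups. Using Theorem \ref{tensorhomadjunction} to identify $\textnormal{Hom}_\mathcal{E}(W \otimes -, I)$ with $\textnormal{Hom}_\mathcal{E}(W, \texthom{Hom}_\mathcal{E}(-,I))$, one concludes by Yoneda that $\texthom{Hom}_\mathcal{E}(Z,I)$ is the kernel of $\texthom{Hom}_\mathcal{E}(Y,I) \to \texthom{Hom}_\mathcal{E}(X,I)$, so the first half of the sequence is strictly exact.

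The second and principal step is to show that $\texthom{Hom}_\mathcal{E}(Y,I) \to \texthom{Hom}_\mathcal{E}(X,I)$ is a strict epimorphism. Here I would invoke Proposition \ref{projectivegenerator}: it is enough to check that for every flat projective $P \in \mathcal{P}$ the induced map $\textnormal{Hom}_\mathcal{E}(P,\texthom{Hom}_\mathcal{E}(Y,I)) \to \textnormal{Hom}_\mathcal{E}(P,\texthom{Hom}_\mathcal{E}(X,I))$ is surjective. By the tensor-hom adjunction this becomes $\textnormal{Hom}_\mathcal{E}(P \otimes Y, I) \to \textnormal{Hom}_\mathcal{E}(P \otimes X, I)$. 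Flatness of $P$ preserves the strict monomorphism $X \to Y$ after tensoring, and injectivity of $I$ immediately converts this strict monomorphism into a surjection on Hom sets. The only real obstacle here is conceptual rather than computational, namely getting comfortable with shuttling between external and internal hom via the tensor-hom adjunction so as to reduce a quasi-abelian strict-exactness statement to an ordinary surjectivity statement at the level of Hom sets, where flatness and injectivity can be combined in the classical manner.
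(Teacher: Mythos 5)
Your proposal is correct and follows essentially the same route as the paper: left exactness of $\texthom{Hom}_{\mathcal{E}}(-,I)$ from the adjunction with the tensor product (you merely unpack the right-adjoint argument via a Yoneda test against arbitrary $W$, where the paper cites the adjoint directly), and then strict epimorphy at the last spot via Proposition \ref{projectivegenerator}, the tensor-hom adjunction, flatness of $P$, and injectivity of $I$. No gaps.
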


\begin{proof}
Suppose that we have a  short strictly exact sequence
\begin{equation*}
    0\rightarrow{X}\rightarrow{Y}\rightarrow{Z}\rightarrow{0}
\end{equation*}It is clear that $\texthom{Hom}_{\mathcal{E}}(-,I)=\texthom{Hom}_{\mathcal{E}^{op}}(I,-)$ is left exact since it has a left adjoint $-\otimes I$ in $\mathcal{E}^{op}$. We see that the sequence
\begin{equation*}
    0\rightarrow{\texthom{Hom}_\mathcal{E}(Z,I)}\rightarrow{\texthom{Hom}_\mathcal{E}(Y,I)}\rightarrow{\texthom{Hom}_\mathcal{E}(X,I)}
\end{equation*}is strictly exact. It remains to show that 
\begin{align*}
    \texthom{Hom}_\mathcal{E}(Y,I)&\rightarrow{\texthom{Hom}_\mathcal{E}(X,I)}
\intertext{is a strict epimorphism. By Proposition \ref{projectivegenerator} it suffices to show that for all flat projectives $P$, the following map is a surjection}
    \textnormal{Hom}_{\mathcal{E}}(P,\texthom{Hom}_\mathcal{E}(Y,I))&\rightarrow{\textnormal{Hom}_{\mathcal{E}}(P,\texthom{Hom}_\mathcal{E}(X,I))}
\intertext{
Equivalently, by the internal hom adjunction, we can just show that the following map is a surjection}
\textnormal{Hom}_{\mathcal{E}}(P\otimes Y,I)&\rightarrow{\textnormal{Hom}_{\mathcal{E}}(P\otimes X,I)}
\end{align*}Since $I$ is injective, this map is a surjection if $P\otimes X\rightarrow{P\otimes Y}$ is a strict monomorphism. The result then follows since $P$ is flat and the map $X\rightarrow{Y}$ is a strict monomorphism.
\end{proof}

\subsection{The Derived Category}

The derived category of a quasi-abelian category can be defined analogously to the abelian case. Suppose that we have a quasi-abelian category $\mathcal{E}$. We denote the category of cochain complexes in $\mathcal{E}$ by $\textcat{C}(\mathcal{E})$. We note that cohomology is not well defined in $\textcat{C}(\mathcal{E})$. However, since $\mathcal{E}$ is additive, cones of morphisms are well-defined and we can make the following definition.
\begin{defn}A cochain morphism $f^\bullet:C^\bullet\rightarrow{D^\bullet}$ in $\textcat{C}(\mathcal{E})$ is a \textbf{strict quasi-isomorphism} if $cone(f^\bullet)^\bullet$ is strictly exact. 
\end{defn} 

Since strict quasi-isomorphisms are stable under cochain homotopy equivalence, we can define the notion of a strict quasi-isomorphism in the homotopy category $\textcat{K}(\mathcal{E})$. Similarly to the abelian case, we make the following definition.
\begin{defn}The \textbf{derived category} $\textcat{D}(\mathcal{E})$ is the localisation of the homotopy category $\textcat{K}(\mathcal{E})$ at the class of all strict quasi-isomorphisms.
\end{defn}

\begin{remark}
We remark that, since homotopic maps become equal when strict quasi-isomorphisms are inverted, the category obtained by inverting strict quasi-isomorphisms in $\textcat{C}(\mathcal{E})$ is equivalent to the derived category $\textcat{D}(\mathcal{E})$. I will call both the \textit{derived category} $\textcat{D}(\mathcal{E})$.
\end{remark} 

\begin{exmp}
The categories $\textcat{IndBan}_k$ and $\textcat{CBorn}_k$ are tensor derived equivalent \cite[c.f. Proposition 3.16]{bambozzikremnitzer20} in the sense that there is an equivalence of derived categories
\begin{equation*}
    \textcat{D}(\textcat{CBorn}_k)\simeq \textcat{D}(\textcat{IndBan}_k)
\end{equation*}which preserves the monoidal structure.
\end{exmp}

In \cite[Section 1.2]{schneiders99}, Schneiders details an embedding of $\mathcal{E}$ into an abelian category, its \textit{abelianisation}, using t-structures. This embedding allows us to do homological algebra in quasi-abelian categories by working within abelian categories. We summarise his results in the following proposition. 
\begin{prop}\label{leftheartembedding}
    There exists an abelian category $\mathcal{LH(E)}$, the `left heart' of $\mathcal{E}$, and a fully faithful functor 
    \begin{equation*}
        I:\mathcal{E}\rightarrow{\mathcal{LH(E)}}
    \end{equation*}inducing an equivalence of categories
    \begin{equation*}
        \textcat{D}(I):\textcat{D}(\mathcal{E})\rightarrow{\textcat{D}(\mathcal{LH(E)})}
    \end{equation*}Moreover, $I$ has a left adjoint $C:\mathcal{LH(E)}\rightarrow{\mathcal{E}}$ with 
    \begin{equation*}
        C\circ I\simeq id_{\mathcal{E}}
    \end{equation*}
\end{prop}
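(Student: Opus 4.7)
The plan is to construct $\mathcal{LH(E)}$ as the heart of a suitable t-structure on $\textcat{D}(\mathcal{E})$. I would declare $\textcat{D}^{\leq 0}_{LH}(\mathcal{E})$ to consist of those complexes isomorphic in $\textcat{D}(\mathcal{E})$ to a complex concentrated in degrees $\leq 0$, and $\textcat{D}^{\geq 0}_{LH}(\mathcal{E})$ to consist of those complexes isomorphic to a two-term complex $0 \to X^{-1} \xrightarrow{d} X^0 \to 0$ (in degrees $-1, 0$) with $d$ a strict monomorphism. The key technical step is verifying the t-structure axioms, in particular constructing the truncation triangle: given $X^\bullet$, one builds $\tau^{\leq 0}_{LH} X^\bullet$ by replacing $X^0$ with $\textnormal{Ker}(X^0 \to X^1)$ and zeroing out positive degrees, and one checks that this descends to a functor on $\textcat{D}(\mathcal{E})$ using the stability of strict monomorphisms and epimorphisms under pullback/pushout supplied by axioms $(\textbf{QA})$ and $(\textbf{QA}^*)$.

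Once the t-structure is in place, set $\mathcal{LH(E)} := \textcat{D}^{\leq 0}_{LH}(\mathcal{E}) \cap \textcat{D}^{\geq 0}_{LH}(\mathcal{E})$, which is automatically abelian as the heart of a t-structure. Concretely, objects of $\mathcal{LH(E)}$ can be represented as strict monomorphisms $f: E \hookrightarrow F$ in $\mathcal{E}$ placed in degrees $-1, 0$. Define $I:\mathcal{E} \to \mathcal{LH(E)}$ by sending $X$ to the complex concentrated in degree $0$ (equivalently, $0 \hookrightarrow X$). Full faithfulness of $I$ reduces to checking that $\textnormal{Hom}_{\textcat{D}(\mathcal{E})}(X, Y) = \textnormal{Hom}_{\mathcal{E}}(X, Y)$ for $X, Y \in \mathcal{E}$ placed in degree $0$, which follows from a calculus-of-fractions argument using that any strict quasi-isomorphism out of a complex concentrated in degree $0$ factors through the identity up to homotopy.

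To show $\textcat{D}(I)$ is an equivalence, the crucial observation is that every object $(f: E \hookrightarrow F) \in \mathcal{LH(E)}$ sits in a short exact sequence $0 \to I(E) \to I(F) \to (f) \to 0$ in $\mathcal{LH(E)}$, so every object of $\mathcal{LH(E)}$ admits a length-one resolution by objects in the image of $I$. A standard spectral sequence / t-structure argument (comparing the Beilinson–Bernstein–Deligne realization functor with $\textcat{D}(I)$) then upgrades this to essential surjectivity and full faithfulness at the derived level. The left adjoint $C:\mathcal{LH(E)} \to \mathcal{E}$ is given by $C(E \xrightarrow{f} F) := \textnormal{Coker}(f)$; the adjunction $\textnormal{Hom}_{\mathcal{E}}(C(M), X) \simeq \textnormal{Hom}_{\mathcal{LH(E)}}(M, I(X))$ falls out of the universal property of cokernels applied to the representation of $M$ as a strict monomorphism, and $C \circ I \simeq id_{\mathcal{E}}$ is immediate since $\textnormal{Coker}(0 \to X) = X$.

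The principal obstacle will be establishing the t-structure, especially the well-definedness of the truncation on $\textcat{D}(\mathcal{E})$. In the abelian setting this is automatic from the snake lemma, but here one must lean on $(\textbf{QA})$ and $(\textbf{QA}^*)$ to guarantee that the candidate truncation respects strict quasi-isomorphisms, since the naive cohomology functors $\textnormal{Ker}/\textnormal{Im}$ and $\textnormal{Ker}/\textnormal{Coim}$ no longer coincide. Once this technical point is handled, everything else is formal t-structure manipulation and elementary diagram chasing.
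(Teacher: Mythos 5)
The paper does not prove this proposition itself; it is stated as a summary of \cite[Section 1.2]{schneiders99}, so your proposal is really a reconstruction of Schneiders' construction. Your overall architecture is the correct one: the left t-structure on $\textcat{D}(\mathcal{E})$, its heart $\mathcal{LH(E)}$, $I$ as the degree-zero inclusion, and $C$ as the cokernel of the representing two-term complex, with $C\circ I\simeq id$ because $\textnormal{Coker}(0\rightarrow{X})=X$. The truncation $\tau^{\leq 0}$ replacing $X^0$ by $\textnormal{Ker}(d^0)$ is also Schneiders' truncation.

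However, there is a genuine error at the heart of your description: objects of $\mathcal{LH(E)}$ are represented by two-term complexes $0\rightarrow{E^{-1}}\xrightarrow{f}{E^0}\rightarrow{0}$ in which $f$ is a \emph{monomorphism}, not a \emph{strict} monomorphism. If $f$ is a strict monomorphism then $(E^{-1},E^0,\textnormal{Coker}(f))$ is a kernel--cokernel pair, so the complex $[E^{-1}\rightarrow{E^0}]$ is strictly quasi-isomorphic to $\textnormal{Coker}(f)$ placed in degree $0$, i.e.\ it already lies in the essential image of $I$. With your definition the ``heart'' would therefore coincide with (the essential image of) $\mathcal{E}$ itself, and since $\mathcal{E}$ is only quasi-abelian this category cannot be abelian — the whole point of the construction is that the \emph{non-strict} monomorphisms supply exactly the new objects (formal coimages) needed to make every morphism strict. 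The same confusion infects your description of the aisle: $\textcat{D}^{\geq 0}_{LH}(\mathcal{E})$ is not the class of two-term complexes but the class of complexes strictly exact in degrees $<0$ (equivalently, representable by complexes concentrated in degrees $\geq -1$ whose first differential is a monomorphism); a complex concentrated in degree $+5$ must belong to it. Once the strictness condition is dropped, the rest of your outline — the short exact sequence $0\rightarrow{I(E)}\rightarrow{I(F)}\rightarrow{(f)}\rightarrow{0}$, the adjunction for $C$, and the derived equivalence via resolutions by objects of $I(\mathcal{E})$ — matches Schneiders' argument, though the derived equivalence step remains a sketch rather than a proof.
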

We have the following corollary. 
\begin{cor}\label{strictexactembedding} \cite[Corollary 1.2.27]{schneiders99} A sequence 
\begin{equation*}
    X\rightarrow{Y}\rightarrow{Z}
\end{equation*}is strictly exact in $\mathcal{E}$ if and only if the sequence
\begin{equation*}
    I(X)\rightarrow{I(Y)}\rightarrow{I(Z)}
\end{equation*}is exact in $\mathcal{LH(E)}$.

\end{cor}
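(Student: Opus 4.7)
The plan is to leverage three properties of Schneiders' embedding $I$, all consequences of Proposition \ref{leftheartembedding} and the underlying t-structure construction: (i) $I$ is fully faithful, hence reflects both zero morphisms and isomorphisms; (ii) $I$ admits a left adjoint $C$, so $I$ is left exact and in particular preserves kernels; (iii) $I$ sends strict short exact sequences in $\mathcal{E}$ to short exact sequences in $\mathcal{LH(E)}$, because such sequences correspond to distinguished triangles in $\textcat{D}(\mathcal{E})$ whose three terms all lie in the heart of the left t-structure.

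For the forward direction, assume $X \xrightarrow{f} Y \xrightarrow{g} Z$ is strictly exact. Strictness of $f$ together with $\textnormal{Im}(f) \simeq \textnormal{Ker}(g)$ yields two strict short exact sequences in $\mathcal{E}$:
\begin{equation*}
0 \rightarrow \textnormal{Ker}(f) \rightarrow X \rightarrow \textnormal{Ker}(g) \rightarrow 0 \quad \textnormal{and} \quad 0 \rightarrow \textnormal{Ker}(g) \rightarrow Y \rightarrow \textnormal{Coker}(\textnormal{Ker}(g) \hookrightarrow Y) \rightarrow 0.
\end{equation*}
By (iii) their images under $I$ are short exact in $\mathcal{LH(E)}$. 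Splicing and invoking $I(\textnormal{Ker}(g)) = \textnormal{Ker}(I(g))$ from (ii) identifies $\textnormal{Im}(I(f))$ with $\textnormal{Ker}(I(g))$ as subobjects of $I(Y)$.

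For the converse, assume $I(X) \to I(Y) \to I(Z)$ is exact at $I(Y)$. Faithfulness of $I$ immediately yields $g \circ f = 0$ in $\mathcal{E}$. I would then work with the canonical factorization $f = \iota \circ \overline{f} \circ \pi$, where $\pi: X \twoheadrightarrow \textnormal{Coim}(f)$ is a strict epi, $\iota: \textnormal{Im}(f) \hookrightarrow Y$ a strict mono, $\overline{f}: \textnormal{Coim}(f) \to \textnormal{Im}(f)$ the canonical mono-epi, and the induced monomorphism $j: \textnormal{Im}(f) \to \textnormal{Ker}(g)$ satisfying $(\textnormal{Ker}(g) \hookrightarrow Y) \circ j = \iota$. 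Applying (iii) to the strict short exact sequences surrounding $\pi$ and $\iota$ shows $I(\pi)$ is epi and $I(\iota)$ is mono in the abelian category $\mathcal{LH(E)}$. Combining the exactness hypothesis with $\textnormal{Ker}(I(g)) = I(\textnormal{Ker}(g))$ forces $I(j \circ \overline{f}) = I(j) \circ I(\overline{f})$ to be an isomorphism; since $\overline{f}$ and $j$ are monomorphisms in $\mathcal{E}$, left exactness of $I$ makes $I(\overline{f})$ and $I(j)$ monos in $\mathcal{LH(E)}$. A standard abelian-category argument (a composition iso with one factor mono forces the other factor to be iso) then promotes each to an isomorphism, and full faithfulness of $I$ transports this back to $\mathcal{E}$: $\overline{f}$ and $j$ are iso, which is precisely strictness of $f$ together with $\textnormal{Im}(f) \simeq \textnormal{Ker}(g)$.

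The hard part is this converse direction. In a quasi-abelian category the canonical morphism $\overline{f}$ is always both mono and epi but can fail to be strict, and this failure is precisely the obstruction to $\mathcal{E}$ being abelian; consequently one cannot a priori identify $I(\textnormal{Im}(f))$ with $\textnormal{Im}(I(f))$ as subobjects of $I(Y)$. The exactness hypothesis is exactly what is needed to collapse these two subobjects, and via the abelian structure on $\mathcal{LH(E)}$ this collapse forces $I(\overline{f})$ to be iso, which then reflects back to strictness of $f$ in $\mathcal{E}$ by full faithfulness of $I$.
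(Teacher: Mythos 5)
Your proof is correct. There is nothing in the paper to compare it against: the corollary is quoted directly from Schneiders \cite[Corollary 1.2.27]{schneiders99} with no argument supplied, so you have reconstructed a proof where the paper defers to a citation. Your route also differs from Schneiders' own, which identifies strict exactness of a null sequence with the vanishing of the cohomology functors $LH^n$ attached to the left t-structure; you instead reduce everything to the behaviour of $I$ on strict short exact sequences and then run a mono--epi factorization argument inside the abelian category $\mathcal{LH(E)}$. What your version buys is that, granted your input (iii), the remainder uses only what Proposition \ref{leftheartembedding} actually records --- full faithfulness and the existence of the left adjoint $C$, hence preservation of kernels and of monomorphisms --- together with two standard quasi-abelian facts (the canonical map $\overline{f}:\textnormal{Coim}(f)\rightarrow{\textnormal{Im}(f)}$ is always both mono and epi, and kernel maps are strict monos). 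The converse direction correctly isolates where the hypothesis is spent: exactness at $I(Y)$ forces $I(j)\circ I(\overline{f})$ to be an epimorphism between monomorphisms, hence an isomorphism, and full faithfulness reflects invertibility of $\overline{f}$ and $j$ back to $\mathcal{E}$, which is precisely strictness of $f$ plus $\textnormal{Im}(f)\simeq\textnormal{Ker}(g)$. The one point where you still lean on the t-structure machinery is item (iii) itself --- that a strict short exact sequence gives a distinguished triangle in $\textcat{D}(\mathcal{E})$ with all three vertices in the heart, hence a short exact sequence in $\mathcal{LH(E)}$ --- and since this is a special case of the forward implication you are proving, it is the irreducible content that cannot be avoided; your sketch of it is the standard argument and is adequate.
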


\begin{cor}\label{Iquasiiso}
A cochain morphism $f^\bullet$ is a strict quasi-isomorphism in $\mathcal{E}$ if and only if $I(f^\bullet)$ is a quasi-isomorphism in $\mathcal{LH(E)}$.
\end{cor}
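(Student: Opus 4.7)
The plan is to reduce the statement, via the definition of strict quasi-isomorphism, to the already-established comparison of strict exactness and exactness provided by Corollary \ref{strictexactembedding}. By definition, $f^\bullet$ is a strict quasi-isomorphism in $\mathcal{E}$ precisely when $\mathrm{cone}(f^\bullet)^\bullet$ is strictly exact, and $I(f^\bullet)$ is a quasi-isomorphism in $\mathcal{LH(E)}$ precisely when $\mathrm{cone}(I(f^\bullet))^\bullet$ is exact. Thus it suffices to show two things: first, that $I$ commutes with the cone construction up to natural isomorphism, and second, that strict exactness of a whole complex corresponds under $I$ to exactness of the whole complex.

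For the first point, I would use that $I$ is additive (as the left adjoint of a functor between additive categories, or directly from its description via Schneiders' t-structure in \cite[Section 1.2]{schneiders99}), so $I$ preserves finite biproducts. Since $\mathrm{cone}(f^\bullet)^n = C^{n+1} \oplus D^n$ with the standard differential built from $d_C$, $d_D$, and $f$, applying the additive functor $I$ term-by-term yields $I(\mathrm{cone}(f^\bullet)^\bullet) \cong \mathrm{cone}(I(f^\bullet))^\bullet$ as complexes in $\mathcal{LH(E)}$.

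For the second point, I would note that strict exactness of a complex $X^\bullet$ in $\mathcal{E}$ means, by definition, strict exactness of each three-term piece $X^{n-1} \to X^n \to X^{n+1}$. By Corollary \ref{strictexactembedding}, each such piece is strictly exact in $\mathcal{E}$ if and only if the corresponding three-term piece $I(X^{n-1}) \to I(X^n) \to I(X^{n+1})$ is exact in $\mathcal{LH(E)}$. Quantifying over all $n$ gives that $X^\bullet$ is strictly exact in $\mathcal{E}$ if and only if $I(X^\bullet)$ is exact in $\mathcal{LH(E)}$.

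Combining these two observations with $X^\bullet = \mathrm{cone}(f^\bullet)^\bullet$ yields the desired equivalence. There is no real obstacle here — the corollary is essentially a formal consequence of Corollary \ref{strictexactembedding} once one observes that $I$ preserves cones; the only thing to be mildly careful about is invoking additivity of $I$ (and hence its commutation with the direct sum appearing in the cone) rather than any exactness property, which $I$ need not possess in the strong sense required otherwise.
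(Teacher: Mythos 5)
Your proof is correct and follows essentially the same route as the paper's: reduce to the cone via the definition of strict quasi-isomorphism, note that the additive functor $I$ preserves cones, and apply Corollary \ref{strictexactembedding} termwise to the cone complex. One small slip worth noting: by Proposition \ref{leftheartembedding} the functor $I$ has a \emph{left} adjoint $C$, so $I$ is a right adjoint rather than a left adjoint; but additivity of $I$ follows anyway from its exactness (Corollary \ref{strictexactembedding}) or from Schneiders' explicit construction, so your argument stands.
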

\begin{proof}
Note that $f^\bullet$ is a strict quasi-isomorphism if and only if $cone(f^\bullet)^\bullet$ is a strictly exact complex. By the previous corollary, $cone(f^\bullet)^\bullet$ is strictly exact if and only if $I(cone(f^\bullet))^\bullet$ is exact in $\mathcal{LH(E)}$. Since $I$ preserves cones, this occurs if and only if $cone(I(f^\bullet))^\bullet$ is exact in $\mathcal{LH(E)}$, equivalently if and only if $I(f^\bullet)$ is a quasi-isomorphism in $\mathcal{LH(E)}$.
\end{proof}

\begin{prop}
If $f:X\rightarrow{Y}$ is a strict map in $\mathcal{E}$, then 
\begin{equation*}
     \textnormal{Im}(I(f))\simeq I(\textnormal{Im}(f))\quad\text{and}\quad\textnormal{Coim}(I(f))\simeq I(\textnormal{Coim}(f))
\end{equation*}It follows that, for a strict cochain complex $(C^\bullet,d^\bullet)$ in $\mathcal{E}$,
\begin{equation*}
    H^n(I(C^\bullet))\simeq\textnormal{Coker}(I(\textnormal{Im}(d^{n-1}))\rightarrow{I(\textnormal{Ker}(d^n))})
\end{equation*}
\end{prop}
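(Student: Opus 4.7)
The plan is to exploit the strict factorization of $f$ together with two facts already established in the excerpt: that $I$ sends strictly exact sequences to exact sequences (Corollary \ref{strictexactembedding}), and that $I$ preserves kernels, since it has a left adjoint $C$.

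Since $f$ is strict, I would begin by factoring $f = m \circ e$, where $e \colon X \twoheadrightarrow \textnormal{Im}(f)$ is a strict epimorphism with kernel $\textnormal{Ker}(f)$, and $m \colon \textnormal{Im}(f) \hookrightarrow Y$ is a strict monomorphism with cokernel $\textnormal{Coker}(f)$; recall also that strictness gives $\textnormal{Coim}(f) \simeq \textnormal{Im}(f)$. This yields two short strictly exact sequences
\begin{equation*}
0 \to \textnormal{Ker}(f) \to X \xrightarrow{e} \textnormal{Im}(f) \to 0, \qquad 0 \to \textnormal{Im}(f) \xrightarrow{m} Y \to \textnormal{Coker}(f) \to 0.
\end{equation*}
Applying $I$ and invoking Corollary \ref{strictexactembedding} on each, I obtain two short exact sequences in $\mathcal{LH(E)}$, which in particular show that $I(e)$ is an epimorphism and $I(m)$ is a monomorphism. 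Thus $I(f) = I(m) \circ I(e)$ is an epi-mono factorization through $I(\textnormal{Im}(f))$ in the abelian category $\mathcal{LH(E)}$, and uniqueness of such factorizations gives $\textnormal{Im}(I(f)) \simeq I(\textnormal{Im}(f))$. In an abelian category $\textnormal{Coim}(I(f)) \simeq \textnormal{Im}(I(f))$, and combined with $\textnormal{Coim}(f) \simeq \textnormal{Im}(f)$ we deduce $\textnormal{Coim}(I(f)) \simeq I(\textnormal{Coim}(f))$.

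For the cohomology formula, since $I$ is a right adjoint it preserves limits and in particular $\textnormal{Ker}(I(d^n)) \simeq I(\textnormal{Ker}(d^n))$. Applying the first part of the proposition to the strict morphism $d^{n-1}$ identifies $\textnormal{Im}(I(d^{n-1})) \simeq I(\textnormal{Im}(d^{n-1}))$, and since $I(C^\bullet)$ is a complex in the abelian category $\mathcal{LH(E)}$, its cohomology may be computed as
\begin{equation*}
H^n(I(C^\bullet)) = \textnormal{Coker}\bigl(\textnormal{Im}(I(d^{n-1})) \to \textnormal{Ker}(I(d^n))\bigr) \simeq \textnormal{Coker}\bigl(I(\textnormal{Im}(d^{n-1})) \to I(\textnormal{Ker}(d^n))\bigr),
\end{equation*}
which is precisely the asserted formula.

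The only subtle point I anticipate is verifying that the two sequences displayed above really are strictly exact; this rests on the standard fact that in a quasi-abelian category any strict epimorphism sits at the right end of a strictly exact short sequence together with its kernel, and dually for strict monomorphisms. Once that is in hand, the remainder is purely formal abelian-category manipulation in $\mathcal{LH(E)}$, so I expect no serious further obstacle.
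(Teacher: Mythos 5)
Your argument is correct, and it reaches the conclusion by a genuinely different mechanism from the paper's. The paper argues that, since $I$ sends strictly exact sequences to exact sequences (Corollary 2.25), it preserves kernels and cokernels, and then simply unwinds the definition $\textnormal{Im}(I(f)) := \textnormal{Ker}\bigl(I(Y)\rightarrow\textnormal{Coker}(I(f))\bigr) \simeq I\bigl(\textnormal{Ker}(Y\rightarrow\textnormal{Coker}(f))\bigr) = I(\textnormal{Im}(f))$, with the coimage treated dually; the cohomology formula is then immediate. You instead pass through the strict factorization $f = m\circ e$, apply $I$ to the two resulting kernel--cokernel pairs, and invoke uniqueness of epi--mono factorizations in the abelian category $\mathcal{LH(E)}$. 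Both routes rest on the same input (Corollary 2.25 plus the fact that $I$ preserves kernels, whether via exactness or via the adjunction $C\dashv I$), but yours has the merit of making visible exactly where strictness of $f$ enters: the second short sequence $0\to\textnormal{Im}(f)\to Y\to\textnormal{Coker}(f)\to 0$ is strictly exact for any $f$, whereas the identification of the first sequence's third term with $\textnormal{Im}(f)$ rather than $\textnormal{Coim}(f)$ uses $\overline{f}$ being an isomorphism. The paper's version is slightly more compressed but leaves implicit that preservation of $\textnormal{Coker}(f)$ under $I$ also relies on strictness of $f$ (the sequence $X\to Y\to\textnormal{Coker}(f)\to 0$ is strictly exact only for strict $f$). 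Your closing caveat about checking strict exactness of the two displayed sequences is easily discharged: both are kernel--cokernel pairs of strict morphisms, so there is no remaining gap.
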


\begin{proof}

Since $I$ is exact by Corollary \ref{strictexactembedding}, $I$ preserves kernels and cokernels. Therefore, 
\begin{equation*}\begin{aligned}
    \textnormal{Im}(I(f))&:=\textnormal{Ker}(I(Y)\rightarrow{\textnormal{Coker}(I(f))})\\
    &\simeq \textnormal{Ker}(I(Y)\rightarrow{I(\textnormal{Coker}(f)}))\\
    &\simeq I(\textnormal{Ker}(Y\rightarrow{\textnormal{Coker}(f)}))\\
    &\simeq I(\textnormal{Im}(f))
\end{aligned}\end{equation*}The result for the coimage follows similarly. Therefore, 
\begin{equation*}
    \begin{aligned}
         H^n(I(C^\bullet))&=\textnormal{Coker}(\textnormal{Im}(I(d^{n-1}))\rightarrow{\textnormal{Ker}(I(d^n))})\\
         &\simeq\textnormal{Coker}(I(\textnormal{Im}(d^{n-1}))\rightarrow{I(\textnormal{Ker}(d^n))})
    \end{aligned}
\end{equation*}
\end{proof}

\subsection{Derived Functors}

For any category $\textcat{Com}(\mathcal{E})$ consisting of chain complexes, the notation $\textcat{Com}^*(\mathcal{E})$ will be used to denote the subcategories consisting of bounded above $(-)$, bounded below $(+)$, or bounded $(b)$ chain complexes. 
\begin{prop}\label{inducedderived}
    If $F:\textcat{C}^*(\mathcal{E})\rightarrow{\textcat{C}^*(\mathcal{F})}$ maps strict quasi-isomorphisms to strict quasi-isomorphisms, then it induces a functor $\textcat{D}^*F:\textcat{D}^*(\mathcal{E})\rightarrow{\textcat{D}^*(\mathcal{F})}$.
\end{prop}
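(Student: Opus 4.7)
The plan is to invoke the universal property of the localisation that defines the derived category. Recall from the remark immediately after the definition of $\textcat{D}(\mathcal{E})$ that, because homotopic maps become equal when strict quasi-isomorphisms are inverted, we may (and do) view $\textcat{D}^*(\mathcal{E})$ as the localisation of $\textcat{C}^*(\mathcal{E})$ itself at the class $W_\mathcal{E}$ of strict quasi-isomorphisms, and similarly for $\mathcal{F}$. Write $Q_\mathcal{E}:\textcat{C}^*(\mathcal{E})\to\textcat{D}^*(\mathcal{E})$ and $Q_\mathcal{F}:\textcat{C}^*(\mathcal{F})\to\textcat{D}^*(\mathcal{F})$ for the canonical localisation functors.

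First I would form the composite $Q_\mathcal{F}\circ F:\textcat{C}^*(\mathcal{E})\to\textcat{D}^*(\mathcal{F})$. By hypothesis, $F$ sends every strict quasi-isomorphism in $\textcat{C}^*(\mathcal{E})$ to a strict quasi-isomorphism in $\textcat{C}^*(\mathcal{F})$, and $Q_\mathcal{F}$ sends every strict quasi-isomorphism in $\textcat{C}^*(\mathcal{F})$ to an isomorphism in $\textcat{D}^*(\mathcal{F})$. Consequently $Q_\mathcal{F}\circ F$ sends every morphism of $W_\mathcal{E}$ to an isomorphism in $\textcat{D}^*(\mathcal{F})$.

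The universal property of the localisation $Q_\mathcal{E}$ then produces a unique functor
\begin{equation*}
\textcat{D}^*F:\textcat{D}^*(\mathcal{E})\to\textcat{D}^*(\mathcal{F})
\end{equation*}
with $\textcat{D}^*F\circ Q_\mathcal{E}=Q_\mathcal{F}\circ F$, which is exactly the required induced functor. There is no real obstacle here; the statement is essentially a formal consequence of the defining universal property of the derived category as a localisation, once one observes that $Q_\mathcal{F}\circ F$ inverts $W_\mathcal{E}$. The only mild subtlety worth flagging is the compatibility with the boundedness condition $(*)$, but this is automatic because $F$ is assumed to preserve $\textcat{C}^*$, so $Q_\mathcal{F}\circ F$ already lands in $\textcat{D}^*(\mathcal{F})$ and the induced functor therefore lands there as well.
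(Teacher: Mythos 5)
Your proof is correct and is essentially the same argument as the paper's: compose $F$ with the localisation functor for $\mathcal{F}$, observe that strict quasi-isomorphisms are sent to isomorphisms, and invoke the universal property of the localisation defining $\textcat{D}^*(\mathcal{E})$. The only cosmetic difference is that the paper first passes to the homotopy category $\textcat{K}^*(\mathcal{E})$ before localising, whereas you localise $\textcat{C}^*(\mathcal{E})$ directly, which is justified by the paper's own remark that the two localisations agree.
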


\begin{proof}
If $F$ maps strict quasi-isomorphisms to strict quasi-isomorphisms, then the induced functor $\textcat{K}F:\textcat{K}^*(\mathcal{E})\rightarrow{\textcat{K}^*(\mathcal{F})}$ does too. We denote our localisation functor by $Q_\mathcal{F}:\textcat{K}^*(\mathcal{F})\rightarrow{\textcat{D}^*(\mathcal{F})}$ and consider the composition $Q_\mathcal{F}\circ \textcat{K}F:\textcat{K}^*(\mathcal{E})\rightarrow{\textcat{D}^*(\mathcal{F})}$. This maps strict quasi-isomorphisms to isomorphisms, and hence, by definition of the localisation, this induces a functor $\textcat{D}F:\textcat{D}^*(\mathcal{E})\rightarrow{\textcat{D}^*(\mathcal{F})}$.
\end{proof}

\begin{prop}\label{strictquasiexact}
    A functor $F:\textcat{C}^*(\mathcal{E})\rightarrow{\textcat{C}^*(\mathcal{F})}$ maps strict quasi-isomorphisms to strict quasi-isomorphisms if it preserves strict exactness. 
\end{prop}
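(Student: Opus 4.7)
The plan is to reduce the statement to a direct application of the cone characterization of strict quasi-isomorphisms from the definition preceding this proposition. Recall that $f^\bullet: C^\bullet \to D^\bullet$ is a strict quasi-isomorphism if and only if $\mathrm{cone}(f^\bullet)^\bullet$ is a strictly exact complex. So it suffices to show that $F$ commutes with the formation of cones and takes strictly exact complexes to strictly exact complexes.

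First I would verify that $F(\mathrm{cone}(f^\bullet)^\bullet) \cong \mathrm{cone}(F(f^\bullet))^\bullet$. This is purely formal: the cone is built degreewise from the finite direct sum $C^{n+1} \oplus D^n$, and its differential is assembled from $d_C$, $d_D$, and $f$ via addition and sign flips. Since $F$ operates between cochain categories over additive categories and respects the cochain structure, it is additive, and commutes with these operations. Next, I would upgrade the hypothesis—preservation of strict exactness of (short) sequences, as formalized in the exact-functor definition earlier—to preservation of strict exactness of arbitrary complexes. A complex $(X^\bullet, d^\bullet)$ is strictly exact iff each three-term piece $X^{n-1} \to X^n \to X^{n+1}$ is strictly exact, and such a null sequence decomposes via the strict epi-mono factorization of $d^{n-1}$ into a pair of short strictly exact sequences. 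Preservation of these is exactly the hypothesis, so strict exactness at each spot is preserved, and hence so is strict exactness of the whole complex.

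Putting the pieces together: if $f^\bullet$ is a strict quasi-isomorphism, then $\mathrm{cone}(f^\bullet)^\bullet$ is strictly exact, therefore $F(\mathrm{cone}(f^\bullet)^\bullet) \cong \mathrm{cone}(F(f^\bullet))^\bullet$ is strictly exact, and so $F(f^\bullet)$ is a strict quasi-isomorphism. There is no substantial obstacle in this argument; the only mildly delicate point is the translation between short strictly exact sequences (as in the hypothesis) and the three-term strict exactness used in the definition of a strictly exact complex, which is handled by the standard strict image-coimage factorization guaranteed in any quasi-abelian category.
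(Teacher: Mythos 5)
Your proof is correct and follows essentially the same route as the paper: both reduce to the observation that $F$ commutes with cones and sends the strictly exact complex $\mathrm{cone}(f^\bullet)^\bullet$ to the strictly exact complex $\mathrm{cone}(F(f^\bullet))^\bullet$. The extra details you supply (additivity of $F$ for the cone identification, and the factorization of a strictly exact complex into short strictly exact sequences) are just explicit justifications of steps the paper leaves implicit.
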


\begin{proof}
    Indeed, suppose we have a strict quasi-isomorphism $f^\bullet\in\textcat{C}^*(\mathcal{E}) $. Then, $cone(f^\bullet)^\bullet$ is strictly exact. If $F$ preserves strict exactness, then $F(cone(f^\bullet)^\bullet)\simeq cone(F(f^\bullet)^\bullet)$ is strictly exact, and hence $F(f^\bullet)$ is a strict quasi-isomorphism. 
\end{proof}

As in the abelian case, we can make the following definition. Let $\textcat{D}^+(\mathcal{E})$ be the derived category of the category of bounded below cochain complexes. Suppose that $F:\mathcal{E}\rightarrow{\mathcal{F}}$ is an additive functor, and denote the localisation functors by $Q_{\mathcal{E}}:\textcat{K}(\mathcal{E})\rightarrow{\textcat{D}(\mathcal{E})}$ and $Q_{\mathcal{F}}:\textcat{K}(\mathcal{F})\rightarrow{\textcat{D}(\mathcal{F})}$.
\begin{defn}
The \textbf{right derived functor} of $F$ is a pair $(\textnormal{\textbf{R}}F,\epsilon_F)$ consisting of a triangulated functor
\begin{equation*}
\textnormal{\textbf{R}}F:\textnormal{\textbf{D}}^+(\mathcal{E})\rightarrow{\textnormal{\textbf{D}}^+(\mathcal{F})}    
\end{equation*} and a natural transformation \begin{equation*}
\epsilon_F:Q_\mathcal{F}\circ\textcat{K}^+(F)\Rightarrow{\textnormal{\textbf{R}}F\circ Q_\mathcal{E}}    
\end{equation*} universal in the sense that, for any triangulated functor   $G:\textnormal{\textbf{D}}^+(\mathcal{E})\rightarrow{\textnormal{\textbf{D}}^+(\mathcal{F})}$,  and any natural transformation $\epsilon:Q_\mathcal{F}\circ\textcat{K}^+(F)\Rightarrow{G\circ Q_\mathcal{E}}$, there exists a unique natural transformation $\eta:\textnormal{\textbf{R}}F\Rightarrow{G}$ such that the diagram
\begin{equation*}
    \begin{tikzcd}
    & Q_\mathcal{F}\circ \textnormal{\textbf{K}}^+F \arrow[ld,"\epsilon_F"'] \arrow{rd}{\epsilon}\\
    \textnormal{\textbf{R}}F\circ Q_\mathcal{E} \arrow{rr}{\eta\circ id_{Q_\mathcal{E}}} & & G\circ Q_\mathcal{E}
    \end{tikzcd}
\end{equation*}
commutes. 
\end{defn}
\begin{remark}
We can define left derived functors similarly.
\end{remark}

In \cite[Section 1.3]{schneiders99}, Schneiders gives some conditions for the existence of left and right derived functors. In particular, if $\mathcal{E}$ has enough injective objects, then any additive functor $F:\mathcal{E}\rightarrow{\mathcal{F}}$ is right derivable. We remark that, since any projective object in $\mathcal{E}$ is injective in $\mathcal{E}^{op}$, if $\mathcal{E}$ has enough projectives then an additive functor $G:\mathcal{E}^{op}\rightarrow{\mathcal{F}}$ is right derivable. Hence, if $\mathcal{E}$ has enough projectives the functor $\textnormal{Hom}_{\mathcal{E}}(-,Y):\mathcal{E}^{op}:\rightarrow{\textcat{Ab}}$ is right derivable. As in the abelian case, we can make the following definition.

\begin{defn}
Suppose that $\mathcal{E}$ has enough projectives. Let $X$ be an object in $\mathcal{E}$ and $P^\bullet\rightarrow{X}$ be a strict projective resolution of $X$. Then, the \textbf{$i^{th}$ Ext object} of $X$ is 
\begin{equation*}
    \textnormal{Ext}_{\mathcal{E}}^i(X,Y):=\textcat{R}^i\textnormal{Hom}_{\mathcal{E}}(X,Y)=H^i(\textnormal{Hom}_\mathcal{E}(P^\bullet,Y))
\end{equation*}

\end{defn}

\begin{remark}
For an explanation as to why this is well defined, see \cite[Remark 12.11]{buhler09}.
\end{remark}

\begin{remark}
If $\mathcal{E}$ does not have enough projectives, then you can use the Yoneda construction \cite{yoneda60} to construct $\textnormal{Ext}$.
\end{remark}

\section{Modules in Quasi-abelian Categories}\label{haquasiabeliancats}
For the remainder of this paper we will fix a bicomplete closed symmetric monoidal quasi-abelian category $\mathcal{E}$ with enough flat projectives. We remark that the categories $\textcat{CBorn}_k$ and $\textcat{IndBan}_k$ satisfy these properties.
\subsection{Categories of Modules}
Recall that the category of left modules over a monoid $A$ is denoted by $\textcat{A-Mod}$.
\begin{prop}\cite[Proposition 1.5.1]{schneiders99}
Suppose that $A$ is a monoid in $\mathcal{E}$, then $\textcat{A-Mod}$ is quasi-abelian. Moreover, a morphism of $\textcat{A-Mod}$ is strict if and only if it is strict as a morphism of $\mathcal{E}$, since the forgetful functor $\textcat{A-Mod}\rightarrow{\mathcal{E}}$ preserves limits and colimits.
\end{prop}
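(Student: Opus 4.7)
The plan is to reduce every structural property of $\textcat{A-Mod}$ to the corresponding property of $\mathcal{E}$ via the forgetful functor $U:\textcat{A-Mod}\to\mathcal{E}$. First I would check that $\textcat{A-Mod}$ is additive: the zero object is the zero object of $\mathcal{E}$ with its unique module structure, and the direct sum of two modules $(M_i,\lambda_i)$ is $M_1\oplus M_2$ in $\mathcal{E}$ together with the action $A\otimes(M_1\oplus M_2)\simeq (A\otimes M_1)\oplus(A\otimes M_2)\xrightarrow{\lambda_1\oplus\lambda_2}M_1\oplus M_2$, where the first isomorphism uses that $A\otimes-$ preserves finite coproducts (it is a left adjoint, since $\mathcal{E}$ is closed).

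Next I would establish that $U$ preserves limits and colimits. Preservation of limits is the formal consequence of the adjunction $A\otimes-\dashv U$; explicitly, given any small diagram $\{(M_j,\lambda_j)\}$ in $\textcat{A-Mod}$, let $L=\lim_j M_j$ in $\mathcal{E}$. The composites $A\otimes L\to A\otimes M_j\xrightarrow{\lambda_j}M_j$ form a cone, hence factor uniquely through $L$, producing a canonical action $\lambda:A\otimes L\to L$ whose associativity and unit are verified by the universal property of the limit. For colimits, let $C=\mathrm{colim}_j M_j$ in $\mathcal{E}$. Because $\mathcal{E}$ is closed monoidal, $A\otimes-$ is a left adjoint and therefore commutes with colimits, giving $A\otimes C\simeq \mathrm{colim}_j(A\otimes M_j)$; the maps $\lambda_j$ then assemble into a unique morphism $A\otimes C\to C$, and the module axioms follow from the universal property. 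Kernels and cokernels, being particular limits and colimits, are therefore computed in $\mathcal{E}$ and carry canonical $A$-module structures, so $\textcat{A-Mod}$ has all kernels and cokernels.

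Once $U$ is known to preserve kernels and cokernels, the coincidence of strictness is immediate: for $f\in\textcat{A-Mod}$, the canonical factorisation $\mathrm{Coim}(f)\to\mathrm{Im}(f)$ is built from kernels and cokernels which $U$ preserves, so it is an isomorphism in $\textcat{A-Mod}$ if and only if it is an isomorphism in $\mathcal{E}$. Consequently strict monomorphisms and strict epimorphisms in $\textcat{A-Mod}$ are precisely those morphisms whose images under $U$ are strict mono/epimorphisms in $\mathcal{E}$.

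It remains to verify axioms $(\textbf{QA})$ and $(\textbf{QA}^*)$ for $\textcat{A-Mod}$. Given a pullback square in $\textcat{A-Mod}$ with $f$ a strict epimorphism, applying $U$ yields a pullback square in $\mathcal{E}$ (since $U$ preserves limits) in which the lower horizontal arrow is a strict epimorphism. By $(\textbf{QA})$ for $\mathcal{E}$ the base change $f'$ is a strict epimorphism in $\mathcal{E}$, and by the previous paragraph it is also strict epic in $\textcat{A-Mod}$. The pushout case is formally dual, using that $U$ preserves colimits and the axiom $(\textbf{QA}^*)$ for $\mathcal{E}$. The main technical step is the construction of the module structure on a colimit, which hinges on $A\otimes-$ being a left adjoint; every other step is a formal transport along $U$.
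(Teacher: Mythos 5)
Your argument is correct and is exactly the standard one that the paper delegates to Schneiders (the proposition is cited, not reproved): the forgetful functor creates limits and colimits because $A\otimes-$ is a left adjoint in the closed monoidal category $\mathcal{E}$, kernels, cokernels and the coimage-to-image comparison are therefore computed in $\mathcal{E}$, and the axioms $(\textbf{QA})$ and $(\textbf{QA}^*)$ transfer along the preserved pullbacks and pushouts. The only point worth making explicit is that the forgetful functor also \emph{reflects} isomorphisms (the inverse of an underlying isomorphism is automatically a module map), which is what upgrades ``strict in $\mathcal{E}$'' to ``strict in $\textcat{A-Mod}$''.
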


\begin{prop}\cite[c.f. Proposition 1.5.2]{schneiders99}
The functor $A\otimes-:\mathcal{E}\rightarrow{\textcat{A-Mod}}$ is left adjoint to the forgetful functor $\textcat{A-Mod}\rightarrow{\mathcal{E}}$. For any object $X\in\mathcal{E}$, \begin{equation}\label{freeforgetfuladjunction}
    \textnormal{Hom}_{\textcat{A-Mod}}(A\otimes X,-)\simeq \textnormal{Hom}_{\mathcal{E}}(X,-)\end{equation}
\end{prop}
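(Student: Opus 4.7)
The plan is to construct the adjunction isomorphism directly by exhibiting mutually inverse natural maps and then verifying the standard compatibilities. Recall that a left $A$-module $(M,\lambda)$ comes equipped with the action $\lambda: A\otimes M \to M$, and the monoid $A$ carries its unit $\eta: I \to A$. These are the only two pieces of structure I will need beyond the closed symmetric monoidal structure on $\mathcal{E}$.

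First I would define the forward map
\[
\Phi_{X,M}: \textnormal{Hom}_{\textcat{A-Mod}}(A\otimes X, M) \longrightarrow \textnormal{Hom}_{\mathcal{E}}(X,M),
\]
sending an $A$-linear morphism $f: A\otimes X \to M$ to the composite
\[
X \xrightarrow{\ \simeq\ } I\otimes X \xrightarrow{\ \eta\otimes \mathrm{id}_X\ } A\otimes X \xrightarrow{\ f\ } M,
\]
where the first arrow is the left unit isomorphism of $\mathcal{E}$. In the opposite direction, define
\[
\Psi_{X,M}: \textnormal{Hom}_{\mathcal{E}}(X,M) \longrightarrow \textnormal{Hom}_{\textcat{A-Mod}}(A\otimes X, M),
\]
sending $g: X \to M$ to the composite $A\otimes X \xrightarrow{\mathrm{id}_A\otimes g} A\otimes M \xrightarrow{\lambda} M$. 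I would verify that $\Psi_{X,M}(g)$ really is a morphism of $A$-modules by chasing the associativity diagram for $\lambda$: the required identity $\lambda\circ(\mathrm{id}_A\otimes \Psi(g)) = \Psi(g)\circ(\mu\otimes \mathrm{id}_{A\otimes X})$ follows by combining the associativity of $\mu$ with the fact that $\mathrm{id}_A\otimes g$ is an $\mathcal{E}$-morphism and so commutes with any tensor action in the evident way.

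Next, I would show that $\Phi$ and $\Psi$ are mutually inverse. The composition $\Phi\circ\Psi$ applied to $g$ reduces, using the left unit axiom $\lambda\circ(\eta\otimes\mathrm{id}_M) \simeq \mathrm{id}_M$ of the module structure and functoriality of $\otimes$, to $g$ itself. For the other composition, $\Psi(\Phi(f))$ is the composite $A\otimes X \to A\otimes X \xrightarrow{\lambda\circ(\mathrm{id}_A\otimes f)\circ(\mathrm{id}_A\otimes \eta\otimes \mathrm{id}_X)} M$; using the hypothesis that $f$ is $A$-linear one rewrites this as $\lambda_A\circ(\mathrm{id}_A\otimes \eta\otimes\mathrm{id}_X)$ followed by $f$, and the left unit axiom of $A$ itself (i.e.\ $\mu\circ(\eta\otimes\mathrm{id}_A) = \mathrm{id}_A$ up to the unitor) collapses this to $f$. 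Naturality in $X$ and $M$ is immediate from the construction, since everything is built out of $\otimes$ and the module action, which are themselves natural.

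The only real obstacle is bookkeeping: the calculation genuinely requires the module-associativity axiom for $\Psi(g)$ to land in $\textcat{A-Mod}$, and the module-unit axiom to collapse $\Phi\Psi$ to the identity. None of this depends on the quasi-abelian structure — the argument takes place purely in the closed symmetric monoidal category $(\mathcal{E},\otimes,I)$ — so the statement really is a formal consequence of the definitions in Section \ref{closedsymmetricmonoidal}, and cites \cite[Proposition 1.5.2]{schneiders99} only for completeness.
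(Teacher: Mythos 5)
Your proof is correct and is precisely the standard formal argument (transpose along the monoid unit in one direction, extend by the module action in the other, and collapse using the unit and associativity axioms) that the paper delegates entirely to the citation of Schneiders, giving no proof of its own. The only nitpick is notational: the action of $A$ on the free module $A\otimes X$ is $\mu\otimes\mathrm{id}_X:A\otimes A\otimes X\to A\otimes X$ rather than $\mu\otimes\mathrm{id}_{A\otimes X}$, and the step collapsing $\Psi(\Phi(f))$ uses the right unit axiom $\mu\circ(\mathrm{id}_A\otimes\eta)$ rather than the left one, but neither affects the validity of the argument.
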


\begin{cor}\label{tensorprojective}
$P$ is a (flat) projective object in $\mathcal{E}$ if and only if $A\otimes P$ is a (flat) projective object in $\textcat{A-Mod}$. Moreover, if $\mathcal{E}$ has enough (flat) projectives, then so does $\textcat{A-Mod}$.
\end{cor}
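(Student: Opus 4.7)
The plan is to transport the projective and flat lifting properties across the free--forgetful adjunction (\ref{freeforgetfuladjunction}), and to exploit the monoid unit $\eta\colon I\to A$ to manufacture strict epimorphisms in $\textcat{A-Mod}$. The key structural input is Schneiders' preceding proposition, which says the forgetful functor $U\colon\textcat{A-Mod}\to\mathcal{E}$ preserves and reflects strictness. Consequently, a strict epimorphism in $\textcat{A-Mod}$ is exactly a morphism whose underlying map in $\mathcal{E}$ is strict epi; dually, since $A\otimes -$ is a left adjoint it preserves cokernels, and hence carries strict epimorphisms of $\mathcal{E}$ to strict epimorphisms of $\textcat{A-Mod}$.

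For the projective statement, suppose $P\in\mathcal{E}$ is projective and let $f\colon M\twoheadrightarrow N$ be a strict epimorphism in $\textcat{A-Mod}$. Then $U(f)$ is a strict epimorphism in $\mathcal{E}$, so $\textnormal{Hom}_{\mathcal{E}}(P,U(M))\to\textnormal{Hom}_{\mathcal{E}}(P,U(N))$ is surjective. Identifying this via (\ref{freeforgetfuladjunction}) with $\textnormal{Hom}_{\textcat{A-Mod}}(A\otimes P,M)\to\textnormal{Hom}_{\textcat{A-Mod}}(A\otimes P,N)$ shows $A\otimes P$ is projective in $\textcat{A-Mod}$. For flatness, associativity of $\otimes$ together with Lemma \ref{usefulfacts} yields a natural isomorphism $(A\otimes P)\otimes_A M\simeq P\otimes U(M)$ on $\textcat{A-Mod}$; since $U$ is exact (it preserves kernels and cokernels) and $P\otimes -$ is exact when $P$ is flat, the composite is exact, so $A\otimes P$ is flat in $\textcat{A-Mod}$.

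For the \emph{moreover} clause, fix $M\in\textcat{A-Mod}$ and, using that $\mathcal{E}$ has enough (flat) projectives, choose a strict epimorphism $P\twoheadrightarrow U(M)$ with $P$ flat projective in $\mathcal{E}$. Its adjunct in $\textcat{A-Mod}$ factors as $A\otimes P\to A\otimes M\xrightarrow{\alpha}M$, where $\alpha$ is the module action. The first map is a strict epimorphism because $A\otimes -$ preserves cokernels (it is a left adjoint), and $\alpha$ is a strict epimorphism by Corollary \ref{rightinverseepimorphism}, since the module unit axiom supplies the right inverse $M\simeq I\otimes M\xrightarrow{\eta\otimes 1_M}A\otimes M$. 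The composite is therefore a strict epimorphism by Proposition \ref{strictepisstable}, and by the previous paragraph $A\otimes P$ is flat projective in $\textcat{A-Mod}$; hence $\textcat{A-Mod}$ has enough (flat) projectives. The main obstacle is purely bookkeeping: checking that strictness matches up across the adjunction, and confirming that each auxiliary map (the action, the free functor applied to a strict epi, their composite) is itself strict in $\textcat{A-Mod}$; once the unit section $\eta\otimes 1_M$ is spotted, the rest is essentially forced.
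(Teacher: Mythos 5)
Your argument follows essentially the same route as the paper's: projectivity of $A\otimes P$ is read off from the free--forgetful adjunction (Equation \ref{freeforgetfuladjunction}), and ``enough projectives'' is obtained by transporting a strict epimorphism $P\twoheadrightarrow M$ in $\mathcal{E}$ to its adjunct $A\otimes P\to M$. In fact you supply more detail than the printed proof: where the paper merely asserts that the adjunct is a strict epimorphism, you exhibit the factorization $A\otimes P\to A\otimes M\xrightarrow{\alpha}M$, note that $A\otimes-$ carries strict epimorphisms to strict epimorphisms since it preserves cokernels, and get strictness of $\alpha$ from the unit section $\eta\otimes 1_M$ via Corollary \ref{rightinverseepimorphism} and Proposition \ref{strictepisstable}. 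For flatness the paper invokes flatness of $A$ in $\textcat{A-Mod}$ and stability of flatness under the monoidal product, whereas you compute $(A\otimes P)\otimes_A M\simeq P\otimes M$ directly; both work, and yours is the more transparent.

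The one genuine omission is the converse implication in the stated equivalence: you prove that $P$ projective (flat) in $\mathcal{E}$ forces $A\otimes P$ projective (flat) in $\textcat{A-Mod}$, but not that projectivity of $A\otimes P$ conversely forces projectivity of $P$. This direction does not follow formally from the adjunction, which identifies $\textnormal{Hom}_{\textcat{A-Mod}}(A\otimes P,-)$ with $\textnormal{Hom}_{\mathcal{E}}(P,-)$ only after precomposing the latter with the forgetful functor, i.e.\ only against strict epimorphisms of $\mathcal{E}$ that underlie $A$-module maps; an arbitrary strict epimorphism of $\mathcal{E}$ need not arise this way. The paper's own one-line justification of the ``if and only if'' elides the same point, and only the forward direction together with the ``moreover'' clause is used in the sequel, so this is as much a weakness of the statement as of your argument --- but since the corollary is phrased as an equivalence, the missing direction should at least be acknowledged and, if retained, argued separately.
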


\begin{proof}
By the previous proposition, we see that $\textnormal{Hom}_{\textcat{A-Mod}}(A\otimes P,-)$ is strictly exact if and only if $\textnormal{Hom}_{\mathcal{E}}(P,-)$ is. Hence, $P$ is projective in $\mathcal{E}$ if and only if $A\otimes P$ is. Now, suppose we have an $A$-module $M$. Since $\mathcal{E}$ has enough projectives, there exists a projective object $P$ in $\mathcal{E}$ and a strict epimorphism 
\begin{equation*}
    f:P\rightarrow{M}
\end{equation*} We note that $A\otimes P$ is a projective $A$-module and that, under the isomorphism in Equation \ref{freeforgetfuladjunction}, $f$ corresponds to a strict epimorphism $f':A\otimes P\rightarrow{M}$. Hence, $\textcat{A-Mod}$ has enough projectives. The flatness case follows from noting that $A$ is flat in $\textcat{A-Mod}$ and that the monoidal product of flat objects is flat.
\end{proof}

We state the following important lemma.

\begin{lem}\label{kernelprojective}
If $M$ and $N$ are projective $A$-modules and $f:M\rightarrow{N}$ is a strict epimorphism, then $\textnormal{Ker}(f)$ is a projective $A$-module.
\end{lem}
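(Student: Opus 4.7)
The plan is to exploit Lemma \ref{projectivesplit} (the splitting criterion for projectives) to reduce the statement to the classical fact that direct summands of projective objects are projective. Neither of these facts uses abelianness beyond what Schneiders has already provided in the quasi-abelian setup, so the proof should be routine once the splitting is in hand.

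First, since $N$ is projective and $f:M\to N$ is a strict epimorphism, Lemma \ref{projectivesplit} furnishes a section $s:N\to M$ with $f\circ s=\mathrm{id}_N$. Let $\iota:\textnormal{Ker}(f)\to M$ denote the canonical strict monomorphism. The morphism $\mathrm{id}_M-s\circ f$ is annihilated by $f$, so it factors uniquely through $\iota$, giving a morphism $p:M\to \textnormal{Ker}(f)$ with $\iota\circ p=\mathrm{id}_M-s\circ f$. A direct calculation (using that $\iota$ is a monomorphism to cancel on the left) shows that $p\circ\iota=\mathrm{id}_{\textnormal{Ker}(f)}$ and $p\circ s=0$, while $f\circ\iota=0$ and $f\circ s=\mathrm{id}_N$. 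Hence the pair $(\iota,s):\textnormal{Ker}(f)\oplus N\to M$ and $(p,f):M\to \textnormal{Ker}(f)\oplus N$ are mutually inverse, and I conclude $M\simeq \textnormal{Ker}(f)\oplus N$ as $A$-modules.

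Second, I verify that a direct summand of a projective $A$-module is projective. Given any strict epimorphism $g:X\twoheadrightarrow Y$ in $\textcat{A-Mod}$ and any morphism $h:\textnormal{Ker}(f)\to Y$, extend $h$ by zero on the $N$-summand to obtain $\tilde h:\textnormal{Ker}(f)\oplus N\to Y$. Since $\textnormal{Ker}(f)\oplus N\simeq M$ is projective, $\tilde h$ lifts through $g$ to some $\tilde H:M\to X$; precomposing with the coproduct injection $\textnormal{Ker}(f)\to M$ yields a lift of $h$. Equivalently, $\textnormal{Hom}_{\textcat{A-Mod}}(\textnormal{Ker}(f)\oplus N,-)\simeq \textnormal{Hom}_{\textcat{A-Mod}}(\textnormal{Ker}(f),-)\times \textnormal{Hom}_{\textcat{A-Mod}}(N,-)$, and exactness of the product functor forces each factor to be exact. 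Either way, $\textnormal{Ker}(f)$ is projective.

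The only step that requires any care is the additive-category splitting argument, since one must confirm that $\iota$ and $s$ genuinely assemble into an isomorphism inside the quasi-abelian category $\textcat{A-Mod}$; however, the construction of $p$ uses nothing beyond the universal property of the kernel and the fact that $\textcat{A-Mod}$ is additive, both of which we have. By Proposition \ref{isomorphismmonoepi} one could alternatively verify directly that $(\iota,s)$ is a strict monomorphism and epimorphism and hence an isomorphism, but the inverse-pair calculation already suffices and avoids any appeal to strictness hypotheses on the constructed map.
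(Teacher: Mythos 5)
Your proof is correct and follows the same route as the paper: apply Lemma \ref{projectivesplit} to split the strict epimorphism $f$, deduce $M\simeq \textnormal{Ker}(f)\oplus N$, and conclude that $\textnormal{Ker}(f)$, being a direct summand of a projective, is projective. You simply make explicit the two steps the paper leaves implicit (the additive-category splitting calculation and the fact that summands of projectives are projective), and both verifications are accurate.
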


\begin{proof}
We consider the strictly exact sequence
\begin{equation*}
    0\rightarrow{\textnormal{Ker}(f)}\xrightarrow{\iota}{M}\xrightarrow{f}{N}\rightarrow{0}
\end{equation*}Since $N$ is a projective $A$-module, this sequence splits by Lemma \ref{projectivesplit} and hence 
\begin{equation*}
    M\simeq \textnormal{Ker}(f)\oplus N
\end{equation*}Therefore, since $\textnormal{Ker}(f)$ is a direct summand of a projective object and $N$ is projective, $\textnormal{Ker}(f)$ is projective.
\end{proof}

\subsection{Categories of Graded Modules}
Suppose now that $A$ is a graded monoid in $\mathcal{E}$. Since $\mathcal{E}$ has countable colimits we can denote $A$ as 
\begin{equation*}
    A=\bigoplus_{i\in\mathbb{Z}}A_i
\end{equation*}

\begin{prop}
The category of graded left $A$-modules, denoted $\textcat{grA-Mod}$, is quasi-abelian. 
\end{prop}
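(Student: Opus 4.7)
The plan is to mirror the proof of Schneiders' Proposition 1.5.1 (cited earlier in the excerpt for the ungraded case $\textcat{A-Mod}$), reducing each quasi-abelian axiom for $\textcat{grA-Mod}$ to the corresponding property of $\mathcal{E}$ via the observation that kernels, cokernels, pullbacks, and pushouts of graded morphisms can all be computed componentwise in $\mathcal{E}$.

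First, I would verify that $\textcat{grA-Mod}$ is additive with all kernels and cokernels. Additivity follows from the componentwise direct sum structure. Given a graded morphism $f:M\to N$, I would set $(\textnormal{Ker}(f))_i := \textnormal{Ker}(f_i)$ and $(\textnormal{Coker}(f))_i := \textnormal{Coker}(f_i)$ in $\mathcal{E}$. Because $f$ commutes with the action maps and $A_j\otimes -$ is functorial (so kills zero morphisms), the composite $A_j\otimes\textnormal{Ker}(f_i)\to A_j\otimes M_i \to M_{i+j}\to N_{i+j}$ vanishes, and the universal property of $\textnormal{Ker}(f_{i+j})$ provides a unique action $A_j\otimes (\textnormal{Ker}(f))_i\to(\textnormal{Ker}(f))_{i+j}$; the cokernel case is dual. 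The associativity and unit conditions transfer from $M$ and $N$, and the graded universal property is immediate from that in $\mathcal{E}$.

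Next, I would characterise strict morphisms: a graded map $f$ is strict in $\textcat{grA-Mod}$ if and only if each $f_i$ is strict in $\mathcal{E}$. Since both $\textnormal{Im}(f)$ and $\textnormal{Coim}(f)$ are built from the componentwise kernels and cokernels just described, the canonical map $\textnormal{Coim}(f)\to\textnormal{Im}(f)$ is an isomorphism of graded modules precisely when it is so in every degree. Pullbacks and pushouts of graded diagrams are likewise constructed componentwise, being built from products, coproducts, equalizers and coequalizers which all inherit their graded structure from $\mathcal{E}$.

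Finally, I would verify (QA) and (QA*). For (QA), given a pullback square in $\textcat{grA-Mod}$ with $f:M\to N$ a strict epimorphism, each $f_i$ is strict epic in $\mathcal{E}$; applying axiom (QA) for $\mathcal{E}$ to the componentwise pullback square shows that every $f'_i$ is strict epic, so $f'$ is a strict epimorphism in $\textcat{grA-Mod}$. The argument for (QA*) via pushouts and strict monomorphisms is entirely dual. There is no serious obstacle here: the proof is purely formal once one checks, as above, that forming (co)kernels and (co)limits of graded morphisms reduces to performing the same operations componentwise in $\mathcal{E}$ and equipping the result with the induced $A$-action, so that the quasi-abelian axioms of $\mathcal{E}$ lift degreewise.
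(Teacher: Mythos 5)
Your proposal is correct and takes essentially the same approach as the paper: the paper's proof is a two-line remark that kernels and cokernels exist and that the quasi-abelian axioms are verified by working in each degree (reducing to $\textcat{A-Mod}$, whose strict morphisms are detected in $\mathcal{E}$ anyway). You have simply spelled out the componentwise constructions and the induced action maps in full detail.
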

\begin{proof}
We note easily that $\textcat{grA-Mod}$ has all kernels and cokernels. Since $\textbf{A-Mod}$ is quasi-abelian, we see that, by working {in each degree}, the dual axioms in the definition of a quasi-abelian category are satisfied.
\end{proof}

\begin{lem}\label{gradedprojective}
If a graded left $A$-module $M$ is projective as an $A$-module then it is projective as a graded $A$-module. 
\end{lem}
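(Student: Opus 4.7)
The plan is to apply Lemma \ref{projectivesplit} in the category $\textcat{grA-Mod}$, which is quasi-abelian by the preceding proposition: it suffices to show that every strict epimorphism $f: N \to M$ of graded $A$-modules admits a graded section. Since the forgetful functor $\textcat{grA-Mod} \to \textcat{A-Mod}$ preserves strict epimorphisms (as kernels and cokernels of graded maps are computed degreewise), $f$ is also a strict epimorphism in $\textcat{A-Mod}$. The hypothesis that $M$ is projective as an $A$-module, combined again with Lemma \ref{projectivesplit}, then yields an $A$-linear section $s: M \to N$ with $f \circ s = \textnormal{id}_M$ in $\textcat{A-Mod}$.

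The section $s$ is not yet graded, but we can extract a graded refinement by taking its diagonal components. For each $n$, let $\iota_n^M: M_n \to M$ denote the canonical coproduct inclusion and $\pi_n^N: N \to N_n$ the canonical product projection (both exist since $\mathcal{E}$ is bicomplete), and define $\tilde s_n := \pi_n^N \circ s \circ \iota_n^M: M_n \to N_n$. Assembling these components produces a graded morphism $\tilde s: M \to N$ (of underlying graded objects); I then claim that $\tilde s$ is the desired graded section.

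Two verifications remain. The $A$-linearity of $\tilde s$ amounts to checking $\tilde s_{k+j} \circ \lambda^M_{k,j} = \lambda^N_{k,j} \circ (\textnormal{id}_{A_k} \otimes \tilde s_j)$ for all $k,j$; this follows from the $A$-linearity of $s$ by precomposing with $\textnormal{id}_{A_k} \otimes \iota_j^M$ and taking the degree-$(k+j)$ component of both sides, using that $\lambda^M_{k,j}$ already factors through the $(k+j)$-summand of $M$ and that $\lambda^N$ is graded. The equality $f \circ \tilde s = \textnormal{id}_M$ reduces, degreewise, to $f_n \circ \tilde s_n = \textnormal{id}_{M_n}$, which follows by precomposing $f \circ s = \textnormal{id}_M$ with $\iota_n^M$, post-composing with $\pi_n^M$, and using that $f$ is graded so that $\pi_n^M \circ f = f_n \circ \pi_n^N$. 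The main subtlety, rather than being a genuine obstacle, is articulating the diagonal-extraction construction internally in $\mathcal{E}$ without element chasing, but each required identity is a formal consequence of the universal properties of (co)products together with the definition of a graded morphism.
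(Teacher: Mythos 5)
Your proof is correct and is essentially the paper's argument: the paper reduces to showing that surjectivity of $\textnormal{Hom}_{\textcat{A-Mod}}(M,N_1)\rightarrow{\textnormal{Hom}_{\textcat{A-Mod}}(M,N_2)}$ forces surjectivity on graded morphisms, which is exactly your observation that an ungraded $A$-linear lift can be replaced by its degree-preserving diagonal part. You package this through the splitting characterisation of projectives and carry out the diagonal extraction explicitly (the one detail worth naming is that $A_k\otimes-$ commutes with the direct sum, which holds since $\mathcal{E}$ is closed monoidal), whereas the paper leaves this verification as ``easy to see''.
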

\begin{proof}
It is easy to see that, for a graded morphism $N_1\rightarrow{N_2}$ of graded left $A$-modules, the induced surjection
\begin{equation*}
    \textnormal{Hom}_{\textbf{A-Mod}}(M,N_1)\rightarrow{\textnormal{Hom}_{\textbf{A-Mod}}(M,N_2)}
\end{equation*} must be a surjection in $\textcat{grA-Mod}$.
\end{proof}

\begin{cor}
If $M$ is a graded projective left $A$-module, then $A\otimes M$ is a graded projective left $A$-module.
\end{cor}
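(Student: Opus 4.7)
This corollary is a formal chaining of Corollary~\ref{tensorprojective} with Lemma~\ref{gradedprojective}: the strategy is to pass to the underlying ungraded $A$-module of $A\otimes M$, invoke Corollary~\ref{tensorprojective} to see that it is projective in $\textcat{A-Mod}$, and then apply Lemma~\ref{gradedprojective} to recover graded projectivity.

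In detail, one first equips $A\otimes M$ with its canonical graded left $A$-module structure: the grading $(A\otimes M)_n=\bigoplus_{i+j=n}A_i\otimes M_j$ together with the left $A$-action induced by the multiplication on the first factor. The underlying object in $\mathcal{E}$ is then $A\otimes\bigoplus_i M_i$. Next, one argues that $\bigoplus_i M_i\in\mathcal{E}$ is projective; this proceeds by checking that each homogeneous component $M_i$ is projective in $\mathcal{E}$ (by testing against graded $A$-modules concentrated in a single degree, using the bicompleteness of $\mathcal{E}$) and then invoking the fact that coproducts of projectives remain projective, since $\textnormal{Hom}_{\mathcal{E}}(\bigoplus_i M_i,-)\simeq\prod_i\textnormal{Hom}_{\mathcal{E}}(M_i,-)$ is exact when each factor is. Corollary~\ref{tensorprojective} then yields that $A\otimes\bigoplus_i M_i$ is projective in $\textcat{A-Mod}$, so the underlying ungraded $A$-module of $A\otimes M$ is projective, and Lemma~\ref{gradedprojective} upgrades this to graded projectivity in $\textcat{grA-Mod}$.

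The one step that requires genuine care is the implication that graded projectivity of $M$ forces its underlying graded object in $\mathcal{E}$ to be projective component-wise; granted that, the corollary follows formally from the preceding two results with no further input.
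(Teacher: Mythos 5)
Your overall strategy --- chaining Corollary \ref{tensorprojective} with Lemma \ref{gradedprojective} --- is exactly the paper's proof, which consists of precisely those two citations and nothing more. The divergence is in the step you yourself flag as requiring ``genuine care'': you read the hypothesis as projectivity of $M$ in $\textcat{grA-Mod}$ and then try to deduce that the underlying object $\bigoplus_i M_i$ of $\mathcal{E}$ is projective, so that Corollary \ref{tensorprojective} applies. As sketched, this step does not go through. Testing against graded $A$-modules concentrated in a single degree only gives information about $\textnormal{Hom}_{\textcat{grA-Mod}}(M,-)$, i.e.\ about morphisms of graded $A$-modules; it says nothing about the exactness of $\textnormal{Hom}_{\mathcal{E}}(M_i,-)$ on arbitrary strict epimorphisms of $\mathcal{E}$. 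In general the forgetful functor $\textcat{grA-Mod}\rightarrow{\mathcal{E}}$ has no reason to preserve projectives: its right adjoint is the cofree-module functor $\texthom{Hom}(A,-)$, which preserves strict epimorphisms only under extra hypotheses on $A$, and the projective generators $A\otimes_{A_0}P_i$ of $\textcat{grA-Mod}$ constructed in the paper need not have projective underlying objects in $\mathcal{E}$. So this implication is a genuine gap, not merely a delicate point.

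The paper sidesteps the issue by applying Corollary \ref{tensorprojective} directly to $M$; that corollary takes as input projectivity of $M$ as an object of $\mathcal{E}$, not projectivity in $\textcat{grA-Mod}$, and that is the reading of the hypothesis under which the two-line proof is valid. If you adopt that reading, your problematic intermediate step disappears entirely and your argument collapses to the paper's. The remaining ingredients of your proposal --- the canonical grading on $A\otimes M$, stability of projectives in $\mathcal{E}$ under coproducts, and the final appeal to Lemma \ref{gradedprojective} to upgrade ungraded projectivity of $A\otimes M$ to graded projectivity --- are all fine.
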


\begin{proof}
We see from Corollary \ref{tensorprojective} that $A\otimes M$ is a projective left $A$-module. The result then follows from Lemma \ref{gradedprojective}.
\end{proof}

Suppose now that $A$ is positively-graded. We note that, if $M$ is a left $A$-module, then $A\otimes_{A_0} M$ is a left $A$-module with obvious $A$-action.

\begin{prop}\label{strictmodulemap} Suppose that $M$ is a graded left $A$-module. There exists a map $A\otimes_{A_0} M\rightarrow{M}$ which is a {strict graded epimorphism}.
\end{prop}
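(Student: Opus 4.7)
The plan is to exhibit the desired map as the one induced by the module action, and show it is a strict graded epimorphism by producing an explicit right inverse, to which we then apply Corollary \ref{rightinverseepimorphism}.

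First I would construct the map. The module action $\lambda:A\otimes M\to M$ is assembled from the morphisms $\lambda_{i,j}:A_i\otimes M_j\to M_{i+j}$, and the associativity axiom for the module action (together with the associativity of $\mu$) guarantees that the two composites
\begin{equation*}
    A\otimes A_0\otimes M\rightrightarrows A\otimes M\xrightarrow{\lambda} M
\end{equation*}
coming from the right $A_0$-action on $A$ and the left $A_0$-action on $M$ agree. By the universal property of the coequalizer defining $A\otimes_{A_0}M$, the action $\lambda$ descends uniquely to a morphism $\overline{\lambda}:A\otimes_{A_0}M\to M$. This map is graded because each $\lambda_{i,j}$ lands in $M_{i+j}$, so $\overline{\lambda}$ restricts to a morphism $(A\otimes_{A_0}M)_n\to M_n$ in each degree.

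Next I would build a section. The unit $\eta:I\to A_0$ is a component of the monoid unit, and composing with the canonical inclusion $A_0\hookrightarrow A$ in degree zero yields a morphism $I\to A$ of graded objects. Tensoring with $M$ and passing through the coequalizer, together with the isomorphism $A_0\otimes_{A_0}M\simeq M$ from Lemma \ref{usefulfacts}, produces a graded morphism
\begin{equation*}
    s:M\simeq A_0\otimes_{A_0} M\longrightarrow A\otimes_{A_0}M.
\end{equation*}
The unit axiom for the module action, $\lambda_{0,j}\circ(\eta\otimes 1_{M_j})=1_{M_j}$, is exactly the statement that $\overline{\lambda}\circ s=\mathrm{id}_M$.

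Finally, since $\textcat{grA-Mod}$ is quasi-abelian, Corollary \ref{rightinverseepimorphism} applies in this category and shows that the morphism $\overline{\lambda}:A\otimes_{A_0}M\to M$, having the graded right inverse $s$, is a strict epimorphism of graded left $A$-modules. The only place one has to be careful is verifying that strictness of $\overline{\lambda}$ in $\textcat{grA-Mod}$ is what we actually want; but this is no obstacle, since $\textcat{grA-Mod}$ was shown to be quasi-abelian by working degreewise, and strictness in $\textcat{grA-Mod}$ is inherited from strictness degreewise in $\mathcal{E}$. I do not expect any serious difficulty, the only point requiring mild care being that the coequalizer defining $A\otimes_{A_0}M$ is compatible with the grading, which follows from the fact that all the structure morphisms involved are graded and colimits in $\textcat{grA-Mod}$ are computed degreewise.
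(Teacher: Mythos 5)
Your proposal is correct and follows essentially the same route as the paper: both construct the map via the universal property of the coequalizer and conclude strict epimorphy from the existence of a right inverse coming from the unit. The only (immaterial) difference is that you build a section of $A\otimes_{A_0}M\to M$ directly and invoke Corollary \ref{rightinverseepimorphism} once, whereas the paper applies that corollary to $A\otimes M\to M$ and then descends via Proposition \ref{monoepimorphismtriangle}.
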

\begin{proof}We note that the graded maps
\begin{equation*}
    A\otimes A_0\otimes M\rightrightarrows{A\otimes M}\rightarrow{M}
\end{equation*}given by the composition of the action of $A_0$ on $A$ and $M$ and the action of $A$ on $M$, are equal. Hence, by the universal property of the coequalizer, there exists a graded map $A\otimes_{A_0} M\rightarrow{M}$ such that the following diagram commutes. 

\begin{equation*}
    \begin{tikzcd}
    A\otimes A_0\otimes M \ar[r,shift left=.75ex]
  \ar[r,shift right=.75ex,swap] & A\otimes M \arrow{r} \arrow{dr} & {A\otimes_{A_0} M} \arrow{d}\\
  & & M
    \end{tikzcd}
\end{equation*}We note that the morphism $A\otimes M\rightarrow{M}$ is a strict epimorphism by Corollary \ref{rightinverseepimorphism} since it has a right inverse given by inclusion. Hence, by Proposition \ref{monoepimorphismtriangle} the map $A\otimes_{A_0} M\rightarrow{M}$ is a strict epimorphism.
\end{proof}
\begin{prop}\label{gradedtensor}
If $M$ is a projective left $A_0$-module, then $A\otimes_{A_0}M$ is a projective left $A$-module.
\end{prop}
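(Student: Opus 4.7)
The plan is to prove projectivity via the standard induction-restriction adjunction together with the projectivity assumption on $M$.

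First I would establish the adjunction. Taking $M_1 = A$ as an $(A, A_0)$-bimodule (with the $A_0$-action on $A$ coming from the unit $A_0 \hookrightarrow A$), $M_2 = M$ a left $A_0$-module, and any left $A$-module $N$ as $M_3$, Theorem \ref{tensorhomadjunction} gives a natural isomorphism
\begin{equation*}
    \textnormal{Hom}_{\textcat{A-Mod}}(A \otimes_{A_0} M, N) \simeq \textnormal{Hom}_{\textcat{A}_0\textcat{-Mod}}(M, \texthom{Hom}_{\textcat{A-Mod}}(A, N))
\end{equation*}
Applying Lemma \ref{usefulfacts} to simplify $\texthom{Hom}_{\textcat{A-Mod}}(A, N) \simeq N$ (viewed as an $A_0$-module via restriction along $A_0 \hookrightarrow A$), this becomes
\begin{equation*}
    \textnormal{Hom}_{\textcat{A-Mod}}(A \otimes_{A_0} M, N) \simeq \textnormal{Hom}_{\textcat{A}_0\textcat{-Mod}}(M, N)
\end{equation*}
naturally in $N$.

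Next, to see that $A \otimes_{A_0} M$ is projective in $\textcat{A-Mod}$, I need to show that any strict epimorphism $f : N_1 \twoheadrightarrow N_2$ of left $A$-modules induces a surjection on $\textnormal{Hom}_{\textcat{A-Mod}}(A \otimes_{A_0} M, -)$. By the proposition at the start of Section \ref{haquasiabeliancats}, strictness in both $\textcat{A-Mod}$ and $\textcat{A}_0\textcat{-Mod}$ is detected in the underlying category $\mathcal{E}$, so the restriction of $f$ along $A_0 \hookrightarrow A$ is again a strict epimorphism, now in $\textcat{A}_0\textcat{-Mod}$. The natural isomorphism above identifies the map of Hom sets with
\begin{equation*}
    \textnormal{Hom}_{\textcat{A}_0\textcat{-Mod}}(M, N_1) \to \textnormal{Hom}_{\textcat{A}_0\textcat{-Mod}}(M, N_2),
\end{equation*}
which is surjective by projectivity of $M$ as an $A_0$-module. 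Hence $A \otimes_{A_0} M$ is projective in $\textcat{A-Mod}$.

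Finally, since $A \otimes_{A_0} M$ carries a natural grading $(A \otimes_{A_0} M)_n = A_n \otimes_{A_0} M$ making it into a graded $A$-module, Lemma \ref{gradedprojective} upgrades this to projectivity in $\textcat{grA-Mod}$. There is no real obstacle here: the argument is a direct transcription of the classical ring-theoretic fact, and the only mildly technical point is confirming that strict epimorphisms pass correctly through the change-of-rings forgetful functor, which is immediate from the description of strictness in module categories over $\mathcal{E}$.
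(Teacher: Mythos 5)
Your proof is correct and follows essentially the same route as the paper's: apply Theorem \ref{tensorhomadjunction} to identify $\textnormal{Hom}_{\textcat{A-Mod}}(A\otimes_{A_0}M,-)$ with $\textnormal{Hom}_{\textcat{A}_0\textcat{-Mod}}(M,\texthom{Hom}_{\textcat{A-Mod}}(A,-))$, simplify via Lemma \ref{usefulfacts}, and conclude from projectivity of $M$ over $A_0$. Your extra remarks on strictness being detected in $\mathcal{E}$ and on the graded upgrade are fine but not needed for the statement as written.
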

\begin{proof}
We want to show that the functor 
\begin{equation*}
    \textnormal{Hom}_{\textcat{A-Mod}}(A\otimes_{A_0} M,-):\textbf{A-Mod}\rightarrow{\textcat{Ab}}
\end{equation*} maps strict epimorphisms to surjections. Indeed, by Theorem \ref{tensorhomadjunction}, 
\begin{equation*}
    \textnormal{Hom}_{\textcat{A-Mod}}(A\otimes_{A_0} M,-)\simeq \textnormal{Hom}_{\textcat{A}_0\textcat{-Mod}}(M,\texthom{Hom}_\textcat{A-Mod}(A,-))
\end{equation*}Now, by Lemma \ref{usefulfacts},  $\texthom{Hom}_{\textcat{A-Mod}}(A,-)$ defines an isomorphism. Since $M$ is a projective $A_0$-module, we see that the functor must send strict epimorphisms to surjections. 
\end{proof}

\begin{remark}
If $M$ is a graded projective left $A_0$-module, then it is easy to see that $A\otimes_{A_0} M$ is a graded projective left $A$-module with 
\begin{equation*}
    (A\otimes_{A_0} M)_n=\bigoplus_{i+j=n}A_i\otimes_{A_0} M_j=\bigoplus_{i+j=n}\textnormal{Coeq}(A_i\otimes A_0\otimes M_j\rightrightarrows{A_i\otimes M_j})
\end{equation*}
\end{remark}

\begin{lem}
If $\textcat{A}_0\textcat{-Mod}$ has enough projectives, then the category $\textcat{grA-Mod}$ has enough projectives.
\end{lem}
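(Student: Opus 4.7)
The plan is to imitate the standard construction of projective covers for graded modules: for each degree $n$, cover the $A_0$-module $M_n$ by a projective $A_0$-module, induce up to a graded projective $A$-module via $A\otimes_{A_0}(-)$, and assemble these into a single coproduct.

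More precisely, fix a graded left $A$-module $M$. Each component $M_n$ is naturally a left $A_0$-module via the action $\lambda_{0,n}:A_0\otimes M_n\to M_n$. By hypothesis, choose for each $n\in\mathbb{Z}$ a projective object $P_n$ of $\textcat{A}_0\textcat{-Mod}$ together with a strict epimorphism $\phi_n:P_n\twoheadrightarrow M_n$. Regard $P_n$ as a graded $A_0$-module concentrated in degree $n$ and form $A\otimes_{A_0}P_n$: by Proposition \ref{gradedtensor} combined with Lemma \ref{gradedprojective}, and the remark following Proposition \ref{gradedtensor}, this is a graded projective left $A$-module whose degree-$m$ component is $A_{m-n}\otimes_{A_0}P_n$. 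Via the tensor-hom adjunction of Corollary \ref{tensorhomcor2}, or equivalently by composing $1_A\otimes\phi_n$ with the action map of Proposition \ref{strictmodulemap}, extend $\phi_n$ to a graded $A$-module morphism $\psi_n:A\otimes_{A_0}P_n\to M$.

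Next, set $P:=\bigoplus_{n\in\mathbb{Z}}A\otimes_{A_0}P_n$, which exists since $\mathcal{E}$ is bicomplete, and let $\psi:=\bigoplus_n\psi_n:P\to M$. I claim $P$ is projective in $\textcat{grA-Mod}$. Indeed, there is a natural identification
\begin{equation*}
\textnormal{Hom}_{\textcat{grA-Mod}}(P,-)\simeq\prod_{n\in\mathbb{Z}}\textnormal{Hom}_{\textcat{grA-Mod}}(A\otimes_{A_0}P_n,-),
\end{equation*}
and since each factor sends strict epimorphisms to surjections of abelian groups, and arbitrary products of surjections in $\textcat{Ab}$ remain surjections, the same holds for the product.

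It remains to show $\psi$ is a strict epimorphism. Since strictness in $\textcat{grA-Mod}$ is detected degreewise in $\mathcal{E}$, it suffices to verify that for each $m$ the morphism $\psi_m:\bigoplus_n A_{m-n}\otimes_{A_0}P_n\to M_m$ is a strict epimorphism. The summand at $n=m$ is $A_0\otimes_{A_0}P_m\simeq P_m$ by Lemma \ref{usefulfacts}, and $\psi_m$ restricted to this summand is precisely $\phi_m$, which is already a strict epimorphism. Factoring $\phi_m$ as the composition $P_m\hookrightarrow\bigoplus_n A_{m-n}\otimes_{A_0}P_n\xrightarrow{\psi_m}M_m$, Proposition \ref{monoepimorphismtriangle} forces $\psi_m$ to be a strict epimorphism. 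The only mild subtlety is confirming that coproducts of projectives remain projective in this quasi-abelian setting, which is dispatched by the Hom-coproduct calculation in the previous paragraph.
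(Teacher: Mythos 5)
Your proof is correct and follows essentially the same route as the paper: cover each $M_n$ by a projective $A_0$-module, induce up via $A\otimes_{A_0}(-)$ to get graded projective $A$-modules, and assemble the coproduct into a strict graded epimorphism onto $M$. You are in fact slightly more careful than the paper at two points it glosses over — that the coproduct of projectives is again projective, and that the assembled map is a strict epimorphism degreewise via Proposition \ref{monoepimorphismtriangle} — both handled correctly.
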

\begin{proof}
Suppose that $M$ is a graded left $A$-module, $M=\bigoplus_{i\in \mathbb{Z}}M_i$. Consider each component $M_i$ as a left $A_0$-module. Then, since $\textcat{A}_0\textcat{-Mod}$ has enough projectives, there exists a projective left $A_0$-module $P_i$ and an epimorphism $P_i\rightarrow{M_i}$. We note that $A\otimes_{A_0} P_i$ is a projective left $A$-module by Proposition \ref{gradedtensor}, and there exists a strict epimorphism $A\otimes_{A_0} P_i\rightarrow{M_i}$ given by the composition of the strict epimorphisms $A\otimes_{A_0} P_i\rightarrow{P_i}$ and $P_i\rightarrow{M_i}$. Taking the coproduct of all of these $A\otimes_{A_0}P_i$ in the category of graded $A$-modules gives us a graded projective $A$-module $\bigoplus_{i\in \mathbb{Z}}A\otimes_{A_0} P_i$ along with a strict graded epimorphism $\bigoplus_{i\in \mathbb{Z}}A\otimes_{A_0} P_i\rightarrow{M}$.

\end{proof}

\section{Koszul Monoids}\label{koszulmonoids}

For a positively graded monoid $A$, we let $A_{>0}=\bigoplus_{i> 0} A_i$. There is a short strictly exact sequence
\begin{equation*}
    0\rightarrow{A_{>0}}\xrightarrow{\iota}{A}\xrightarrow{\pi}{A_0}\rightarrow{0}
\end{equation*}We can consider $A_0$ as a graded left $A$-module with, for each $j$, the action $A_j\otimes A_0\rightarrow{A_0}$ defined to be the composition \begin{equation*}
    A_j\otimes A_0\xrightarrow{\mu_{j,0}}{A_j}\hookrightarrow{A}\twoheadrightarrow{ Coker(A_{>0}\xrightarrow{\iota}{A})}\simeq A_0
\end{equation*}

\begin{defn}
 We will say that a graded left $A$-module $M$ is \textbf{generated by its degree $i$ component over $A_0$} if the map 
 \begin{equation*}
     A\otimes_{A_0} M_i\twoheadrightarrow{M}
 \end{equation*}is a strict epimorphism.
\end{defn}
\begin{defn}
A graded left $A$-module is called \textbf{pure of weight n} if it is concentrated in degree $n$, i.e. $M=M_{n}$.
\end{defn}For a graded left $A$-module $M$ we define the grading shifts by 
\begin{equation*}
    (M\langle n\rangle)_i=M_{i-n}
\end{equation*} We can consider $A_0$ to be a pure graded left $A$-module concentrated in degree $0$ by defining the action of $A_i$ on $A_0$ to be $0$ unless $i=0$. We remark that for any $A$-modules $M$ and $N$ pure of weights $m,n$ respectively, we have that $\textnormal{Hom}_{\textcat{grA-Mod}}(M,N)=0$ unless $m=n$.

\begin{prop}\label{purezero}
If $M$ is an $A$-module generated by its degree $i$ component over $A_0$ and $N$ is pure of weight $n$, then 
\begin{equation*}
    \textnormal{Hom}_{\textcat{grA-Mod}}(M,N)=0
\end{equation*}unless $i=n$
\end{prop}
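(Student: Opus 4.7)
The plan is to reduce the claim to the remark already stated just before the proposition, namely that $\textnormal{Hom}_{\textcat{grA-Mod}}(M',N')=0$ whenever $M'$ and $N'$ are pure of distinct weights. Since $M$ is generated by its degree $i$ component over $A_0$, by definition there is a strict graded epimorphism
\begin{equation*}
    \varphi: A\otimes_{A_0}M_i \twoheadrightarrow M,
\end{equation*}
where $M_i$ is regarded as a graded $A_0$-module pure of weight $i$. Applying the left exact functor $\textnormal{Hom}_{\textcat{grA-Mod}}(-,N)$ to $\varphi$ and using that strict epimorphisms go to monomorphisms under contravariant Hom, I get an injection
\begin{equation*}
    \textnormal{Hom}_{\textcat{grA-Mod}}(M,N) \hookrightarrow \textnormal{Hom}_{\textcat{grA-Mod}}(A\otimes_{A_0}M_i,N).
\end{equation*}
So it is enough to show that the right-hand side vanishes unless $i=n$.

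For this I invoke the tensor-hom adjunction of Corollary \ref{tensorhomcor2} with $B=A$ and the base monoid $A_0$, applied in the graded setting: since $A$ is a graded right $A_0$-module and $N$ is both a graded $A$- and $A_0$-module, there is a natural isomorphism
\begin{equation*}
    \textnormal{Hom}_{\textcat{grA-Mod}}(A\otimes_{A_0}M_i,N)\simeq \textnormal{Hom}_{\textcat{grA}_0\textcat{-Mod}}(M_i,N).
\end{equation*}

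Now $M_i$ is a graded $A_0$-module pure of weight $i$ and $N$ is a graded $A_0$-module pure of weight $n$ (since an $A_0$-action is part of the $A$-action, and $N$ is concentrated in degree $n$). By the pure-weight observation recalled just above the statement of the proposition, this last Hom vanishes unless $i=n$, which gives the result.

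The only point requiring a little care is checking that the tensor-hom adjunction of Corollary \ref{tensorhomcor2} lifts to graded modules with the gradings as defined in the Categories of Modules subsection; this is a routine consequence of the construction of $\otimes_{A_0}$ and $\texthom{Hom}$ degreewise, and it is the main (mild) obstacle to making the argument clean.
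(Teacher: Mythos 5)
Your argument is correct and matches the paper's proof essentially step for step: the strict epimorphism $A\otimes_{A_0}M_i\twoheadrightarrow M$ gives an injection of Hom groups, the tensor-hom adjunction identifies $\textnormal{Hom}_{\textcat{grA-Mod}}(A\otimes_{A_0}M_i,N)$ with $\textnormal{Hom}_{\textcat{grA}_0\textcat{-Mod}}(M_i,N_n)$, and the pure-weight observation finishes it. The paper is equally terse about the graded form of the adjunction, so your closing caveat is the same implicit step the authors also leave to the reader.
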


\begin{proof}
We note that, since there is a strict epimorphism $A\otimes_{A_0} M_i\twoheadrightarrow{M}$, there is an injection
\begin{equation*}
    {\textnormal{Hom}_{\textcat{grA-Mod}}(M,N)}\hookrightarrow\textnormal{Hom}_{\textcat{grA-Mod}}(A\otimes_{A_0}M_i,N)
\end{equation*}The result then follows since
\begin{equation*}
    \textnormal{Hom}_{\textcat{grA-Mod}}(A\otimes_{A_0}M_i,N)\simeq\textnormal{Hom}_{\textcat{grA}_0\textcat{-Mod}}(M_i,N_n)=0 
\end{equation*}if $i\neq n$. 
\end{proof}

In \cite{beilinsonginzburg96}, Koszul duality theory is developed for $A_0$ a semisimple ring. In our category $\mathcal{E}$, it is difficult to develop a satisfactory analogous definition of a semisimple object. Hence, we will develop our theory for $A_0$ satisfying the following more general conditions, and call $A$ a pre-Koszul monoid.

\begin{defn}\label{pre-koszul}We will say that a positively graded monoid $A$ is \textbf{pre-Koszul} if $A_0$ is injective as a module over itself, each $A_i$ is projective as an $A_0$-module, and, for any graded $A$-module $M$ living only in degrees $\geq i$, whenever \begin{equation*}
    \textnormal{Hom}_{\textcat{grA-Mod}}(M,A_0\langle n\rangle)=0
\end{equation*} unless $n=i$, then $M$ is generated by its $i^{th}$ component over $A_0$.
\end{defn}
\begin{remark}
We note that our definition of being pre-Koszul differs from that given by Priddy \cite[Section 2]{priddy70}.
\end{remark}\begin{prop}\label{semisimpleprekoszul}
In the category of rings, if $A_0$ is a semisimple ring then $A$ is pre-Koszul.
\end{prop}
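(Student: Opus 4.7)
The plan is to verify the three clauses of Definition \ref{pre-koszul} in turn. The first two are immediate from Artin--Wedderburn: every module over a semisimple ring is both projective and injective, so $A_0$ is injective as a module over itself and each $A_i$, viewed as an $A_0$-module, is projective. All of the work therefore lies in the third clause, which asks that whenever $M$ is a graded $A$-module concentrated in degrees $\geq i$ with $\textnormal{Hom}_{\textcat{grA-Mod}}(M,A_0\langle n\rangle) = 0$ for all $n\neq i$, the canonical map $A\otimes_{A_0}M_i\to M$ is a (strict) epimorphism.

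To prove this I would argue by contradiction. Write $N\subseteq M$ for the image of $A\otimes_{A_0}M_i\to M$ and $Q=M/N$. The degree-$i$ part of this map is the isomorphism $A_0\otimes_{A_0}M_i\xrightarrow{\sim}M_i$, so $N_i=M_i$ and therefore $Q_k=0$ for all $k\leq i$. If $Q\neq 0$, pick the smallest $j$ with $Q_j\neq 0$; then $j>i$. Since $A_0$ is semisimple Artinian, every simple left $A_0$-module occurs as a direct summand of $A_0$, so any nonzero $A_0$-module (in particular $Q_j$) admits a nonzero $A_0$-linear map $\phi:Q_j\to A_0$.

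The remaining step is to lift $\phi$ to a graded $A$-module map $\widetilde\phi:Q\to A_0\langle j\rangle$ by setting $\widetilde\phi_j=\phi$ and $\widetilde\phi_k=0$ otherwise. This is $A$-equivariant essentially for formal reasons: $A_0\langle j\rangle$ is pure of weight $j$ with $A_{>0}$ acting as zero, and because $Q$ is concentrated in degrees $\geq j$, no element of $Q$ is carried into degree $j$ by the $A_{>0}$-action, so the only nontrivial linearity condition to check reduces to the $A_0$-linearity of $\phi$. Composing $\widetilde\phi$ with the quotient $M\twoheadrightarrow Q$ produces a nonzero element of $\textnormal{Hom}_{\textcat{grA-Mod}}(M,A_0\langle j\rangle)$ with $j>i$, contradicting the hypothesis. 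The main point that genuinely needs care is precisely the $A$-equivariance of $\widetilde\phi$, i.e.\ the use of the lowest-degree choice of $j$; everything else is either formal or standard semisimple ring theory.
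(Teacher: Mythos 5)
Your proof is correct and follows essentially the same strategy as the paper's: locate the least degree $j>i$ where generation by $M_i$ fails, use semisimplicity of $A_0$ to produce a nonzero graded $A$-module map into $A_0\langle j\rangle$, and contradict the Hom-vanishing hypothesis. The only (minor) difference is that you first pass to the quotient $Q=M/\mathrm{im}(A\otimes_{A_0}M_i)$, which makes the $A$-equivariance of the extension-by-zero automatic, whereas the paper works directly inside $M_j$ with a complement of $A_0m_j$; your formulation is, if anything, slightly cleaner on that point.
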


\begin{proof}
Indeed, we note that all $A_0$-modules are projective and that $A_0$ is injective over itself. It therefore suffices to prove the second condition. Suppose that we have a graded $A$-module $M$ living only in degrees $\geq i$. We will show that if \begin{equation*}
    \textnormal{Hom}_{\textcat{grA-Mod}}(M,A_0\langle n\rangle)=0
    \end{equation*}unless $n=i$, then there exists a surjection $A\otimes_{A_0} M_i\twoheadrightarrow{M}$. Choose the least $j$ such that there exists $m_j\in M_j$ not in the image of the map $A_{j-i}\otimes_{A_0} M_i\rightarrow{M_j}$. We note that $j>i$. We consider $M_j$ as a pure $A_0$-module of weight $j$ and consider the simple $A_0$-submodule $A_0 m_j$ of $M_j$. Since pure modules over $A_0$ are semisimple,  $M_j$ is semisimple, and hence completely reducible. Therefore, there exists an $A_0$-submodule $N$ of $M_j$ such that 
\begin{equation*}
    M_j=A_0m_j\oplus N
\end{equation*}We define an $A$-module morphism $f:M\rightarrow{A_0\langle j\rangle}$ as follows 
\begin{equation*}
    f(m)=\begin{cases}a &\text{if }m=am_j\in A_0m_j\text{ for some }a\in A_0\\
    0 &\text{otherwise}\end{cases}
\end{equation*}and extend linearly. Then, we see that this map is graded since $f(M_k)\subseteq A_0\langle{j}\rangle_k$. This map is non-zero which contradicts that 
\begin{equation*}
    \textnormal{Hom}_{\textcat{grA-Mod}}(M,A_0\langle j\rangle)=0
\end{equation*}Hence, there exists a surjection $A\otimes_{A_0} M_i\twoheadrightarrow{M}$.
\end{proof}

\begin{remark}
We note that if $A$ is pre-Koszul in the category of rings, then $A_0$ is not necessarily semisimple. Indeed, not every module over $A_0$ is guaranteed to be projective. See \cite[Example 4.4]{greenreitensolberg} for an example of a ring $A$ such that $A_0$ is not semisimple but $A$ is pre-Koszul.
\end{remark}

\begin{defn}
A \textbf{Koszul monoid} in $\mathcal{E}$ is a positively graded pre-Koszul monoid $A$ such that $A_0$, considered as a graded left $A$-module, admits a graded projective strict resolution of $A$-modules.
 \begin{equation*}
     \dots \rightarrow{P^2}\rightarrow{P^1}\rightarrow{P^0}\twoheadrightarrow{A_0}
 \end{equation*}with each $P^i$ generated by its degree $i$ component over $A_0$.
\end{defn}

\begin{remark}
The condition on $P^i$ says that the diagonal part of the resolution generates the rest.
\end{remark}
{\begin{exmp}
    We note that in the category of abelian groups, any Koszul ring in the sense of \cite[Definition 1.1.2]{beilinsonginzburg96} is a Koszul monoid. 
\end{exmp}}

{\begin{exmp}
    Suppose that we have a Koszul algebra $R$ over a field $k$ in the sense of \cite[Definition 1.1.2]{beilinsonginzburg96}. We consider the fine bornology \cite[Definition 1.11]{meyer07} on its underlying vector space such that $R$ becomes a complete bornological ring \cite[Example 1.27]{meyer07}. Suppose further that $k$ is self-injective in $\textcat{CBorn}_k$, e.g. $k=\mathbb{C}, \mathbb{R}$. Then, $R$ is a Koszul monoid in $\textcat{CBorn}_k$ since the functor $\textcat{Vect}_k\rightarrow{\textcat{CBorn}_k}$ is fully faithful and exact \cite[Example 1.77]{meyer07}.
\end{exmp}}

We now discuss in what sense a Koszul monoid is `as close to being semisimple' as a graded monoid can be. For the rest of this section, we fix a positively graded monoid $A$ in $\mathcal{E}$.

\begin{prop}\label{propprojresolutionforM}
Let $M\in\textcat{grA-Mod}$ be a projective left $A_0$-module living only in degrees $\geq n$. Then, $M$ admits a graded projective strict resolution of $A$-modules
\begin{equation*}
    \dots\rightarrow{P^2}\rightarrow{P^1}\rightarrow{P^0}\twoheadrightarrow{M}
\end{equation*}such that $P^i$ lives only in degrees $\geq n+i$. So $P^i=\bigoplus_{j\geq n+i}P^i_j$.
\end{prop}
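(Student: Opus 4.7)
The plan is to build the resolution inductively, ensuring at each stage that the kernel of the newly constructed strict epimorphism lies one degree higher than its source, so that the next projective cover fits in the required degree range.

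For the base step, I would view $M$ itself as a graded left $A_0$-module. Since $M$ is projective over $A_0$, each $M_j$ is a projective $A_0$-module, so $M$ is a graded projective left $A_0$-module living in degrees $\geq n$. By Proposition \ref{gradedtensor} together with the remark following it, $P^0 := A \otimes_{A_0} M$ is a graded projective left $A$-module, and since $A$ lives in non-negative degrees while $M$ lives in degrees $\geq n$, the module $P^0$ lives in degrees $\geq n$. The canonical action map $\lambda : P^0 \to M$ is a strict graded epimorphism by Proposition \ref{strictmodulemap}. The key observation is that in degree $n$ one has $(P^0)_n = A_0 \otimes_{A_0} M_n \simeq M_n$ and $\lambda_n$ is the identity under this identification, because $M$ has no components in degrees $< n$. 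Consequently, $K^0 := \textnormal{Ker}(\lambda)$ satisfies $(K^0)_n = 0$ and lives in degrees $\geq n+1$.

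For the inductive step, the same construction applied to $K^0$ (with $n$ replaced by $n+1$) produces $P^1$, and iterating yields $P^i$ along with $K^i$ living in degrees $\geq n+i+1$. Splicing the resulting short strictly exact sequences $0 \to K^i \to P^i \to K^{i-1} \to 0$ gives a complex $\dots \to P^1 \to P^0 \twoheadrightarrow M$ with each $P^i$ a graded projective $A$-module living in degrees $\geq n+i$. Each differential $P^i \to P^{i-1}$ factors as a strict epimorphism $P^i \twoheadrightarrow K^{i-1}$ followed by the strict monomorphism $K^{i-1} \hookrightarrow P^{i-1}$, hence is strict, and the complex is strictly exact by construction.

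The main obstacle is verifying that each $K^i$ remains a projective left $A_0$-module so that the induction can continue. For $K^0$, in each degree $k$ the sequence
\begin{equation*}
0 \to (K^0)_k \to (P^0)_k \to M_k \to 0
\end{equation*}
is strictly exact in $\textcat{A}_0\textcat{-Mod}$ and splits because $M_k$ is a projective $A_0$-module (by Lemma \ref{projectivesplit}), exhibiting $(K^0)_k$ as a direct summand of $(P^0)_k = \bigoplus_{j=n}^{k} A_{k-j} \otimes_{A_0} M_j$. A direct summand of a projective object in a quasi-abelian category is again projective, so it remains only to verify that each summand $A_{k-j} \otimes_{A_0} M_j$ is a projective left $A_0$-module; this is the key technical point and follows from the tensor-hom adjunction of Theorem \ref{tensorhomadjunction} applied to the projectivity of both factors over $A_0$.
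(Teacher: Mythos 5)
Your construction is essentially the paper's own proof: take $P^0 = A\otimes_{A_0}M$ with the strict graded epimorphism of Proposition \ref{strictmodulemap}, observe that the kernel lives one degree higher because the map is an isomorphism in the bottom degree, and iterate, with each differential strict as a composite of a strict epimorphism onto the kernel and a strict monomorphism. The only cosmetic difference is that you re-derive the projectivity of the kernels over $A_0$ degree by degree via the splitting coming from projectivity of the target, whereas the paper invokes Lemma \ref{kernelprojective} once; both rest on the same splitting argument.
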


\begin{proof}We may assume, without any loss of generality, that $n=0$. We consider the module $P^0=A\otimes_{A_0} M$. This is a graded projective $A$-module by Proposition \ref{gradedtensor} and, moreover, it lives only in positive degree. By Proposition \ref{strictmodulemap}, there exists a strict graded epimorphism $d^0:P^0\rightarrow{M}$. This fits into a short strictly exact sequence
\begin{equation*}
    0\rightarrow{K^0}\xrightarrow{\iota^0}{P^0}\xrightarrow{d^0}{M}\rightarrow{0}
\end{equation*} with $K^0=\textnormal{Ker}(d^0)$, a graded $A_0$-module, and $\iota^0:K^0\rightarrow{P^0}$ the inclusion map, a strict monomorphism. Since $P^0_0=A_0\otimes_{A_0} M_0\simeq M_0$ we see that in degree $0$, $d^0$ is a monomorphism. Hence, $K^0$ lives only in degree $\geq 1$. 

We now let $P^1=A\otimes_{A_0}K^0$. Since $K^0$ is the kernel of a strict epimorphism of projective $A_0$-modules, it is a projective $A_0$-module by Lemma \ref{kernelprojective}. Hence, $P^1$ is a projective $A$-module by Proposition \ref{gradedtensor}. We note that $P^1$ is graded and lives only in degrees $\geq 1$. We construct a map $d^1:P^1\rightarrow{P^0}$ as the composition of the map $P^1\rightarrow{K^0}$ and the inclusion map $\iota^0:K^0\rightarrow{P^0}$. This map $d^1$ is strict since it a composition of a strict monomorphism and a strict epimorphism. The sequence
\begin{equation*}
    P^1\xrightarrow{d^1}{P^0}\xrightarrow{d^0}{M}\rightarrow{0}
\end{equation*} is strictly exact.

We can continue in this way to construct our desired projective resolution. Indeed, suppose we have a strictly exact sequence of graded projective $A$-modules defined up to degree $i$
\begin{equation*}
    P^i\xrightarrow{d^i}{\dots}\rightarrow{P^1}\xrightarrow{d^1}{P^0}\xrightarrow{d^0}{M}\rightarrow{0}
\end{equation*} with each $P^j$ living only in degree $\geq j$. Suppose also that the sequence is constructed such that $P^j=A\otimes_{A_0} K^{j-1}$ with $K^{j-1}$ the graded $A_0$-module $\textnormal{Ker}(d^{j-1}:P^{j-1}\twoheadrightarrow{\textnormal{Im}(d^{j-1})}\simeq K^{j-2})$. We let $K^{i+1}=\textnormal{Ker}(d^{i+1}:P^{i}\twoheadrightarrow{\textnormal{Im}(d^i)}\simeq K^{i-1})$, which is a projective $A_0$-module, and define $P^{i+1}=A\otimes_{A_0} K^i$. This is a projective graded $A$-module living only in degrees $\geq i+1$. The differential $d^{i+1}:P^{i+1}\rightarrow{P^i}$ is the strict map defined as the composition of the map $P^{i+1}=A\otimes_{A_0}K^i\rightarrow{K^i}$ and the inclusion $K^i\rightarrow{P^i}$. The extended sequence
\begin{equation*}
    P^{i+1}\xrightarrow{d^{i+1}}{P^i}\xrightarrow{d^i}{\dots}\rightarrow{P^1}\xrightarrow{d^1}{P^0}\xrightarrow{d^0}{M}\rightarrow{0}
\end{equation*}is strictly exact. 

\end{proof}

\begin{cor}
\label{projresolutionforM}
If $M\in\textcat{grA-Mod}$ is a pure projective left $A_0$-module of weight $n$, then $M$ admits a graded projective strict resolution of $A$-modules
\begin{equation*}
    \dots\rightarrow{ P^2}\rightarrow{P^1}\rightarrow{P^0}\twoheadrightarrow{M}
\end{equation*} such that $P^i$ lives only in degrees $\geq n+i$.
\end{cor}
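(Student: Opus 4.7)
The plan is to observe that this corollary is an immediate specialisation of Proposition \ref{propprojresolutionforM}. A pure projective left $A_0$-module $M$ of weight $n$ is, by definition, concentrated in degree $n$, so it is in particular a graded left $A$-module whose underlying $A_0$-module is projective and which lives only in degrees $\geq n$ (indeed only in degree $n$ itself). Therefore the hypotheses of Proposition \ref{propprojresolutionforM} are satisfied, and applying it yields a graded projective strict resolution $\dots \rightarrow P^2 \rightarrow P^1 \rightarrow P^0 \twoheadrightarrow M$ of $A$-modules with each $P^i$ living only in degrees $\geq n+i$, which is exactly the desired conclusion.

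No additional argument is needed beyond invoking the proposition; in particular, there is no extra verification to perform because the stronger hypothesis (pure of weight $n$) trivially implies the weaker one (living in degrees $\geq n$). The only potential subtlety, which is already handled by the proposition, is ensuring that when we start the inductive construction with $P^0 = A \otimes_{A_0} M$, the fact that $M = M_n$ is concentrated in degree $n$ forces $P^0$ to live in degrees $\geq n$ and $K^0 = \textnormal{Ker}(d^0)$ to live in degrees $\geq n+1$, and so on inductively. Since this degree-shifting behaviour is built into the proof of the proposition, no further work is required.
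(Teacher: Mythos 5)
Your proposal is correct and matches the paper's intent exactly: the paper states this corollary without proof, treating it as the immediate specialisation of Proposition \ref{propprojresolutionforM} to the case where $M$ is concentrated in degree $n$ (hence in particular lives only in degrees $\geq n$). Nothing further is needed.
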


\begin{cor}Let $M,N\in \textcat{grA-Mod}$ be pure of weights $m,n$, with $M$ projective as a left module over $A_0$. Then, 
\begin{equation*}
    \textnormal{Ext}_{\textcat{grA-Mod}}^i(M,N)=0
\end{equation*}for $i>n-m$.
\end{cor}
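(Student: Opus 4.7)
The plan is to invoke Corollary \ref{projresolutionforM} to build a graded projective strict resolution of $M$ with controlled degree support, then compute Ext from it by a degree-counting argument. Since $M$ is pure of weight $m$ and projective as a left $A_0$-module, Corollary \ref{projresolutionforM} produces a resolution
\begin{equation*}
    \cdots \rightarrow P^2 \rightarrow P^1 \rightarrow P^0 \twoheadrightarrow M
\end{equation*}
in $\textcat{grA-Mod}$ with each $P^i$ living only in degrees $\geq m + i$. By the definition of Ext recalled in the previous section,
\begin{equation*}
    \textnormal{Ext}^i_{\textcat{grA-Mod}}(M, N) = H^i\bigl(\textnormal{Hom}_{\textcat{grA-Mod}}(P^\bullet, N)\bigr),
\end{equation*}
so it suffices to show that the Hom term itself vanishes whenever $i > n - m$.

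The vanishing is essentially bookkeeping with the grading. A morphism in $\textcat{grA-Mod}$ from $P^i$ to $N$ is a family of degree-preserving component maps $\{P^i_k \rightarrow N_k\}_{k \in \mathbb{Z}}$, and since $N$ is pure of weight $n$ the target is zero in every degree $k \neq n$. Only the component $P^i_n \rightarrow N_n$ could be nonzero. But when $i > n - m$ one has $n < m + i$, so the degree-support condition on $P^i$ forces $P^i_n = 0$. Hence $\textnormal{Hom}_{\textcat{grA-Mod}}(P^i, N) = 0$, and the $i$-th cohomology of the Hom complex vanishes, giving the claim.

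I do not foresee a genuine obstacle here, as the real content has already been packaged into Proposition \ref{propprojresolutionforM}. The only subtlety worth flagging is that one must use a resolution with the degree-growth property $P^i_{<m+i} = 0$, rather than an arbitrary graded projective resolution, which is precisely what the iterative construction in that proposition delivers.
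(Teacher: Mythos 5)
Your proof is correct and follows essentially the same route as the paper: invoke Corollary \ref{projresolutionforM} to obtain a graded projective strict resolution with $P^i$ supported in degrees $\geq m+i$, then observe that purity of $N$ in weight $n$ kills $\textnormal{Hom}_{\textcat{grA-Mod}}(P^i,N)$ once $i>n-m$. The only cosmetic difference is that the paper normalises to $m=0$ first, whereas you carry $m$ through explicitly.
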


\begin{proof}
Without loss of generality we may assume $m=0$. By Corollary \ref{projresolutionforM}, $M$ admits a graded projective resolution of $A$-modules
\begin{equation*}
    \dots\rightarrow{P^2}\rightarrow{P^1}\rightarrow{P^0}\twoheadrightarrow{M}
\end{equation*} such that each $P^i$ lives only in degrees $\geq i$, i.e. $P^i=\bigoplus_{j\geq i}P^i_j$. We examine the complex $\textnormal{Hom}_{\textcat{grA-Mod}}(P^\bullet,N)$ which has objects
\begin{equation*}
    \textnormal{Hom}_{\textcat{grA-Mod}}(P^i,N)=\textnormal{Hom}_{\textcat{grA-Mod}}(\bigoplus_{j\geq i}P^i_j, N)
\end{equation*}Since $N$ is pure of weight $n$ we see that $\textnormal{Hom}_{\textcat{grA-Mod}}(\bigoplus_{j\geq i}P^i_j, N_{n})=0$ for $i>n$. Hence, 
\begin{equation*}
    \textnormal{Ext}_{\textcat{grA-Mod}}^i(M,N)=0
\end{equation*}for $i>n$.
\end{proof}

\begin{prop}
If $A$ is a Koszul monoid then, for any pure left $A$-module $M$ of weight $n$, we have $\textnormal{Ext}_{\textcat{grA-Mod}}^i(A_0,M)=0$ unless $i=n$.
\end{prop}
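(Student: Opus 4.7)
The plan is to compute $\textnormal{Ext}^i_{\textcat{grA-Mod}}(A_0, M)$ directly using the definition, via the graded projective strict resolution of $A_0$ provided by the Koszulity hypothesis, and observe that the $\textnormal{Hom}$-complex collapses to a single term.

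First I would invoke the definition of a Koszul monoid to fix a graded projective strict resolution
\[
\dots \rightarrow P^2 \rightarrow P^1 \rightarrow P^0 \twoheadrightarrow A_0
\]
in $\textcat{grA-Mod}$ such that each $P^i$ is generated by its degree $i$ component over $A_0$. Since $\textcat{grA-Mod}$ has enough projectives (as established earlier for the graded setting), this resolution computes $\textnormal{Ext}$, so
\[
\textnormal{Ext}^i_{\textcat{grA-Mod}}(A_0, M) = H^i\bigl(\textnormal{Hom}_{\textcat{grA-Mod}}(P^\bullet, M)\bigr).
\]

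Next I would apply Proposition \ref{purezero} term by term: because $P^i$ is generated by its degree $i$ component over $A_0$ and $M$ is pure of weight $n$, we get $\textnormal{Hom}_{\textcat{grA-Mod}}(P^i, M) = 0$ whenever $i \neq n$. Hence the cochain complex $\textnormal{Hom}_{\textcat{grA-Mod}}(P^\bullet, M)$ is concentrated in cohomological degree $n$, so its cohomology vanishes in every other degree.

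There is no real obstacle here; the result is essentially a direct consequence of the Koszul resolution condition combined with Proposition \ref{purezero}. The only thing one has to be slightly careful about is that the Koszul resolution is a \emph{strict} projective resolution (which is exactly what is needed for the definition of $\textnormal{Ext}$ via $\textnormal{Hom}$-complexes in the quasi-abelian setting) and that the grading shifts line up correctly, i.e.\ that "generated by its degree $i$ component" is precisely the hypothesis required to feed into Proposition \ref{purezero}. Both of these are immediate from the definitions given.
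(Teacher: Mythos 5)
Your proposal is correct and follows essentially the same route as the paper: take the Koszul resolution $P^\bullet\twoheadrightarrow A_0$, note that $\textnormal{Ext}^i$ is computed by $\textnormal{Hom}_{\textcat{grA-Mod}}(P^\bullet,M)$, and observe via Proposition \ref{purezero} that every term except the one in degree $n$ vanishes. The paper leaves the appeal to Proposition \ref{purezero} implicit, but otherwise the arguments coincide.
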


\begin{proof}
Suppose that $A$ is a Koszul monoid. Then, by definition, $A_0$ admits a graded projective strict resolution of $A$-modules.
 \begin{equation*}
     \dots \rightarrow{P^2}\rightarrow{P^1}\rightarrow{P^0}\twoheadrightarrow{A_0}
 \end{equation*}with each $P^i$ generated by its degree $i$ component over $A_0$. We see that 
 $\textnormal{Ext}_{\textcat{grA-Mod}}^i(A_0,M)$ is the $i^{th}$ cohomology object of the complex $\textnormal{Hom}_{\textcat{grA-Mod}}(P^\bullet,M)$. But, since $P^i$ is generated by its degree $i$ component and $M$ is pure of weight $n$, so $M=M_n$, we see that all the terms in the complex $\textnormal{Hom}_{\textcat{grA-Mod}}(P^\bullet,M)$ are zero other than $\textnormal{Hom}_{\textcat{grA-Mod}}(P^n,M)$. Thus, $\textnormal{Ext}_{\textcat{grA-Mod}}^i(A_0,M)=0$ unless $i=n$. 
\end{proof}

\begin{lem}\label{exthomkernel}
Suppose that there exists a graded projective strict exact sequence of $A$-modules 
\begin{equation*}
   P^i\xrightarrow{d^i}{\dots}\rightarrow{P^1}\xrightarrow{d^1}{P^0}\xrightarrow{d^0}{M}\rightarrow{0}
\end{equation*}such that $P^i$ is generated by its degree $i$ component over $A_0$. Then, if we let \begin{equation*}
K^i:=\textnormal{Ker}(d^i:P^i\twoheadrightarrow{\textnormal{Im}(d^i)})\end{equation*} we have that 
\begin{equation*}
    \textnormal{Ext}^{i+1}_{\textcat{grA-Mod}}(M,N)=\textnormal{Hom}_{\textcat{grA-Mod}}(K^i,N)
\end{equation*}for any pure $N\in\textcat{grA-Mod}$.
\end{lem}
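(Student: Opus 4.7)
The strategy is iterated dimension shifting through the partial resolution, ending with a single application of the long exact $\textnormal{Ext}$-sequence, and then invoking Proposition \ref{purezero} to kill the remaining obstruction.

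First, I would write $K^j := \textnormal{Ker}(d^j)$ for each $j = 0, 1, \dots, i$ and record the short strictly exact sequences extracted from the hypothesis. Strict exactness of the given sequence gives $\textnormal{Im}(d^0) = M$ and $\textnormal{Im}(d^j) = K^{j-1}$ for $1 \le j \le i$, hence the short strictly exact sequences
\begin{equation*}
0 \to K^0 \to P^0 \to M \to 0, \qquad 0 \to K^j \to P^j \to K^{j-1} \to 0 \quad (1 \le j \le i).
\end{equation*}
For the Ext groups themselves to exist, I would first extend the partial resolution to a full graded projective strict resolution of $M$ by iteratively choosing strict epimorphisms from graded projective $A$-modules onto $K^i$ and onwards, which is possible in $\textcat{grA-Mod}$ since it has enough projectives.

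Next, I would apply $\textnormal{Hom}_{\textcat{grA-Mod}}(-,N)$ to each short exact sequence above. Since each $P^j$ is graded projective, $\textnormal{Ext}^{k}_{\textcat{grA-Mod}}(P^j, N) = 0$ for $k \geq 1$, so the long exact sequences collapse to dimension shifts
\begin{equation*}
\textnormal{Ext}^{k+1}_{\textcat{grA-Mod}}(M,N) \simeq \textnormal{Ext}^{k}_{\textcat{grA-Mod}}(K^0, N), \quad \textnormal{Ext}^{k+1}_{\textcat{grA-Mod}}(K^{j-1},N) \simeq \textnormal{Ext}^{k}_{\textcat{grA-Mod}}(K^{j},N)
\end{equation*}
valid for $k \geq 1$. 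Chaining these $i$ times yields $\textnormal{Ext}^{i+1}_{\textcat{grA-Mod}}(M,N) \simeq \textnormal{Ext}^{1}_{\textcat{grA-Mod}}(K^{i-1},N)$.

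For the final step, I would apply $\textnormal{Hom}_{\textcat{grA-Mod}}(-,N)$ to the sequence $0 \to K^{i} \to P^{i} \to K^{i-1} \to 0$ and use projectivity of $P^{i}$ to obtain the exact sequence
\begin{equation*}
\textnormal{Hom}_{\textcat{grA-Mod}}(P^i, N) \to \textnormal{Hom}_{\textcat{grA-Mod}}(K^i, N) \to \textnormal{Ext}^{1}_{\textcat{grA-Mod}}(K^{i-1},N) \to 0.
\end{equation*}
Now the generation hypothesis enters decisively: because $P^{i}$ is generated by its degree-$i$ component over $A_0$, Proposition \ref{purezero} gives $\textnormal{Hom}_{\textcat{grA-Mod}}(P^i, N) = 0$ for pure $N$ of weight $\neq i$. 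The restriction map therefore has zero image, and combining with the dimension-shifting identification completes the proof.

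The main obstacle I anticipate is the borderline case where $N$ is pure of weight exactly $i$, since Proposition \ref{purezero} does not supply vanishing of $\textnormal{Hom}_{\textcat{grA-Mod}}(P^i,N)$ there. To handle it, I would argue that $K^{i}$ inherits from the construction the property of living in degrees strictly greater than $i$ (this is the shape of the syzygies produced by Proposition \ref{propprojresolutionforM}, where $P^{j}_{j} \simeq K^{j-1}_{j}$ forces $K^{j}_{j}=0$); then $\textnormal{Hom}_{\textcat{grA-Mod}}(K^{i},N)=0$ as well, and both sides of the claimed identity vanish.
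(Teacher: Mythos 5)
Your proof is correct and follows essentially the same route as the paper's: dimension shifting to identify $\textnormal{Ext}^{i+1}_{\textcat{grA-Mod}}(M,N)$ with the cokernel of $\textnormal{Hom}_{\textcat{grA-Mod}}(P^i,N)\rightarrow{\textnormal{Hom}_{\textcat{grA-Mod}}(K^i,N)}$, and then showing that this map vanishes for pure $N$ by combining the generation hypothesis on $P^i$ (Proposition \ref{purezero}, covering weights $\neq i$) with the fact that $K^i$ lives only in degrees $\geq i+1$ (covering weight $i$). The paper compresses your explicit two-case analysis into the single assertion that the restriction map is zero, but the content is identical.
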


\begin{proof}
By dimension shifting, we see that there is an exact sequence
\begin{equation*}
    0\rightarrow\textnormal{Hom}(\textnormal{Im}(d^i),N)\rightarrow{\textnormal{Hom}(P^i,N)}\rightarrow{\textnormal{Hom}(K^i,N)}\rightarrow{\textnormal{Ext}^{i+1}(M,N)}\rightarrow{0}
\end{equation*}Therefore, 
\begin{equation*}
    \textnormal{Ext}^{i+1}_{\textcat{grA-Mod}}(M,N)=\textnormal{Coker}(\textnormal{Hom}_{\textcat{grA-Mod}}(P^i,N)\rightarrow{\textnormal{Hom}_{\textcat{grA-Mod}}(K^i,N)})
\end{equation*}Now, since $P^i$ is generated by its degree $i$ component over $A_0$ and $K^i$ lives only in degrees $\geq i+1$, we see that $\textnormal{Hom}_{\textcat{grA-Mod}}(P^i,N)\rightarrow{\textnormal{Hom}_{\textcat{grA-Mod}}(K^i,N)}$ is the zero map, and hence
\begin{equation*}
    \textnormal{Ext}^{i+1}_{\textcat{grA-Mod}}(M,N)\simeq \textnormal{Hom}_{\textcat{grA-Mod}}(K^i,N)
\end{equation*}
\end{proof}

\begin{prop}\label{A0shiftkoszul}
{If $A$ is a pre-Koszul monoid} and $\textnormal{Ext}_{\textcat{grA-Mod}}^i(A_0,A_0\langle n\rangle)=0$ unless $i=n$, then $A$ is Koszul.
\end{prop}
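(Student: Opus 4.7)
The plan is to construct, by induction on $i$, a graded projective strict resolution
\[
\cdots \to P^2 \xrightarrow{d^2} P^1 \xrightarrow{d^1} P^0 \xrightarrow{d^0} A_0 \to 0
\]
in which each $P^i$ is generated by its degree $i$ component over $A_0$. Along the way I will track the auxiliary invariants that $K^i := \ker(d^i)$ lives only in degrees $\geq i+1$ and that $(K^i)_{i+1}$ is a projective $A_0$-module.

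For the base case, take $P^0 := A$ with $d^0 : A \twoheadrightarrow A_0$ the canonical augmentation from the start of Section \ref{koszulmonoids}. Clearly $P^0$ is generated by $(P^0)_0 = A_0$ over $A_0$, $K^0 = A_{>0}$ lives in degrees $\geq 1$, and $(K^0)_1 = A_1$ is projective over $A_0$ by the pre-Koszul hypothesis.

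For the inductive step, suppose the resolution has been built through $P^i$ with the stated invariants. The heart of the argument is to apply Lemma \ref{exthomkernel}: since each $P^j$ for $j \leq i$ is generated in degree $j$, the lemma yields
\[
\textnormal{Hom}_{\textcat{grA-Mod}}(K^i, A_0\langle n\rangle) \simeq \textnormal{Ext}^{i+1}_{\textcat{grA-Mod}}(A_0, A_0\langle n\rangle),
\]
which, by the hypothesis of the proposition, vanishes unless $n = i+1$. Since $K^i$ lives in degrees $\geq i+1$, the pre-Koszul condition (Definition \ref{pre-koszul}) then forces $K^i$ to be generated by $(K^i)_{i+1}$ over $A_0$, giving a strict epimorphism $A \otimes_{A_0} (K^i)_{i+1} \twoheadrightarrow K^i$. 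Using the inductive projectivity of $(K^i)_{i+1}$, set $P^{i+1} := A \otimes_{A_0} (K^i)_{i+1}$ with $(K^i)_{i+1}$ placed in degree $i+1$; by Proposition \ref{gradedtensor} this is graded projective and generated in degree $i+1$. Define $d^{i+1} : P^{i+1} \to P^i$ as the composition $P^{i+1} \twoheadrightarrow K^i \hookrightarrow P^i$, which is strict with image $\ker(d^i)$, so strict exactness at $P^i$ holds. In degree $i+1$, $d^{i+1}$ acts as the inclusion $(K^i)_{i+1} \hookrightarrow (P^i)_{i+1}$, so $K^{i+1}$ lives in degrees $\geq i+2$.

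The main obstacle I anticipate is sustaining the projectivity invariant across steps: to carry the induction forward, $(K^{i+1})_{i+2}$ must be projective over $A_0$. This kernel equals $\ker\bigl((P^{i+1})_{i+2} \twoheadrightarrow (K^i)_{i+2}\bigr)$, so by Lemma \ref{kernelprojective} it suffices to know that $(P^{i+1})_{i+2} = A_1 \otimes_{A_0} (K^i)_{i+1}$ and $(K^i)_{i+2}$ are projective $A_0$-modules. This is where one must lean on the fact that tensor products of projective $A_0$-modules remain projective in the ambient categories of interest, exactly as in the machinery underlying Proposition \ref{propprojresolutionforM}, so that a parallel inductive bookkeeping shows every $(K^i)_j$ with $j \geq i+1$ remains projective over $A_0$.
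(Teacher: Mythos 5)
Your proposal is correct and follows essentially the same route as the paper's own proof: the same inductive construction with $P^{i+1}=A\otimes_{A_0}(K^i)_{i+1}$, the same appeal to Lemma \ref{exthomkernel} to identify $\textnormal{Hom}_{\textcat{grA-Mod}}(K^i,A_0\langle n\rangle)$ with $\textnormal{Ext}^{i+1}_{\textcat{grA-Mod}}(A_0,A_0\langle n\rangle)$, and the same use of the pre-Koszul condition to conclude that $K^i$ is generated in degree $i+1$. Your explicit bookkeeping of the projectivity of the kernels over $A_0$ (via Lemma \ref{kernelprojective} and closure of projectives under tensor product) is exactly the machinery the paper leans on implicitly through Proposition \ref{propprojresolutionforM}.
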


\begin{proof}

We will construct a projective resolution $P^\bullet$ for $A_0$ using a similar method to Proposition \ref{propprojresolutionforM}. For $A$ to be Koszul, we want each of our modules $P^i$ to be generated in degree $i$ over $A_0$. As before, we can take $P^0=A$. Suppose that the strict resolution in question is constructed up to degree $i$
\begin{equation*}
    P^i\xrightarrow{d^i}{\dots}\rightarrow{P^1}\xrightarrow{d^1}{P^0}\xrightarrow{d^0}{A_0}\rightarrow{0}
\end{equation*}with each $P^j$ a graded projective $A$-module generated by its degree $j$ component over $A_0$ and with $d^j$ a strict monomorphism in degree $j$. We consider $K^{i}:=\textnormal{Ker}(d^{i}:P^{i}\twoheadrightarrow{\textnormal{Im}(d^{i})})$. This is a projective left $A_0$-module living only in degrees $\geq i+1$.  Since $P^i$ is generated in degree $i$ and $A_0\langle n\rangle$ is pure of weight $n$, we see that by Lemma \ref{exthomkernel},

\begin{equation*}
    \textnormal{Ext}^{i+1}_{\textcat{grA-Mod}}(A_0,A_0\langle n\rangle)=\textnormal{Hom}_{\textcat{grA-Mod}}(K^i,A_0\langle n\rangle)
\end{equation*}which, by assumption, is zero for $i+1\neq n$. Hence, since $K^{i}$ lives only in degree $\geq i+1$, $K^{i}$ is generated by its $({i+1})^{th}$ component because $A$ is pre-Koszul. We take $P^{i+1}$ to be the graded projective $A$-module $A\otimes_{A_0} K^{i}_{i+1}$. There is clearly a strict epimorphism 
\begin{equation*}
    A\otimes_{A_0} P^{i+1}_{i+1}=A\otimes_{A_0}A_0\otimes_{A_0} K^i_{i+1}\simeq A\otimes_{A_0} K^i_{i+1}\twoheadrightarrow{P^{i+1}}
\end{equation*}and therefore $P^{i+1}$ is generated by its $(i+1)^{th}$ component. It is simple to check that we have strict exactness in degree $i+1$.

\end{proof}

\begin{prop}\label{koszulcondition}If $A$ is a pre-Koszul monoid, then it is Koszul if and only if \begin{equation*}\textnormal{Ext}_{\textcat{grA-Mod}}^i(A_0,A_0\langle n\rangle)=0
\end{equation*}unless $i=n$.
\end{prop}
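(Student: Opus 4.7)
The plan is straightforward: this proposition is the combination of two results that have already been established in the excerpt, so my proof would simply assemble them into a biconditional.

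For the forward direction, I would invoke the earlier proposition (the one just before Lemma \ref{exthomkernel}) stating that if $A$ is Koszul then $\textnormal{Ext}_{\textcat{grA-Mod}}^i(A_0,M)=0$ unless $i=n$ for any pure $A$-module $M$ of weight $n$. Applying this with $M=A_0\langle n\rangle$, which is pure of weight $n$ since the shift places $A_0$ in degree $n$ only, yields immediately that $\textnormal{Ext}_{\textcat{grA-Mod}}^i(A_0,A_0\langle n\rangle)=0$ unless $i=n$. No additional work is needed here.

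For the reverse direction, I would simply cite Proposition \ref{A0shiftkoszul}, which is precisely the statement that a pre-Koszul monoid $A$ satisfying the vanishing condition $\textnormal{Ext}_{\textcat{grA-Mod}}^i(A_0,A_0\langle n\rangle)=0$ unless $i=n$ must be Koszul. Recall that the construction there proceeded inductively: one takes $P^0=A$, and at each stage forms $K^i=\textnormal{Ker}(d^i)$, uses Lemma \ref{exthomkernel} to identify $\textnormal{Hom}_{\textcat{grA-Mod}}(K^i,A_0\langle n\rangle)$ with $\textnormal{Ext}^{i+1}_{\textcat{grA-Mod}}(A_0,A_0\langle n\rangle)$, so that the hypothesis forces these Hom groups to vanish unless $n=i+1$, and then applies the pre-Koszul property to conclude that $K^i$ is generated in degree $i+1$, allowing one to set $P^{i+1}=A\otimes_{A_0}K^i_{i+1}$.

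Since both directions are already proved in the lead-up, there is no genuine obstacle; the role of this proposition is to record the equivalence as a clean characterisation for later use. I would keep the proof to two or three sentences, just pointing at the appropriate earlier propositions.
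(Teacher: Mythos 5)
Your proposal is correct and follows exactly the paper's own argument: the paper's proof is literally ``This is clear from the two propositions above,'' meaning the forward direction from the proposition that a Koszul monoid has $\textnormal{Ext}^i_{\textcat{grA-Mod}}(A_0,M)=0$ unless $i=n$ for $M$ pure of weight $n$ (applied to $M=A_0\langle n\rangle$), and the reverse direction from Proposition \ref{A0shiftkoszul}. Nothing further is needed.
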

\begin{proof}
This is clear from the two propositions above. 
\end{proof}
\begin{defn}
If $A$ is a Koszul monoid, we define the \textbf{Koszul complex} to be the sequence 
\begin{equation*}
    \dots\rightarrow{P^3}\rightarrow{P^2}\rightarrow{P^1}
\end{equation*} constructed in Proposition \ref{A0shiftkoszul}. We note that this complex gives a resolution of $A_0$.
\end{defn}

\section{Quadratic Monoids}\label{quadraticmonoids}

Analogously to quadratic rings, we would like a concept of a quadratic monoid in our category $\mathcal{E}$.

\subsection{The Tensor Monoid}

We assert the existence of the tensor monoid using the following theorem.
\begin{thm}\cite[Theorem VII.3.2]{maclane98}The forgetful functor $\textcat{Mon}(\mathcal{E})\rightarrow{\mathcal{E}}$ has a left adjoint.
\end{thm}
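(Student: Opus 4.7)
The plan is to construct the left adjoint explicitly as the tensor monoid functor $T : \mathcal{E} \to \textcat{Mon}(\mathcal{E})$ and verify the adjunction directly, since we have all the necessary ambient structure: $\mathcal{E}$ is bicomplete (so countable coproducts exist) and closed symmetric monoidal (so $-\otimes Y$ is a left adjoint and hence preserves colimits, giving us the distributivity we need).

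For $X \in \mathcal{E}$, I would define
\begin{equation*}
    T(X) := \bigoplus_{n \geq 0} X^{\otimes n},
\end{equation*}
with the convention $X^{\otimes 0} = I$. The unit $\eta_{T(X)} : I \to T(X)$ is the coproduct injection of the $n=0$ summand. For the multiplication, the canonical associativity isomorphisms provide maps $X^{\otimes m} \otimes X^{\otimes n} \to X^{\otimes (m+n)}$ for each pair $(m,n)$; using that $-\otimes T(X)$ and $T(X)\otimes -$ preserve coproducts (since $\mathcal{E}$ is closed), we have
\begin{equation*}
    T(X) \otimes T(X) \simeq \bigoplus_{m,n \geq 0} X^{\otimes m} \otimes X^{\otimes n},
\end{equation*}
and the concatenation maps assemble into $\mu_{T(X)} : T(X) \otimes T(X) \to T(X)$. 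Associativity and unitality reduce, after unwinding the coproduct isomorphism, to the coherence of the associator and unitors in $\mathcal{E}$.

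Next, I would verify the universal property. Given a monoid $(A,\mu_A,\eta_A)$ and a morphism $f : X \to A$ in $\mathcal{E}$, define $f_n : X^{\otimes n} \to A$ inductively by $f_0 = \eta_A$, $f_1 = f$, and $f_{n+1} = \mu_A \circ (f \otimes f_n)$. The universal property of the coproduct then packages these into a single morphism $\tilde{f} : T(X) \to A$. The checks that $\tilde{f}$ is a monoid morphism (compatibility with $\mu$ and $\eta$) and that it is the unique such extension of $f$ along the coproduct injection $X = X^{\otimes 1} \hookrightarrow T(X)$ are routine inductions on the coproduct index, again relying on the distributivity of $\otimes$ over $\bigoplus$. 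This bijection
\begin{equation*}
    \textnormal{Hom}_{\textcat{Mon}(\mathcal{E})}(T(X), A) \simeq \textnormal{Hom}_{\mathcal{E}}(X, A)
\end{equation*}
is visibly natural in both variables, establishing the adjunction.

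The only nontrivial point, and the one I would take a little care with, is ensuring that the monoidal product genuinely commutes with the countable coproduct used to define $T(X)$; this is where closedness of $\mathcal{E}$ is essential, since it guarantees that $-\otimes Y$ is a left adjoint and hence cocontinuous. Once this is in place, all remaining verifications are formal consequences of the coherence axioms of a symmetric monoidal category and do not use the quasi-abelian structure at all. In particular, no strictness or flatness hypotheses are needed for this construction, which is why the theorem holds in the generality of \cite{maclane98}.
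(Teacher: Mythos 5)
Your construction is correct and is exactly the argument behind the cited result: the paper itself gives no proof, deferring entirely to Mac Lane's Theorem VII.3.2, and the tensor monoid $T(X)=\bigoplus_{n\geq 0}X^{\otimes n}$ you build (with unit the $n=0$ injection, multiplication by concatenation via distributivity of $\otimes$ over coproducts, and the adjunction verified through the inductively defined $f_n$) is precisely the construction the paper then records in its definition of the tensor monoid. You also correctly isolate the one hypothesis that matters --- cocontinuity of $-\otimes Y$ in each variable, supplied here by closedness and symmetry --- and rightly note that no quasi-abelian structure is used.
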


\begin{defn}The \textbf{tensor monoid} $T(A)$ of a monoid $A$ in $\mathcal{E}$ is the graded monoid
\begin{equation*}
    T(A)=\bigoplus_{n=0}^\infty T_n(A)=\bigoplus_{n=0}^\infty A^{\otimes n}
\end{equation*}with multiplication $\mu:T(A)\otimes T(A)\rightarrow{T(A)}$ determined by the canonical isomorphism 
\begin{equation*}
    \mu_{i,j}:T_i(A)\otimes T_j(A)\rightarrow{T_{i+j}(A)}
\end{equation*}The unit is the inclusion 
\begin{equation*}
    \eta:T_0(V)\rightarrow{T(V)}
\end{equation*}
\end{defn}

\begin{remark}
If $A$ is a monoid and $M$ is an $(A,A)$-bimodule, we can define the tensor module $T_A(M)=A\oplus M\oplus(M\otimes_A M)\oplus\dots$.
\end{remark}

\begin{lem}
    If $T_{A_0}(A_1)$ is a pre-Koszul monoid, then it is Koszul. 
\end{lem}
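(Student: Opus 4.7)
The plan is to construct an explicit two-term graded projective resolution of $A_0$ as a left $T$-module, where $T := T_{A_0}(A_1)$, and verify that it satisfies the generation condition in the definition of a Koszul monoid. Concretely, I would propose the sequence
\begin{equation*}
    0 \longrightarrow T \otimes_{A_0} A_1 \xrightarrow{d^1} T \xrightarrow{d^0} A_0 \longrightarrow 0,
\end{equation*}
setting $P^0 := T$, $P^1 := T \otimes_{A_0} A_1$, and $P^i := 0$ for $i \geq 2$. Here $d^0$ is the augmentation arising as the cokernel of the strict inclusion $T_{>0} \hookrightarrow T$, and $d^1$ is the monoid multiplication precomposed with the degree-one inclusion $A_1 \hookrightarrow T$, with $A_1$ regarded as a pure graded $A_0$-bimodule concentrated in degree $1$.

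First I would verify projectivity and the generation condition. Since $T$ is pre-Koszul, $A_1$ is projective over $A_0$, so by Proposition \ref{gradedtensor} the module $P^1 = T \otimes_{A_0} A_1$ is graded projective over $T$, and $P^0 = T$ is trivially graded projective. The degree-zero component of $P^0$ is $A_0$ and the degree-one component of $P^1$ is $A_0 \otimes_{A_0} A_1 \simeq A_1$; the natural maps $T \otimes_{A_0} A_0 \rightarrow P^0$ and $T \otimes_{A_0} A_1 \rightarrow P^1$ are isomorphisms, so each nonzero $P^i$ is generated by its degree-$i$ component over $A_0$.

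The core of the argument is strict exactness. That $d^0$ is a strict epimorphism is immediate from its definition as a cokernel. For the remainder, I would observe that in each positive degree $n$ the map $d^1$ takes the form
\begin{equation*}
    (T \otimes_{A_0} A_1)_n = T_{n-1} \otimes_{A_0} A_1 \longrightarrow T_n,
\end{equation*}
which, by the very definition $T_n = A_1^{\otimes_{A_0} n}$ of the tensor monoid, is the canonical isomorphism $A_1^{\otimes_{A_0}(n-1)} \otimes_{A_0} A_1 \simeq A_1^{\otimes_{A_0} n}$. Hence $d^1$ is a strict monomorphism whose image is precisely $T_{\geq 1} = \textnormal{Ker}(d^0)$, and strict exactness of the whole sequence follows degree-wise.

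The main conceptual obstacle is that strict exactness in a quasi-abelian category cannot be checked by element chasing. My strategy circumvents this by identifying each degree component of $d^1$ with a canonical isomorphism built directly into the construction of $T$; strictness then reduces to the trivial observation that isomorphisms together with the kernel-cokernel pair attached to the augmentation are strict. The resulting resolution meets every requirement of the Koszul definition (alternatively, at this stage one could invoke Proposition \ref{koszulcondition}, since the two-term resolution yields $\textnormal{Ext}^i_{\textcat{grT-Mod}}(A_0, A_0\langle n \rangle) = 0$ unless $i = n$), and hence $T$ is Koszul.
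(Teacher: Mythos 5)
Your proposal is correct and takes essentially the same approach as the paper: the paper's proof exhibits exactly the two-term resolution $0\rightarrow T_{A_0}(A_1)\otimes_{A_0}A_1\rightarrow T_{A_0}(A_1)\rightarrow A_0\rightarrow 0$ and notes that each term is projective and generated in the right degree. You simply supply more of the verification (the degree-wise identification of $d^1$ with the canonical isomorphism $A_1^{\otimes_{A_0}(n-1)}\otimes_{A_0}A_1\simeq A_1^{\otimes_{A_0}n}$, which settles strictness), all of which is sound.
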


\begin{proof}If $T_{A_0}(A_1)$ is pre-Koszul, then $A_1$ is projective over $A_0$ and hence $A_0$ admits a {projective resolution $P^\bullet$ as a $T_{A_0}(A_1)$-module}
\begin{equation*}
    0\rightarrow{T_{A_0}(A_1)\otimes_{A_0} A_1}\rightarrow{T_{A_0}(A_1)}\rightarrow{A_0}\rightarrow{0}
\end{equation*}with each $T_{A_0}(A_1)$-module $P^i$ projective and generated in degree $i$.
    
\end{proof}
{\begin{exmp}
Suppose that $R_0$ is a semisimple Banach algebra over a non-trivially valued field $k$ and suppose that $R_0$ is injective over itself as an element of $\textcat{CBorn}_k$, for example $R_0=\mathbb{C}$. Suppose further that $R_1$ is a projective $R_0$-module, e.g. $R_1=\ell^1(\mathbb{C}):=\{(c_i)_{i\in \mathbb{C}- \{0\}}\mid c_i\in\mathbb{C},\sum_{i\in \mathbb{C}- \{0\}}||c_i||<\infty\}$. Then, we can consider the tensor algebra $T_{R_0}(R_1)$ in $\textcat{CBorn}_k$. We note that $R_1$ is also projective as a module in $\textcat{CBorn}_k$. To show that $T_{R_0}(R_1)$ is pre-Koszul it suffices to prove the hom condition. Suppose that $M=\bigoplus_{k\geq i}``\varinjlim_{j\in J}"(M_j)_k$ is a graded $T_{R_0}(R_1)$-module in $\textcat{CBorn}_k$ living only in degrees $\geq i$ and that
\begin{equation*}
    \textnormal{Hom}_{\textcat{gr}T_{R_0}(R_1)\textcat{-Mod}(\textcat{CBorn}_k)}(M,R_0\langle n\rangle)=0
\end{equation*}unless $n=i$. Then, we see that, identifying $\textcat{CBorn}_k$ with the full subcategory of essentially monomorphic objects in $\textcat{IndBan}_k$ 
\begin{equation*}
    \bigoplus_{k\geq i}\varprojlim_{j\in J}\textnormal{Hom}_{T_{R_0}(R_1)\textcat{-Mod}(\textcat{Ban}_k)}((M_j)_k,R_0\langle n\rangle)=0
\end{equation*}unless $n=i$. Hence, we see that for each $k\geq i$, 
\begin{equation*}
    \varprojlim_{j\in J}\textnormal{Hom}_{T_{R_0}(R_1)\textcat{-Mod}(\textcat{Ban}_k)}((M_j)_k,R_0\langle n\rangle)=0
\end{equation*}unless $n=i$. Since $M\in\textcat{CBorn}_k$, each of the system morphisms in $M$ are monomorphisms and, hence, each of the morphisms in the system $(\textnormal{Hom}_{T_{R_0}(R_1)\textcat{-Mod}(\textcat{Ban}_k)}((M_j)_k,R_0\langle n\rangle))_{j\in J}$ are epimorphisms. Hence, if the inverse limit of these abelian groups is zero, we see that each $\textnormal{Hom}_{T_{R_0}(R_1)\textcat{-Mod}(\textcat{Ban}_k)}((M_j)_k,R_0\langle n\rangle)=0$ unless $i=n$. Since $R_0$ is semisimple, we therefore can use a similar reasoning to Proposition \ref{semisimpleprekoszul}, noting that by \cite[Lemma A.29]{benbassatkremnitzer17} a strict epimorphism of Banach spaces is equivalently a surjection, to show that for all $j\in J$ there is a strict epimorphism 
\begin{equation*}
    T_{R_0}(R_1)\otimes_{R_0}(M_j)_i\rightarrow{M_j}
\end{equation*}in $T_{R_0}(R_1)\textcat{-Mod}(\textcat{Ban}_k)$. Hence, we see that, by \cite[Proposition 2.10]{bambozzibenbassat15} there is a strict epimorphism 
\begin{equation*}
    T_{R_0}(R_1)\otimes_{R_0}``\varinjlim_{j\in J}" (M_j)_i\simeq ``\varinjlim_{j\in J}"T_{R_0}(R_1)\otimes_{R_0}(M_j)_i\rightarrow{``\varinjlim_{j'\in J}"M_{j'}}
\end{equation*}in $\textcat{gr}T_{R_0}(R_1)\textcat{-Mod}(\textcat{CBorn}_k)$.  
\end{exmp}}

If $A$ is positively graded, we can consider $A_1$ as an $(A_0,A_0)$-bimodule. There is a canonical morphism 
\begin{equation*}
    \pi:T_{A_0}(A_1)\rightarrow{A}
\end{equation*}formed by `linearly' extending the multiplication 
\begin{equation*}
    \mu_i:A_1^{\otimes_{A_0}i}\rightarrow{A}
\end{equation*}for all $i\in\mathbb{N}$. We make the following definition.
\begin{defn}\label{quadraticmonoid}
We say that $A$ is a \textbf{quadratic monoid} with quadratic data $(A_1,R)$ if $A$ is pre-Koszul, and there exists a strict graded epimorphism 
\begin{equation*}
    \pi:T_{A_0}(A_1)\twoheadrightarrow{A}
\end{equation*}such that there exists a strict epimorphism 
\begin{equation*}
    T_{A_0}(A_1)\otimes_{A_0}R\otimes_{A_0}T_{A_0}(A_1)\twoheadrightarrow{\textnormal{Ker}(\pi)}
\end{equation*}with $R=K_2:=\textnormal{Ker}(A_1\otimes_{A_0} A_1\twoheadrightarrow{A_2})$.
\end{defn}

\begin{remark}
We see that $A\simeq \textnormal{Coker}(\textnormal{Ker}(\pi)\rightarrow{T_{A_0}(A_1))}$ with $\textnormal{Ker}(\pi)$ generated by $R$. By some abuse of notation, we denote this quadratic monoid by 
\begin{equation*}
    A=T_{A_0}(A_1)/(R)
\end{equation*}We note that $A$ is in some sense generated by $A_1$ over $A_0$ with relations of degree two.
\end{remark}
\begin{exmp}
If the tensor monoid $T_{A_0}(A_1)$ is pre-Koszul, then it is quadratic. We may take $R=0$ and see that there clearly exists a strict epimorphism $\pi:T_{A_0}(A_1)\rightarrow{T_{A_0}(A_1)}$ with zero kernel. 
\end{exmp}

\begin{lem}Let $K=\textnormal{Ker}(\pi)$. Then, $K$ is graded as a left $A_0$-module by 
\begin{equation*}
    K_i=\textnormal{Ker}(\mu_i:A_1^{\otimes_{A_0} i}\rightarrow{A_i})
\end{equation*}and there exists a strict epimorphism 
\begin{equation*}
    \bigoplus_{j=0}^{i-2}A_1^{\otimes j}\otimes_{A_0} R\otimes_{A_0} A_1^{\otimes i-j-2}\twoheadrightarrow{K_i}
\end{equation*}Moreover, for each $i\geq 0$, 
\begin{equation*}
    A_i\simeq \textnormal{Coker}(K_i\rightarrow{A_1^{\otimes i}})
\end{equation*} 
\end{lem}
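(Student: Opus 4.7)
The plan is to extract everything degree-by-degree from the two strict graded maps that come packaged in the definition of a quadratic monoid, so the argument really just amounts to unwinding the gradings.

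First I would observe that since $\pi:T_{A_0}(A_1)\twoheadrightarrow A$ is a strict graded epimorphism, and since kernels and cokernels of graded morphisms are computed degree by degree (so a graded morphism is strict iff it is strict in each degree), the kernel $K=\textnormal{Ker}(\pi)$ is graded by $K_i=\textnormal{Ker}(\pi_i)$. Now $\pi$ is obtained by linearly extending the multiplications $\mu_i:A_1^{\otimes_{A_0}i}\to A_i$, so $\pi_i=\mu_i$ and we obtain the first statement $K_i=\textnormal{Ker}(\mu_i:A_1^{\otimes_{A_0}i}\to A_i)$.

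For the second statement I would compute the grading of $T_{A_0}(A_1)\otimes_{A_0} R\otimes_{A_0} T_{A_0}(A_1)$. Since $R=K_2$ is pure of weight $2$ (being the kernel of the graded map $A_1\otimes_{A_0}A_1\twoheadrightarrow A_2$, and $A_1^{\otimes_{A_0}2}$ lives in degree $2$), the degree $i$ part is
\begin{equation*}
\bigl(T_{A_0}(A_1)\otimes_{A_0}R\otimes_{A_0}T_{A_0}(A_1)\bigr)_i=\bigoplus_{j+k+2=i}A_1^{\otimes j}\otimes_{A_0}R\otimes_{A_0}A_1^{\otimes k}=\bigoplus_{j=0}^{i-2}A_1^{\otimes j}\otimes_{A_0}R\otimes_{A_0}A_1^{\otimes i-j-2}.
\end{equation*}
The strict graded epimorphism $T_{A_0}(A_1)\otimes_{A_0}R\otimes_{A_0}T_{A_0}(A_1)\twoheadrightarrow K$ given by Definition \ref{quadraticmonoid} is, again by degreewise computation of kernels and cokernels, strict in each degree, so restricting to degree $i$ yields the desired strict epimorphism onto $K_i$.

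For the last identification, I would use that $\mu_i$ is a strict epimorphism (being the $i$-th component of the strict graded epimorphism $\pi$), hence in a quasi-abelian category it canonically identifies its target with the cokernel of its kernel: $A_i\simeq\textnormal{Coker}(K_i\hookrightarrow A_1^{\otimes i})$. The only subtle point to check carefully is that the tensor product over $A_0$ really respects the gradings in the way claimed (so that the degree $i$ component of the tensor decomposes as the finite direct sum above); this uses that the $A_i$ are projective over $A_0$ — in particular $A_1$ is flat over $A_0$ by our standing assumption on $\mathcal{E}$ and the pre-Koszul hypothesis — so $-\otimes_{A_0}A_1^{\otimes n}$ commutes with the relevant colimits and the coequalizer defining $\otimes_{A_0}$ behaves well on graded pieces. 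Apart from this bookkeeping, no serious obstacle arises: the statement is essentially a degreewise restriction of structure already built into the definition of a quadratic monoid.
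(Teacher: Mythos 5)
Your argument is correct, and in fact the paper states this lemma without proof, treating it as exactly the routine degree-by-degree unwinding you carry out: kernels and strict epimorphisms of graded maps are computed componentwise, $\pi_i=\mu_i$, the degree-$i$ piece of $T_{A_0}(A_1)\otimes_{A_0}R\otimes_{A_0}T_{A_0}(A_1)$ is the stated finite sum because $R$ is pure of weight $2$, and strictness of $\mu_i$ gives $A_i\simeq\textnormal{Coim}(\mu_i)=\textnormal{Coker}(K_i\rightarrow A_1^{\otimes i})$. The only superfluous step is your closing worry about flatness: the grading on $\otimes_{A_0}$ is \emph{defined} in the paper by $(M\otimes_{A}N)_n=\bigoplus_{i+j=n}M_i\otimes_{A}N_j$, so the decomposition of the degree-$i$ component is a matter of definition rather than something requiring exactness of $-\otimes_{A_0}A_1^{\otimes n}$.
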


We now fix a positively graded pre-Koszul monoid $A$. We recall that there is a short strictly exact sequence
\begin{equation*}
    0\rightarrow{A_{>0}}\xrightarrow{\iota}{A}\xrightarrow{\pi}{A_0}\rightarrow{0}
\end{equation*}
\begin{defn}
$A$ is a \textbf{quotient of $T_{A_0}(A_1)$} if there exists a strict epimorphism 
\begin{equation*}
    T_{A_0}(A_1)\twoheadrightarrow{A}
\end{equation*}
\end{defn}

\begin{lem}\label{quotientlemma}
If $A$ is a quotient of $T_{A_0}(A_1)$, then there exists a strict epimorphism 
\begin{equation*}
    A\otimes_{A_0}A_1\twoheadrightarrow{A_{>0}}
\end{equation*}given, in each degree $i>0$, by the $(A_0,A_0)$-bimodule map
\begin{equation*}
    \lambda_{i-1,1}:A_{i-1}\otimes_{A_0}A_1\rightarrow{A_i}
\end{equation*}
\end{lem}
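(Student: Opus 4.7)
The plan is to reduce strictness of the big aggregated map to strictness in each graded degree, and then deduce strictness degree-by-degree from the hypothesis that $\pi: T_{A_0}(A_1) \twoheadrightarrow A$ is a strict (graded) epimorphism.

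First I would observe that, because $\pi$ is a strict epimorphism of graded $A_0$-bimodules and strictness in $\textcat{grA}_0\textcat{-Mod}$ is detected componentwise (by Proposition 1.5.1 / the discussion after it, since limits and colimits commute with grading), the degree-$i$ component
\begin{equation*}
\mu_i : A_1^{\otimes_{A_0} i} \twoheadrightarrow A_i
\end{equation*}
is a strict epimorphism in $\mathcal{E}$ for every $i \geq 1$. Here $\mu_i$ is the $i$-fold multiplication, realized as the graded component of $\pi$ in degree $i$.

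Next I would factor $\mu_i$ through $A_{i-1} \otimes_{A_0} A_1$: under the canonical identification $A_1^{\otimes_{A_0} i} \simeq A_1^{\otimes_{A_0}(i-1)} \otimes_{A_0} A_1$, the iterated multiplication splits as
\begin{equation*}
A_1^{\otimes_{A_0}(i-1)} \otimes_{A_0} A_1 \xrightarrow{\mu_{i-1} \otimes \mathrm{id}_{A_1}} A_{i-1} \otimes_{A_0} A_1 \xrightarrow{\lambda_{i-1,1}} A_i,
\end{equation*}
by associativity of multiplication. Since the composite $\mu_i$ is a strict epimorphism, Proposition \ref{monoepimorphismtriangle} immediately yields that $\lambda_{i-1,1}$ is a strict epimorphism in $\mathcal{E}$.

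Finally I would assemble these into the claimed map. The graded object $A \otimes_{A_0} A_1$ has degree-$n$ component $A_{n-1}\otimes_{A_0} A_1$ (since $A_1$ is pure of weight $1$), while $A_{>0}$ has degree-$n$ component $A_n$ for $n \geq 1$ and $0$ in degree $0$. The graded morphism whose $n$-th component is $\lambda_{n-1,1}$ (and $0$ in degree $0$) is well defined and, by the previous step, is a strict epimorphism in every positive degree; in degree $0$ the map $0 \to 0$ is trivially strict. Strictness of a graded morphism is componentwise, so the assembled map $A \otimes_{A_0} A_1 \twoheadrightarrow A_{>0}$ is a strict epimorphism, as required.

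The only delicate point is the componentwise criterion for strictness and the factorization argument; both are handled directly by Proposition \ref{monoepimorphismtriangle} together with the fact that kernels and cokernels in graded module categories are computed in each degree. There is no genuine obstacle, and the proof amounts to correctly bookkeeping the grading.
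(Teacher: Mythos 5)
Your proof is correct and follows essentially the same route as the paper's: factor $\mu_i$ through $A_{i-1}\otimes_{A_0}A_1$ via $\mu_{i-1}\otimes \mathrm{id}_{A_1}$ and apply Proposition \ref{monoepimorphismtriangle} to conclude that $\lambda_{i-1,1}$ is a strict epimorphism in each degree. The extra bookkeeping you do about componentwise strictness and the degree-$0$ component is fine but is exactly what the paper leaves implicit.
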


\begin{proof}
It suffices to show that $\lambda_{i-1,1}$ is a strict epimorphism for each $i$. We see that the following diagram commutes
\begin{equation*}\begin{tikzcd}
   & A_{i-1}\otimes_{A_0}A_1 \arrow{dr}{\lambda_{i-1,1}}\\
    A_1^{\otimes_{A_0}i} \arrow{rr}{\mu_i} \arrow{ur} & & A_i
\end{tikzcd}\end{equation*} and hence, since $\mu_i$ is a strict epimorphism, $\lambda_{i-1,1}$ is also a strict epimorphism by Proposition \ref{monoepimorphismtriangle}.
\end{proof}

\begin{remark}
We see that $A_{>0}$ is generated by $A_1$ over $A_0$.
\end{remark}

\begin{prop}For any pure $M\in\textcat{grA-Mod}$, $\textnormal{Ext}^1_{\textcat{grA-Mod}}(A_0,M)=\textnormal{Hom}_{\textcat{grA-Mod}}(A_{>0},M)$
\end{prop}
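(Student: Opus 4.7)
The plan is to apply the contravariant long exact sequence of $\textnormal{Ext}$ to the short strictly exact sequence
\begin{equation*}
    0 \to A_{>0} \xrightarrow{\iota} A \xrightarrow{\pi} A_0 \to 0
\end{equation*}
of graded left $A$-modules, paired with the pure module $M$. This produces
\begin{equation*}
\textnormal{Hom}(A_0,M) \to \textnormal{Hom}(A,M) \to \textnormal{Hom}(A_{>0},M) \to \textnormal{Ext}^1(A_0,M) \to \textnormal{Ext}^1(A,M).
\end{equation*}

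Two simplifications then carry the argument. First, $A$ is free on a single generator as a graded left $A$-module, hence projective in $\textcat{grA-Mod}$, so $\textnormal{Ext}^1_{\textcat{grA-Mod}}(A,M) = 0$. Second, a graded $A$-module map $A \to M$ is determined by the image of $1 \in A_0$, which necessarily lies in degree zero, giving $\textnormal{Hom}_{\textcat{grA-Mod}}(A,M) \simeq M_0$. It remains to split into cases based on the weight $n$ of the pure module $M$. If $n \neq 0$, then $M_0 = 0$, and the exact sequence collapses to $0 \to \textnormal{Hom}(A_{>0},M) \xrightarrow{\sim} \textnormal{Ext}^1(A_0,M) \to 0$, giving exactly the claimed isomorphism. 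If $n = 0$, then $\textnormal{Hom}(A_{>0},M) = 0$ since $A_{>0}$ lives in strictly positive degrees while $M$ is concentrated in degree $0$; one also checks that $\textnormal{Hom}(A_0,M) \to \textnormal{Hom}(A,M)$ is the isomorphism induced by $\pi$, so that $\textnormal{Ext}^1(A_0,M) = 0$ as well, and both sides of the asserted equality vanish.

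The one point that requires justification rather than routine calculation is the existence of the long exact sequence of $\textnormal{Ext}$ groups from a short strictly exact sequence in the second argument in a quasi-abelian setting. This can be handled in two equivalent ways: either directly via the author's definition $\textnormal{Ext}^i(X,Y) = H^i(\textnormal{Hom}(P^\bullet, Y))$ for a strict projective resolution of $X$ (the usual snake-lemma argument applied to the sequence of Hom-complexes, using that strict short exact sequences are preserved by $\textnormal{Hom}(P^i,-)$ for $P^i$ projective), or by passing through the fully faithful embedding $I : \mathcal{E} \to \mathcal{LH(E)}$ of Proposition \ref{leftheartembedding}, where strictly exact sequences correspond to exact sequences by Corollary \ref{strictexactembedding} and the usual long exact $\textnormal{Ext}$ sequence in the abelian category applies. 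This is the main technical step; everything else is bookkeeping with weights via Proposition \ref{purezero}.
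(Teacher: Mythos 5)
Your argument is correct and is essentially the paper's own route: the paper simply invokes Lemma \ref{exthomkernel} for the one-step graded projective presentation $A\rightarrow A_0\rightarrow 0$ (so $K^0=A_{>0}$), and that lemma is proved by precisely the dimension-shifting/long-exact-sequence computation you carry out by hand, with the weight bookkeeping showing that the map $\textnormal{Hom}(A,M)\rightarrow\textnormal{Hom}(A_{>0},M)$ vanishes. The only cosmetic slips are that the short strictly exact sequence sits in the first (contravariant) argument of $\textnormal{Ext}$, not the second, and that in this element-free setting the identification of $\textnormal{Hom}_{\textcat{grA-Mod}}(A,M)$ should be justified via Proposition \ref{purezero} rather than by ``the image of $1$''.
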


\begin{proof}
We consider the graded projective strict exact sequence of $A$-modules $A\rightarrow{A_0}\rightarrow{0}$. We note that $A$ is generated by its degree $0$ component over $A_0$ and $A_{>0}=\textnormal{Ker}(A\rightarrow{A_0})$. Therefore, by Lemma \ref{exthomkernel}, we see that 
\begin{equation*}
    \textnormal{Ext}^1_{\textcat{grA-Mod}}(A_0,M)=\textnormal{Hom}_{\textcat{grA-mod}}(A_{>0},N)
\end{equation*}
\end{proof}

\begin{cor}\label{quotientoftensor}
If $\textnormal{Ext}^1_{\textcat{grA-Mod}}(A_0,A_0\langle n\rangle)=0$ unless $n=1$, then  there exists a strict epimorphism $A\otimes_{A_0}A_1\rightarrow{A_{>0}}$.
Moreover, $A$ is a quotient of $T_{A_0}(A_1)$.
\end{cor}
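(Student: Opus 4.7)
The plan is to chain the previous proposition with the pre-Koszul hypothesis, and then bootstrap up to the full tensor monoid via induction. First, the preceding proposition gives
\[
\textnormal{Ext}^1_{\textcat{grA-Mod}}(A_0, A_0\langle n\rangle) \simeq \textnormal{Hom}_{\textcat{grA-Mod}}(A_{>0}, A_0\langle n\rangle),
\]
so the vanishing hypothesis translates into $\textnormal{Hom}_{\textcat{grA-Mod}}(A_{>0}, A_0\langle n\rangle) = 0$ unless $n = 1$. Since $A_{>0}$ is a graded $A$-module living only in degrees $\geq 1$, the pre-Koszul condition applied with $i = 1$ yields a strict epimorphism
\[
A \otimes_{A_0} (A_{>0})_1 = A \otimes_{A_0} A_1 \twoheadrightarrow A_{>0},
\]
which is exactly the first claim. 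Reading this off in each graded degree $i \geq 1$, we obtain strict epimorphisms $\lambda_{i-1,1} : A_{i-1} \otimes_{A_0} A_1 \twoheadrightarrow A_i$.

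For the ``moreover'' part, I would construct the canonical morphism $\pi : T_{A_0}(A_1) \to A$ by extending the multiplication maps $\mu_i : A_1^{\otimes_{A_0} i} \to A_i$ and show that $\pi$ is a strict graded epimorphism by checking that each $\mu_i$ is strict epi. I would argue by induction on $i$: the cases $i = 0, 1$ are identities. For the inductive step I would use the associativity identity
\[
\mu_i = \lambda_{i-1,1} \circ \bigl( \mu_{i-1} \otimes_{A_0} \text{id}_{A_1} \bigr).
\]
By the inductive hypothesis $\mu_{i-1}$ is strict epi, and we have just shown $\lambda_{i-1,1}$ is strict epi, so it suffices to know that $- \otimes_{A_0} A_1$ preserves strict epimorphisms. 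This is where flatness enters: since $A$ is pre-Koszul, $A_1$ is projective over $A_0$, and hence flat (using the assumption that $\mathcal{E}$ has enough flat projectives together with the analogous statement in $\textcat{A}_0\textcat{-Mod}$, as used elsewhere in the paper). Thus $\mu_{i-1} \otimes_{A_0} \text{id}_{A_1}$ is strict epi, and the composition $\mu_i$ is strict epi by Proposition \ref{strictepisstable}. Consequently the graded map $\pi$ is a strict epi in each degree, giving $A$ as a quotient of $T_{A_0}(A_1)$.

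The only delicate point in this plan is ensuring that tensoring a strict epimorphism with $A_1$ on the right stays a strict epimorphism; everything else is a direct translation of the two hypotheses or a composition argument. If one prefers to avoid invoking flatness of $A_1$ over $A_0$ directly, an alternative is to bypass the tensor monoid induction entirely and instead argue that the strict epimorphism $A \otimes_{A_0} A_1 \twoheadrightarrow A_{>0}$ already identifies $A$ (in each positive degree) as a quotient of an iterated tensor of $A_1$'s by applying $\lambda_{-,1}$ repeatedly and noting that the composite lands in $A_i$; the graded universal property of $T_{A_0}(A_1)$ then packages these into the desired strict epimorphism $T_{A_0}(A_1) \twoheadrightarrow A$.
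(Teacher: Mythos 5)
Your proof is correct and follows essentially the same route as the paper: translate the Ext-vanishing into $\textnormal{Hom}_{\textcat{grA-Mod}}(A_{>0},A_0\langle n\rangle)=0$ for $n\neq 1$, invoke the pre-Koszul condition to get $A\otimes_{A_0}A_1\twoheadrightarrow A_{>0}$, and then compose the resulting strict epimorphisms $\lambda_{i-1,1}$ (tensored with identities) to obtain $A_1^{\otimes i}\twoheadrightarrow A_i$ and hence $T_{A_0}(A_1)\twoheadrightarrow A$. The only cosmetic difference is that your "delicate point" needs no flatness: $-\otimes_{A_0}A_1$ is a left adjoint, hence preserves cokernels and therefore strict epimorphisms automatically, which is exactly what the paper's chain of epimorphisms tacitly uses.
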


\begin{proof}
Consider $A_0\langle n\rangle$ as a pure graded $A$-module of weight $n$. Using the previous proposition, we note that
\begin{equation*}
    \textnormal{Ext}^1_{\textcat{grA-Mod}}(A_0,A_0\langle n\rangle)=\textnormal{Hom}_{\textcat{grA-Mod}}(A_{>0},A_0\langle n\rangle)
\end{equation*} and hence
\begin{equation*}
    \textnormal{Hom}_{\textcat{grA-Mod}}(A_{>0},A_0\langle n\rangle)=0
\end{equation*}unless $n=1$. Then, since $A$ is pre-Koszul, $A_{>0}$ is generated by its component in degree $1$, so there exists a strict epimorphism \begin{equation*}
    A\otimes_{A_0}A_1\twoheadrightarrow{A_{>0}}
\end{equation*}Hence, for each $i$, there exists a strict epimorphism 
\begin{equation*}
    A_{i-1}\otimes_{A_0}A_1\twoheadrightarrow{A_i}
\end{equation*}For each $i>0$, we can construct a chain of strict epimorphisms
\begin{equation*}
    A_1^{\otimes i}=(A_1\otimes_{A_0}A_1)\otimes_{A_0}A_1^{\otimes i-2}\twoheadrightarrow{A_2\otimes A_1^{\otimes i-2}}\twoheadrightarrow{\dots}\twoheadrightarrow{A_i}
\end{equation*}and hence there exists a strict epimorphism 
\begin{equation*}
    T_{A_0}(A_1)\twoheadrightarrow{A}
\end{equation*}
\end{proof}

\begin{prop}\label{ext2quadratic}
Suppose that $A$ is a quotient of $T_{A_0}(A_1)$. If $\textnormal{Ext}_{\textcat{grA-Mod}}^2(A_0,A_0\langle n\rangle)=0$ unless $n=2$, then $A$ is a quadratic monoid. 
\end{prop}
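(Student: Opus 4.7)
The plan is to build the start of a graded projective resolution of $A_0$, use the Ext$^2$ hypothesis together with pre-Koszulness to deduce that the first syzygy is generated in degree $2$ by $R$, and then propagate this degree-wise to the full kernel of $\pi:T_{A_0}(A_1)\twoheadrightarrow A$.

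First I would set $P^0 := A$ and $P^1 := A\otimes_{A_0}A_1$. Since $A$ is a quotient of $T_{A_0}(A_1)$, Lemma \ref{quotientlemma} produces a strict epimorphism $A\otimes_{A_0}A_1\twoheadrightarrow A_{>0}$, which I compose with the inclusion $A_{>0}\hookrightarrow A$ to obtain a strict differential $d^1:P^1\to P^0$. By Proposition \ref{gradedtensor} both are graded projective $A$-modules generated by their degree $0$ and degree $1$ components respectively. A quick degree-wise computation shows $K^1:=\textnormal{Ker}(d^1)$ lives only in degrees $\geq 2$ with $K^1_2=\textnormal{Ker}(A_1\otimes_{A_0}A_1\twoheadrightarrow A_2)=R$. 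Applying Lemma \ref{exthomkernel} to this partial resolution yields $\textnormal{Ext}^2_{\textcat{grA-Mod}}(A_0,A_0\langle n\rangle)\simeq\textnormal{Hom}_{\textcat{grA-Mod}}(K^1,A_0\langle n\rangle)$, which by hypothesis vanishes for $n\neq 2$. Pre-Koszulness of $A$ then forces $K^1$ to be generated over $A_0$ by its degree-$2$ component, so there is a strict epimorphism $A\otimes_{A_0}R\twoheadrightarrow K^1$.

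To reach the tensor-algebra level, I induct on $n\geq 2$ to show that the natural map
\[\Phi_n:\bigoplus_{j=0}^{n-2}A_1^{\otimes j}\otimes_{A_0}R\otimes_{A_0}A_1^{\otimes n-j-2}\longrightarrow K_n:=\textnormal{Ker}\bigl(\mu_n:A_1^{\otimes n}\to A_n\bigr)\]
is a strict epimorphism. The base $n=2$ is immediate. For the step, flatness of $A_1$ over $A_0$ together with a snake-lemma argument (legitimized in the quasi-abelian setting by passing to the left heart via Proposition \ref{leftheartembedding} and Corollary \ref{strictexactembedding}) supplies a short strictly exact sequence $0\to K_{n-1}\otimes_{A_0}A_1\to K_n\to K^1_n\to 0$. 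Tensoring the inductive hypothesis with $A_1$ covers $K_{n-1}\otimes_{A_0}A_1$ via the summands with $j\leq n-3$; for the $j=n-2$ summand, the composition $A_1^{\otimes n-2}\otimes_{A_0} R\twoheadrightarrow A_{n-2}\otimes_{A_0}R\twoheadrightarrow K^1_n$ (using that $A_1^{\otimes n-2}\twoheadrightarrow A_{n-2}$ is strict since $A$ is a quotient of the tensor algebra, composed with the degree-$n$ part of $A\otimes_{A_0}R\twoheadrightarrow K^1$) furnishes a strict epimorphism covering $K^1_n$. A four-lemma applied in the abelian category $\mathcal{LH(E)}$ then gives that $\Phi_n$ itself is a strict epimorphism. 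Assembling the $\Phi_n$ across all degrees produces the required strict epimorphism $T_{A_0}(A_1)\otimes_{A_0}R\otimes_{A_0}T_{A_0}(A_1)\twoheadrightarrow\textnormal{Ker}(\pi)$, so $A$ is quadratic with data $(A_1,R)$.

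The main obstacle I anticipate is ensuring the diagram-chase of the inductive step respects strictness: in a general quasi-abelian category one cannot freely lift strict epimorphisms onto an arbitrary object, but here the lift $A_1^{\otimes n-2}\otimes_{A_0}R\to K_n$ is produced canonically from the inclusion $R\hookrightarrow A_1^{\otimes 2}$ (so no projectivity assumption on the summands is needed), and once the diagram is transported into the abelian left heart $\mathcal{LH(E)}$, the four-lemma applies verbatim.
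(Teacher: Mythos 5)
Your proof is correct, but it takes a genuinely different route from the paper's. The paper applies Lemma \ref{exthomkernel} to the sequence $0\to\textnormal{Ker}(\pi)\to T_{A_0}(A_1)_{>0}\to A\to A_0\to 0$, reading $T_{A_0}(A_1)_{>0}$ as the graded projective $P^1$ generated in degree $1$; this identifies $\textnormal{Ext}^2(A_0,A_0\langle n\rangle)$ with $\textnormal{Hom}(\textnormal{Ker}(\pi),A_0\langle n\rangle)$ directly, so pre-Koszulness immediately yields a strict epimorphism $A\otimes_{A_0}K_2\twoheadrightarrow\textnormal{Ker}(\pi)$, and the desired map is then just the composite $T_{A_0}(A_1)\otimes_{A_0}R\otimes_{A_0}T_{A_0}(A_1)\twoheadrightarrow A\otimes_{A_0}R\otimes_{A_0}A\twoheadrightarrow A\otimes_{A_0}R\twoheadrightarrow\textnormal{Ker}(\pi)$ via Propositions \ref{strictepisstable} and \ref{strictmodulemap} — no induction on degree is needed. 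You instead take the unambiguous choice $P^1=A\otimes_{A_0}A_1$, so the Ext--Hom identification lands on the smaller syzygy $K^1=\textnormal{Ker}(A\otimes_{A_0}A_1\to A)$, and you must then transport the generation statement from $K^1$ up to $\textnormal{Ker}(\pi)$ by the degree-wise induction with the sequence $0\to K_{n-1}\otimes_{A_0}A_1\to K_n\to K^1_n\to 0$. What your version buys is that every object in sight is manifestly a graded projective $A$-module (the paper's choice of $P^1$ requires one to justify viewing $T_{A_0}(A_1)_{>0}$ as a projective $A$-module generated in degree $1$, which is the one delicate point of its argument), and as a by-product you actually prove the degree-wise presentation of $\textnormal{Ker}(\pi)$ that the paper records without proof in the lemma following Definition \ref{quadraticmonoid}. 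The cost is the extra machinery: flatness of $A_1$ over $A_0$, the kernel-of-a-composite sequence, and the four-lemma in $\mathcal{LH(E)}$ — all of which are legitimately available here (projective $A_0$-modules are flat under the standing hypotheses, and Corollary \ref{strictexactembedding} converts epimorphy in the left heart back into strict epimorphy), so the argument goes through.
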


\begin{proof}
It suffices to show that there is a strict epimorphism
\begin{equation*}
    T_{A_0}(A_1)\otimes_{A_0}R\otimes_{A_0}T_{A_0}(A_1)\twoheadrightarrow{\textnormal{Ker}(\pi)}
\end{equation*}where $R=K_2$. Let $K=\textnormal{Ker}(\pi)$. We note that, since the $0^{th}$ and $1^{st}$ components of $T_{A_0}(A_1)$ are $A_0$ and $A_1$ respectively, then $K$ only exists in degree $\geq 2$. We have a strictly exact sequence
\begin{equation*}
    0\rightarrow{K}\xrightarrow{\iota}{T_{A_0}(A_1)}_{>0}\xrightarrow{\pi}{A}\rightarrow{A_0}\rightarrow{0}
\end{equation*}where $T_{A_0}(A_1)$ is a projective $A$-module generated by its degree $1$ component over $A_0$. By Lemma \ref{exthomkernel}, 
\begin{equation*}
    \textnormal{Ext}^2_{\textcat{grA-Mod}}(A_0, A_0\langle n\rangle)=\textnormal{Hom}_{\textcat{grA-Mod}}(K,A_0\langle n\rangle)
\end{equation*}
Hence, $\textnormal{Hom}_{\textcat{grA-Mod}}(K,A_0\langle n\rangle)=0$ unless $n=2$. Since $A$ is pre-Koszul, there exists a strict epimorphism 
\begin{equation*}
    A\otimes_{A_0}K_2\twoheadrightarrow{K}
\end{equation*}Since there exists a strict epimorphism $T_{A_0}(A_1)\twoheadrightarrow{A}$ and strict epimorphisms are stable under composition by Proposition \ref{strictepisstable}, there is a strict epimorphism 
\begin{equation*}\begin{aligned}
T_{A_0}(A_1)\otimes_{A_0} K_2\otimes_{A_0} T_{A_0}(A_1)\twoheadrightarrow{A\otimes_{A_0} K_2\otimes_{A_0}A}\twoheadrightarrow{A\otimes_{A_0}K_2}\twoheadrightarrow{K}
\end{aligned}
\end{equation*}where the second step follows by Proposition \ref{strictmodulemap}. 
\end{proof}
The key result of this section is the following.
\begin{cor}
Any Koszul monoid is quadratic.
\end{cor}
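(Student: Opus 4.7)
The proof is essentially a chain of applications of the results already established just above the statement, so the plan is to assemble them in the correct order rather than do any new work.

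First, I would invoke Proposition \ref{koszulcondition}, which characterizes Koszul monoids among pre-Koszul monoids by the Ext-vanishing condition $\textnormal{Ext}^i_{\textcat{grA-Mod}}(A_0, A_0\langle n\rangle) = 0$ unless $i = n$. In particular, since $A$ is Koszul, this holds for $i = 1$ and $i = 2$, giving exactly the two hypotheses needed downstream.

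Next, from the $i = 1$ case, Corollary \ref{quotientoftensor} yields a strict epimorphism $T_{A_0}(A_1) \twoheadrightarrow A$, so $A$ is a quotient of its tensor monoid on $A_1$. Having this quotient presentation, the $i = 2$ Ext-vanishing then feeds into Proposition \ref{ext2quadratic}, which produces the required strict epimorphism
\begin{equation*}
T_{A_0}(A_1) \otimes_{A_0} R \otimes_{A_0} T_{A_0}(A_1) \twoheadrightarrow \textnormal{Ker}(\pi),
\end{equation*}
with $R = K_2$. Together with the fact that $A$ is pre-Koszul by hypothesis, these are exactly the pieces of data required by Definition \ref{quadraticmonoid}, so $A$ is a quadratic monoid with quadratic data $(A_1, K_2)$.

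Since every step is simply citing an earlier result, there is no real obstacle; the only thing to verify is that the Ext-vanishing statement from Proposition \ref{koszulcondition} is indeed strong enough to trigger both Corollary \ref{quotientoftensor} and Proposition \ref{ext2quadratic}, which it is, degree by degree. Thus the statement follows as an immediate corollary, and the brevity of the proof is natural given that the preceding section was constructed to make exactly this chain work.
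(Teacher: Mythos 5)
Your proposal is correct and follows exactly the same route as the paper: apply Proposition \ref{koszulcondition} to get the Ext-vanishing, use the $i=1$ case with Corollary \ref{quotientoftensor} to exhibit $A$ as a quotient of $T_{A_0}(A_1)$, and then use the $i=2$ case with Proposition \ref{ext2quadratic} to conclude that $A$ is quadratic. No gaps; this matches the paper's proof step for step.
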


\begin{proof}
Suppose $A$ is a Koszul monoid. Then, by Proposition \ref{koszulcondition}, $\textnormal{Ext}^i_{\textcat{grA-Mod}}(A_0,A_0\langle n\rangle)=0$ unless $n=i$. Hence, $\textnormal{Ext}^1_{\textcat{grA-Mod}}(A_0,A_0\langle n\rangle)=0$ unless $n=1$. Therefore, by Corollary \ref{quotientoftensor}, $A$ is a quotient of $T_{A_0}(A_1)$. Moreover, since $\textnormal{Ext}^2_{\textcat{grA-Mod}}(A_0,A_0\langle n\rangle)=0$ unless $n=2$, then by Proposition \ref{ext2quadratic}, $A$ is a quadratic monoid.
\end{proof}

\section{Dual Quadratic Monoids}\label{dualquadraticmonoids}

\subsection{Dual Objects}

Suppose that $A$ is a positively graded monoid in $\mathcal{E}$ and that $M$ is a left  $A_0$-module. We want to define the notion of a dual $A_0$-module, $M^*$. Indeed there are various notions of what a dual object should be in an arbitrary monoidal category. We make the following definition which explicitly constructs $M^*$ as an $A_0$-module. Our theory is closely related to the definition of a dual object from \cite[Section 2.10]{etingof15}. Indeed our definition of the dual of a dualisable $A_0$-module corresponds exactly to their dual object in the monoidal category $\textcat{A}_0\textcat{-Mod}$.

\begin{defn}The \textbf{left dual $A_0$-module $M^*$} is defined to be 
\begin{equation*}
    M^*:=\texthom{Hom}_{\textcat{A}_0\textcat{-Mod}}(M,A_0)
\end{equation*}If $M$ is instead a right $A_0$-module, the \textbf{right dual $A_0$-module} ${}^*M$ is defined to be
\begin{equation*}
    {}^*M:=\texthom{Hom}_{\textcat{Mod-A}_0}(M,A_0)
\end{equation*}
\end{defn}
\begin{remark}
We note that $A_0^*=\texthom{Hom}_{\textcat{A}_0\textcat{-Mod}}(A_0,A_0)\simeq A_0$ by Lemma \ref{usefulfacts}.
\end{remark}
If $M$ is graded, we may define a grading on $M^*$ by
\begin{equation*}
    (M^*)_i=\texthom{Hom}_{\textcat{A}_0\textcat{-Mod}}(M_{-i},A_0)=(M_{-i})^*
\end{equation*}By the internal hom adjunction, for any left $A_0$-module $M$ there is an isomorphism 
\begin{equation*}
    \begin{aligned}\textnormal{Hom}_{\textcat{A}_0\textcat{-Mod}}(M^*\otimes_{A_0}M,A_0)&\simeq \textnormal{Hom}_{\textcat{A}_0\textcat{-Mod}}(M^*,\texthom{Hom}_{\textcat{A}_0\textcat{-Mod}}(M,A_0))\\
    &=\textnormal{Hom}_{\textcat{A}_0\textcat{-Mod}}(M^*,M^*)
\end{aligned}\end{equation*}

\begin{defn}
We define the \textbf{evaluation morphism} $ev_M:M^*\otimes_{A_0}M\rightarrow{A_0}$ to be the image of $id_{M^*}$ under the isomorphism 
\begin{equation*}
    \textnormal{Hom}_{\textcat{A}_0\textcat{-Mod}}(M^*,M^*)\simeq \textnormal{Hom}_{\textcat{A}_0\textcat{-Mod}}(M^*\otimes_{A
    _0}M,A_0)
\end{equation*}
\end{defn}

\begin{defn}
We say that an $(A_0,A_0)$-bimodule $M$ is \textbf{left dualisable} if ${}^*(M^*)\simeq M$ and there exists a coevalution morphism $coev_M:A_0\rightarrow{M\otimes_{A_0}M^*}$ such that the compositions
\begin{equation*}\label{equation1}
    M\xrightarrow{coev_M\otimes_{A_0} id_M}{(M\otimes_{A_0} M^*)\otimes_{A_0} M}\xrightarrow{}{M\otimes_{A_0}(M^*\otimes_{A_0} M)}\xrightarrow{id_M\otimes_{A_0} ev_M}{M}
\end{equation*}and
\begin{equation*}\label{equation2}
    M^*\xrightarrow{id_{M^*}\otimes_{A_0} coev_M}{M^*\otimes_{A_0} (M\otimes_{A_0} M^*)}\xrightarrow{}{(M^*\otimes_{A_0} M)\otimes_{A_0} M^*}\xrightarrow{ev_M\otimes_{A_0} id_{M^*}}{M^*}
\end{equation*}are the identity morphisms. 
\end{defn}

\begin{remark}
We remark that if $M$ is an $(A_0,A_0)$-bimodule, then $M^*$ and ${}^*M$ are $(A_0,A_0)$-bimodules.
\end{remark}

\begin{remark}
We say that an $(A_0,A_0)$-bimodule $M$ is \textbf{right dualisable} if $({}^*M)^*\simeq M$ and there exists a coevaluation morphism $coev_M:A_0\rightarrow{{}^*M\otimes_{A_0}M }$ satisfying similar conditions to above, with the evaluation morphism defined to be $ev_M:M\otimes_{A_0} {}^*M\rightarrow{A_0}$.
\end{remark}

\begin{exmp}
We note that when $A_0=k$ is a field, the dualisable modules are precisely the finite dimensional vector spaces. When $A_0$ is a semisimple ring, the dualisable modules are precisely the finitely generated ones.
\end{exmp}

We now prove a few important propositions.

\begin{prop}\label{aboveproposition}
Suppose that $M$ is a left dualisable $(A_0,A_0)$-bimodule, $N_1$ is an $(A_0,A_0)$-bimodule and $N_2$ is a left $A_0$-module, then we have that 
\begin{equation*}
    \texthom{Hom}_{\textcat{A}_0\textcat{-Mod}}(N_1\otimes_{A_0} M,N_2)\simeq \texthom{Hom}_{\textcat{A}_0\textcat{-Mod}}(N_1,M^*\otimes_{A_0}N_2)
\end{equation*}
\end{prop}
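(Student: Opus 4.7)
The plan is to build explicit mutually inverse morphisms using the evaluation $ev_M$ and coevaluation $coev_M$ of $M$, with the triangle identities from the definition of left dualisability doing the work of showing that the two composites collapse to identities. This is the standard pattern showing that a dualisable object yields a tensor--hom adjunction, adapted here to the internal hom $\texthom{Hom}$ in the bicomplete closed symmetric monoidal setting.

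Concretely, I would first define a candidate forward map
$$\Phi : \texthom{Hom}_{\textcat{A}_0\textcat{-Mod}}(N_1 \otimes_{A_0} M, N_2) \to \texthom{Hom}_{\textcat{A}_0\textcat{-Mod}}(N_1, M^* \otimes_{A_0} N_2)$$
as the internal transpose (via Corollary \ref{tensorhomadjunctioncor}) of the composite
$$N_1 \xrightarrow{\mathrm{id} \otimes coev_M} N_1 \otimes_{A_0} M \otimes_{A_0} M^* \xrightarrow{f \otimes \mathrm{id}} N_2 \otimes_{A_0} M^* \xrightarrow{\sigma} M^* \otimes_{A_0} N_2,$$
where $\sigma$ is the symmetric braiding inherited from $\mathcal{E}$, and a candidate inverse $\Psi$ whose transpose is
$$N_1 \otimes_{A_0} M \xrightarrow{g \otimes \mathrm{id}} M^* \otimes_{A_0} N_2 \otimes_{A_0} M \xrightarrow{\sigma} M^* \otimes_{A_0} M \otimes_{A_0} N_2 \xrightarrow{ev_M \otimes \mathrm{id}} A_0 \otimes_{A_0} N_2 \simeq N_2.$$

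I would then verify $\Psi \circ \Phi = \mathrm{id}$ and $\Phi \circ \Psi = \mathrm{id}$ by tracing the composites. Each composite, once the braidings are pushed past the other tensor factors, reduces to a ``zig--zag'' in $M$ or $M^*$ of the shape $(\mathrm{id} \otimes ev_M) \circ (coev_M \otimes \mathrm{id})$ or its sibling on $M^*$, and is therefore the identity by exactly the two triangle identities built into the definition of left dualisability. This step is a routine diagram chase, and naturality in $N_1$ and $N_2$ is automatic from the construction.

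The main subtlety and really the only obstacle is careful bookkeeping of the $(A_0,A_0)$-bimodule structures through the interchanges $\sigma$: since $\mathcal{E}$ is closed symmetric monoidal and $M$, $M^*$, $N_1$ all carry compatible bimodule structures, each interchange descends to a well-defined morphism of the relevant module tensor products, and once this is verified no further technical input is required beyond the dualisability data.
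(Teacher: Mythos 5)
Your proposal is correct and takes essentially the same route as the paper: the paper's (very terse) proof simply exhibits the forward map as the transpose of $N_1\simeq N_1\otimes_{A_0}A_0\xrightarrow{id\otimes coev_M}N_1\otimes_{A_0}M\otimes_{A_0}M^*\xrightarrow{f\otimes id}N_2\otimes_{A_0}M^*\xrightarrow{s}M^*\otimes_{A_0}N_2$, which is exactly your $\Phi$. You additionally spell out the inverse via $ev_M$ and the zig--zag identities, which the paper leaves implicit.
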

\begin{proof}
This isomorphism is induced by the isomorphism 
\begin{equation*}
    N_1\simeq N_1\otimes_{A_0} A_0\xrightarrow{id_{N_1}\otimes_{A_0} coev_{M}} N_1\otimes_{A_0}M\otimes_{A_0}M^*\rightarrow{N_2\otimes_{A_0} M^*}\xrightarrow{s_{N_2,M^*}}{M^*\otimes_{A_0} N_2}
\end{equation*}

\end{proof}

\begin{prop}\label{dualtensorhomprop}
For a left dualisable $(A_0,A_0)$-bimodule $M$ and a left $A_0$-module $N$, we have 
\begin{equation*}
    M^*\otimes_{A_0}N\simeq\texthom{Hom}_{\textcat{A}_0\textcat{-Mod}}(M,N)
\end{equation*}
\end{prop}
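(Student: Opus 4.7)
The plan is to deduce this directly from the preceding Proposition \ref{aboveproposition} by specializing to a clever choice of test bimodule, and then collapsing both sides via the unit identities of Lemma \ref{usefulfacts}. Since $A_0$ is itself an $(A_0,A_0)$-bimodule, I would set $N_1 = A_0$ and $N_2 = N$ in the statement of Proposition \ref{aboveproposition}. This immediately yields an isomorphism
\[
\texthom{Hom}_{\textcat{A}_0\textcat{-Mod}}(A_0 \otimes_{A_0} M, N) \;\simeq\; \texthom{Hom}_{\textcat{A}_0\textcat{-Mod}}(A_0, M^* \otimes_{A_0} N).
\]

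Next, I would simplify each side using Lemma \ref{usefulfacts}. On the left, the natural isomorphism $A_0 \otimes_{A_0} M \simeq M$ gives $\texthom{Hom}_{\textcat{A}_0\textcat{-Mod}}(A_0 \otimes_{A_0} M, N) \simeq \texthom{Hom}_{\textcat{A}_0\textcat{-Mod}}(M, N)$. On the right, the identity $\texthom{Hom}_{\textcat{A}_0\textcat{-Mod}}(A_0, X) \simeq X$ applied to $X = M^* \otimes_{A_0} N$ yields $M^* \otimes_{A_0} N$. Splicing these together produces the desired isomorphism
\[
M^* \otimes_{A_0} N \;\simeq\; \texthom{Hom}_{\textcat{A}_0\textcat{-Mod}}(M, N).
\]

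There is essentially no obstacle here, since the real work has been carried out in Proposition \ref{aboveproposition} — the present statement is a formal corollary obtained by feeding the unit object $A_0$ into that adjunction-type isomorphism. The only mild point to verify is that the isomorphism is natural in $N$, which follows because both isomorphisms being chained are natural in their respective arguments (the adjunction coming from the coevaluation $\mathrm{coev}_M$ is natural in $N_2$, and the identifications from Lemma \ref{usefulfacts} are manifestly natural). Thus the composite isomorphism is natural in $N$, as required for subsequent use.
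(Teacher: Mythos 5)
Your argument is correct and is essentially the paper's own proof: the paper likewise specialises Proposition \ref{aboveproposition} to the unit object $A_0$ (writing the chain $\texthom{Hom}_{\textcat{A}_0\textcat{-Mod}}(M,N)\simeq \texthom{Hom}_{\textcat{A}_0\textcat{-Mod}}(A_0\otimes_{A_0}M,N)\simeq \texthom{Hom}_{\textcat{A}_0\textcat{-Mod}}(A_0,M^*\otimes_{A_0}N)\simeq M^*\otimes_{A_0}N$ using Lemma \ref{usefulfacts} and the hom adjunction). If anything, your version is slightly cleaner in keeping every step at the level of internal homs rather than mixing in the external $\textnormal{Hom}$-sets as the paper's displayed chain does.
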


\begin{proof}Indeed, by Propositions \ref{aboveproposition} and \ref{usefulfacts}, and the internal hom adjunction,
\begin{equation*}
    \begin{aligned}
    \texthom{Hom}_{\textcat{A}_0\textcat{-Mod}}(M,N)
    &\simeq \textnormal{Hom}_{\textcat{A}_0\textcat{-Mod}}(A_0,\texthom{Hom}_{\textcat{A}_0\textcat{-Mod}}(M,N))\\
    &\simeq \textnormal{Hom}_{\textcat{A}_0\textcat{-Mod}}(A_0\otimes_{A_0}M,N)\\
    &\simeq \textnormal{Hom}_{\textcat{A}_0\textcat{-Mod}}(A_0,M^*\otimes_{A_0} N)\\
    &\simeq M^*\otimes_{A_0} N
    \end{aligned}
\end{equation*}
\end{proof}
\begin{cor}\label{rightdualisableiso}
For a right dualisable $(A_0,A_0)$-bimodule $M$ and a left $A_0$-module $N$ we have 
\begin{equation*}
    M\otimes_{A_0} N\simeq\texthom{Hom}_{\textcat{A}_0\textcat{-Mod}}({}^*M,N)
\end{equation*}
\end{cor}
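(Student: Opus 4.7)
The plan is to reduce the corollary directly to Proposition \ref{dualtensorhomprop} by exploiting the symmetric nature of the dualisability conditions. First I would observe that if $M$ is right dualisable, then its right dual ${}^*M$ is itself left dualisable. Indeed, the requisite isomorphism ${}^*(({}^*M)^*)\simeq {}^*M$ is immediate from $({}^*M)^*\simeq M$, and the coevaluation $coev_M:A_0\rightarrow {}^*M\otimes_{A_0} M$ together with the evaluation $ev_M:M\otimes_{A_0} {}^*M\rightarrow A_0$ provided by right dualisability of $M$ play the roles of $coev_{{}^*M}:A_0\rightarrow {}^*M\otimes_{A_0} ({}^*M)^*$ and $ev_{{}^*M}:({}^*M)^*\otimes_{A_0} {}^*M\rightarrow A_0$ respectively, once we identify $({}^*M)^*$ with $M$.

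Next I would check that the two triangle identities for ${}^*M$ as a left dualisable bimodule follow from the corresponding identities for $M$ as a right dualisable bimodule. These are formally the same diagrams with the roles of $M$ and ${}^*M$ swapped, so the identities transfer mechanically under the isomorphism $({}^*M)^*\simeq M$.

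With ${}^*M$ now recognised as a left dualisable $(A_0,A_0)$-bimodule, I would apply Proposition \ref{dualtensorhomprop} with ${}^*M$ in place of $M$ to obtain
\begin{equation*}
({}^*M)^*\otimes_{A_0} N\simeq \texthom{Hom}_{\textcat{A}_0\textcat{-Mod}}({}^*M,N).
\end{equation*}
Substituting the isomorphism $({}^*M)^*\simeq M$ on the left then yields $M\otimes_{A_0} N\simeq \texthom{Hom}_{\textcat{A}_0\textcat{-Mod}}({}^*M,N)$, which is exactly the stated corollary.

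The main obstacle here is purely notational rather than conceptual: one must be careful that the coevaluation and evaluation morphisms packaged into right dualisability of $M$ really do correspond, under $({}^*M)^*\simeq M$, to those required to witness left dualisability of ${}^*M$. Once that symmetry is unwound, the corollary drops out of Proposition \ref{dualtensorhomprop} without any new computation. An alternative route would be to mimic the chain of isomorphisms in the proof of Proposition \ref{dualtensorhomprop} directly, using $M\otimes_{A_0}-\dashv \texthom{Hom}_{\textcat{A}_0\textcat{-Mod}}({}^*M,-)$ established from $coev_M$ and $ev_M$ (the right-handed analogue of Proposition \ref{aboveproposition}); I would keep this in reserve in case the reader prefers an explicit construction rather than the symmetry argument.
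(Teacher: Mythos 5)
Your proposal is correct and follows essentially the same route as the paper: the paper's proof likewise notes that ${}^*M$ is left dualisable and then applies Proposition \ref{dualtensorhomprop} to ${}^*M$ together with the identification $({}^*M)^*\simeq M$. You spell out the verification that right dualisability of $M$ supplies the left-dualisability data for ${}^*M$, which the paper asserts without comment; otherwise the arguments coincide.
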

\begin{proof}
Indeed, since ${}^*M$ is a left dualisable $(A_0,A_0)$-bimodule, applying the previous proposition we obtain
\begin{equation*}
    M\otimes_{A_0} N\simeq ({}^*M)^*\otimes_{A_0} N\simeq\texthom{Hom}_{\textcat{A}_0\textcat{-Mod}}({}^*M,N)
\end{equation*}
\end{proof}

\begin{cor}\label{tensorduals}
Suppose that $M$ is a left $(A_0,A_0)$-bimodule and $N$ is a left dualisable $(A_0,A_0)$-bimodule. Then 
\begin{equation*}
    (M\otimes_{A_0} N)^*\simeq N^*\otimes_{A_0} M^*
\end{equation*}Further, if $M$ is left dualisable, then $M\otimes_{A_0} N$ is left dualisable.
\end{cor}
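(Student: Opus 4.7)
\emph{First isomorphism.} Unwinding the definition, $(M\otimes_{A_0}N)^* = \texthom{Hom}_{\textcat{A}_0\textcat{-Mod}}(M\otimes_{A_0}N, A_0)$. My plan is to combine the internal tensor-hom adjunction of Corollary \ref{tensorhomadjunctioncor} (with $B=A=A_0$, $M_1=M$, $M_2=N$, and $M_3=A_0$) with Proposition \ref{dualtensorhomprop}. The first step gives
\[
(M\otimes_{A_0}N)^* \simeq \texthom{Hom}_{\textcat{A}_0\textcat{-Mod}}\bigl(N, \texthom{Hom}_{\textcat{A}_0\textcat{-Mod}}(M, A_0)\bigr) = \texthom{Hom}_{\textcat{A}_0\textcat{-Mod}}(N, M^*).
\]
Since $N$ is left dualisable and $M^*$ is a left $A_0$-module, Proposition \ref{dualtensorhomprop} applied to $N$ then identifies this with $N^*\otimes_{A_0}M^*$. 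Composing the two isomorphisms gives the first claim; note that $M$ itself need not be dualisable for this step.

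\emph{Second claim.} Suppose now that $M$ is also left dualisable. To show $M\otimes_{A_0}N$ is left dualisable I must establish the reflexivity condition ${}^*((M\otimes_{A_0}N)^*)\simeq M\otimes_{A_0}N$ together with evaluation and coevaluation morphisms satisfying the two zigzag identities. Reflexivity follows by a symmetric argument to the first part: applying the right-module version of Corollary \ref{tensorhomadjunctioncor} to ${}^*(N^*\otimes_{A_0}M^*)$ and then Corollary \ref{rightdualisableiso} with the right dualisable bimodule $N^*$ (whose right dual is $N$), one obtains ${}^*(N^*\otimes_{A_0}M^*)\simeq \texthom{Hom}_{\textcat{Mod-A}_0}(N^*,M)\simeq M\otimes_{A_0}N$. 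For the structural morphisms I would define
\[
\mathrm{coev}_{M\otimes N}: A_0 \xrightarrow{\mathrm{coev}_M} M\otimes_{A_0}M^* \xrightarrow{id_M\otimes\mathrm{coev}_N\otimes id_{M^*}} M\otimes_{A_0}N\otimes_{A_0}N^*\otimes_{A_0}M^*,
\]
\[
\mathrm{ev}_{M\otimes N}: N^*\otimes_{A_0}M^*\otimes_{A_0}M\otimes_{A_0}N \xrightarrow{id_{N^*}\otimes\mathrm{ev}_M\otimes id_N} N^*\otimes_{A_0}N \xrightarrow{\mathrm{ev}_N} A_0,
\]
and then transport them across the isomorphism of the first claim so that their (co)domains match $(M\otimes_{A_0}N)^*$ rather than $N^*\otimes_{A_0}M^*$.

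\emph{Main obstacle.} The only non-formal step is the verification of the two zigzag identities. Inserting $\mathrm{coev}_{M\otimes N}$ and $\mathrm{ev}_{M\otimes N}$ into the defining compositions for $M\otimes_{A_0}N$ and for $N^*\otimes_{A_0}M^*$, the strategy is to use naturality to slide the inner $\mathrm{coev}_N$ and $\mathrm{ev}_N$ (resp. $\mathrm{coev}_M$ and $\mathrm{ev}_M$) past the outer identity factors, so that the composition collapses via two nested applications of the zigzag identities for $N$ and $M$ individually, yielding the identity on $M\otimes_{A_0}N$ (resp. $N^*\otimes_{A_0}M^*$). The calculation is the standard ``dual of a tensor is the reversed tensor of duals'' fact of monoidal category theory, so it presents no conceptual difficulty; the care required is purely bookkeeping of the associators in the (non-symmetric) $(A_0,A_0)$-bimodule monoidal structure.
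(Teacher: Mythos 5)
Your proposal is correct and follows essentially the same route as the paper: the same tensor–hom adjunction step followed by the identification $\texthom{Hom}_{\textcat{A}_0\textcat{-Mod}}(N,M^*)\simeq N^*\otimes_{A_0}M^*$ (the paper phrases this via $N\simeq{}^*(N^*)$ and Proposition \ref{aboveproposition}, which is equivalent to your direct appeal to Proposition \ref{dualtensorhomprop}), and the identical coevaluation and evaluation morphisms for $M\otimes_{A_0}N$. Your explicit treatment of the reflexivity condition ${}^*((M\otimes_{A_0}N)^*)\simeq M\otimes_{A_0}N$ is if anything slightly more complete than the paper, which leaves the remaining compatibility checks as routine.
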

\begin{proof}
We have that, by Proposition \ref{tensorhomadjunctioncor}.
\begin{align*}
    (M\otimes_{A_0}N)^*:=\texthom{Hom}_{\textcat{A}_0\textcat{-Mod}}(M\otimes_{A_0}N,A_0)&\simeq\texthom{Hom}_{\textcat{A}_0\textcat{-Mod}}(N,\texthom{Hom}_{\textcat{A}_0\textcat{-Mod}}(M,A_0))\\
    &\simeq \texthom{Hom}_{\textcat{A}_0\textcat{-Mod}}(N,M^*)\\
    \intertext{and since $N\simeq {}^*(N^*)$ then, by Proposition \ref {aboveproposition},}
    &\simeq N^*\otimes_{A_0}M^*
\end{align*}Now suppose that $M$ is dualisable. We define the coevaluation \begin{equation*}
    coev_{M\otimes_{A_0}N}:A_0\rightarrow{(M\otimes_{A_0}N)\otimes_{A_0} (M\otimes_{A_0} N)^*}
\end{equation*} as the composition
\begin{equation*}\begin{aligned}
    A_0\xrightarrow{coev_M}{M\otimes_{A_0} M^*}\simeq & M\otimes_{A_0} A_0\otimes_{A_0} M^*
    \xrightarrow{id_M\otimes_{A_0}coev_N\otimes_{A_0}id_{M^*}}{}\\
    &\rightarrow{M\otimes_{A_0} N\otimes_{A_0} N^*\otimes_{A_0}M^*}\simeq (M\otimes_{A_0} N)\otimes_{A_0} (M\otimes_{A_0} N)^*
\end{aligned}\end{equation*}We can easily check that this is compatible with the evaluation morphism
\begin{equation*}
    ev_{M\otimes_{A_0} N}:(M\otimes_{A_0} N)^*\otimes_{A_0} (M\otimes_{A_0} N)\rightarrow{A_0}
\end{equation*}which can be written as the composition
\begin{equation*}\begin{aligned}
    (M\otimes_{A_0} N)^*\otimes_{A_0} (M\otimes_{A_0} N)\simeq &N^*\otimes_{A_0} (M^*\otimes_{A_0}M)\otimes_{A_0} N\xrightarrow{id_{N^*}\otimes ev_M\otimes_{A_0} id_N}{}\\
    &\rightarrow{N^*\otimes_{A_0} N}\xrightarrow{ev_N}A_0
\end{aligned}\end{equation*}
\end{proof}

\begin{prop}
If $M=\bigoplus_{i\in I}M_i$ is a finite direct sum of left dualisable $(A_0,A_0)$-bimodules then 
\begin{equation*}
    M^*\simeq\bigoplus_{i\in I}M^*_i
\end{equation*}
\end{prop}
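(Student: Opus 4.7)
The plan is to use two basic facts: (i) the internal hom of $A_0$-modules is contravariant and continuous in its first argument, so it sends coproducts in the first slot to products, and (ii) in an additive category, finite products agree with finite coproducts. Putting these together immediately gives the claim.

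More precisely, I would first recall from Proposition \ref{amodproperties} that $\textcat{A}_0\textcat{-Mod}$ is a bicomplete closed symmetric monoidal additive category, and that the internal hom functor $\texthom{Hom}_{\textcat{A}_0\textcat{-Mod}}(-,A_0)$ is right adjoint (viewed as a contravariant functor) via the isomorphism
\begin{equation*}
\textnormal{Hom}_{\textcat{A}_0\textcat{-Mod}}(N \otimes_{A_0} L, A_0) \simeq \textnormal{Hom}_{\textcat{A}_0\textcat{-Mod}}(N, \texthom{Hom}_{\textcat{A}_0\textcat{-Mod}}(L, A_0)).
\end{equation*}
By general adjunction nonsense, a functor with a right adjoint preserves colimits; dualising gives that $\texthom{Hom}_{\textcat{A}_0\textcat{-Mod}}(-,A_0)$ converts colimits in its first argument into limits. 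In particular it converts the coproduct $\bigoplus_{i\in I} M_i$ into the product $\prod_{i\in I} \texthom{Hom}_{\textcat{A}_0\textcat{-Mod}}(M_i,A_0) = \prod_{i\in I} M_i^*$.

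Then I would invoke additivity of $\textcat{A}_0\textcat{-Mod}$ together with the finiteness of $I$: in any additive category, a finite product and a finite coproduct of the same family are canonically isomorphic. Thus
\begin{equation*}
M^* = \texthom{Hom}_{\textcat{A}_0\textcat{-Mod}}\!\Bigl(\bigoplus_{i\in I} M_i, A_0\Bigr) \simeq \prod_{i\in I} M_i^* \simeq \bigoplus_{i\in I} M_i^*,
\end{equation*}
which is the desired isomorphism. I would also note that the individual dualisability of each $M_i$ is not actually needed for this particular statement, only for making sense of the notation $M_i^*$; the left-dualisability of the direct sum itself would follow from iteratively applying Corollary \ref{tensorduals}-style reasoning (with direct sum replacing tensor product) via the componentwise evaluation and coevaluation maps.

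There is no real obstacle here: the statement is essentially the formal observation that contravariant hom functors send finite biproducts to finite biproducts in an additive setting. The only thing to be slightly careful about is citing the internal, rather than external, hom version of the adjunction — but this is exactly the content of the tensor-hom adjunction (Theorem \ref{tensorhomadjunction} and Corollary \ref{tensorhomadjunctioncor}) applied with $A_0$ in place of $A_0$ on both sides.
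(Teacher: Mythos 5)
Your proposal is correct and follows essentially the same route as the paper, which simply asserts that the internal hom preserves finite coproducts in its first argument and concludes; your version just spells out the justification (the tensor-hom adjunction turning the coproduct into a product, plus finite products and coproducts agreeing in an additive category) that the paper leaves implicit. Your side remark that dualisability of the $M_i$ is not actually used in this isomorphism is also accurate.
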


\begin{proof}We note that, since the internal hom functor preserves finite coproducts in $\textcat{A}_0\textcat{-Mod}$,
\begin{equation*}\begin{aligned}
    M^*:=\texthom{Hom}_{\textcat{A}_0\textcat{-Mod}}(M,A_0)&=\texthom{Hom}_{\textcat{A}_0\textcat{-Mod}}(\bigoplus_{i\in I}M_i,A_0)\\
    &\simeq\bigoplus_{i\in I}\texthom{Hom}_{\textcat{A}_0\textcat{-Mod}}(M_i,A_0)\\
    &=\bigoplus_{i\in I}M_i^*
\end{aligned}\end{equation*}
\end{proof}

\begin{defn}
Suppose that $M$ is a left dualisable $(A_0,A_0)$-bimodule and $N$ is a left $A_0$-module. If $f:M\rightarrow{N}$ is a left $A_0$-module map we define the \textbf{left dual map} $f^*:N^*\rightarrow{M^*}$ to be the map 
\begin{equation*}\begin{aligned}
    N^*\xrightarrow{id_{N^*}\otimes_{A_0}coev_M}{N^*\otimes (M\otimes_{A_0} M^*)}\xrightarrow{a_{N^*,M,M^*}}{(N^*\otimes_{A_0} M)\otimes_{A_0}M^*}\\
    \xrightarrow{(id_{N^*}\otimes_{A_0} f)\otimes_{A_0} id_{M^*}}{(N^*\otimes_{A_0} N)\otimes_{A_0} M^*}
    \xrightarrow{ev_N\otimes_{A_0}id_{M^*}}{M^*}
\end{aligned}\end{equation*}
\end{defn}

We note that this map makes the following diagrams commute
\begin{equation*} 
    \begin{tikzcd}
    N^*\otimes_{A_0} M \arrow{d}{id_{N^*}\otimes_{A_0} f} \arrow{rr}{f^*\otimes_{A_0}id_M} && M^*\otimes_{A_0} M\arrow{d}{ev_M}\\
    N^*\otimes_{A_0}N \arrow{rr}{ev_N} & & A_0
    \end{tikzcd}\quad \begin{tikzcd}
    A_0 \arrow{d}{coev_N} \arrow{rr}{coev_M} && M\otimes_{A_0} M^*\arrow{d}{f\otimes_{A_0} id_{M^*}}\\
    N\otimes_{A_0}N^* \arrow{rr}{id_N\otimes_{A_0} f^*} & & N\otimes_{A_0} M^*
    \end{tikzcd}
\end{equation*}

\begin{remark}
We remark that if $f$ is strict, then so is $f^*$, being a composition of strict maps. 
\end{remark}
We state without proof the following easy propositions. 
\begin{prop}
Suppose that $N$ and $M$ are left dualisable. If $f:M\rightarrow{N}$ is a left $A_0$-module map, then ${}^*(f^*)\simeq f$.
\end{prop}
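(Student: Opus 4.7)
The plan is to give a conceptual proof based on the universal property characterizing the dual morphism, avoiding a direct grind through the long composition defining ${}^*(f^*)$. First, I would recast the definition of $f^*$ as a universal characterization. The commutative diagram following the definition of $f^*$ says precisely that $ev_M \circ (f^* \otimes_{A_0} id_M) = ev_N \circ (id_{N^*} \otimes_{A_0} f)$ as morphisms $N^* \otimes_{A_0} M \to A_0$. Combining the internal hom adjunction (Theorem \ref{tensorhomadjunction}) with the definition of $M^*$, we get a natural bijection
\begin{equation*}
\textnormal{Hom}_{\textcat{A}_0\textcat{-Mod}}(N^*, M^*) \simeq \textnormal{Hom}_{\textcat{A}_0\textcat{-Mod}}(N^* \otimes_{A_0} M, A_0), \qquad h \longmapsto ev_M \circ (h \otimes_{A_0} id_M).
\end{equation*}
Hence $f^*$ is the unique morphism satisfying the above equation.

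Next, I apply the analogous reasoning to $f^*\colon N^* \to M^*$, viewed now as a morphism between the right-dualisable objects $M^*$ and $N^*$. Under the reflexivity isomorphisms ${}^*(M^*) \simeq M$ and ${}^*(N^*) \simeq N$, the duality data $(coev_{M^*}, ev_{M^*})$ and $(coev_{N^*}, ev_{N^*})$ are forced, by uniqueness of morphisms satisfying the zig-zag identities, to agree with $(coev_M, ev_M)$ and $(coev_N, ev_N)$. Consequently, the construction analogous to that of $f^*$ — now using the right-dualisable structure — characterizes ${}^*(f^*)\colon M \to N$ as the unique morphism $k$ such that
\begin{equation*}
ev_N \circ (id_{N^*} \otimes_{A_0} k) = ev_M \circ (f^* \otimes_{A_0} id_M).
\end{equation*}
The required uniqueness here comes from the dual version of the bijection above, namely $\textnormal{Hom}_{\textcat{A}_0\textcat{-Mod}}(M, N) \simeq \textnormal{Hom}_{\textcat{A}_0\textcat{-Mod}}(N^* \otimes_{A_0} M, A_0)$ via $k \mapsto ev_N \circ (id_{N^*} \otimes_{A_0} k)$, which follows by combining the internal hom adjunction with the reflexivity ${}^*(N^*) \simeq N$.

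To conclude, I combine the two characterizations. By the defining property of $f^*$, the right-hand side equals $ev_N \circ (id_{N^*} \otimes_{A_0} f)$, so that $f$ itself satisfies the equation characterizing ${}^*(f^*)$. By uniqueness, ${}^*(f^*) \simeq f$. The main subtle point, and the only nontrivial verification in this plan, is making precise the identification of the right-dualisable evaluation $ev_{M^*}\colon M^* \otimes_{A_0} {}^*(M^*) \to A_0$ with $ev_M\colon M^* \otimes_{A_0} M \to A_0$ under the isomorphism ${}^*(M^*) \simeq M$ (and similarly for $N$). Once this identification is in hand, the argument is essentially formal.
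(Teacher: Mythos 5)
The paper states this proposition explicitly \emph{without} proof (``We state without proof the following easy propositions''), so there is no argument of record to compare against; your proposal supplies one, and it is correct. Your reduction of the definition of $f^*$ to the universal characterization $ev_M\circ(f^*\otimes_{A_0} id_M)=ev_N\circ(id_{N^*}\otimes_{A_0} f)$ is exactly the content of the first commutative diagram the paper records immediately after the definition of the dual map, and the uniqueness of a morphism satisfying this equation follows, as you say, from the adjunction bijection $\textnormal{Hom}_{\textcat{A}_0\textcat{-Mod}}(N^*,M^*)\simeq\textnormal{Hom}_{\textcat{A}_0\textcat{-Mod}}(N^*\otimes_{A_0}M,A_0)$ together with the fact that $ev_M$ is by definition the image of $id_{M^*}$ under this bijection. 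Applying the mirror characterization to ${}^*(f^*)$ and invoking the defining equation of $f^*$ then closes the argument formally. The one point you rightly flag --- that the right-duality data $(ev_{M^*},coev_{M^*})$ on $M^*$ must be identified with $(ev_M,coev_M)$ under the reflexivity isomorphism ${}^*(M^*)\simeq M$ --- is genuinely the only place where care is needed, because the paper's definition of ``left dualisable'' posits the isomorphism ${}^*(M^*)\simeq M$ without specifying it; the standard uniqueness-of-duals argument shows the two structures agree up to a unique such isomorphism, and since the conclusion ${}^*(f^*)\simeq f$ is itself only asserted up to these identifications, your argument is at the right level of rigor for the claim as stated.
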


We will call any $f$ satisfying this condition \textit{left dualisable}. Similarly, any $f$ such that $({}^*f)^*\simeq f$ is \textit{right dualisable}. 
\begin{prop}
Suppose that $f:M\rightarrow{N}$ and $g:L\rightarrow{M}$ are left dualisable $A_0$-module maps between $(A_0,A_0)$-bimodules, then $(f\circ g)^*=g^*\circ f^*$.
\end{prop}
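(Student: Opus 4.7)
My plan is to verify the identity by unravelling the definitions and performing a diagram chase. Both $(f\circ g)^*$ and $g^*\circ f^*$ are explicit finite composites of morphisms $N^*\to L^*$ built from identities, $f$, $g$, the coevaluations $coev_M,coev_L$, the evaluations $ev_N,ev_M$, and associators of $\otimes_{A_0}$. The first step is simply to write out both composites in full using the formula defining the left dual of a map.

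The key manoeuvre is to reorganise $g^*\circ f^*$ by exploiting the fact that the various morphisms involved act on disjoint tensor factors. More precisely, by repeated use of the bifunctoriality of $\otimes_{A_0}$, the evaluation $ev_N$ appearing in $f^*$ can be commuted past $coev_L$ appearing in $g^*$, and similarly the applications of $f$ and $g$ can be performed in either order. This lets me rewrite $g^*\circ f^*$ as a single composite on the larger tensor product $N^*\otimes_{A_0}M\otimes_{A_0}M^*\otimes_{A_0}L\otimes_{A_0}L^*$, in which the two coevaluations are applied first, then both module maps $f$ and $g$ in parallel, and finally both evaluations.

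The main step is then to recognise, inside this rearranged composite, the sub-expression
\begin{equation*}
M\xrightarrow{coev_M\otimes_{A_0} id_M}M\otimes_{A_0}M^*\otimes_{A_0}M\xrightarrow{id_M\otimes_{A_0} ev_M}M,
\end{equation*}
which equals $id_M$ by the triangle (zig-zag) identity that is part of the hypothesis that $M$ is left dualisable. Cancelling this subcomposite collapses the expression to precisely the defining composite of $(f\circ g)^*$, completing the argument.

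The main obstacle is the purely bookkeeping task of tracking the associators and the positions of tensor factors so that the middle $M\otimes_{A_0}M^*\otimes_{A_0}M$ is exposed in the form required to invoke the zig-zag identity. MacLane's coherence theorem guarantees that the various reorderings by associators are canonical, so the only substantive mathematical input is the single application of the triangle identity for $M$. A more abstract alternative would be to interpret $f^*$ as the mate of $f$ under the adjunction $(-\otimes_{A_0}M)\dashv (M^*\otimes_{A_0}-)$ of Proposition \ref{aboveproposition}, and invoke the contravariant functoriality of the mate construction, but the direct diagram chase is closer in spirit to the rest of the section.
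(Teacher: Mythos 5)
Your argument is correct: after using bifunctoriality of $\otimes_{A_0}$ (and coherence for the associators) to interleave the two defining composites, the single application of the triangle identity $(id_M\otimes_{A_0} ev_M)\circ(coev_M\otimes_{A_0} id_M)=id_M$ for the left dualisable module $M$ collapses $g^*\circ f^*$ to exactly the defining composite of $(f\circ g)^*$. The paper states this proposition without proof, so there is nothing to compare against; your diagram chase is the standard argument and supplies precisely the omitted verification.
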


\begin{prop}\label{monodualepi}
Suppose that $f:M\rightarrow{N}$ is a left dualisable epimorphism of $(A_0,A_0)$-bimodules, then $f^*:N^*\rightarrow{M^*}$ is a monomorphism on right dualisable maps. Similarly, if $f$ is a monomorphism, then $f^*$ is an epimorphism on right dualisable maps.
\end{prop}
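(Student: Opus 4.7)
The plan is to reduce the statement to the contravariance of the left and right dual operations combined with the double--dual identities ${}^*(h^*)\simeq h$ (for left dualisable $h$) and $({}^*h)^*\simeq h$ (for right dualisable $h$) established in the two preceding propositions. Recall that by the proposition $(f\circ g)^*=g^*\circ f^*$, and by the analogous right--dual statement ${}^*(f\circ g)={}^*g\circ{}^*f$, applying either dual operation to an equality of compositions gives an equality of compositions in the opposite order.

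For the first claim, suppose $f:M\to N$ is a left dualisable epimorphism and let $g_1,g_2:L\to N^*$ be right dualisable morphisms satisfying $f^*\circ g_1 = f^*\circ g_2$. Applying ${}^*(-)$ contravariantly gives
\begin{equation*}
{}^*g_1\circ {}^*(f^*) = {}^*g_2\circ {}^*(f^*).
\end{equation*}
Since $f$ is left dualisable we may substitute ${}^*(f^*)\simeq f$ to obtain ${}^*g_1\circ f = {}^*g_2\circ f$. As $f$ is an epimorphism this forces ${}^*g_1 = {}^*g_2$, and applying $(-)^*$ and using $({}^*g_i)^*\simeq g_i$ (available because each $g_i$ is right dualisable) yields $g_1=g_2$. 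Hence $f^*$ is a monomorphism when tested against right dualisable maps.

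The second claim is completely symmetric. From $g_1\circ f^* = g_2\circ f^*$ with $g_1,g_2:M^*\to L$ right dualisable, the right dual produces ${}^*(f^*)\circ {}^*g_1 = {}^*(f^*)\circ {}^*g_2$, i.e.\ $f\circ {}^*g_1 = f\circ {}^*g_2$. Monomorphism--ness of $f$ then gives ${}^*g_1={}^*g_2$, and dualising once more yields $g_1=g_2$, so $f^*$ is an epimorphism on right dualisable maps.

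The only real subtlety is bookkeeping: at each step the morphism being dualised must be in the class (left or right dualisable) for which the corresponding double--dual identity holds. This is guaranteed by the standing hypotheses, namely that $f$ itself is left dualisable and the test morphisms $g_i$ are right dualisable; the explicit (co)evaluation diagrams defining $f^*$ and ${}^*g_i$ show that the iterated duals land back on the original morphisms up to the stated natural isomorphisms, which is what keeps the chain of equalities coherent.
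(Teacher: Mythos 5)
Your argument is correct and is essentially identical to the paper's proof: both apply the right dual to the equation $f^*\circ g_1=f^*\circ g_2$, use ${}^*(f^*)\simeq f$ and the epimorphism property of $f$ to get ${}^*g_1={}^*g_2$, and then recover $g_1=g_2$ via $({}^*g_i)^*\simeq g_i$. No further comment is needed.
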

\begin{proof}
Suppose that $f$ is an epimorphism, and that we have two right dualisable maps $g_1,g_2:L\rightarrow{N^*}$ of $(A_0,A_0)$-bimodules such that $f^*\circ g_1=f^*\circ g_2$. Then, ${}^*(f^*\circ g_1)={}^*(f^*\circ g_2)$, and hence ${}^*g_1\circ f={}^*g_2\circ f$. Since $f$ is an epimorphism, ${}^*g_1={}^*g_2$, and hence $g_1\simeq({}^*g_1)^*=({}^*g_2)^*\simeq g_2$. Therefore, $f^*$ is a monomorphism. The other claim follows similarly.
\end{proof}

\begin{prop}\label{dualkercoker2}Suppose that $A_0$ is injective as a module over itself. Suppose that we have a map $f:M\rightarrow{N}$ between a left dualisable $(A_0,A_0)$-bimodule $M$ and a left $A_0$-module $N$. Then, \begin{equation*}
    \textnormal{Ker}(f^*)\simeq \textnormal{Coker}(f)^*\quad\text{and}\quad \textnormal{Ker}(f)^*\simeq \textnormal{Coker}(f^*)
\end{equation*}
\end{prop}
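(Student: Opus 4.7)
The plan is to use the exactness of the dualization functor $(-)^* = \texthom{Hom}_{\textcat{A}_0\textcat{-Mod}}(-,A_0)$ on the canonical strictly exact sequences associated with $f$. Since $A_0$ is injective as a module over itself, Proposition \ref{homstrictlyexactoriginal} applied inside the quasi-abelian category $\textcat{A}_0\textcat{-Mod}$ shows that $(-)^*$ is exact: as a contravariant functor, it sends short strictly exact sequences to short strictly exact sequences with the arrows reversed.

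For the first isomorphism, I would start with the short strictly exact sequence
\[
0 \to \textnormal{Im}(f) \hookrightarrow N \twoheadrightarrow \textnormal{Coker}(f) \to 0.
\]
Dualizing yields the strictly exact sequence
\[
0 \to \textnormal{Coker}(f)^* \to N^* \to \textnormal{Im}(f)^* \to 0,
\]
so $\textnormal{Coker}(f)^*$ is identified with the kernel of the strict epimorphism $N^* \twoheadrightarrow \textnormal{Im}(f)^*$. Now $f^*$ factors as $N^* \twoheadrightarrow \textnormal{Im}(f)^* \xrightarrow{\bar f^*} \textnormal{Coim}(f)^* \hookrightarrow M^*$, and the composite $\textnormal{Im}(f)^* \to M^*$ is a monomorphism (being the dual of the composition of strict surjections $M \twoheadrightarrow \textnormal{Coim}(f)$ with the monoepi $\bar f$). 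Thus $\textnormal{Ker}(f^*) = \textnormal{Ker}(N^* \to \textnormal{Im}(f)^*) \simeq \textnormal{Coker}(f)^*$.

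For the second isomorphism, I would apply $(-)^*$ to the short strictly exact sequence
\[
0 \to \textnormal{Ker}(f) \hookrightarrow M \twoheadrightarrow \textnormal{Coim}(f) \to 0
\]
to obtain the strictly exact sequence
\[
0 \to \textnormal{Coim}(f)^* \to M^* \twoheadrightarrow \textnormal{Ker}(f)^* \to 0.
\]
Combining with the factorization of $f^*$ from the previous paragraph, the image of $f^*$ sits inside $M^*$ as the strict subobject $\textnormal{Coim}(f)^*$, and hence $\textnormal{Coker}(f^*) \simeq \textnormal{Coker}(\textnormal{Coim}(f)^* \hookrightarrow M^*) \simeq \textnormal{Ker}(f)^*$.

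The main obstacle is controlling the image of $f^*$ when $f$ is not itself strict, since then the canonical map $\bar f: \textnormal{Coim}(f) \to \textnormal{Im}(f)$ is merely a monoepimorphism rather than a strict isomorphism, and the dual $\bar f^*$ inherits the same defect. I would handle this by tracing through the middle factor carefully, using that the exactness of $(-)^*$ coming from self-injectivity of $A_0$ ensures that the two outer arrows in the factorization $N^* \twoheadrightarrow \textnormal{Im}(f)^* \to \textnormal{Coim}(f)^* \hookrightarrow M^*$ are strict, so the image of the composite coincides with $\textnormal{Coim}(f)^* \hookrightarrow M^*$ and the kernel coincides with the kernel of $N^* \twoheadrightarrow \textnormal{Im}(f)^*$. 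When $f$ is strict, $\bar f$ is an isomorphism and the argument trivializes.
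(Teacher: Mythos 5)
Your proof is correct and takes essentially the same route as the paper: the paper applies $(-)^*=\texthom{Hom}_{\textcat{A}_0\textcat{-Mod}}(-,A_0)$, exact by Proposition \ref{homstrictlyexactoriginal}, to the single four-term strictly exact sequence $0\rightarrow\textnormal{Ker}(f)\rightarrow M\rightarrow N\rightarrow \textnormal{Coker}(f)\rightarrow 0$ and reads off both isomorphisms at once, whereas you dualise the two constituent short strictly exact sequences separately and reassemble via the factorisation of $f^*$. The one substantive remark is about your final paragraph: for non-strict $f$ your identification $\textnormal{Coker}(f^*)\simeq\textnormal{Coker}(\textnormal{Coim}(f)^*\hookrightarrow M^*)$ still requires $\bar f^*$ to be an epimorphism, which does not follow from strictness of the two outer arrows alone (Proposition \ref{monodualepi} only gives this on right dualisable maps); but the paper's own proof carries the same implicit restriction, since the four-term sequence is strictly exact at $N$ only when $f$ is strict, and in every application in the paper $f$ is indeed strict.
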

\begin{proof}
Consider the strictly exact sequence 
\begin{equation*}
    0\rightarrow{\textnormal{Ker}(f)}\rightarrow{M}\xrightarrow{f}{N}\rightarrow{\textnormal{Coker}(f)}\rightarrow{0}
\end{equation*}If we apply the functor $\texthom{Hom}_{\textcat{A}_0\textcat{-Mod}}(-,A_0)$, which is strictly exact by Proposition \ref{homstrictlyexactoriginal}, we obtain the strictly exact sequence
\begin{equation*}
    0\rightarrow{\textnormal{Coker}(f)^*}\rightarrow{N^*}\xrightarrow{f^*}{M^*}\rightarrow{\textnormal{Ker}(f)^*}\rightarrow{0}
\end{equation*}and so our result follows.
\end{proof}
For the rest of this subsection we will assume that $A_0$ is injective as a module over itself.  
\begin{prop}\label{dualkercoker}
If $N$ is instead a right $A_0$-module and $M$ is right dualisable, then $\textnormal{Ker}({}^*f)\simeq{{}^*\textnormal{Coker}(f)}$ and ${}^*\textnormal{Ker}(f)\simeq{\textnormal{Coker}({}^*f)}$.
\end{prop}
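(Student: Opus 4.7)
The plan is to mirror the proof of Proposition \ref{dualkercoker2} verbatim, replacing left $A_0$-modules by right $A_0$-modules and the left-dual functor $(-)^* = \texthom{Hom}_{\textcat{A}_0\textcat{-Mod}}(-,A_0)$ with the right-dual functor ${}^*(-) = \texthom{Hom}_{\textcat{Mod-A}_0}(-,A_0)$. Concretely, starting from the strictly exact sequence of right $A_0$-modules
\begin{equation*}
0 \rightarrow \textnormal{Ker}(f) \rightarrow M \xrightarrow{f} N \rightarrow \textnormal{Coker}(f) \rightarrow 0,
\end{equation*}
I would apply ${}^*(-)$ and, provided this functor is strictly exact, obtain the strictly exact sequence
\begin{equation*}
0 \rightarrow {}^*\textnormal{Coker}(f) \rightarrow {}^*N \xrightarrow{{}^*f} {}^*M \rightarrow {}^*\textnormal{Ker}(f) \rightarrow 0,
\end{equation*}
from which the two claimed isomorphisms $\textnormal{Ker}({}^*f)\simeq {}^*\textnormal{Coker}(f)$ and $\textnormal{Coker}({}^*f) \simeq {}^*\textnormal{Ker}(f)$ read off immediately by decomposing the four-term sequence into its image/kernel pieces and invoking Proposition \ref{isomorphismmonoepi} to promote strict mono- and epimorphisms to isomorphisms where needed.

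The one non-trivial point I would need to verify is the strict exactness of ${}^*(-)$ on short strictly exact sequences, i.e.\ the right-module analogue of Proposition \ref{homstrictlyexactoriginal}. The argument there goes through unchanged: the left-exactness is formal from the adjunction $-\otimes A_0 \dashv \texthom{Hom}_{\textcat{Mod-A}_0}(-,A_0)$ in the opposite category, and the surjectivity of the remaining map at the level of flat projective generators reduces, via the internal hom adjunction and Proposition \ref{projectivegenerator}, to the fact that $P \otimes X \rightarrow P \otimes Y$ remains a strict monomorphism when $X \rightarrow Y$ is a strict monomorphism and $P$ is flat, together with the injectivity of $A_0$ as a right module over itself (which we are implicitly assuming together with the left-injectivity used for the previous proposition).

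The main obstacle, such as it is, is therefore a bookkeeping one rather than a conceptual one: making sure that the right-handed version of Proposition \ref{homstrictlyexactoriginal} is in force under the standing assumption that $A_0$ is self-injective. Since the category $\textcat{Mod-A}_0$ inherits all the same structural properties from $\mathcal{E}$ as $\textcat{A}_0\textcat{-Mod}$ does (by the evident left/right symmetry of the results in Section \ref{haquasiabeliancats}), the argument transfers with no essential change, and the proof of Proposition \ref{dualkercoker} can be given in a single sentence referencing the proof of Proposition \ref{dualkercoker2}.
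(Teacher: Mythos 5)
Your proposal is correct and matches the paper's intent exactly: the paper gives no proof for Proposition \ref{dualkercoker}, treating it as the evident left/right mirror of Proposition \ref{dualkercoker2}, which is precisely the argument you spell out. Your added caveat about needing $A_0$ to be injective as a \emph{right} module over itself (so that the right-handed analogue of Proposition \ref{homstrictlyexactoriginal} applies) is a legitimate point of bookkeeping that the paper glosses over.
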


\begin{cor}\label{kerneldualisable}
If $f:M\rightarrow{N}$ is a map between left dualisable $(A_0,A_0)$-bimodules $M$ and $N$, then $\textnormal{Ker}(f)$ is a left dualisable $(A_0,A_0)$-bimodule.
\end{cor}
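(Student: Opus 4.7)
The plan is to first verify the reflexivity condition ${}^*(K^*) \simeq K$ for $K := \textnormal{Ker}(f)$, and then to construct a coevaluation morphism satisfying the triangle identities.

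For reflexivity, Proposition \ref{dualkercoker2} applied to $f$ gives $K^* \simeq \textnormal{Coker}(f^*)$. Since $N$ is left dualisable, $N^*$ is a right dualisable $(A_0,A_0)$-bimodule, so Proposition \ref{dualkercoker} applies to $f^* : N^* \to M^*$, yielding $\textnormal{Ker}({}^*(f^*)) \simeq {}^*\textnormal{Coker}(f^*) \simeq {}^*(K^*)$. The earlier (unlabelled) proposition on reflexivity of dualisable maps gives ${}^*(f^*) \simeq f$, and hence $\textnormal{Ker}({}^*(f^*)) \simeq \textnormal{Ker}(f) = K$, establishing ${}^*(K^*) \simeq K$.

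For the coevaluation, let $\iota : K \hookrightarrow M$ denote the canonical strict monomorphism and $\iota^* : M^* \twoheadrightarrow K^*$ its dual strict epimorphism (identified under $K^* \simeq \textnormal{Coker}(f^*)$ with the canonical projection). I would define $\textnormal{coev}_K : A_0 \to K \otimes_{A_0} K^*$ as the unique factorisation of
\[
A_0 \xrightarrow{\textnormal{coev}_M} M \otimes_{A_0} M^* \xrightarrow{\textnormal{id}_M \otimes \iota^*} M \otimes_{A_0} K^*
\]
through the subobject $\iota \otimes \textnormal{id}_{K^*} : K \otimes_{A_0} K^* \hookrightarrow M \otimes_{A_0} K^*$. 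To establish the existence of this factorisation it suffices to show that the composite $A_0 \to (M/K) \otimes_{A_0} K^*$ vanishes. Using naturality of coevaluation, $(f \otimes \textnormal{id}_{M^*}) \circ \textnormal{coev}_M = (\textnormal{id}_N \otimes f^*) \circ \textnormal{coev}_N$, and post-composing with $\textnormal{id}_N \otimes \iota^*$ yields
\[
(f \otimes \iota^*) \circ \textnormal{coev}_M = (\textnormal{id}_N \otimes \iota^* f^*) \circ \textnormal{coev}_N = 0,
\]
since $\iota^* f^* = (f \iota)^* = 0$. Factoring $f$ as $M \twoheadrightarrow M/K \hookrightarrow N$ and using that the second arrow remains a monomorphism after tensoring with $K^*$, the vanishing of the composite into $N \otimes_{A_0} K^*$ then forces vanishing into $(M/K) \otimes_{A_0} K^*$.

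Finally, the two triangle identities for $K$ follow from those for $M$ by a direct diagram chase, exploiting the defining factorisation of $\textnormal{coev}_K$ together with the compatibility $\textnormal{ev}_K \circ (\iota^* \otimes \textnormal{id}_K) = \textnormal{ev}_M \circ (\textnormal{id}_{M^*} \otimes \iota)$ between the evaluation morphisms (itself a consequence of the definition of the dual of $\iota$). The main obstacle will be the factorisation step above, specifically justifying that the functor $- \otimes_{A_0} K^*$ preserves the strict monomorphism $M/K \hookrightarrow N$; this likely requires extracting a flatness-type property of $K^*$ from the injectivity of $A_0$ together with the quasi-abelian axioms, possibly by passing to the left heart $\mathcal{LH}(\textcat{A}_0\textcat{-Mod})$ where the corresponding abelian computation is clean.
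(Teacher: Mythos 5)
Your plan reproduces the paper's argument essentially step for step: the same chain of isomorphisms ${}^*(K^*)\simeq{}^*\textnormal{Coker}(f^*)\simeq\textnormal{Ker}({}^*(f^*))\simeq K$ for reflexivity, the same definition of $\textnormal{coev}_K$ by factoring $(\textnormal{id}_M\otimes_{A_0}\pi)\circ\textnormal{coev}_M$ through $K\otimes_{A_0}K^*$ using naturality of coevaluation and $\pi\circ f^*=0$, and the same compatibility $\textnormal{ev}_K\circ(\pi\otimes_{A_0}\textnormal{id}_K)=\textnormal{ev}_M\circ(\textnormal{id}_{M^*}\otimes_{A_0}\iota)$ driving the diagram chase for the triangle identities. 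The ``main obstacle'' you flag is genuine but is not actually addressed in the paper either: its proof simply invokes ``the universal property of the kernel'' and silently identifies $\textnormal{Ker}(f\otimes_{A_0}\textnormal{id}_{K^*})$ with $\textnormal{Ker}(f)\otimes_{A_0}K^*$, which is precisely the exactness property of $-\otimes_{A_0}K^*$ you propose to justify, so your version is, if anything, the more honest one.
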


\begin{proof}
 We first note that ${}^*(\textnormal{Ker}(f)^*)\simeq {}^*\textnormal{Coker}(f^*)\simeq \textnormal{Ker}({}^*(f^*))\simeq \textnormal{Ker}(f)$ using Propositions \ref{dualkercoker2} and \ref{dualkercoker}. Let $K=\textnormal{Ker}(f)$. We define a coevaluation map $coev_K:A_0\rightarrow{K\otimes_{A_0}K^*}$ as follows. We note that $K^*\simeq \textnormal{Coker}(f^*)$ by Proposition \ref{dualkercoker}. Denote by $\iota$ the inclusion map $K\hookrightarrow{M}$ and by $\pi$ the cokernel map $M^*\rightarrow{\textnormal{Coker}(f^*)}\simeq K^*$. Consider the composition 
\begin{equation*}
    A_0\xrightarrow{coev_{M}}{M\otimes_{A_0} M^*}\xrightarrow{id_M\otimes_{A_0} \pi}{M\otimes_{A_0} \textnormal{Coker}(f^*)}
\end{equation*} We note that the following diagram commutes
\begin{equation*}\begin{tikzcd}
    A_0\arrow{d}{coev_N}\arrow{r}{coev_{M}} & {M\otimes_{A_0} M^*} \arrow{d}{f\otimes_{A_0} id_{M^*}} \arrow{r}{id_M\otimes_{A_0} \pi} & {M\otimes_{A_0} \textnormal{Coker}(f^*)} \arrow{d}{f\otimes_{A_0}id_{K^*}}\\
    N\otimes_{A_0} N^* \arrow{r}{id_N\otimes_{A_0} f^*}& N\otimes_{A_0} M^* \arrow{r}{id_N\otimes_{A_0}\pi} & N\otimes_{A_0} \textnormal{Coker}(f^*)
\end{tikzcd}\end{equation*}
and hence we see that
\begin{equation*}
    (f\otimes_{A_0} id_{K^*})\circ (id_M\otimes_{A_0} \pi)\circ coev_M=(id_N\otimes_{A_0} \pi)\circ (id_N\otimes_{A_0} f^*)\circ coev_N=0
\end{equation*}since $\pi\circ f^*=0$. Therefore, by the universal property of the kernel, there exists a map \begin{equation*}coev_K:A_0\rightarrow{\textnormal{Ker}(f)\otimes_{A_0}}\textnormal{Coker}(f^*)\simeq K\otimes_{A_0} K^*\end{equation*} such that $(\iota\otimes_{A_0}id_{K^*})\circ coev_K=(id_{M}\otimes_{A_0}\pi)\circ coev_M $. We note that the evaluation map $ev_K:K^*\otimes_{A_0} K\rightarrow{A_0}$ exists and satisfies $ev_K\circ (\pi\otimes_{A_0}id_K)=ev_M\circ (id_{M^*}\otimes_{A_0}\iota)$. We check that this coevaluation is compatible with the evaluation. Indeed, 

\begin{align*}
    &\iota\circ (id_K\otimes_{A_0} ev_K)\circ a_{K,K^*,K}\circ (coev_K\otimes_{A_0} id_K)\\
    &=(\iota\otimes_{A_0} ev_K)\circ a_{K,K^*,K}\circ (coev_K\otimes_{A_0} id_K)\\
    &=(id_M\otimes_{A_0}ev_K)\circ a_{M,K^*,K}\circ((\iota\otimes_{A_0} id_{K^*})\otimes_{A_0}id_K)\circ  (coev_K\otimes_{A_0} id_K)\\
    \intertext{and since $(\iota\otimes_{A_0}id_{K^*})\circ coev_K=(id_{M}\otimes_{A_0}\pi)\circ coev_M$, we have}
    &=(id_M\otimes_{A_0}ev_K)\circ a_{M,K^*,K}\circ ((id_M\otimes_{A_0} \pi)\otimes_{A_0} id_K)\circ (coev_M\otimes_{A_0} id_K)\\
    \intertext{and since $ev_K\circ (\pi\otimes_{A_0} id_K)=ev_M\circ (id_{M^*}\otimes_{A_0} \iota)$,}
    &=(id_M\otimes_{A_0} ev_M)\circ a_{M,M^*,M}\circ(id_M\otimes_{A_0}(id_{M^*}\otimes_{A_0}\iota))\circ(coev_M\otimes_{A_0} id_K)\\
    &=(id_M\otimes_{A_0} ev_M)\circ a_{M,M^*,M}\circ(coev_M\otimes_{A_0} id_M)\circ \iota\\
    \intertext{using the compatibility of $ev_M$ and $coev_M$,}
    &=id_M\circ \iota\\
    &=\iota\circ id_K
\end{align*}Therefore, as $\iota$ is a monomorphism, we get that
\begin{equation*}
 (id_K\otimes_{A_0} ev_K)\circ a_{K,K^*,K}\circ (coev_K\otimes_{A_0} id_K)=id_K    
\end{equation*}
The other compatibility condition follows similarly. Hence, $\textnormal{Ker}(f)$ is left dualisable.
\end{proof}

\begin{cor}\label{cokerdualisable}
If $f:M\rightarrow{N}$ is a map between left dualisable $(A_0,A_0)$-bimodules $M$ and $N$, then $\textnormal{Coker}(f)$ is a left dualisable $(A_0,A_0)$-bimodule.
\end{cor}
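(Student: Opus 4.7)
The plan is to mirror the construction in Corollary \ref{kerneldualisable}, using the symmetry between kernels and cokernels that Propositions \ref{dualkercoker2} and \ref{dualkercoker} provide. Write $C = \textnormal{Coker}(f)$, let $\pi : N \twoheadrightarrow C$ be the cokernel projection, and via the isomorphism $C^* \simeq \textnormal{Ker}(f^*)$ of Proposition \ref{dualkercoker2} write $\iota : C^* \hookrightarrow N^*$ for the corresponding monomorphism.

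First I would verify reflexivity ${}^*(C^*) \simeq C$. Starting from $C^* \simeq \textnormal{Ker}(f^*)$, Proposition \ref{dualkercoker} gives ${}^*\textnormal{Ker}(f^*) \simeq \textnormal{Coker}({}^*(f^*)) \simeq \textnormal{Coker}(f) = C$, where the last step uses ${}^*(f^*) \simeq f$, an immediate consequence of $M$ and $N$ being left dualisable (compare the corresponding step in the proof of Corollary \ref{kerneldualisable}).

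Next I would construct $coev_C : A_0 \to C \otimes_{A_0} C^*$. Form the composition $(\pi \otimes_{A_0} id_{N^*}) \circ coev_N : A_0 \to C \otimes_{A_0} N^*$. Using the commutative square $(id_N \otimes_{A_0} f^*) \circ coev_N = (f \otimes_{A_0} id_{M^*}) \circ coev_M$ coming from the definition of the dual map, one checks that $(id_C \otimes_{A_0} f^*) \circ (\pi \otimes_{A_0} id_{N^*}) \circ coev_N = ((\pi \circ f) \otimes_{A_0} id_{M^*}) \circ coev_M = 0$. Identifying $C \otimes_{A_0} C^*$ with the kernel of $id_C \otimes_{A_0} f^* : C \otimes_{A_0} N^* \to C \otimes_{A_0} M^*$ (dually to the identification implicit in Corollary \ref{kerneldualisable}), the universal property of the kernel then produces $coev_C$, characterised by the property $(id_C \otimes_{A_0} \iota) \circ coev_C = (\pi \otimes_{A_0} id_{N^*}) \circ coev_N$.

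It remains to verify the two triangle identities relating $ev_C$ and $coev_C$. The evaluation $ev_C$ is automatically defined as the image of $id_{C^*}$ under the internal-Hom adjunction, and satisfies the compatibility $ev_C \circ (id_{C^*} \otimes_{A_0} \pi) = ev_N \circ (\iota \otimes_{A_0} id_N)$ arising from the definition of the dual morphism $\pi^* = \iota$. The two triangle identities for $C$ then reduce, after composing with the strict epimorphism $\pi$ on the appropriate side (respectively precomposing with the strict monomorphism $\iota$), to the corresponding identities for $N$ together with the identities built into the definitions of $ev_M, coev_M, ev_N, coev_N$. The main obstacle will be executing this final diagram chase cleanly: the argument is essentially forced by the analogous steps in Corollary \ref{kerneldualisable}, but carefully tracking the associators and the interplay of $\pi$, $\iota$, and the evaluation/coevaluation morphisms of $M$ and $N$ requires care similar to the kernel case.
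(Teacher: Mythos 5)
Your proposal is correct and is exactly the argument the paper intends: its proof of this corollary is literally ``this follows using a similar proof to the previous proposition,'' and your dualisation of Corollary \ref{kerneldualisable} — reflexivity via Propositions \ref{dualkercoker2} and \ref{dualkercoker}, $coev_C$ from the universal property of $\textnormal{Ker}(id_C\otimes_{A_0}f^*)$, and the triangle identities checked after cancelling the epimorphism $\pi$ (resp.\ the monomorphism $\iota$) — is precisely that similar proof, spelled out. The implicit identification of $C\otimes_{A_0}C^*$ with $\textnormal{Ker}(id_C\otimes_{A_0}f^*)$ is the same move the paper itself makes in the kernel case, so no new gap is introduced.
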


\begin{proof}
This follows using a similar proof to the previous proposition.
\end{proof}

\subsection{Dual Quadratic Monoids}
\begin{defn}
We will say that a positively graded monoid $A$ is \textbf{left dualisable} if each $A_i$ is a left dualisable $(A_0,A_0)$-bimodule. 
\end{defn}

Suppose that $A$ is a left dualisable quadratic monoid with quadratic data $(A_1,R)$. We note that $R=K_2$ is a left dualisable $(A_0,A_0)$-bimodule by Corollary \ref{kerneldualisable}. If we dualise the strict monomorphism 
\begin{equation*}
    \iota:R\hookrightarrow{A_1\otimes_{A_0}A_1}
\end{equation*}we obtain a map 
\begin{equation*}
    \iota^*:A_1^*\otimes_{A_0}A_1^*\simeq (A_1\otimes_{A_0}A_1)^*\rightarrow{R^*}
\end{equation*}
\begin{defn}
The \textbf{left orthogonal $A_0$-submodule $R^\perp$} is the kernel of the map \begin{equation*}
    \iota^*:A_1^*\otimes_{A_0}A_1^*\rightarrow{R^*}
\end{equation*}
\end{defn}

\begin{defn}
Suppose that $A$ is a left dualisable quadratic monoid $(A_1,R)$. We say that a positively graded pre-Koszul monoid $A^!$ is the \textbf{left dual quadratic monoid} of $A$ if 
\begin{equation*}
    A^!\simeq T_{A_0}(A_1^*)/(R^\perp)
\end{equation*}where we use the notation of Definition \ref{quadraticmonoid}. 
\end{defn}

\begin{remark}
In this paper, we will not explore under which conditions $A^!$ exists and is pre-Koszul. It may be more tempting to define, for a Koszul monoid $A$, the dual Koszul monoid to be $A^!=\texthom{Ext}_{\textcat{A}_0\textcat{-Mod}}(A_0,A_0)$ but we will not explore this approach in this paper.
\end{remark}

Spelling this out, there exists a strict epimorphism $\pi^!:T_{A_0}(A_1^*)\twoheadrightarrow{A^!}$ with a strict epimorphism 
\begin{equation*}
    T_{A_0}(A_1^*)\otimes_{A_0}R^\perp\otimes_{A_0} T_{A_0}(A_1^*)\twoheadrightarrow{\textnormal{Ker}(\pi^!)}
\end{equation*}
\begin{exmp}
If we once again consider the quadratic tensor monoid $T_{A_0}(A_1)$, we see that $R^\perp=\textnormal{Ker}(A_1^*\otimes_{A_0}A_1^*\rightarrow{0})\simeq A_1^*\otimes_{A_0} A_1^*$. Therefore, \begin{equation*}
    A^!=\textnormal{Coker}(T_{A_0}(A_1^*)\otimes_{A_0} R^\perp\otimes_{A_0}T_{A_0}(A_1^*)\hookrightarrow{T_{A_0}(A_1^*)})\simeq A_0\oplus A_1^*
\end{equation*}
\end{exmp}

If $A$ is instead a right dualisable quadratic monoid, we can define the \textbf{right orthogonal $A_0$-submodule} ${}^\perp R$ to be the kernel of the map 
\begin{equation*}
    {}^*\iota:{}^*A_1\otimes_{A_0}{}^*A_1\rightarrow{{}^*R}
\end{equation*}The \textbf{right dual quadratic monoid} is then a positively graded pre-Koszul monoid with presentation 
\begin{equation*}
    {}^*A\simeq T_{A_0}({}^*A_1)/({}^\perp R)
\end{equation*}

Now, suppose that $A$ is a left dualisable quadratic monoid with left dual quadratic monoid $A^!$. 
\begin{lem}\label{Risomorphism}We have ${}^\perp (R^\perp)\simeq R$.
\end{lem}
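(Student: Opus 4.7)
The plan is to use the self-injectivity of $A_0$ (from the pre-Koszul hypothesis) to dualise the defining sequence of $R$ twice and identify the successive orthogonal complements.

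First I would recall the strictly exact sequence
\begin{equation*}
    0 \to R \xrightarrow{\iota} A_1 \otimes_{A_0} A_1 \xrightarrow{\mu_2} A_2 \to 0
\end{equation*}
defining $R = K_2$. Since $A$ is left dualisable, each $A_i$ is a left dualisable $(A_0,A_0)$-bimodule, and by Corollary \ref{kerneldualisable} so is $R$. The first step is to dualise this sequence: because $A_0$ is injective over itself, the functor $\texthom{Hom}_{\textcat{A}_0\textcat{-Mod}}(-,A_0)$ is strictly exact by Proposition \ref{homstrictlyexactoriginal}, so together with the isomorphism $(A_1 \otimes_{A_0} A_1)^* \simeq A_1^* \otimes_{A_0} A_1^*$ from Corollary \ref{tensorduals} we obtain the strictly exact sequence
\begin{equation*}
    0 \to A_2^* \xrightarrow{\mu_2^*} A_1^* \otimes_{A_0} A_1^* \xrightarrow{\iota^*} R^* \to 0.
\end{equation*}

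Next I would read off from this that $R^\perp = \textnormal{Ker}(\iota^*) \simeq A_2^*$, and moreover that the canonical inclusion $\iota^\perp : R^\perp \hookrightarrow A_1^* \otimes_{A_0} A_1^*$ is identified with the dual map $\mu_2^* : A_2^* \hookrightarrow A_1^* \otimes_{A_0} A_1^*$. Since each $A_i$ is left dualisable, each $A_i^*$ is right dualisable with ${}^*(A_i^*) \simeq A_i$. Applying $\texthom{Hom}_{\textcat{Mod-A}_0}(-,A_0)$ to the sequence above (again exact by Proposition \ref{homstrictlyexactoriginal}) and using the right-handed version of Corollary \ref{tensorduals} to rewrite ${}^*(A_1^* \otimes_{A_0} A_1^*) \simeq A_1 \otimes_{A_0} A_1$, I recover the original sequence
\begin{equation*}
    0 \to R \to A_1 \otimes_{A_0} A_1 \xrightarrow{\mu_2} A_2 \to 0,
\end{equation*}
in which the map on the right is identified with ${}^*(\iota^\perp) = {}^*(\mu_2^*) \simeq \mu_2$ (using Proposition \ref{monodualepi} and the reflexivity provided by left dualisability).

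By definition ${}^\perp(R^\perp) = \textnormal{Ker}({}^*(\iota^\perp))$, and the previous identification rewrites this kernel as $\textnormal{Ker}(\mu_2) = R$, as required. The main subtleties lie in (i) verifying that left dualisability is preserved by the various constructions, which is covered by Corollaries \ref{kerneldualisable} and \ref{tensorduals}, and (ii) checking that the canonical reflexivity isomorphisms ${}^*((-)^*) \simeq \textnormal{id}$ fit into the dualised diagrams in a way compatible with the inclusions $\iota$ and $\iota^\perp$; this is a diagram-chase using the evaluation/coevaluation formalism set up in the preceding section, and is the only point requiring genuine care.
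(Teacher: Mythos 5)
Your proposal is correct and is essentially the paper's argument: both proofs dualise the defining kernel–cokernel pair of $R$ twice, using that $\texthom{Hom}_{\textcat{A}_0\textcat{-Mod}}(-,A_0)$ is exact because $A_0$ is self-injective (the paper packages this as Proposition \ref{dualkercoker}, giving ${}^*(R^\perp)\simeq\textnormal{Coker}(\iota)$ directly) together with reflexivity of dualisable objects and maps. The only cosmetic difference is that you name $\textnormal{Coker}(\iota)$ as $A_2$ and thereby record the extra identification $R^\perp\simeq A_2^*$, which the paper's proof leaves implicit.
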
\begin{proof}
Indeed, 
\begin{align*}
    {}^\perp(R^\perp)&=\textnormal{Ker}({}^*(A_1^*)\otimes_{A_0}{}^*(A_1^*)\rightarrow{{}^*(R^\perp)})\\
    &\simeq \textnormal{Ker}(A_1\otimes_{A_0}A_1\rightarrow{{}^*(R^\perp)})\\
    &= \textnormal{Ker}(A_1\otimes_{A_0}A_1\rightarrow{{}^*\textnormal{Ker}(A_1^*\otimes_{A_0}A_1^*\rightarrow{R^*}}))\\
    \intertext{Now, by Proposition \ref{dualkercoker},}
    &\simeq \textnormal{Ker}(A_1\otimes_{A_0}A_1\rightarrow{\textnormal{Coker}(R\hookrightarrow{A_1\otimes_{A_0}A_1)}})\\
    &\simeq \textnormal{Im}(R\hookrightarrow{A_1\otimes_{A_0}A_1})\\
    &\simeq R
\end{align*}
\end{proof}

\begin{prop}\label{A!!}
${}^!(A^!)\simeq A$.
\end{prop}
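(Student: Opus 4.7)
The plan is to unpack both sides of the desired isomorphism and reduce the statement to Lemma \ref{Risomorphism} together with the dualisability of $A_1$. By definition, the right dual quadratic monoid of a right dualisable quadratic monoid $B$ with quadratic data $(B_1, S)$ is $T_{B_0}({}^*B_1)/({}^\perp S)$. Applied to $B = A^!$, which has quadratic data $(A_1^*, R^\perp)$ and whose $0$-part is still $A_0$, this gives
\begin{equation*}
    {}^!(A^!) \simeq T_{A_0}({}^*(A_1^*))/({}^\perp(R^\perp)).
\end{equation*}
So the identification reduces to matching the generating bimodule and the space of relations.

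First I would verify that this dual makes sense, i.e.\ that $A_1^*$ is right dualisable and $R^\perp$ is right dualisable as an $(A_0,A_0)$-bimodule. Right dualisability of $A_1^*$ is immediate from the assumption that $A$ is left dualisable, since ${}^*(A_1^*) \simeq A_1$ by definition of left dualisability, and the coevaluation/evaluation for $A_1$ dualise to provide those for $A_1^*$. Right dualisability of $R^\perp$ follows from Corollary \ref{kerneldualisable} (applied in the right-dualisable variant) since $R^\perp$ is the kernel of a map between right dualisable bimodules $A_1^* \otimes_{A_0} A_1^* \to R^*$.

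Next I would identify the generating piece: by left dualisability of $A$ we have ${}^*(A_1^*) \simeq A_1$ canonically through the counit/unit of the duality, so $T_{A_0}({}^*(A_1^*)) \simeq T_{A_0}(A_1)$ as graded monoids. Then I would apply Lemma \ref{Risomorphism} directly to conclude ${}^\perp(R^\perp) \simeq R$ as sub-$(A_0,A_0)$-bimodules of $A_1 \otimes_{A_0} A_1 \simeq {}^*(A_1^*) \otimes_{A_0} {}^*(A_1^*)$. Combining these two identifications, the presentation of ${}^!(A^!)$ becomes $T_{A_0}(A_1)/(R)$, which is precisely the presentation of $A$ as a quadratic monoid.

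The only remaining subtlety, and the point requiring slightly more care, is to argue that two quadratic monoids with the same quadratic data $(A_1, R)$ are canonically isomorphic. This follows because by definition of quadratic data, both $A$ and ${}^!(A^!)$ are cokernels of the two-sided ideal generated by $R$ inside $T_{A_0}(A_1)$; the canonical strict epimorphisms $\pi: T_{A_0}(A_1) \twoheadrightarrow A$ and $\pi^{!!}: T_{A_0}(A_1) \twoheadrightarrow {}^!(A^!)$ share the same kernel (generated by the same $R$ via the strict epimorphism from $T_{A_0}(A_1) \otimes_{A_0} R \otimes_{A_0} T_{A_0}(A_1)$), so by the universal property of cokernels there is a unique graded monoid isomorphism $A \simeq {}^!(A^!)$ compatible with these projections. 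The main expected obstacle is the bookkeeping of this last step, ensuring that the identifications ${}^*(A_1^*) \simeq A_1$ and ${}^\perp(R^\perp) \simeq R$ are genuinely compatible with the inclusions into $A_1 \otimes_{A_0} A_1$, so that the two kernels really do coincide and the resulting isomorphism is canonical.
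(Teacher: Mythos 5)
Your proposal is correct and follows essentially the same route as the paper: the paper's own proof is a one-line appeal to the left dualisability of $A_1$ (giving ${}^*(A_1^*)\simeq A_1$) together with Lemma \ref{Risomorphism} (giving ${}^\perp(R^\perp)\simeq R$), which is exactly the core of your argument. The additional checks you supply — right dualisability of the data of $A^!$ and the compatibility of the identifications with the inclusions into $A_1\otimes_{A_0}A_1$ — are details the paper leaves implicit, and they are handled correctly.
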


\begin{proof}
This follows immediately using that $A_1$ is dualisable and Lemma \ref{Risomorphism}.

\end{proof}

\begin{lem}
$A_1^!\simeq A_1^*$.
\end{lem}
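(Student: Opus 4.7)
The plan is to compare degree-$1$ components on both sides of the defining presentation of $A^!$, and exploit the fact that the relations $R^\perp$ of $A^!$ sit in degree $2$, so cannot contribute anything in degree $1$.

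Concretely, I would begin from the definition of $A^!$ as the cokernel of the relations: there is a strict graded epimorphism $\pi^!:T_{A_0}(A_1^*)\twoheadrightarrow A^!$ fitting into a strictly exact sequence
\begin{equation*}
0\to \textnormal{Ker}(\pi^!)\to T_{A_0}(A_1^*)\xrightarrow{\pi^!} A^!\to 0
\end{equation*}
together with a strict epimorphism
\begin{equation*}
T_{A_0}(A_1^*)\otimes_{A_0}R^\perp\otimes_{A_0}T_{A_0}(A_1^*)\twoheadrightarrow \textnormal{Ker}(\pi^!).
\end{equation*}
Because $T_{A_0}(A_1^*)$ lives in degrees $\ge 0$ and $R^\perp\hookrightarrow A_1^*\otimes_{A_0}A_1^*$ is concentrated in degree $2$, the graded object on the left is concentrated in degrees $\ge 2$. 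Evaluating the above epimorphism in degree $1$ gives a strict epimorphism $0\twoheadrightarrow \textnormal{Ker}(\pi^!)_1$, forcing $\textnormal{Ker}(\pi^!)_1=0$.

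Next, I would restrict the sequence above to degree $1$. Since strictness in $\textcat{grA}_0\textcat{-Mod}$ is checked degree-by-degree (cf.\ the proof that $\textcat{grA-Mod}$ is quasi-abelian), the resulting sequence
\begin{equation*}
0\to 0\to (T_{A_0}(A_1^*))_1 = A_1^* \xrightarrow{\pi^!_1} A_1^!\to 0
\end{equation*}
is strictly exact. Thus $\pi^!_1$ is both a strict monomorphism and a strict epimorphism, and by Proposition \ref{isomorphismmonoepi} it is an isomorphism $A_1^*\simeq A_1^!$.

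There is essentially no obstacle here beyond unwinding the definitions; the only subtlety worth being explicit about is that strictness and exactness in the graded category can be read off degree-wise, which is precisely what allows the degree-$1$ vanishing of the relations to produce an isomorphism on the nose rather than merely a monomorphism.
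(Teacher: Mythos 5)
Your proof is correct and follows essentially the same route as the paper: the paper simply observes that $K_1^!=\textnormal{Ker}(\pi^!)_1=0$ (since the relations generated by $R^\perp$ live in degrees $\geq 2$), so the strict epimorphism $A_1^*\to A_1^!$ is an isomorphism. Your version just spells out the degree-wise argument in more detail.
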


\begin{proof}
We note that $K_1^!=0$, and hence the strict epimorphism $A_1^*\rightarrow{A_1^!}$ is an isomorphism. 
\end{proof}
    We have the following results, similar to the ones stated in the previous section.
\begin{lem}
We let $K^!=\textnormal{Ker}(\pi^!)$. Then, $K^!$ is graded as a left $A_0$-module by 
\begin{equation*}
    K_i^!=\textnormal{Ker}((A_1^*)^{\otimes i}\rightarrow{A_i^!})
\end{equation*}and there exists a strict epimorphism 
\begin{equation*}
    \bigoplus_{j=0}^{i-2}(A_1^*)^{\otimes j}\otimes_{A_0} R^\perp\otimes_{A_0}(A_1^*)^{\otimes (i-j-2)}\twoheadrightarrow{K_i^!}
\end{equation*}Moreover, for each $i\geq 0$, 
\begin{equation*}
    A_i^!\simeq\textnormal{Coker}(K_i^!\rightarrow{(A_1^*)^{\otimes i}})
\end{equation*}
\end{lem}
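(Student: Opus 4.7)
The plan is to observe that this lemma is simply the $A^!$-analogue of the earlier structural lemma about $K=\textnormal{Ker}(\pi)$ for a quadratic monoid $A$. Indeed, by the definition of the left dual quadratic monoid, $A^!$ is itself a positively graded quadratic monoid with quadratic data $(A_1^*, R^\perp)$. Hence one should be able to repeat the proof of that earlier lemma verbatim, with $A$ replaced by $A^!$, $A_1$ replaced by $A_1^*$, and $R$ replaced by $R^\perp$.

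Concretely, I would verify each of the three claims in turn. For the grading on $K^!$, the morphism $\pi^!:T_{A_0}(A_1^*)\twoheadrightarrow A^!$ is by definition a strict graded epimorphism. Since the forgetful functor $\textcat{grA}_0\textcat{-Mod}\rightarrow \mathcal{E}$ preserves limits (as in Proposition 1.5.1 applied here), the kernel can be computed degree by degree, so
\begin{equation*}
K_i^!=\textnormal{Ker}\bigl((A_1^*)^{\otimes i}\xrightarrow{\mu_i^!} A_i^!\bigr).
\end{equation*}

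For the second claim, the definition of $A^!$ as quadratic with data $(A_1^*, R^\perp)$ supplies a strict epimorphism
\begin{equation*}
T_{A_0}(A_1^*)\otimes_{A_0} R^\perp\otimes_{A_0}T_{A_0}(A_1^*)\twoheadrightarrow K^!
\end{equation*}
of graded $A_0$-modules. In degree $i$, the domain decomposes as $\bigoplus_{j=0}^{i-2}(A_1^*)^{\otimes j}\otimes_{A_0} R^\perp\otimes_{A_0}(A_1^*)^{\otimes (i-j-2)}$, using the grading on the tensor monoid. Since strict epimorphisms in $\textcat{grA}_0\textcat{-Mod}$ are strict epimorphisms in $\mathcal{E}$ in each degree, the desired strict epimorphism onto $K_i^!$ follows. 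The cokernel formula is then immediate: the short strictly exact sequence $0\to K^!\xrightarrow{\iota^!} T_{A_0}(A_1^*)\xrightarrow{\pi^!} A^!\to 0$ restricts in degree $i$ to $0\to K_i^!\to (A_1^*)^{\otimes i}\to A_i^!\to 0$, giving $A_i^!\simeq \textnormal{Coker}(K_i^!\to (A_1^*)^{\otimes i})$.

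The main obstacle, such as there is one, is purely bookkeeping: one must confirm that passing from graded data to a single fixed degree commutes with forming kernels, cokernels, and tensor products over $A_0$, and that strictness is preserved throughout. All of these points were used in the proof of the earlier lemma for $A$, and none require any new input. In particular, nothing in the argument uses that $A_1^*$ is specifically a dual module rather than just an abstract $(A_0,A_0)$-bimodule, so the proof really is formally identical to the $A$-case.
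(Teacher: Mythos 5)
Your proof is correct and is exactly what the paper intends: the paper states this lemma without proof, introducing it as the direct analogue for $A^!$ of the earlier (also unproved) structural lemma for $K=\textnormal{Ker}(\pi)$, and your argument simply unwinds the spelled-out definition of $A^!$ as the quadratic monoid with data $(A_1^*,R^\perp)$ degree by degree. The bookkeeping points you flag (kernels, cokernels, and the degree-$i$ piece of the graded tensor monoid all being computed degreewise, with strictness preserved) are the only content, and you handle them correctly.
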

\begin{prop}
$A^!$ is right dualisable.
\end{prop}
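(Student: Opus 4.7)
The plan is to verify that each graded piece $A_i^!$ is a right dualisable $(A_0,A_0)$-bimodule, using the quadratic presentation of $A^!$ together with right-dual analogues of the closure properties established for left dualisability in Section \ref{dualquadraticmonoids}.

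First I would record that Corollaries \ref{tensorduals}, \ref{kerneldualisable}, and \ref{cokerdualisable} admit completely symmetric right-dual versions: if $M$ and $N$ are right dualisable $(A_0,A_0)$-bimodules, then so is $M \otimes_{A_0} N$, and for any bimodule map $f: M \to N$ both $\textnormal{Ker}(f)$ and $\textnormal{Coker}(f)$ are right dualisable. The proofs are obtained by interchanging the roles of $M^*$ and ${}^*M$ (and correspondingly of $ev$ and $coev$) in the arguments already given, and use only that $A_0$ is self-injective, which is part of our pre-Koszul hypothesis. A finite direct sum of right dualisable bimodules is then also right dualisable, with evaluation and coevaluation assembled componentwise.

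Next I would check that the building blocks of $A^!$ are right dualisable. Since $A_1$ is left dualisable, the isomorphism ${}^*(A_1^*) \simeq A_1$ together with the evaluation and coevaluation maps for $A_1$ realise $A_1^*$ as a right dualisable bimodule, and iterating the right-dual analogue of Corollary \ref{tensorduals} shows that each $(A_1^*)^{\otimes i}$ is right dualisable. The kernel $R = K_2 = \textnormal{Ker}(A_1 \otimes_{A_0} A_1 \twoheadrightarrow A_2)$ is a map between left dualisable bimodules, hence $R$ is left dualisable by Corollary \ref{kerneldualisable}; consequently $R^*$ is right dualisable, and therefore
\begin{equation*}
R^\perp \;=\; \textnormal{Ker}\bigl(A_1^* \otimes_{A_0} A_1^* \longrightarrow R^*\bigr)
\end{equation*}
is right dualisable by the right-dual analogue of Corollary \ref{kerneldualisable}.

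The last step assembles these ingredients via the presentation of $A_i^!$ for $i \geq 2$. Composing the strict epimorphism coming from the quadratic data of $A^!$ with the inclusion $K_i^! \hookrightarrow (A_1^*)^{\otimes i}$ yields
\begin{equation*}
A_i^! \;\simeq\; \textnormal{Coker}\!\left(\bigoplus_{j=0}^{i-2}(A_1^*)^{\otimes j}\otimes_{A_0} R^\perp \otimes_{A_0} (A_1^*)^{\otimes (i-j-2)} \longrightarrow (A_1^*)^{\otimes i}\right).
\end{equation*}
Each summand of the domain is a tensor product of right dualisable bimodules, hence right dualisable, so the direct sum is right dualisable; the codomain is right dualisable as well. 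The right-dual analogue of Corollary \ref{cokerdualisable} then yields that $A_i^!$ is right dualisable for $i \geq 2$, while $i=0$ and $i=1$ are immediate from $A_0^! = A_0$ and $A_1^! \simeq A_1^*$. The only genuine obstacle is bookkeeping: one must retrace the explicit constructions of evaluation and coevaluation in Corollaries \ref{tensorduals}, \ref{kerneldualisable}, and \ref{cokerdualisable} with the two-sided duals swapped, verify that the associativity and symmetry isomorphisms apply in the dual order, and confirm that the appeal to self-injectivity of $A_0$ (which enters via Proposition \ref{dualkercoker2} in the original kernel/cokernel argument) remains valid on the right-hand side.
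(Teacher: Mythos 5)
Your proposal is correct and follows essentially the same route as the paper: the paper likewise reduces to showing each $A_i^!$ is right dualisable, obtains $(A_1^*)^{\otimes i}$ from Corollary \ref{tensorduals} and the left dualisability of $A_1$, gets $R^\perp$ as a kernel via Corollary \ref{kerneldualisable}, and concludes by realising $A_i^!$ as a cokernel of right dualisable objects via Corollary \ref{cokerdualisable}. The only difference is that you make explicit the bookkeeping of transposing the left-dual closure lemmas to their right-dual versions, which the paper leaves implicit.
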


\begin{proof}We will show that each $A_i^!$ is a right dualisable $(A_0,A_0)$-bimodule. Indeed, we note that $(A_1^*)^{\otimes i}$ is right dualisable using Proposition \ref{tensorduals} and the fact that $A_1$ is left dualisable. We can apply Corollary \ref{kerneldualisable} to show that $R^\perp$, being the kernel of right dualisable objects, is right dualisable. Hence, since $A^!_i$ is isomorphic to a cokernel of right dualisable objects, it is right dualisable by Corollary \ref{cokerdualisable}.
\end{proof}
\begin{prop}\label{kernelperpA!}
We have 
\begin{equation*}
    {}^*(A_i^!)\simeq {}^\perp(K_i^!)\end{equation*}
\end{prop}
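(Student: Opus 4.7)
The plan is to derive the claim by dualising the short strictly exact sequence that defines $K_i^!$. By construction $K_i^!$ is the kernel of the strict epimorphism $(A_1^*)^{\otimes i} \twoheadrightarrow A_i^!$ induced by $\pi^!$, so we have a strictly exact sequence
\begin{equation*}
    0 \rightarrow K_i^! \hookrightarrow (A_1^*)^{\otimes i} \twoheadrightarrow A_i^! \rightarrow 0
\end{equation*}
of right dualisable $(A_0,A_0)$-bimodules (right dualisability of each term is available: $A_1^*$ is right dualisable since $A_1$ is left dualisable, tensor products of right dualisables are right dualisable by the right-dualisable analogue of Corollary \ref{tensorduals}, and $K_i^!$, $A_i^!$ are right dualisable by the argument given just above for $A_i^!$).

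Next I would apply the right-hand internal hom functor ${}^*(-) = \texthom{Hom}_{\textcat{Mod-A}_0}(-,A_0)$. Since $A$ is pre-Koszul, $A_0$ is injective over itself, so by the right-module version of Proposition \ref{homstrictlyexactoriginal} the functor ${}^*(-)$ is exact on strictly exact sequences. This produces a strictly exact sequence
\begin{equation*}
    0 \rightarrow {}^*(A_i^!) \rightarrow {}^*((A_1^*)^{\otimes i}) \xrightarrow{{}^*\iota_i} {}^*(K_i^!) \rightarrow 0,
\end{equation*}
where $\iota_i: K_i^! \hookrightarrow (A_1^*)^{\otimes i}$ is the canonical inclusion.

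By strict exactness, ${}^*(A_i^!) \simeq \textnormal{Ker}({}^*\iota_i)$. Finally, I would observe that this kernel is exactly the right orthogonal, by the definition of ${}^\perp(-)$ applied to the inclusion $K_i^! \hookrightarrow (A_1^*)^{\otimes i}$ (extending the prescription for ${}^\perp R$ given in Definition of the right orthogonal $A_0$-submodule): namely
\begin{equation*}
    {}^\perp(K_i^!) := \textnormal{Ker}\bigl({}^*((A_1^*)^{\otimes i}) \xrightarrow{{}^*\iota_i} {}^*(K_i^!)\bigr).
\end{equation*}
Combining the two identifications gives ${}^*(A_i^!) \simeq {}^\perp(K_i^!)$, as required. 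The only subtlety — really the only point where one could slip — is the verification that ${}^*(-)$ is exact on this sequence, which requires injectivity of $A_0$ over itself (ensured by $A$ being pre-Koszul) plus enough flat projectives in $\mathcal{E}$; everything else is a direct unwinding of the definitions.
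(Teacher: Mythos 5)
Your proof is correct and follows essentially the same route as the paper: the paper simply packages your exactness argument into its Proposition on duals of kernels and cokernels (citing ${}^*\mathrm{Coker}(f)\simeq\mathrm{Ker}({}^*f)$ applied to $K_i^!\hookrightarrow (A_1^*)^{\otimes i}$, whose cokernel is $A_i^!$), whereas you re-derive that identity by dualising the strictly exact sequence directly using self-injectivity of $A_0$. The identification ${}^*((A_1^*)^{\otimes i})\simeq A_1^{\otimes i}$ and the definition of ${}^\perp(K_i^!)$ as the kernel of ${}^*\iota_i$ are handled the same way in both arguments.
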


\begin{proof}
Indeed, using Proposition \ref{dualkercoker},
\begin{align*}
    {}^*(A_i^!)&\simeq{}^*\textnormal{Coker}(K_i^!\rightarrow{(A_1^*)^{\otimes i}})\\
    &\simeq \textnormal{Ker}(A_1^{\otimes i}\rightarrow{{}^*(K_i^!)})\\
    &\simeq {}^\perp(K_i^!)
\end{align*}

\end{proof}

\begin{prop}\label{pullbackiso}
Define $P_i$ to be the pullback of all the monomorphisms 
\begin{equation*}
    A_1^{\otimes j}\otimes_{A_0} R\otimes_{A_0} A_1^{\otimes (i-j-2)}\hookrightarrow{A_1^{\otimes i}}
\end{equation*} where $j$ ranges from $0$ to $i-2$. Then, there exists an isomorphism 
\begin{equation*}
    P_i\simeq {}^\perp(K_i^!)\simeq {}^*(A_i^!)
\end{equation*}for each $i$.
\end{prop}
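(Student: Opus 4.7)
The plan is to establish the first isomorphism $P_i \simeq {}^\perp(K_i^!)$ by computing both as the same kernel inside $A_1^{\otimes i}$; the second isomorphism ${}^\perp(K_i^!) \simeq {}^*(A_i^!)$ is precisely Proposition \ref{kernelperpA!}.

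The key preliminary step is to identify ${}^*(R^\perp)$ with $A_2$. Applying Proposition \ref{dualkercoker2} to the strict monomorphism $\iota: R \hookrightarrow A_1 \otimes_{A_0} A_1$ with cokernel $A_2$ gives $R^\perp = \ker(\iota^*) \simeq \textnormal{Coker}(\iota)^* = A_2^*$. Since each $A_i$ is left dualisable, applying ${}^*(-)$ yields ${}^*(R^\perp) \simeq A_2$.

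Next I would dualize the factorization from the lemma preceding the proposition,
\[
\bigoplus_{j=0}^{i-2} (A_1^*)^{\otimes j} \otimes_{A_0} R^\perp \otimes_{A_0} (A_1^*)^{\otimes (i-j-2)} \twoheadrightarrow K_i^! \hookrightarrow (A_1^*)^{\otimes i}.
\]
Applying ${}^*(-)$ and using that finite direct sums coincide with finite products (so commute with ${}^*(-)$), together with the identification ${}^*(R^\perp) \simeq A_2$ and functoriality of ${}^*(-)$ on tensor factors, produces a factorization
\[
A_1^{\otimes i} \twoheadrightarrow {}^*(K_i^!) \hookrightarrow \bigoplus_{j=0}^{i-2} A_1^{\otimes j} \otimes_{A_0} A_2 \otimes_{A_0} A_1^{\otimes (i-j-2)}
\]
whose composite has $j$th component equal to the natural quotient $A_1^{\otimes i} \twoheadrightarrow A_1^{\otimes j} \otimes_{A_0} A_2 \otimes_{A_0} A_1^{\otimes (i-j-2)}$ induced by $A_1 \otimes_{A_0} A_1 \twoheadrightarrow A_2$. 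Since the second arrow is a strict monomorphism, the kernel of the composite equals $\ker(A_1^{\otimes i} \twoheadrightarrow {}^*(K_i^!)) = {}^\perp(K_i^!)$. Using that $A_1$ is flat over $A_0$ (being projective), the kernel of each component is $A_1^{\otimes j} \otimes_{A_0} R \otimes_{A_0} A_1^{\otimes (i-j-2)}$, so the kernel of the map into the direct sum is precisely the intersection of these subobjects, which is by definition the pullback $P_i$. This yields $P_i \simeq {}^\perp(K_i^!)$.

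The main obstacle will be verifying that the $j$th component of the dualized map is indeed this natural quotient. This reduces, via functoriality of ${}^*(-)$ on tensor products, to checking that the dual of the inclusion $R^\perp \simeq A_2^* \hookrightarrow A_1^* \otimes_{A_0} A_1^*$ is the quotient $A_1 \otimes_{A_0} A_1 \twoheadrightarrow A_2$; this holds tautologically since the former is the dual of the latter, via Proposition \ref{dualkercoker2}.
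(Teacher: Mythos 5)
Your argument is correct and follows essentially the same route as the paper: both proofs dualise the presentation $\bigoplus_j (A_1^*)^{\otimes j}\otimes_{A_0}R^\perp\otimes_{A_0}(A_1^*)^{\otimes(i-j-2)}\twoheadrightarrow K_i^!\hookrightarrow (A_1^*)^{\otimes i}$, use that the dual of the epimorphism is a monomorphism to identify ${}^\perp(K_i^!)$ with the kernel of $A_1^{\otimes i}\rightarrow\bigoplus_j A_1^{\otimes j}\otimes_{A_0}{}^*(R^\perp)\otimes_{A_0}A_1^{\otimes(i-j-2)}$, and then show $P_i$ is that same kernel. Your packaging of the last step as ``kernel into a finite direct sum equals the intersection of the component kernels (identified via flatness of $A_1$), i.e.\ the wide pullback'' is just a condensed version of the paper's explicit universal-property verification, so no genuinely new idea or gap is involved.
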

\begin{proof}
There exists a strict epimorphism of right dualisable $(A_0,A_0)$-bimodules
\begin{equation*}
    \bigoplus_{j=0}^{i-2} (A_1^*)^{\otimes j}\otimes_{A_0} R^\perp\otimes_{A_0} (A_1^*)^{\otimes (i-j-2)}\twoheadrightarrow{K_i^!}
\end{equation*}Dualising this map we obtain, by Proposition \ref{monodualepi}, a strict monomorphism
\begin{equation*}
    {}^*(K_i^!)\hookrightarrow{\bigoplus_{j=0}^{i-2}A_1^{\otimes j}\otimes_{A_0} {}^*(R^\perp)\otimes_{A_0} A_1^{\otimes (i-j-2)}}
\end{equation*} We know that ${}^\perp(K_i^!)=\textnormal{Ker}(A_1^{\otimes i}\rightarrow{{}^*(K_i^!))}$. Hence, 
\begin{equation*}
    {}^\perp(K_i^!)=\textnormal{Ker}\bigg({A_1^{\otimes i}\rightarrow{\bigoplus_{j=0}^{i-2}A_1^{\otimes j}\otimes_{A_0} {}^*(R^\perp)\otimes_{A_0} A_1^{\otimes (i-j-2)}}}\bigg)
\end{equation*}To show that $P_i\simeq{}^\perp(K_i^!)$, it suffices to show that $P_i$ is also the kernel of this map. Now, since ${}^*(R^\perp)\simeq \textnormal{Coker}(\iota:R\hookrightarrow{A_1\otimes_{A_0}A_1})$ we see that the map 
\begin{equation*}
    P_i\rightarrow{A_1^{\otimes i}}\rightarrow{\bigoplus_{j=0}^{i-2} A_1^{\otimes j}\otimes_{A_0}{}^*(R^\perp)\otimes_{A_0}A_1^{\otimes (i-j-2)}}
\end{equation*}is zero since, by definition of the pullback, it factors through $A_1^{\otimes j}\otimes_{A_0} R\otimes_{A_0} A_1^{\otimes (i-j-2)}$ for all $0\leq j\leq i-2$. 

Now, suppose that we have an object $W$ such that the map 
\begin{equation*}
    W\rightarrow{A_1^{\otimes i}}\rightarrow{\bigoplus_{j=0}^{i-2} A_1^{\otimes j}\otimes_{A_0} {}^*(R^\perp)\otimes_{A_0} A_1^{\otimes (i-j-2)}}
\end{equation*}is zero. We see that, since the composite $R\hookrightarrow{A_1\otimes_{A_0}A_1\rightarrow{A_2}}$ is zero, there exists a map ${}^*(R^\perp)\rightarrow{A_2}$ such that the following diagram commutes
\begin{equation*}
      \begin{tikzcd}
       A_1\otimes_{A_0}A_1 \arrow{r} \arrow{dr} &{}^*(R^\perp) \arrow{d}\\
        &A_2
    \end{tikzcd}
\end{equation*}and hence the following diagram commutes
\begin{equation*}
    \begin{tikzcd}
     A_1^{\otimes i} \arrow{r} \arrow{dr} & A_1^{\otimes j}\otimes_{A_0}{}^*(R^\perp)\otimes_{A_0} A_1^{\otimes (i-j-2)} \arrow{d}\\
        & A_1^{\otimes j}\otimes_{A_0}A_2\otimes_{A_0} A_1^{\otimes (i-j-2)}
\end{tikzcd}
\end{equation*}The map 
\begin{align*}
    W&\rightarrow{A_1^{\otimes i}}\rightarrow{ A_1^{\otimes j}\otimes_{A_0}A_2\otimes_{A_0}A_1^{\otimes (i-j-2)}}\\
\intertext{is therefore zero since it factors through the zero map}
    W&\rightarrow{A_1^{\otimes j}\otimes_{A_0} {}^*(R^\perp)\otimes_{A_0} A_1^{\otimes (i-j-2)}}
\intertext{Since $R= K_2=\textnormal{Ker}(A_1\otimes_{A_0}A_1\twoheadrightarrow{A_2})$, there exists a map}
    W&\rightarrow{A_1^{\otimes j}\otimes_{A_0} R\otimes_{A_0} A_1^{\otimes (i-j-2)}}
\end{align*}such that the following diagram commutes
\begin{equation*}
    \begin{tikzcd}
     A_1^{\otimes j}\otimes_{A_0} R\otimes_{A_0} A_1^{\otimes (i-j-2)} \arrow{r} & A_1^{\otimes i}\\
     W\arrow{u}\arrow{ur}
    \end{tikzcd}
\end{equation*}Hence, since, for each $0\leq j\leq i-2$, all the maps
\begin{equation*}
    W\rightarrow{A_1^{\otimes j}\otimes_{A_0}R\otimes_{A_0} A_1^{\otimes (i-j-2)}}\rightarrow{A_1^{\otimes i}}
\end{equation*}are equal, then by definition of the pullback there exists a map $W\rightarrow{P_i}$ such that the following diagram commutes for all $0\leq j\leq i-2$,
\begin{equation*}
    \begin{tikzcd}[column sep = 12]
     A_1^{\otimes j}\otimes_{A_0} R\otimes_{A_0} A_1^{\otimes (i-j-2)} \arrow{r} & A_1^{\otimes i} \arrow{r} & \bigoplus_{j=0}^{i-2} A_1^{\otimes j}\otimes_{A_0} {}^*(R^\perp)\otimes_{A_0} A_1^{\otimes (i-j-2)}\\
    P_i\arrow{u} & W\arrow{l}\arrow{u}
    \end{tikzcd}
\end{equation*}Hence, $P_i$ is a kernel of the map $A_1^{\otimes i}\rightarrow{\bigoplus_{j=0}^{i-2}A_1^{\otimes j}\otimes_{A_0} {}^*(R^\perp)\otimes_{A_0}A_1^{\otimes (i-j-2)}}$ so, by the uniqueness of the kernel
\begin{equation*}
    P_i\simeq {}^\perp(K_i^!)
\end{equation*}
\end{proof}

\section{The Koszul Complex}\label{koszulcomplexsection}

Suppose that $A$ is a left dualisable quadratic monoid $(A_1,R)$. Suppose further that the dual quadratic monoid exists, and denote it by $A^!$. For each $i\geq 0$, we let $\mathcal{K}^i$ be the $A$-module
\begin{equation*}
    \mathcal{K}^i=A\otimes_{A_0} {}^*(A_i^!)
\end{equation*}We note that this module lives only in degree $\geq i$. 

\begin{prop}$\mathcal{K}^i$ is a projective $A$-module.
\end{prop}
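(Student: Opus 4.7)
The strategy is to reduce the problem via Proposition \ref{gradedtensor}, which asserts that $A\otimes_{A_0}M$ is a projective left $A$-module whenever $M$ is projective as a left $A_0$-module. Applied with $M={}^*(A_i^!)$, this reduces the projectivity of $\mathcal{K}^i$ over $A$ to showing that ${}^*(A_i^!)$ is a projective left $A_0$-module.

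To set up this reduction, I would exploit two facts already established. First, since $A^!$ is by definition a positively graded pre-Koszul monoid, each component $A_i^!$ is projective as a left $A_0$-module. Second, we showed earlier that $A^!$ is right dualisable, so ${}^*(A_i^!)$ is a well-defined left $A_0$-module carrying the full duality structure: $({}^*(A_i^!))^*\simeq A_i^!$ together with compatible evaluation and coevaluation morphisms.

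The main step is to transfer projectivity from $A_i^!$ to its dual ${}^*(A_i^!)$. Here I would use Proposition \ref{pullbackiso} to identify ${}^*(A_i^!)$ concretely with the pullback $P_i$ formed from the monomorphisms $A_1^{\otimes j}\otimes_{A_0}R\otimes_{A_0}A_1^{\otimes (i-j-2)}\hookrightarrow A_1^{\otimes i}$, and then invoke the coevaluation $A_0\to {}^*(A_i^!)\otimes_{A_0}A_i^!$ paired with the evaluation $A_i^!\otimes_{A_0}{}^*(A_i^!)\to A_0$ to exhibit ${}^*(A_i^!)$ as a retract of an object manifestly projective over $A_0$ (for instance as a direct summand of $A_1^{\otimes i}$, which is projective by iterated application of the pre-Koszul assumption on $A$).

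This final retract step is the main obstacle: in a quasi-abelian category, a pullback or subobject of a projective object is generally not projective, so the dualisability structure has to do genuine work rather than being used only for left exactness. Once projectivity of ${}^*(A_i^!)$ is secured, Proposition \ref{gradedtensor} delivers the conclusion immediately.
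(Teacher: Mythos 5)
Your reduction via Proposition \ref{gradedtensor} is sound: once ${}^*(A_i^!)$ is known to be a projective left $A_0$-module, $\mathcal{K}^i=A\otimes_{A_0}{}^*(A_i^!)$ is projective over $A$. The gap is in the step you yourself flag as the main obstacle. The evaluation and coevaluation for $A_i^!$ do not exhibit ${}^*(A_i^!)$ as a retract of $A_1^{\otimes i}$ or of any ``manifestly projective'' object: the triangle identities only realise ${}^*(A_i^!)$ as a retract of ${}^*(A_i^!)\otimes_{A_0}A_i^!\otimes_{A_0}{}^*(A_i^!)$, which is no more obviously projective than ${}^*(A_i^!)$ itself. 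The classical fact that a dualisable module over a ring is a direct summand of a finite free module rests on writing the coevaluation element as a finite sum of elementary tensors, and this has no element-free analogue here (in a general monoidal category dualisable objects need not be retracts of sums of the unit). Proposition \ref{pullbackiso} only identifies ${}^*(A_i^!)$ as a subobject (a pullback of monomorphisms) of $A_1^{\otimes i}$, which, as you note, does not give projectivity in a quasi-abelian category. So as written the key step does not go through.

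The way dualisability actually does the work---and the route the paper takes, directly at the level of $A$-modules---is through the internal-hom isomorphism of Corollary \ref{rightdualisableiso}: since $A_i^!$ is right dualisable, $\texthom{Hom}_{\textcat{A}_0\textcat{-Mod}}({}^*(A_i^!),-)\simeq A_i^!\otimes_{A_0}-$. Combined with the tensor-hom adjunction (Theorem \ref{tensorhomadjunction}) this gives $\textnormal{Hom}_{\textcat{A-Mod}}(A\otimes_{A_0}{}^*(A_i^!),-)\simeq\textnormal{Hom}_{\textcat{A-Mod}}(A,A_i^!\otimes_{A_0}-)$, and the right-hand side sends strict epimorphisms to surjections because $A_i^!\otimes_{A_0}-$ is right exact and $A$ is a projective $A$-module. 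If you prefer to keep your two-step structure, the same identity one level down, $\textnormal{Hom}_{\textcat{A}_0\textcat{-Mod}}({}^*(A_i^!),-)\simeq\textnormal{Hom}_{\textcat{A}_0\textcat{-Mod}}(A_0,A_i^!\otimes_{A_0}-)$, shows that ${}^*(A_i^!)$ is projective over $A_0$; but that is the internal-hom argument rather than a retract argument, and it renders the appeals to Proposition \ref{pullbackiso} and to projectivity of $A_1^{\otimes i}$ unnecessary.
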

\begin{proof}We want to show that the functor
\begin{equation*}
    \textnormal{Hom}_{\textcat{A-Mod}}(A\otimes_{A_0} {}^*(A_i^!),-):\textcat{A-Mod}\rightarrow{\textcat{Ab}}
\end{equation*}maps strict epimorphisms to surjections. Indeed, we note that $A_i^!$ is right dualisable, and hence $({}^*(A_i^!))^*\simeq A_i^!$. Therefore, by Theorem \ref{tensorhomadjunction} and Proposition \ref{dualtensorhomprop}, 
\begin{equation*}\begin{aligned}
    \textnormal{Hom}_{\textcat{A-Mod}}(A\otimes_{A_0} {}^*(A_i^!),-)&\simeq \textnormal{Hom}_{\textcat{A-Mod}}(A, \texthom{Hom}_{\textcat{A}_0\textcat{-Mod}}({}^*(A_i^!)\otimes_{A_0},-))\\
    &\simeq\textnormal{Hom}_{\textcat{A-Mod}}(A, A_i^!\otimes_{A_0}-)
\end{aligned}\end{equation*}We note that $A_i^!\otimes_{A_0}-$ maps strict epimorphisms to strict epimorphisms since it is right exact. The claim follows since $A$ is a projective $A$-module, and therefore $\textnormal{Hom}_{\textcat{A-Mod}}(A,-)$ maps strict epimorphisms to surjections.

\end{proof}

We consider the map 
\begin{equation*}
    \mu_{i,1}^!:A_i^!\otimes_{A_0} A_1^!\rightarrow{A_{i+1}^!}
\end{equation*}which is the multiplication map on $A^!$. Dualising, we obtain a map 
\begin{equation*}
    {}^*\mu_{i,1}^!:{}^*(A_{i+1}^!)\rightarrow{{}^*(A_1^!)\otimes_{A_0} {}^*(A_i^!)}\simeq A_1\otimes_{A_0} {}^*(A_i^!)
\end{equation*}When tensored with $A$, this gives a map 
\begin{equation*}\begin{aligned}
    \mathcal{K}^{i+1}=A\otimes_{A_0} {}^*(A_{i+1}^!)&\xrightarrow{id_A\otimes_{A_0}{}^*\mu_{i,1}^!}{A\otimes_{A_0}A_1\otimes_{A_0} {}^*(A_i^!)}\\
    &\xrightarrow{\mu\otimes_{A_0} id_{{}^*(A_i^!)}}{{A\otimes_{A_0} {}^*(A_i^!)}}=\mathcal{K}^i
\end{aligned}\end{equation*}which we will denote by $d^{i+1}:\mathcal{K}^{i+1}\rightarrow{\mathcal{K}^i}$. Being a composition of strict maps, it is strict. We let
\begin{equation*}
    \begin{aligned}
    Z^i&=\textnormal{Ker}(d^i:\mathcal{K}^i\rightarrow{\mathcal{K}^{i-1}})\\
    B^i&=\textnormal{Im}(d^{i+1}:\mathcal{K}^{i+1}\rightarrow{\mathcal{K}^i})\end{aligned}
\end{equation*}

\begin{lem}\label{differentialszero}We have $d^i\circ d^{i+1}=0$.
\end{lem}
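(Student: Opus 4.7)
My plan is to expand the composition $d^i \circ d^{i+1}$ using the definition of the differentials and then exploit two associativity statements — one for multiplication in $A$, and one (dualised) for multiplication in $A^!$ — to factor the composition through the inclusion $R \hookrightarrow A_1 \otimes_{A_0} A_1$, which composes with $\mu_{1,1}: A_1 \otimes_{A_0} A_1 \to A_2$ to give zero by the very definition $R = K_2 = \textnormal{Ker}(\mu_{1,1})$.

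More precisely, first I would unwind $d^i \circ d^{i+1}$ as the four-step composition
\begin{equation*}
A \otimes_{A_0} {}^*(A^!_{i+1}) \xrightarrow{id \otimes {}^*\mu^!_{i,1}} A\otimes_{A_0} A_1 \otimes_{A_0} {}^*(A^!_i) \xrightarrow{\mu \otimes id} A\otimes_{A_0} {}^*(A^!_i) \xrightarrow{id \otimes {}^*\mu^!_{i-1,1}} A\otimes_{A_0} A_1 \otimes_{A_0} {}^*(A^!_{i-1}) \xrightarrow{\mu\otimes id} A\otimes_{A_0}{}^*(A^!_{i-1}).
\end{equation*}
Using naturality of $\otimes_{A_0}$, the middle two arrows commute past each other, so the total composite rewrites as $(\mu^{(2)} \otimes id_{{}^*(A_{i-1}^!)}) \circ (id_A \otimes \phi)$, where $\mu^{(2)}: A\otimes_{A_0} A_1 \otimes_{A_0} A_1 \to A$ is the iterated multiplication and $\phi := (id_{A_1} \otimes {}^*\mu^!_{i-1,1}) \circ {}^*\mu^!_{i,1}$.

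Next I would dualise associativity of $A^!$, namely $\mu^!_{i,1} \circ (\mu^!_{i-1,1}\otimes id_{A_1^!}) = \mu^!_{i-1,2} \circ (id_{A_{i-1}^!}\otimes \mu^!_{1,1})$, to rewrite $\phi$ as $({}^*\mu^!_{1,1}\otimes id_{{}^*(A_{i-1}^!)})\circ {}^*\mu^!_{i-1,2}$. Thus $\phi$ factors through ${}^*(A_2^!) \otimes_{A_0} {}^*(A_{i-1}^!)$. Now, since $A^!_2 \simeq \textnormal{Coker}(R^\perp \hookrightarrow A_1^* \otimes_{A_0} A_1^*)$, dualising (using Propositions~\ref{dualkercoker}, \ref{dualkercoker2} and that ${}^*(R^\perp)\simeq A_2$) identifies ${}^*(A^!_2)$ with $R = \textnormal{Ker}(A_1\otimes_{A_0} A_1 \twoheadrightarrow A_2)$, and under this identification ${}^*\mu^!_{1,1}$ becomes precisely the inclusion $\iota: R \hookrightarrow A_1\otimes_{A_0}A_1$.

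Finally I would conclude by associativity in $A$: the composite $\mu^{(2)}$ agrees, up to $A_1\otimes_{A_0}A_1 \twoheadrightarrow A_2$, with multiplication $A\otimes_{A_0}A_2 \to A$, so the total composition contains the factor $R \xrightarrow{\iota} A_1\otimes_{A_0}A_1 \xrightarrow{\mu_{1,1}} A_2$, which is zero by definition of $R$. Hence $d^i \circ d^{i+1} = 0$. The only real obstacle is the bookkeeping: making sure the dualised associativity of $A^!$, the identification ${}^*(A^!_2)\simeq R$, and the associativity of $\mu$ in $A$ line up consistently under the various swaps demanded by naturality; no new homological input is required beyond the material already established in Sections~\ref{dualquadraticmonoids} and earlier.
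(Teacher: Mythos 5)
Your argument is correct, but it reaches the conclusion by a genuinely different mechanism from the paper's. The paper's proof leans entirely on Proposition \ref{pullbackiso}: it identifies ${}^*(A_{i+1}^!)$ with the pullback $P_{i+1}$ of the subobjects $A_1^{\otimes j}\otimes_{A_0}R\otimes_{A_0}A_1^{\otimes (i-j-1)}$ of $A_1^{\otimes (i+1)}$, observes from the pullback structure that $d^i\circ d^{i+1}$, composed with the strict monomorphism $A\otimes_{A_0}P_{i-1}\hookrightarrow A\otimes_{A_0}A_1^{\otimes (i-1)}$, factors through $A\otimes_{A_0}R\otimes_{A_0}A_1^{\otimes (i-1)}$ and is killed by $\mu_2$ since $R=\textnormal{Ker}(\mu_2)$, and then cancels the monomorphism. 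You instead produce the factorisation through $R$ by dualising the associativity constraint of $A^!$ and using only the degree-two identification ${}^*(A_2^!)\simeq R$ with ${}^*\mu^!_{1,1}=\iota$ (which is the $i=2$ case of Proposition \ref{pullbackiso}, or follows directly from Propositions \ref{dualkercoker2} and \ref{dualkercoker} together with strictness of $\iota$, as you indicate), and finish with associativity in $A$. Your route is the classical argument for Koszul complexes of quadratic algebras transported to the element-free setting; it buys you independence from the full pullback description and from the final monomorphism-cancellation step, at the cost of carefully tracking the order reversal in ${}^*(M\otimes N)\simeq{}^*N\otimes{}^*M$ and verifying that ${}^*(-)$ carries the associativity square of $A^!$ to the identity you use — all routine given that each $A_i^!$ is right dualisable and the multiplication maps are dualisable morphisms. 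Both proofs are sound.
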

\begin{proof}
{By Proposition \ref{pullbackiso}, we have that ${}^*(A_i^!)$ is isomorphic to the pullback of all the monomorphisms 
\begin{equation*}
    A_1^{\otimes j}\otimes_{A_0} R\otimes_{A_0} A_1^{\otimes (i-j-2)}\hookrightarrow{A_1^{\otimes i}}
\end{equation*}where $j$ ranges from $0$ to $i-2$. In particular, we can identify the morphism ${}^*\mu_{i,1}^!:{}^*(A_{i+1}^!)\rightarrow{A_1\otimes_{A_0}{}^*(A_i^!)}$ with the morphism $P_{i+1}\rightarrow{A_1\otimes_{A_0} P_i}$ induced by the pullback. Similarly, the morphism $id_{A_1}\otimes_{A_0}{}^*\mu_{i-1,1}^!$
can be identified with the morphism $A_1\otimes_{A_0} P_i\rightarrow{A_1\otimes_{A_0}A_1\otimes_{A_0} P_{i-1}}$. In particular, using the properties of the pullback, we note that we have the following commutative diagram.}
\begin{equation*}
\begin{tikzcd}
        A\otimes_{A_0}P_{i+1} \arrow{r} \arrow{d}{id_A\otimes_{A_0}{}^*\mu_{i,1}^!} & A\otimes_{A_0} R\otimes_{A_0} A_1^{\otimes i-1} \arrow{dd}\\
        A\otimes_{A_0} A_1\otimes_{A_0} P_i \arrow{d}{id_A\otimes_{A_0} id_{A_1}\otimes_{A_0}{}^*\mu_{i-1,1}^!}\\
        A\otimes_{A_0} A_1\otimes_{A_0}A_1\otimes_{A_0} P_{i-1} \arrow{r} \arrow{d}{id_A\otimes_{A_0}\mu_2\otimes_{A_0} id_{{}^*(A_{i-1}^!)}} & A\otimes_{A_0} A_1^{\otimes i+1} \arrow{d}{id_A\otimes_{A_0} \mu_2\otimes_{A_0} id_{A_1}^{\otimes i-1}}\\
        A\otimes_{A_0} P_{i-1}\arrow{r} & A\otimes_{A_0} A_1^{\otimes i-1}
    \end{tikzcd}
\end{equation*}{We see that the left vertical morphism is precisely the composition $d^i\circ d^{i+1}$. Since $R=\textnormal{Ker}(\mu_2)$, then the right vertical morphism is zero. Hence, since the bottom horizontal morphism is a strict monomorphism, being a pullback of strict monomorphisms, then we see that $d^i\circ d^{i+1}=0$.}
\end{proof}

\begin{defn}The \textbf{Koszul complex} $\mathcal{K}^\bullet$ of $A$ is the complex
\begin{equation*}
    \mathcal{K}^\bullet=\dots A\otimes_{A_0} {}^*(A_2^!)\rightarrow{A\otimes_{A_0} {}^*(A_1^!)}\rightarrow{A}
\end{equation*}with differentials $d^{i+1}:\mathcal{K}^{i+1}\rightarrow{\mathcal{K}^i}$ defined as above.
\end{defn}

We note that the objects of the Koszul complex are graded with components 
\begin{equation*}
    \mathcal{K}^i_j=A_{j-i}\otimes_{A_0} {}^*(A_i^!)
\end{equation*}Each $\mathcal{K}^i$ is projective as an $A$-module and is generated by its degree $i$ component over $A_0$ since $\mathcal{K}^i_i=A_0\otimes_{A_0} {}^*(A_i^!)\simeq {}^*(A_i^!)$. We will show in this section that $A$ is a Koszul monoid if and only if this complex provides a resolution of $A_0$.

\begin{prop}
For all $i\geq 0$, the multiplication map $\mu_{i,1}^!:A_i^!\otimes_{A_0} A_1^!\rightarrow{A_{i+1}^!}$ is a strict epimorphism.
\end{prop}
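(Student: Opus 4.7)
The plan is straightforward: we observe that the claim is an immediate consequence of Lemma \ref{quotientlemma} applied to the dual monoid $A^!$. By hypothesis, $A^!$ is itself a positively graded pre-Koszul quadratic monoid with quadratic data $(A_1^*, R^\perp)$, so Definition \ref{quadraticmonoid} supplies a strict graded epimorphism $\pi^!: T_{A_0}(A_1^*) \twoheadrightarrow A^!$. Combined with the identification $A_1^! \simeq A_1^*$ established earlier, this exhibits $A^!$ as a quotient of $T_{A_0}(A_1^!)$ in the sense of the definition preceding Lemma \ref{quotientlemma}.

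Applying Lemma \ref{quotientlemma} verbatim to $A^!$ then yields a strict graded epimorphism $A^! \otimes_{A_0} A_1^! \twoheadrightarrow A^!_{>0}$ whose component in each degree $j > 0$ is precisely the $(A_0, A_0)$-bimodule multiplication $\mu_{j-1, 1}^!: A_{j-1}^! \otimes_{A_0} A_1^! \to A_j^!$. Reindexing by $i = j-1 \geq 0$ gives the claimed strict epimorphism for every $i \geq 0$. No substantive obstacle arises; the proof is essentially a single invocation of the earlier structural lemma in dual form, together with the observation that the proof of Lemma \ref{quotientlemma} only requires strictness of the multiplication maps $(A_1^!)^{\otimes i} \to A_i^!$, which follows from $\pi^!$ being a strict graded epimorphism and hence strict in each degree.
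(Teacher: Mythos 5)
Your proposal is correct and matches the paper's argument exactly: the paper also deduces the claim in one line from Lemma \ref{quotientlemma}, using that $A^!$ is by definition a quotient of $T_{A_0}(A_1^*)\simeq T_{A_0}(A_1^!)$. You have simply spelled out the details the paper leaves implicit.
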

\begin{proof}This follows directly from Lemma \ref{quotientlemma} since $A^!$ is a quotient of $T_{A_0}(A_1^*)$.
\end{proof}

\begin{cor}\label{dualmono}The right dual map ${}^*\mu_{i,1}^!:{}^*(A_{i+1}^!)\rightarrow{{}^*(A_i^!\otimes_{A_0}A_1^!)}\simeq A_1\otimes_{A_0} {}^*(A_i^!)$ is a strict monomorphism.
\end{cor}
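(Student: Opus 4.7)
My plan is to deduce the claim from the previous proposition by dualising through an exact functor. Concretely, since $\mu_{i,1}^!:A_i^!\otimes_{A_0}A_1^!\twoheadrightarrow A_{i+1}^!$ is a strict epimorphism between right dualisable $(A_0,A_0)$-bimodules, I would embed it into a short strictly exact sequence
\begin{equation*}
0\rightarrow \textnormal{Ker}(\mu_{i,1}^!)\rightarrow A_i^!\otimes_{A_0}A_1^!\xrightarrow{\mu_{i,1}^!} A_{i+1}^!\rightarrow 0
\end{equation*}
and then apply the right dual functor ${}^*(-) = \texthom{Hom}_{\textcat{Mod-A}_0}(-,A_0)$.

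The key input is that $A_0$ is injective as a module over itself, which is part of the pre-Koszul hypothesis enjoyed by $A^!$. Applying the analogue of Proposition \ref{homstrictlyexactoriginal} in the quasi-abelian category $\textcat{Mod-A}_0$ (which is bicomplete, closed symmetric monoidal, and has enough flat projectives by the general results of Section \ref{haquasiabeliancats}), the functor ${}^*(-)$ is exact. Hence it sends the above short strictly exact sequence to a short strictly exact sequence
\begin{equation*}
0\rightarrow {}^*(A_{i+1}^!)\xrightarrow{{}^*\mu_{i,1}^!} {}^*(A_i^!\otimes_{A_0}A_1^!)\rightarrow {}^*\textnormal{Ker}(\mu_{i,1}^!)\rightarrow 0
\end{equation*}
so in particular ${}^*\mu_{i,1}^!$ is a strict monomorphism.

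Finally, I would invoke the right-handed analogue of Corollary \ref{tensorduals} (together with the fact that $A_1$ is left dualisable with ${}^*(A_1^*)\simeq A_1$, and that $A_i^!$ is right dualisable as shown earlier) to identify
\begin{equation*}
{}^*(A_i^!\otimes_{A_0}A_1^!)\simeq {}^*(A_1^!)\otimes_{A_0}{}^*(A_i^!)\simeq A_1\otimes_{A_0}{}^*(A_i^!),
\end{equation*}
which gives the stated target of ${}^*\mu_{i,1}^!$. The only genuine subtlety is checking that the injectivity-based exactness of the internal hom transfers from $\mathcal{E}$ to $\textcat{Mod-A}_0$, but this is immediate from the fact that the forgetful functor $\textcat{Mod-A}_0\to \mathcal{E}$ detects strict exactness (Proposition 1.5.1 of Schneiders, quoted earlier) together with the hypothesis that $A_0$ is self-injective.
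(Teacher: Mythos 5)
Your argument is correct, but it takes a genuinely different route from the paper. The paper's proof is a one-line citation of Proposition \ref{monodualepi}: since $\mu_{i,1}^!$ is a (left) dualisable strict epimorphism, its dual is a monomorphism on right dualisable maps purely by the evaluation/coevaluation formalism, and it is strict because it is defined as a composition of strict maps (as noted in the remark following the definition of the dual map). That argument uses only the duality data and no injectivity hypothesis. You instead run the homological argument of Proposition \ref{dualkercoker2}: embed $\mu_{i,1}^!$ in a short strictly exact sequence and use that ${}^*(-)=\texthom{Hom}_{\textcat{Mod-A}_0}(-,A_0)$ is exact because $A_0$ is self-injective (Proposition \ref{homstrictlyexactoriginal}), which turns the strict epimorphism into a strict monomorphism in one step. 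Your route consumes the extra hypothesis that $A_0$ is injective over itself --- available here since $A$ (hence $A^!$) is pre-Koszul --- but it delivers strictness and injectivity simultaneously and avoids the slightly delicate point that Proposition \ref{monodualepi} only gives cancellation against right dualisable maps. The identification of the target via the right-handed analogue of Corollary \ref{tensorduals} and ${}^*(A_1^*)\simeq A_1$ is exactly what the statement asserts, so both proofs agree on that step. In short: the paper's proof is more elementary and purely categorical; yours is the exactness-of-dualisation argument the paper itself deploys for Proposition \ref{dualkercoker2}, and it is equally valid here.
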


\begin{proof}
This follows from Proposition \ref{monodualepi}.\end{proof}

\begin{prop}\label{strictmonodifferential}
The $(i+1)$-th component of the graded map $d^{i+1}:\mathcal{K}^{i+1}\rightarrow{\mathcal{K}^i}$ is a strict monomorphism.
\end{prop}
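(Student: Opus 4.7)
The plan is to directly compute the $(i+1)$-th graded component of $d^{i+1}$ and recognise it, up to a canonical isomorphism, as the dual map ${}^*\mu_{i,1}^!$ whose strict monomorphism property was already established in Corollary \ref{dualmono}.

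First I would unpack the grading. Since $\mathcal{K}^i = A \otimes_{A_0} {}^*(A_i^!)$ with $(A \otimes_{A_0} {}^*(A_i^!))_j = A_{j-i} \otimes_{A_0} {}^*(A_i^!)$, the relevant components are
\begin{equation*}
\mathcal{K}^{i+1}_{i+1} = A_0 \otimes_{A_0} {}^*(A_{i+1}^!) \quad \text{and} \quad \mathcal{K}^i_{i+1} = A_1 \otimes_{A_0} {}^*(A_i^!).
\end{equation*}
The natural isomorphisms $A_0 \otimes_{A_0} X \simeq X$ coming from Lemma \ref{usefulfacts} identify $\mathcal{K}^{i+1}_{i+1}$ with ${}^*(A_{i+1}^!)$, and similarly the multiplication $\mu_{0,1}: A_0 \otimes_{A_0} A_1 \to A_1$ is an isomorphism coming from the $A_0$-module structure on $A_1$.

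Next I would trace the definition of $d^{i+1}$ in degree $i+1$. The map is the composition
\begin{equation*}
A_0 \otimes_{A_0} {}^*(A_{i+1}^!) \xrightarrow{id \otimes {}^*\mu_{i,1}^!} A_0 \otimes_{A_0} A_1 \otimes_{A_0} {}^*(A_i^!) \xrightarrow{\mu_{0,1} \otimes id} A_1 \otimes_{A_0} {}^*(A_i^!).
\end{equation*}
The second arrow is an isomorphism. After composing with the isomorphism $\mathcal{K}^{i+1}_{i+1} \simeq {}^*(A_{i+1}^!)$, the whole composition is identified with ${}^*\mu_{i,1}^!: {}^*(A_{i+1}^!) \to A_1 \otimes_{A_0} {}^*(A_i^!)$.

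By Corollary \ref{dualmono}, the map ${}^*\mu_{i,1}^!$ is a strict monomorphism, so the $(i+1)$-th component of $d^{i+1}$, being obtained from ${}^*\mu_{i,1}^!$ by composing with isomorphisms on both sides, is also a strict monomorphism. No part of the argument looks subtle; the main thing to be careful about is that the identification of $A_0 \otimes_{A_0} A_1$ with $A_1$ is really the multiplication $\mu_{0,1}$ coming from the monoid structure, which is what appears in the definition of $d^{i+1}$, rather than some unrelated isomorphism.
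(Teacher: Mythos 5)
Your proof is correct and follows exactly the paper's argument: identify $\mathcal{K}^{i+1}_{i+1}\simeq {}^*(A_{i+1}^!)$ and $\mathcal{K}^i_{i+1}\simeq A_1\otimes_{A_0}{}^*(A_i^!)$, recognise the degree-$(i+1)$ component of $d^{i+1}$ as ${}^*\mu_{i,1}^!$ up to these canonical isomorphisms, and invoke Corollary \ref{dualmono}. The only difference is that you spell out the intermediate composition more explicitly than the paper does.
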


\begin{proof}
We note that 
\begin{equation*}
    \mathcal{K}^{i+1}_{i+1}\simeq A_0\otimes_{A_0} {}^*(A_{i+1}^!)\simeq {}^*(A_{i+1}^!)
\end{equation*}and 
\begin{equation*}
    \mathcal{K}_{i+1}^i\simeq A_1\otimes_{A_0}{}^*(A_i^!)
\end{equation*}Hence, the map $d_{i+1}^{i+1}$ is equivalent to the map
\begin{equation*}
    {}^*\mu_{i,1}^!:{}^*(A_{i+1}^!)\rightarrow{A_1\otimes_{A_0} {}^*(A_i^!)}
\end{equation*}which is a strict monomorphism by the previous corollary. 
\end{proof}
\begin{cor}\label{imageisopushout}
$B_{i+1}^i\simeq {}^*(A_{i+1}^!)$.
\end{cor}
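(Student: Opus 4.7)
The plan is to invoke Proposition \ref{strictmonodifferential} directly: it tells us that in degree $i+1$, the differential $d^{i+1}: \mathcal{K}^{i+1} \to \mathcal{K}^i$ restricts to a strict monomorphism on the graded components. Since by definition $B^i = \textnormal{Im}(d^{i+1})$ and the image can be computed componentwise (as grading is preserved by kernels and cokernels in $\textcat{grA-Mod}$), we have $B^i_{i+1} = \textnormal{Im}(d^{i+1}_{i+1})$.

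The key observation is then the general fact that, in a quasi-abelian category, the image of a strict monomorphism $f: X \to Y$ is canonically isomorphic to its source. This is immediate from the definitions: $\textnormal{Ker}(f) = 0$ yields $\textnormal{Coim}(f) = \textnormal{Coker}(\textnormal{Ker}(f) \to X) \simeq X$, and strictness of $f$ gives the canonical isomorphism $\textnormal{Coim}(f) \simeq \textnormal{Im}(f)$.

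Putting these together, and using the identifications
\begin{equation*}
    \mathcal{K}^{i+1}_{i+1} \simeq A_0 \otimes_{A_0} {}^*(A_{i+1}^!) \simeq {}^*(A_{i+1}^!)
\end{equation*}
which were already recorded in the proof of Proposition \ref{strictmonodifferential}, I would conclude
\begin{equation*}
    B^i_{i+1} = \textnormal{Im}(d^{i+1}_{i+1}) \simeq \mathcal{K}^{i+1}_{i+1} \simeq {}^*(A_{i+1}^!).
\end{equation*}

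There is no real obstacle here; the corollary is essentially an immediate reading of the previous proposition through the general principle that strict monomorphisms are isomorphic to their images. The only minor subtlety is making sure one is entitled to compute the image of a graded map one degree at a time, which follows because the category of graded $A$-modules inherits its quasi-abelian structure componentwise from $\mathcal{E}$ (as used implicitly throughout Section \ref{haquasiabeliancats}).
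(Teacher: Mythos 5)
Your proof is correct and is exactly the argument the paper intends: the corollary is stated without proof precisely because it follows immediately from Proposition \ref{strictmonodifferential} by noting that the image of a strict monomorphism is isomorphic to its source (the same fact used in the proof of Proposition \ref{isomorphismmonoepi}) and that images of graded maps are computed degreewise. Nothing to add.
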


\begin{cor}\label{zipositivedegree}
For each $i\geq 0$, $Z^i$ lives only in degrees $\geq i+1$.
\end{cor}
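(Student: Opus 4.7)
The proof should be essentially immediate from the preceding Proposition \ref{strictmonodifferential} combined with the fact that kernels in $\textcat{grA-Mod}$ are computed degree-wise.

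The plan is as follows. Since $\mathcal{K}^i$ lives only in degrees $\geq i$, the subobject $Z^i \subseteq \mathcal{K}^i$ automatically lives only in degrees $\geq i$. Hence to prove the corollary it suffices to show that the $i$-th graded component $Z^i_i$ vanishes. Because the category $\textcat{grA-Mod}$ is quasi-abelian with kernels computed component-wise (as noted in the discussion of the category of graded modules), we have
\begin{equation*}
    Z^i_i = \textnormal{Ker}(d^i_i : \mathcal{K}^i_i \to \mathcal{K}^{i-1}_i).
\end{equation*}
Applying Proposition \ref{strictmonodifferential} with the index shifted down by one, the $i$-th component of $d^i : \mathcal{K}^i \to \mathcal{K}^{i-1}$ is a strict monomorphism, so its kernel is zero. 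Therefore $Z^i_i = 0$, and consequently $Z^i$ is concentrated in degrees $\geq i+1$.

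There is no real obstacle here: all the content lies in Proposition \ref{strictmonodifferential}, which in turn rests on Corollary \ref{dualmono} identifying $d^{i+1}_{i+1}$ with the dualised multiplication ${}^*\mu_{i,1}^!$ and on the fact that dualising a strict epimorphism of right dualisable bimodules (here the multiplication $\mu_{i,1}^!: A_i^! \otimes_{A_0} A_1^! \twoheadrightarrow A_{i+1}^!$) yields a strict monomorphism via Proposition \ref{monodualepi}. Once this input is in hand, the corollary is a one-line consequence.
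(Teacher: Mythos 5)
Your proof is correct and is essentially identical to the paper's: both reduce to noting that $\mathcal{K}^i_j=0$ for $j<i$ and that $Z^i_i=0$ because the degree-$i$ component of $d^i$ is a strict monomorphism by Proposition \ref{strictmonodifferential}.
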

\begin{proof}
We note that $\mathcal{K}^i_j=0$ for $j<i$, and hence $Z_j^i=0$ for $j<i$. By Proposition \ref{strictmonodifferential}, $Z_i^i=0$, and our result follows.
\end{proof}

\begin{lem}\label{monoisotriangle}
If there exist monomorphisms $\iota:X\rightarrow{Y}$ and $\iota':Z\rightarrow{Y}$, along with maps $f:Z\rightarrow{X}$ and $g:X\rightarrow{Z}$ such that the following diagram\begin{equation*}
    \begin{tikzcd}
     X\arrow[r,hook,"\iota"] \arrow[d,shift left, "g"]& Y\\
     Z\arrow[u,shift left,"f"] \arrow[ur,hook,"\iota'"]
    \end{tikzcd}
    \end{equation*}commutes, then $f$ is an isomorphism. 

\end{lem}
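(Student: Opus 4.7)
The plan is a short diagram-chase using only the universal property of monomorphisms; no structure of $\mathcal{E}$ beyond the existence of the stated morphisms is needed. First I would read off the two commutativity relations contained in the diagram: the outer triangle $X \to Z \to Y$ gives $\iota = \iota' \circ g$, while the inner triangle $Z \to X \to Y$ gives $\iota' = \iota \circ f$.

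Substituting one relation into the other yields $\iota = \iota' \circ g = \iota \circ f \circ g$ and $\iota' = \iota \circ f = \iota' \circ g \circ f$. Since $\iota$ is a monomorphism the first equation forces $f \circ g = \mathrm{id}_X$, and since $\iota'$ is a monomorphism the second forces $g \circ f = \mathrm{id}_Z$. Hence $f$ and $g$ are mutually inverse, so $f$ is an isomorphism with inverse $g$.

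There is no real obstacle here: the argument is purely formal and makes no use of the ambient quasi-abelian structure, only that monomorphisms are left-cancellable. The statement will be applied later to identify certain comparison maps between kernels/pullbacks as isomorphisms, and this clean formulation makes such identifications immediate.
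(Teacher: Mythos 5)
Your proof is correct and is essentially identical to the paper's: both read off the relations $\iota = \iota'\circ g$ and $\iota' = \iota\circ f$ from the diagram, substitute to get $\iota = \iota\circ(f\circ g)$ and $\iota' = \iota'\circ(g\circ f)$, and cancel the monomorphisms to conclude $f\circ g = \mathrm{id}_X$ and $g\circ f = \mathrm{id}_Z$. No issues.
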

\begin{proof}
Indeed we have that $\iota\circ f=\iota'$ and $\iota'\circ g=\iota$. Hence, $\iota\circ (f\circ g)=\iota'\circ g=\iota$ and, since $\iota$ is a monomorphism, we have that $f\circ g=id_X$. Similarly $\iota'\circ (g\circ f)=\iota\circ f=\iota'$ and, since $\iota'$ is a monomorphism, we have that $g\circ f=id_Z$.
\end{proof}
We note that 
\begin{equation*}
    Z_{i+1}^i=\textnormal{Ker}(d_{i+1}^i:A_1\otimes_{A_0} {}^*(A_i^!)\rightarrow{A_2\otimes_{A_0}{}^*(A_{i-1}^!))}
\end{equation*}\begin{prop}\label{kernelisopushout}
$Z_{i+1}^i\simeq {}^*(A_{i+1}^!)$.
\end{prop}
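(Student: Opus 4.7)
The plan is to apply Lemma \ref{monoisotriangle} to the triangle
\[
\begin{tikzcd}
{}^*(A_{i+1}^!) \arrow[r,hook,"{}^*\mu_{i,1}^!"] \arrow[d,shift left,"\alpha"] & A_1 \otimes_{A_0} {}^*(A_i^!) \\
Z_{i+1}^i \arrow[u,shift left,"\beta"] \arrow[ur,hook,"\iota"]
\end{tikzcd}
\]
where both maps into $A_1 \otimes_{A_0} {}^*(A_i^!)$ are strict monomorphisms: $\iota$ as the kernel inclusion defining $Z_{i+1}^i$, and ${}^*\mu_{i,1}^!$ by Corollary \ref{dualmono}.

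First I would construct $\alpha$. Restricting the Koszul differentials to graded degree $i+1$ identifies the degree $i+1$ part of $d^{i+1}$ with ${}^*\mu_{i,1}^!$, and Lemma \ref{differentialszero} gives $d^i \circ d^{i+1} = 0$. Thus the composite
\[
{}^*(A_{i+1}^!) \xrightarrow{{}^*\mu_{i,1}^!} A_1 \otimes_{A_0} {}^*(A_i^!) \longrightarrow A_2 \otimes_{A_0} {}^*(A_{i-1}^!)
\]
vanishes, so the universal property of the kernel yields a unique $\alpha$ with $\iota \circ \alpha = {}^*\mu_{i,1}^!$.

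Next I would construct $\beta$. Using Proposition \ref{pullbackiso} I identify ${}^*(A_{i+1}^!)$ with the pullback $P_{i+1}$ of the strict monomorphisms $A_1^{\otimes j} \otimes_{A_0} R \otimes_{A_0} A_1^{\otimes (i-j-1)} \hookrightarrow A_1^{\otimes (i+1)}$ for $j = 0, 1, \ldots, i-1$. To produce a map $Z_{i+1}^i \to P_{i+1}$ it suffices, by the universal property of the pullback, to factor the inclusion
\[
Z_{i+1}^i \hookrightarrow A_1 \otimes_{A_0} {}^*(A_i^!) \simeq A_1 \otimes_{A_0} P_i \hookrightarrow A_1^{\otimes (i+1)}
\]
through each of these $i$ subobjects. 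For $j \geq 1$ the factorization is obtained by tensoring the corresponding factorization of $P_i \hookrightarrow A_1^{\otimes i}$ with $A_1$ on the left, which is legitimate because $A_1$ is projective, hence flat, over $A_0$ by pre-Koszulness. For $j = 0$ I would use the defining kernel property of $Z_{i+1}^i$: flatness of $A_1^{\otimes (i-1)}$ and $A_2$ over $A_0$ yields an identification $\textnormal{Ker}(\mu_2 \otimes_{A_0} \textnormal{id}_{A_1^{\otimes(i-1)}}) \simeq R \otimes_{A_0} A_1^{\otimes(i-1)}$, and a diagram chase comparing the Koszul differential in degree $i+1$ with $\mu_2 \otimes \textnormal{id}$ forces $Z_{i+1}^i$ to land in $R \otimes_{A_0} A_1^{\otimes (i-1)} \hookrightarrow A_1^{\otimes (i+1)}$.

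Finally I would verify commutativity of the triangle --- the identity ${}^*\mu_{i,1}^! \circ \beta = \iota$ follows from the universality of the pullback projection $P_{i+1} \to A_1^{\otimes(i+1)}$, since $A_1 \otimes_{A_0} P_i \hookrightarrow A_1^{\otimes(i+1)}$ is a monomorphism --- and then invoke Lemma \ref{monoisotriangle} to conclude that $\alpha$ and $\beta$ are mutually inverse isomorphisms. The main obstacle is step two: the pullback legs with $j \geq 1$ are purely formal, but the $j = 0$ leg requires identifying the kernel of $\mu_2 \otimes \textnormal{id}$ with $R \otimes A_1^{\otimes (i-1)}$, and it is precisely here that the flatness supplied by the pre-Koszul hypothesis is used essentially.
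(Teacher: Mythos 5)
Your proposal is correct and follows essentially the same route as the paper: both proofs build the map ${}^*(A_{i+1}^!)\to Z_{i+1}^i$ from $d^i\circ d^{i+1}=0$ via the kernel's universal property, build the reverse map by verifying the pullback conditions for $P_{i+1}$ (the $j=0$ leg coming from the defining kernel property of $Z_{i+1}^i$ together with the identification of $\textnormal{Ker}(\mu_2\otimes\textnormal{id})$ with $R$ tensored appropriately, the $j\geq 1$ legs by tensoring the pullback structure of $P_i$ with $A_1$), and conclude with Lemma \ref{monoisotriangle}. Your explicit appeal to flatness in identifying the kernel for the $j=0$ leg is a point the paper passes over more quickly, but the argument is the same.
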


\begin{proof}We first note that, since $d^i_{i+1}\circ d^{i+1}_{i+1}=0$ by Proposition \ref{differentialszero}, there exists a map ${}^*(A_{i+1}^!)\rightarrow{Z_{i+1}^i}$ such that the following diagram commutes
\begin{equation*}
    \begin{tikzcd}
    Z^i_{i+1}\arrow[hook]{r} & A_1\otimes_{A_0} {}^*(A_i^!)\\
    {}^*(A_{i+1}^!)\arrow[ur,hook, "d^{i+1}_{i+1}"]\arrow{u}
    \end{tikzcd}
\end{equation*}

By Proposition \ref{pullbackiso}, ${}^*(A_{i+1}^!)\simeq P_{i+1}$. We will show that there exists a map $Z^i_{i+1}\rightarrow{P_{i+1}}$. We note that, by Corollary \ref{dualmono}, there exists a strict monomorphism ${}^*(A_i^!)\rightarrow{A_1\otimes_{A_0} {}^*(A_{i-1}^!)}$. The kernel of the map $A_1\otimes_{A_0} A_1\otimes_{A_0} {}^*(A_{i-1}^!)\rightarrow{A_2\otimes_{A_0} {}^*(A_{i-1}^!)}$ is exactly $R\otimes_{A_0} {}^*(A_{i-1}^!)$. Since the map 
\begin{equation*}
    Z_{i+1}^i\rightarrow{A_1\otimes_{A_0}A_1\otimes_{A_0} {}^*(A_{i-1}^!)}\rightarrow{A_2\otimes_{A_0} {}^*(A_{i-1}^!)}
\end{equation*} is exactly the zero map 
\begin{equation*}
    Z_{i+1}^i\rightarrow{A_1\otimes_{A_0} {}^*(A_i^!)}\xrightarrow{d^i_{i+1}}{A_2\otimes_{A_0} {}^*(A_{i-1}^!)}
\end{equation*}then, by the universal property of the kernel, there exists a map $Z_{i+1}^i\rightarrow{R\otimes_{A_0}{}^*(A_{i-1}^!)}$ such that the following diagram commutes
\begin{equation*}
    \begin{tikzcd}
    Z_{i+1}^i\arrow{r}\arrow{drr} & A_1\otimes_{A_0} {}^*(A_i^!) \arrow{r} \arrow[rr,bend left=20, "d^i_{i+1}"] & A_1\otimes_{A_0}A_1\otimes_{A_0}{}^*(A_{i-1}^!)\arrow{r} & A_2\otimes_{A_0} {}^*(A_{i-1}^!)\\
    & & R\otimes_{A_0} {}^*(A_{i-1}^!)\arrow{u}
    \end{tikzcd}
\end{equation*}

Using the definition of $P_i\simeq{}^*(A_i^!)$ from Proposition \ref{pullbackiso}, we see that, for all $0\leq j\leq i-2$, the maps
\begin{equation*}
    Z_{i+1}^i\rightarrow{A_1\otimes_{A_0} {}^*(A_i^!)}\rightarrow{A_1\otimes_{A_0}(A_1^{\otimes j}\otimes_{A_0} R\otimes_{A_0} A_1^{\otimes i-j-2}})\rightarrow{A_1^{\otimes (i+1)}}
\end{equation*}are all equal. Further, using the definition of ${}^*(A_{i-1}^!)\simeq P_{i-1}$ and the commutative diagram, for each $0\leq j\leq i-3$, the maps 
\begin{equation*}
    \begin{aligned}Z_{i+1}^i&\rightarrow{A_1\otimes_{A_0}A_1\otimes_{A_0} {}^*(A_{i-1}^!)}\\&\rightarrow{A_1\otimes_{A_0}A_1\otimes_{A_0}(A_1^{\otimes j}\otimes_{A_0} R\otimes_{A_0} A_1^{\otimes i-j-3}})\\&\rightarrow{A_1^{\otimes (i+1)}}
\end{aligned}\end{equation*}are equal to the map 
\begin{equation*}
    Z_{i+1}^i\rightarrow{R\otimes_{A_0} A_1^{\otimes (i-1)}}\rightarrow{A_1^{\otimes (i+1)}}
\end{equation*}Hence, for all $0\leq j\leq i+1$, the maps
\begin{equation*}
    Z_{i+1}^i\rightarrow{A_1^{\otimes j}\otimes_{A_0}R\otimes_{A_0} A_1^{\otimes (i+j-1)}}\rightarrow{A_1^{\otimes (i+1)}}
\end{equation*} are equal, and thus there exists a map $Z^i_{i+1}\rightarrow{P_{i+1}}\simeq {}^*(A_{i+1}^!)$. This clearly makes the following diagram commute
\begin{equation*}
   \begin{tikzcd}
    Z^i_{i+1}\arrow[hook]{r}\arrow{d} & A_1\otimes_{A_0} {}^*(A_i^!)\\
    {}^*(A_{i+1}^!)\arrow[ur,hook, "d^{i+1}_{i+1}"]
    \end{tikzcd}
\end{equation*}Therefore, by Lemma \ref{monoisotriangle}, we have an isomorphism $Z_{i+1}^i\simeq {}^*(A_{i+1}^!)$.

\end{proof}

\begin{thm} \label{koszulresolutiontheorem}Suppose that $A$ is a left dualisable quadratic monoid $(A_1,R)$ with dual quadratic monoid $A^!$. Then, $A$ is a Koszul monoid if and only if its Koszul complex is a resolution of $A_0$.
\end{thm}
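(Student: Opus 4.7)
The biconditional splits into two directions. The ``if'' direction is quick: if $\mathcal{K}^\bullet \to A_0 \to 0$ is a strict resolution, then each $\mathcal{K}^i = A\otimes_{A_0}{}^*(A_i^!)$ is a graded projective $A$-module (as shown just before the definition of the complex) with $\mathcal{K}^i_i \simeq {}^*(A_i^!)$ generating it over $A_0$, and since $A$ is pre-Koszul by virtue of being quadratic, this is a resolution of the kind demanded by the definition of a Koszul monoid.

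For the converse, assume $A$ is Koszul, so by Proposition \ref{koszulcondition} we have $\textnormal{Ext}^i_{\textcat{grA-Mod}}(A_0,A_0\langle n\rangle) = 0$ unless $n = i$. I would prove by induction on $i \geq 0$ that the augmented Koszul complex is strictly exact at each of $A_0, \mathcal{K}^0, \ldots, \mathcal{K}^i$. The base case is immediate: the augmentation $\mathcal{K}^0 = A \twoheadrightarrow A_0$ is a strict epimorphism with kernel $A_{>0}$, and since left dualisability yields $\mathcal{K}^1 \simeq A\otimes_{A_0}A_1$ with $d^1$ the multiplication map, Lemma \ref{quotientlemma} identifies $\textnormal{Im}(d^1) = A_{>0}$.

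For the inductive step, suppose strict exactness is established through $\mathcal{K}^{i-1}$. One must show $B^{i+1} = Z^i$ as subobjects of $\mathcal{K}^i$. The inclusion $B^{i+1} \subseteq Z^i$ is immediate from Lemma \ref{differentialszero}. For the reverse inclusion, apply Lemma \ref{exthomkernel} to the strictly exact sequence $\mathcal{K}^i \to \cdots \to \mathcal{K}^0 \to A_0 \to 0$ (valid by the inductive hypothesis, and noting each $\mathcal{K}^j$ is generated in degree $j$) to obtain
\begin{equation*}
\textnormal{Hom}_{\textcat{grA-Mod}}(Z^i, A_0\langle n\rangle) \simeq \textnormal{Ext}^{i+1}_{\textcat{grA-Mod}}(A_0, A_0\langle n\rangle),
\end{equation*}
which vanishes unless $n = i+1$ by Koszulness. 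Since $Z^i$ lives in degrees $\geq i+1$ by Corollary \ref{zipositivedegree} and $A$ is pre-Koszul, $Z^i$ is generated over $A_0$ by $Z^i_{i+1}$. Proposition \ref{kernelisopushout} together with Corollary \ref{imageisopushout} identify $B^{i+1}_{i+1} = Z^i_{i+1}$ as the \emph{same} subobject of $\mathcal{K}^i_{i+1}$, namely the image of the strict monomorphism $d^{i+1}_{i+1} = {}^*\mu^!_{i,1}$. Hence $Z^i_{i+1} \subseteq B^{i+1}$, and since $B^{i+1}$ is an $A$-submodule of $\mathcal{K}^i$, the generating property forces $Z^i \subseteq B^{i+1}$. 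Strictness of $d^{i+1}$ is automatic as it is a composition of strict morphisms.

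The main obstacle is the identification $B^{i+1}_{i+1} = Z^i_{i+1}$ as subobjects of $\mathcal{K}^i_{i+1}$: abstractly both are isomorphic to ${}^*(A_{i+1}^!)$, but one genuinely needs them to be the same subobject for the submodule argument above to close. This is exactly what the pullback description of ${}^*(A_{i+1}^!)$ in Proposition \ref{pullbackiso}, together with the diagram-chase in Proposition \ref{kernelisopushout} and the observation of Corollary \ref{imageisopushout}, is designed to accomplish. Once secured, the Koszul Ext-vanishing translates through the pre-Koszul hypothesis into the required generation of $Z^i$ in degree $i+1$, closing the induction.
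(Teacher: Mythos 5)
Your proposal is correct and follows essentially the same route as the paper: the same quick ``if'' direction, the same induction on strict exactness of the augmented complex, and the same key inputs (Lemma \ref{exthomkernel} plus Proposition \ref{koszulcondition} to get the Ext-vanishing, pre-Koszulness to generate $Z^i$ in degree $i+1$, and Propositions \ref{kernelisopushout} and \ref{imageisopushout} to match $Z^i_{i+1}$ with the degree-$(i+1)$ part of the image of $d^{i+1}$). The only cosmetic difference is the closing step: the paper produces the comparison map $\textnormal{Im}(d^{i+1})\rightarrow Z^i$ twice via universal properties and invokes Proposition \ref{isomorphismmonoepi} (strict mono and strict epi implies iso), whereas you phrase it as mutual containment of subobjects plus the generation property --- the same content.
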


\begin{proof}
If the Koszul complex of $A$ is a resolution of $A_0$ then clearly $A$ is a Koszul monoid since its Koszul complex consists of projective $A$-modules $\mathcal{K}^i$ generated by their $i^{th}$ components. 

On the other hand, suppose that $A$ is a Koszul monoid. We need to show that the complex $\mathcal{K}^\bullet\rightarrow{A_0}$ is strictly exact. We prove this inductively. Indeed, the map $\mathcal{K}^0=A\rightarrow{A_0}$ is the natural projection map which is a strict epimorphism. Its kernel is isomorphic to $A_{>0}$ which is isomorphic to the image of the map 
\begin{equation*}
    \mathcal{K}^1=A\otimes_{A_0} {}^*(A_1^!)\simeq A\otimes_{A_0} A_1\rightarrow{A=\mathcal{K}^0}
\end{equation*}by Lemma \ref{quotientlemma}. Now, suppose that the Koszul complex is strictly exact up to degree $i$. Then, by Lemma \ref{exthomkernel}
\begin{equation*}
    \textnormal{Ext}^{i+1}_{\textcat{grA-Mod}}(A_0,A_0\langle n\rangle)=\textnormal{Hom}_{\textcat{grA-Mod}}(Z^{i},A_0\langle n\rangle)
\end{equation*}By Proposition \ref{koszulcondition}, if $A$ is Koszul then $\textnormal{Ext}^{i+1}_{\textcat{grA-mod}}(A_0,A_0\langle n\rangle)=0$ unless $i+1=n$. Therefore, $\textnormal{Hom}_{\textcat{grA-Mod}}(Z^{i},A_0\langle n\rangle)=0$
unless $i+1=n$. Since $A$ is pre-Koszul, $Z^{i}$ is generated by its $(i+1)^{th}$ component. Therefore, there exists a strict epimorphism 
\begin{equation*}
    A\otimes_{A_0} Z^{i}_{i+1}\twoheadrightarrow{Z^{i}}
\end{equation*}Hence, by Propositions \ref{imageisopushout} and \ref{kernelisopushout}, there is a strict epimorphism \begin{equation*}
    \mathcal{K}^{i+1}=A\otimes_{A_0} {}^*(A_{i+1}^!)\simeq A\otimes_{A_0} Z^i_{i+1}\simeq A\otimes_{A_0} B_{i+1}^i\twoheadrightarrow{Z^i}
\end{equation*} and the following diagram commutes 
\begin{equation*}
    \begin{tikzcd}
 \mathcal{K}^{i+1} \arrow{rr}{d^{i+1}} \arrow[two heads]{dr} & &  {\mathcal{K}^i}\\
 & Z^i \arrow[hook]{ur}
    \end{tikzcd}
\end{equation*}By the universal property of the image $B^i$, there exists a unique map $B^i\rightarrow{Z^i}$ such that the following diagram commutes
\begin{equation*}
     \begin{tikzcd}
     \mathcal{K}^{i+1} \arrow{rr} \arrow[two heads]{dr} \arrow[two heads]{ddr} & &  {\mathcal{K}^i}\\
 & B^i  \arrow{d} \arrow[hook]{ur} \\
 & Z^i \arrow[hook]{uur}
\end{tikzcd}
\end{equation*}By Proposition \ref{monoepimorphismtriangle}, the map $B^i\rightarrow{Z^i}$ must be a strict epimorphism. Since $d^i\circ d^{i+1}=0$, we also know that there is a unique strict monomorphism $B^i\rightarrow{Z^i}$ such that the following diagram commutes
\begin{equation*}
    \begin{tikzcd}
    Z^i \arrow[hook]{r} & \mathcal{K}^i\\
    B^i \arrow[hook]{ur} \arrow[hook]{u}
    \end{tikzcd}
\end{equation*}By uniqueness, the two maps $B^i\rightarrow{Z^i}$ must be the same. Since this map is both a strict monomorphism and a strict epimorphism, it must be a strict isomorphism by Proposition \ref{isomorphismmonoepi}. Therefore $B^i\simeq Z^i$ and we are done.
\end{proof}

\begin{prop}\label{A!iskoszul}
If $A$ is a Koszul monoid, then $A^!$ is too. 
\end{prop}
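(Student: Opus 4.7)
Since $A^!$ is pre-Koszul by the very definition of the dual quadratic monoid, Proposition~\ref{koszulcondition} reduces the claim to showing that $\textnormal{Ext}^{i}_{\textcat{grA}^!\textcat{-Mod}}(A_0,A_0\langle n\rangle)=0$ whenever $i\neq n$. My plan is to obtain this vanishing by mirroring the Koszul-complex construction of Section~\ref{koszulcomplexsection} in the ``right-dualisable'' setting, using the symmetry ${}^!(A^!)\simeq A$ of Proposition~\ref{A!!}.

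Concretely, since $A^!$ is right dualisable with ${}^!(A^!)\simeq A$ and since $A$ itself is left dualisable, I would define the mirror Koszul complex
\begin{equation*}
    \widetilde{\mathcal K}^{i}_{A^!}\;:=\;A^!\otimes_{A_0}A_i^{*},
\end{equation*}
with differential $\widetilde d^{\,i+1}$ induced by the dual $\mu_{1,i}^{*}:A_{i+1}^{*}\to A_1^{*}\otimes_{A_0}A_i^{*}$ of the multiplication in $A$, composed with left multiplication by $A_1^{*}\simeq A_1^!$ inside $A^!$. Each $\widetilde{\mathcal K}^{i}_{A^!}$ is a graded projective $A^!$-module living in degrees $\geq i$ and generated by its degree $i$ component over $A_0$. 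The formal mirrors of Lemma~\ref{differentialszero} and Propositions~\ref{strictmonodifferential}, \ref{pullbackiso}, \ref{kernelisopushout} then yield $\widetilde d^{\,i}\circ\widetilde d^{\,i+1}=0$ together with the identification $\widetilde Z^{\,i}_{i+1}\simeq\widetilde B^{\,i}_{i+1}\simeq A_{i+1}^{*}$ in the lowest nontrivial degree, each obtained from the original statement by swapping left/right dualisability and $(A,R)\leftrightarrow(A^!,R^{\perp})$ via Lemma~\ref{Risomorphism}.

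To promote strict exactness above the lowest degree I would run the induction of Theorem~\ref{koszulresolutiontheorem}. Since I cannot invoke Koszulness of $A^!$ (which is what is being proved), I would instead transport the Koszulness of $A$ across duality: the Koszul resolution $\mathcal K^{\bullet}_{A}\to A_0$ together with Lemma~\ref{exthomkernel} and Proposition~\ref{koszulcondition} gives $\textnormal{Hom}_{\textcat{grA-Mod}}(\textnormal{Ker}(d^{i}),A_0\langle n\rangle)=0$ for $n\neq i+1$, and a duality argument at the level of graded modules (using Propositions~\ref{dualkercoker2}, \ref{dualkercoker} and \ref{pullbackiso}) translates this into the vanishing of $\textnormal{Hom}_{\textcat{grA}^!\textcat{-Mod}}(\widetilde Z^{\,i},A_0\langle n\rangle)$ for $n\neq i+1$. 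Pre-Koszulness of $A^!$ then forces $\widetilde Z^{\,i}$ to be generated by its degree $i+1$ component over $A_0$, and the inductive step closes exactly as in Theorem~\ref{koszulresolutiontheorem}.

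The hard part will be this last duality step, carrying the Ext vanishing on the $A$-side across to a Hom vanishing for graded $A^!$-modules: it requires careful bookkeeping with sides, graded shifts, and dualisability hypotheses, together with the self-injectivity of $A_0$. A cleaner alternative, worth pursuing if feasible, would be to assemble everything into a single $(A,A^!)$-bimodule Koszul complex $A\otimes_{A_0}A^!$ whose differential is built from the coevaluation of $A_1$, and to observe that when $A$ is Koszul it is simultaneously a strict free resolution of $A_0$ from both sides; the symmetry between $A$ and $A^!$ then becomes manifest and the result is automatic.
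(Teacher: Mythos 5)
You have correctly identified the target: the complex $A^!\otimes_{A_0}A_\bullet^*$ is exactly the Koszul complex of $A^!$ that the paper produces, and by Theorem \ref{koszulresolutiontheorem} it suffices to show it resolves $A_0$. But the route you propose for proving its strict exactness — rerunning the induction of Theorem \ref{koszulresolutiontheorem} on the $A^!$ side and ``transporting'' the vanishing of $\textnormal{Hom}_{\textcat{grA-Mod}}(\textnormal{Ker}(d^i),A_0\langle n\rangle)$ across duality to a vanishing of $\textnormal{Hom}_{\textcat{grA}^!\textcat{-Mod}}(\widetilde Z^{\,i},A_0\langle n\rangle)$ — has a genuine gap precisely at the step you flag as ``the hard part.'' The kernels $Z^i$ and $\widetilde Z^{\,i}$ live in different module categories ($\textcat{grA-Mod}$ versus $\textcat{grA}^!\textcat{-Mod}$), and the dualisability machinery of Section \ref{dualquadraticmonoids} only relates $(A_0,A_0)$-bimodules degreewise; no functor is exhibited that carries graded $A$-module Hom groups to graded $A^!$-module Hom groups, so the ``careful bookkeeping'' you defer is in fact the entire content of the argument and is not supplied.

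The paper's proof avoids this step altogether and is much shorter. Since $A$ is Koszul, its Koszul complex is a strict resolution of $A_0$, and because the differentials are graded, each fixed internal degree $j>0$ yields a \emph{finite} strictly exact complex
\begin{equation*}
    0\rightarrow{{}^*(A_j^!)}\rightarrow\dots\rightarrow{A_{j-1}\otimes_{A_0} {}^*(A_1^!)}\rightarrow{A_j}\rightarrow{0}
\end{equation*}
of left dualisable $(A_0,A_0)$-bimodules. Applying the left dual functor $(-)^*=\texthom{Hom}_{\textcat{A}_0\textcat{-Mod}}(-,A_0)$, which is exact because $A_0$ is self-injective (Proposition \ref{homstrictlyexactoriginal}), and using $({}^*(A_l^!))^*\simeq A_l^!$ and Corollary \ref{tensorduals}, each of these dualises to a strictly exact complex $0\leftarrow A_j^!\leftarrow\dots\leftarrow A_1^!\otimes_{A_0}A_{j-1}^*\leftarrow A_j^*\leftarrow 0$. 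Reassembling over $j$ gives strict exactness of $A^!\otimes_{A_0}A_\bullet^*\rightarrow A_0$ in one stroke, with no induction and no Ext transport. Your closing suggestion about exploiting the manifest symmetry of a single bimodule complex is closer in spirit to this; if you replace the inductive middle section of your plan with the degreewise-dualisation argument above, the proof closes.
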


\begin{proof}
If $A$ is Koszul, then the complex
\begin{equation*}
    \dots\rightarrow{A\otimes_{A_0}{}^*(A_2^!)}\rightarrow{A\otimes_{A_0}{}^*(A_1^!)}\rightarrow{A}\rightarrow{A_0}\rightarrow{0}
\end{equation*}is strictly exact by Theorem \ref{koszulresolutiontheorem}. Since our differentials are graded, for each $j> 0$ the complex
\begin{equation*}
    0\rightarrow{{}^*(A_j^!)}\rightarrow\dots\rightarrow{A_{j-2}\otimes_{A_0} {}^*(A_2^!)}\rightarrow{A_{j-1}\otimes_{A_0} {}^*(A_1^!)}\rightarrow{A_j}\rightarrow{0}
\end{equation*}is strictly exact. Taking the left dual of this complex we obtain the complex
\begin{equation*}
    0\leftarrow{{}^*(A_j^!)^*}\leftarrow\dots\leftarrow{(A_{j-2}\otimes_{A_0}{}^*(A_2^!))^*}\leftarrow{(A_{j-1}\otimes_{A_0} {}^*(A_1^!))^*}\leftarrow{A_j^*}\leftarrow{0}
\end{equation*}Equivalently, since $A^!$ is right dualisable, we have the complex
\begin{equation*}
   0\leftarrow{A_j^!}\leftarrow\dots\leftarrow{A_2^!\otimes_{A_0}  A_{j-2}^*}\leftarrow{A_1^!\otimes_{A_0}  A_{j-1}^*}\leftarrow{A_j^*}\leftarrow{0}
\end{equation*}which is strictly exact since the dual functor is exact by Proposition \ref{homstrictlyexactoriginal}. These sequences assemble to give the strictly exact sequence
\begin{equation*}
    \dots\rightarrow{A^!\otimes_{A_0} A_2^*}\rightarrow{A^!\otimes_{A_0}A_1^*}\rightarrow{A^!}\rightarrow{A_0}\rightarrow{0}
\end{equation*}The objects of the complex are all projective $A^!$-modules and the differentials are $A^!$-module morphisms. This complex gives the Koszul complex, $A^!\otimes_{A_0} A_\bullet^*$, of $A^!$ and provides a resolution of $A_0$. Hence, $A^!$ is a Koszul monoid.
\end{proof}

Suppose that $A$ is a monoid in $\mathcal{E}$. The symmetric group $\Sigma_n$ acts on $T_n(A)=A^{\otimes n}$ by permutation as follows. 
\begin{equation*}\begin{aligned}
    \sigma:T_n(A)&\rightarrow{T_n(A)}\\
    A^{(1)}\otimes_{A_0}\dots\otimes_{A_0} A^{(n)}&\rightarrow{A^{\sigma(1)}\otimes_{A_0}\dots\otimes_{A_0} A^{\sigma(n)}}
\end{aligned}\end{equation*} 
Let $S_n(A)$ be the coequalizer of all the maps $\sigma$, for $\sigma\in\Sigma_n$. Then, the \textit{symmetric monoid on $A$}, $S(A)$, can be defined to be \begin{equation*}
    S(A)=\bigoplus_{n\geq 0}S_n(A)
\end{equation*}Let $\bigwedge^nA$ be the coequalizer of all the maps $sgn(\sigma)\sigma$ for $\sigma\in\Sigma_n$. Then, the \textit{exterior monoid} on $A$, $\bigwedge A$, is defined to be \begin{equation*}
    \bigwedge A=\bigoplus_{n\geq 0} \bigwedge^n A
\end{equation*} 
If $A$ is a monoid and $M$ is an $(A,A)$-bimodule, we can easily define the symmetric module $S_A(M)$ by defining $S_A(M)_n$ to be the coequalizer of all the maps $\sigma:T_A(M)_n\rightarrow{T_A(M)_n}$. We can similarly define the exterior module $\bigwedge_A M$. 
{\begin{exmp}Suppose that $\mathcal{E}$ is enriched over the category of $\mathbb{Q}$-vector spaces and that $T_{A_0}(A_1)$ is a pre-Koszul monoid. Then, we can easily show that there is a map $q_n:S_{A_0}(A_1)_n\rightarrow{T_{A_0}(A_1)_n}$ which is a section of the coequalizer map $\pi_n:T_{A_0}(A_1)_n\rightarrow{S_{A_0}(A_1)_n}$. Therefore, each $S_{A_0}(A_1)_n$ is a projective $A_0$-module. The hom-condition in the definition of pre-Koszul follows from the isomorphism
\begin{equation*}
\textnormal{Hom}_{\textcat{gr}S_{A_0}(A_1)\textcat{-Mod}}(M,A_0\langle n\rangle)\simeq \textnormal{Hom}_{\textcat{gr}T_{A_0}(A_1)\textcat{-Mod}}(T_{A_0}(A_1)\otimes_{S_{A_0}(A_1)}M,A_0\langle n\rangle)
\end{equation*}for every $S_{A_0}(A_1)$-module $M$. Similarly, we can also show that the exterior module $\bigwedge_{A_0}A_1$ is pre-Koszul. 
If $A$ is left dualisable, the quadratic dual of $S_{A_0}(A_1)$ is $\bigwedge_{A_0}A_1^*$. We consider the Koszul complex
\begin{equation*}
    \mathcal{K}^{\bullet}=\dots\rightarrow{S_{A_0}(A_1)\otimes_{A_0} \bigwedge_{A_0}^2 A_1}\xrightarrow{d^2}{S_{A_0}(A_1)\otimes_{A_0} A_1}\xrightarrow{d^1}{S_{A_0}(A_1)}\twoheadrightarrow{A_0}\rightarrow{0}
\end{equation*}where the differentials are given by $d^i:S_{A_0}(A_1)\otimes_{A_0}\bigwedge_{A_0}^iA_1\rightarrow{S_{A_0}(A_1)\otimes_{A_0}\bigwedge_{A_0}^{i-1}A_1}$ with graded part
\begin{equation*}
    d_j^i:S_{A_0}^{j-i}(A_1)\otimes_{A_0} \bigwedge_{A_0}^i A_1\rightarrow{S_{A_0}^{j-i+1}(A_1)\otimes_{A_0} \bigwedge_{A_0}^{i-1} A_1}
\end{equation*}given by the composition \begin{equation*}\begin{aligned}
    S_{A_0}^{j-i}(A_1)\otimes_{A_0} \bigwedge_{A_0}^i A_1&\xrightarrow{id_{S_{A_0}^{j-i}(A_1)}\otimes_{A_0} {}^*\mu_{i-1,1}^!}{S_{A_0}^{j-i}(A_1)\otimes_{A_0} A_1\otimes_{A_0} \bigwedge_{A_0}^{i-1}A_1}\\
    &\xrightarrow{\mu_{j-i,1}\otimes_{A_0} id_{\bigwedge_{A_0}^{i-1} A_1}}S_{A_0}^{j-i+1} (A_1)\otimes_{A_0} \bigwedge_{A_0}^{i-1} A_1
\end{aligned}\end{equation*}We then define maps 
\begin{equation*}
    h_j^i:S_{A_0}^{j-i}(A_1)\otimes_{A_0} \bigwedge_{A_0}^i A_1\rightarrow{S_{A_0}^{j-i-1}(A_1)\otimes_{A_0} \bigwedge_{A_0}^{i+1} A_1}
\end{equation*}for each $i,j$ by \begin{equation*}\begin{aligned}
    S_{A_0}^{j-i}(A_1)\otimes_{A_0} \bigwedge_{A_0}^i A_1 &\xrightarrow{{}^*\mu_{1,j-i-1}\otimes_{A_0} id_{\bigwedge_{A_0}^i } A_1}{S_{A_0}^{j-i-1}(A_1)\otimes_{A_0}A_1\otimes_{A_0} \bigwedge_{A_0}^{i} A_1}\\
    &\xrightarrow{id_{S_{A_0}^{j-i-1}(A_1)}\otimes_{A_0} \mu_{1,i}^!}{S_{A_0}^{j-i-1}(A_1)\otimes_{A_0} \bigwedge_{A_0}^{i+1} A_1}
\end{aligned}\end{equation*}and we can then show that 
\begin{equation*}
    h_{j}^{i-1}\circ d_j^i+d_{j}^{i+1}\circ h^i_j=i\circ id+(j-i)\circ id=j\circ id
\end{equation*}which, since we are working in characteristic $0$, defines a homotopy between the cochain maps $id,0:\mathcal{K}^\bullet_j\rightarrow{\mathcal{K}^\bullet_j}$. It then follows, by \cite[Proposition 2.2.39]{kelly19} that $\mathcal{K}^{\bullet}_j$ is exact for each $j$, and hence $\mathcal{K}_\bullet$ is exact. Since $S_{A_0}(A_1)$ has a Koszul complex which is a resolution of $A_0$, it is a Koszul monoid by Proposition \ref{koszulresolutiontheorem}. Similarly, we can show that $\bigwedge_{A_0}A_1$ is a Koszul monoid.
\end{exmp}}

\section{Our Main Koszul Duality Result}\label{koszuldualityresult}

Suppose that $A$ is a left dualisable quadratic monoid, with quadratic dual monoid $A^!$. We denote by $\textcat{C}(\textcat{grA-Mod})$ the category of cochain complexes $M^\bullet$ of graded $A$-modules $M^i=\bigoplus_jM^i_j$. We can consider $M^\bullet\in\textcat{C}(\textcat{grA-Mod})$ and $N^\bullet\in\textcat{C}(\textcat{grA}^!\textcat{-Mod})$ as modules $\bigoplus_{i}M^i$ and $\bigoplus_iN^i$ over $A$ and $A^!$ respectively. Define the full subcategories $\textcat{C}^\uparrow(\textcat{grA-Mod})$ and $\textcat{C}^\downarrow(\textcat{grA-Mod})$ as follows. 

If $M^\bullet\in\textcat{C}(\textcat{grA-Mod})$, with each $M^i=\bigoplus_{j} M^i_j$, then $M^\bullet\in\textcat{C}^\uparrow(\textcat{grA-Mod})$ if 
\begin{equation*}
    M_j^i=0\text{ for } i\gg 0 \text{ or }i+j\ll 0
\end{equation*}and $M^\bullet\in\textcat{C}^\downarrow(\textcat{grA-Mod})$ if
\begin{equation*}
    M_j^i=0\text{ for } i\ll 0 \text{ or }i+j\gg 0
\end{equation*}
The following diagram illustrates where the non zero components $M_j^i$ lie.
\begin{equation*}
    \begin{tikzpicture}
\node(a) at (-5,0) {} ;
\node(b) at (5,0) {$i$};
\node(c) at (0,5) {$j$};
\node(d) at (0,-2) {}; 
\draw[fill=gray!30]    (-1,5) -- (-1,1) -- (-5,5) ;
\draw[fill=gray!30]    (1,-2) -- (1,1.5) -- (5,-2) ;\draw[->] (a) -- (b); 
\draw[->] (d) -- (c);
\node(e) at (2.5,-1.2) {$\\\textcat{C}^\downarrow(\textcat{grA-Mod})$};
\node(f) at (-2.5,4) {$\\\textcat{C}^\uparrow(\textcat{grA-Mod})$};
\end{tikzpicture}
\end{equation*}

We let $\textcat{D}^\uparrow(\textcat{grA-Mod})$ and $\textcat{D}^\downarrow(\textcat{grA-Mod})$ denote the localisations of the categories $\textcat{C}^\uparrow(\textcat{grA-Mod})$ and $\textcat{C}^\downarrow(\textcat{grA-Mod})$ at strict quasi-isomorphisms. Note that these are triangulated subcategories of $\textcat{C}(\textcat{grA-Mod})$. 

Motivated by the Tensor-Hom adjunction for modules, we define functors
\begin{equation*}
\begin{split}
    F:\textcat{grA-Mod}\,&\leftrightarrow \textcat{grA}^!\textcat{-Mod}:G\\
    M^\bullet&\rightarrow{A^!\otimes_{A_0} M^\bullet}\\
    \textnormal{\underline{Hom}}_{\textcat{grA}_0\textcat{-Mod}}(A,N^\bullet)&\leftarrow{N^\bullet}
\end{split}
\end{equation*}We will show that these functors descend to triangulated functors
\begin{equation*}
    \textcat{D}F: \textcat{D}^\downarrow(\textcat{grA-Mod})\rightarrow{ \textcat{D}^\uparrow({\textcat{grA}}^!\textcat{-Mod})   }\,\text{ and } \, \textcat{D}G: \textcat{D}^\uparrow({\textcat{grA}}^!\textcat{-Mod})\rightarrow{ \textcat{D}^\downarrow(\textcat{grA}\textcat{-Mod})}
\end{equation*}inducing the following equivalence. 
\begin{thm}\label{bigmaintheorem}Suppose that $A$ is a left dualisable Koszul monoid with quadratic dual monoid $A^!$. Then, there is an equivalence of categories
\begin{equation*}
    \textcat{D}^\downarrow(\textcat{grA-Mod})\simeq{ \textcat{D}^\uparrow({\textcat{grA}}^!\textcat{-Mod})  }
\end{equation*}
\end{thm}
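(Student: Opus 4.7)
The plan is to realize the equivalence via the adjoint pair $(F,G)$ described just before the theorem, using the Koszul complex of $A$ as the principal homological input and Schneiders' left-heart embedding $I:\mathcal{E}\to\mathcal{LH(E)}$ to handle convergence of auxiliary spectral sequences in the quasi-abelian setting.

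First I would make $F$ and $G$ into honest cochain-level functors. Equip $F(M^\bullet)=A^!\otimes_{A_0}M^\bullet$ with a bigrading $F(M^\bullet)^{p,q}=A^!_p\otimes_{A_0}M^q$ and take as total differential the sum of the internal differential of $M^\bullet$ and a Koszul-type differential built from the comultiplication ${}^*\mu^!_{i,1}$ that was used to define the Koszul complex in Section $\ref{koszulcomplexsection}$; a symmetric construction with $\mu_{i,1}$ gives the total differential on $G(N^\bullet)=\texthom{Hom}_{\textcat{grA}_0\textcat{-Mod}}(A,N^\bullet)$. A direct check using Lemma $\ref{differentialszero}$ and the multiplicativity of $\pi$ shows the total differentials square to zero. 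The bidegree bounds defining $\textcat{C}^\downarrow(\textcat{grA-Mod})$ guarantee that in each total degree only finitely many $A^!_p\otimes_{A_0}M^q$ contribute, so $F$ lands in $\textcat{C}^\uparrow(\textcat{grA}^!\textcat{-Mod})$, and symmetrically for $G$. The adjunction $F\dashv G$ at the chain level is the extension of Corollary $\ref{tensorhomadjunctioncor}$ to bigraded objects, the compatibility with the Koszul differential being immediate from the way ${}^*\mu^!_{i,1}$ is dual to $\mu^!_{i,1}$.

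The main obstacle is showing that $F$ and $G$ descend to triangulated functors on the derived subcategories; by Proposition $\ref{inducedderived}$ it is enough that they preserve strict quasi-isomorphisms. This is where the left heart comes in. Given a strict quasi-isomorphism $f^\bullet$, Corollary $\ref{Iquasiiso}$ reduces the problem to showing that $I(F(f^\bullet))$ is a quasi-isomorphism in $\mathcal{LH(E)}$. I would view $I(F(M^\bullet))$ as the total complex of the bicomplex $I(A^!_p\otimes_{A_0}M^q)$ in the abelian category $\mathcal{LH(E)}$ and run the standard filtration-by-internal-degree spectral sequence there. The bound $M^i_j=0$ for $i\ll 0$ or $i+j\gg 0$ is precisely what makes this filtration regular so the spectral sequence converges in $\mathcal{LH(E)}$; the $E_1$-page is $A^!_p\otimes_{A_0}H^q(I(M^\bullet))$, on which $f^\bullet$ induces isomorphisms, so $F(f^\bullet)$ is a quasi-isomorphism on abutments. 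The argument for $G$ is symmetric, with the $\textcat{C}^\uparrow$ bound ensuring regularity of the dual spectral sequence.

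Finally I would verify that the unit $\eta:\mathrm{id}\Rightarrow GF$ and counit $\epsilon:FG\Rightarrow\mathrm{id}$ are strict quasi-isomorphisms, which is the Koszul-theoretic heart of the argument. On the generating object $M^\bullet=A_0$ concentrated in bidegree $(0,0)$, unravelling the definitions identifies $GF(A_0)$ with the total complex associated to $\texthom{Hom}_{\textcat{grA}_0\textcat{-Mod}}(A,A^!\otimes_{A_0}A_0)$ equipped with the Koszul differential, which is naturally isomorphic to the Koszul complex $\mathcal{K}^\bullet$ of $A$; Theorem $\ref{koszulresolutiontheorem}$ then says precisely that $\eta_{A_0}$ is a strict quasi-isomorphism because $A$ is Koszul. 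A triangulated devissage, using the fact that shifts $A_0\langle n\rangle[k]$ generate $\textcat{D}^\downarrow(\textcat{grA-Mod})$ together with the convergence argument above, propagates this to all of $\textcat{D}^\downarrow(\textcat{grA-Mod})$. Proposition $\ref{A!iskoszul}$, which asserts that $A^!$ is itself Koszul, gives the dual statement for the counit. Combined with the adjunction, this yields the desired equivalence of triangulated categories. I expect Step three (convergence of the double-complex spectral sequence after embedding into $\mathcal{LH(E)}$, with uniform control of the bigrading bounds) to be the genuinely technical point; once it is in place, the Koszul resolution handles the rest almost formally.
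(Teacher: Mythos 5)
Your construction of the cochain-level functors, the use of the boundedness conditions to land in the right subcategories, and the left-heart spectral-sequence argument for descending to the derived categories all match the paper's proof (the paper phrases the descent as preservation of strict exactness applied to the cone of $f^\bullet$, with strict exactness of the rows coming from flatness of each $A^!_l$ over $A_0$, rather than comparing $E_1$-pages directly -- a cosmetic difference, though note that $I$ is not monoidal, so the expression $A^!_p\otimes_{A_0}H^q(I(M^\bullet))$ needs the paper's formulation to make sense).

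The genuine gap is in your final step. You check the unit on the single object $A_0$ and invoke ``a triangulated devissage, using the fact that shifts $A_0\langle n\rangle[k]$ generate $\textcat{D}^\downarrow(\textcat{grA-Mod})$.'' That generation statement is not available here: $A_0$ is only assumed pre-Koszul, not semisimple, and the graded components $M^i_j$ are arbitrary objects of $\mathcal{E}$ with $A_0$-action, so a pure module is in general nowhere near an iterated extension of copies of $A_0\langle n\rangle$. Even granting some generation statement, objects of $\textcat{D}^\downarrow$ are unbounded complexes, so propagating from generators requires commuting $F$ and $G$ with the relevant homotopy (co)limits -- a convergence problem that is essentially the whole difficulty and is not addressed. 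The paper avoids this entirely: it observes that the counit is a split strict epimorphism and the unit a split strict monomorphism, reduces via Lemma \ref{compquasi} to showing the splittings $\sigma_{N^\bullet}$ and $\rho_{M^\bullet}$ are strict quasi-isomorphisms for an \emph{arbitrary} object, and then runs a first-quadrant (resp.\ third-quadrant) spectral sequence in the left heart whose $E_1$-page collapses because $A\otimes_{A_0}{}^*(A^!_\bullet)$ (resp.\ $A^*_\bullet\otimes_{A_0}A^!$) resolves $A_0$; the arbitrary components $N^p_q$, $M^p_q$ just ride along inside $\texthom{Hom}(A_0,-)$ and $A_0\otimes_{A_0}-$. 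You have also swapped which Koszulness is used where: $GF$ produces $A^*_\bullet\otimes_{A_0}A^!\otimes_{A_0}(-)$, so the unit needs the Koszul complex of $A^!$ (hence Proposition \ref{A!iskoszul}), while it is the counit that uses the Koszul resolution of $A_0$ over $A$ via Theorem \ref{koszulresolutiontheorem}.
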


We prove this Theorem in several parts following closely the proof of \cite[Theorem 2.12.1]{beilinsonginzburg96}.
\begin{lem}\label{CFfunctor}There exists a functor $\textcat{C}F:\textcat{C}(\textcat{grA-Mod})\rightarrow{ \textcat{C}({\textcat{grA}}^!\textcat{-Mod}) }$.
\end{lem}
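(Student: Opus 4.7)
The plan is to extend $F = A^! \otimes_{A_0} -$ to complexes by forming, for each $M^\bullet \in \textcat{C}(\textcat{grA-Mod})$, a total complex whose differential combines the original differential of $M^\bullet$ with a Koszul-type differential built from dualizability of $A_1$, the multiplication in $A^!$, and the $A$-action on $M^\bullet$.

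Concretely, in cohomological degree $n$ and weight $k$ I would set
\[
  \textcat{C}F(M^\bullet)^n_k := \bigoplus_{\substack{p+i=n \\ p-j=k}} A^!_p \otimes_{A_0} M^i_j,
\]
with $A^!$ acting on the first factor so that each $\textcat{C}F(M^\bullet)^n$ is a graded $A^!$-module. The total differential is $d := d_{\textnormal{ext}} + d_{\textnormal{int}}$, where $d_{\textnormal{ext}} := \textnormal{id}_{A^!} \otimes d_{M^\bullet}$, and $d_{\textnormal{int}}$ has components $A^!_p \otimes_{A_0} M^i_j \to A^!_{p+1} \otimes_{A_0} M^i_{j+1}$ obtained by composing $\textnormal{id}_{A^!_p} \otimes \alpha_{i,j}$ with $\mu^!_{p,1} \otimes \textnormal{id}_{M^i_{j+1}}$. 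Here $\alpha_{i,j} : M^i_j \to A^!_1 \otimes_{A_0} M^i_{j+1}$ is the adjoint of the action $A_1 \otimes_{A_0} M^i_j \to M^i_{j+1}$, constructed via the tensor-hom adjunction and the identification $\texthom{Hom}_{\textcat{A}_0\textcat{-Mod}}(A_1, -) \simeq A_1^* \otimes_{A_0} - = A^!_1 \otimes_{A_0} -$ from Proposition \ref{dualtensorhomprop}. Both pieces are graded $A^!$-module morphisms of cohomological degree $+1$ and weight $0$.

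The main obstacle is $d^2 = 0$. The identity $d_{\textnormal{ext}}^2 = 0$ is immediate, and the anticommutation $d_{\textnormal{ext}} d_{\textnormal{int}} + d_{\textnormal{int}} d_{\textnormal{ext}} = 0$ (with Koszul sign conventions) follows from the $A$-linearity of $d_{M^\bullet}$. The key point is $d_{\textnormal{int}}^2 = 0$, which mirrors Lemma \ref{differentialszero}: by associativity of $\mu^!$, two composed copies of $d_{\textnormal{int}}$ factor through the map $A^!_p \otimes_{A_0} A^!_1 \otimes_{A_0} A^!_1 \otimes_{A_0} M^i_{j+2} \to A^!_{p+2} \otimes_{A_0} M^i_{j+2}$. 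Meanwhile, the iterated adjoint $M^i_j \to A^!_1 \otimes_{A_0} A^!_1 \otimes_{A_0} M^i_{j+2}$ factors through $A_2^* \otimes_{A_0} M^i_{j+2}$ by associativity of the $A$-action on $M^\bullet$ together with dualizability; but $A_2^* \simeq R^\perp \subseteq \textnormal{Ker}(A^!_1 \otimes_{A_0} A^!_1 \to A^!_2)$ by construction of $A^!$, so the final application of multiplication in $A^!$ annihilates everything. Functoriality is then automatic: any chain map $f^\bullet: M^\bullet \to N^\bullet$ in $\textcat{C}(\textcat{grA-Mod})$ induces $\textnormal{id}_{A^!} \otimes f^\bullet$, which commutes with $d_{\textnormal{ext}}$ (because $f^\bullet$ is a chain map) and with $d_{\textnormal{int}}$ (because $f^\bullet$ is $A$-linear, hence compatible with $\alpha$), and identities and compositions are preserved.
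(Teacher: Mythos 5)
Your construction is essentially the paper's: you form the bicomplex $A^!_\bullet\otimes_{A_0}M^\bullet$ with horizontal differential $\mathrm{id}_{A^!}\otimes d_M$ and a vertical Koszul-type differential obtained from the $A_1$-action on $M$ (via dualisability) followed by multiplication in $A^!$, and your verification that $d_{\mathrm{int}}^2=0$ --- the iterated coaction factors through $A_2^*\simeq R^\perp$, which is killed by $A_1^!\otimes_{A_0}A_1^!\to A_2^!$ --- is precisely the dual of the mechanism in Lemma \ref{differentialszero}, which the paper leaves as ``easy to check.'' The one place you diverge is the totalisation: you put $A^!_p\otimes_{A_0}M^i_j$ in cohomological degree $n=p+i$, whereas the paper uses the ``twisted'' degree $n=i+j$ (the weight $k=p-j$ agrees in both). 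For the bare statement of this lemma either choice yields a functor to $\textcat{C}(\textcat{grA}^!\textcat{-Mod})$, so your argument does prove what is asserted; but the paper's regrading, which absorbs the internal weight of $M$ rather than the $A^!$-degree into the cohomological degree, is the one required for Proposition \ref{prop1}: under your convention $A^!_n\otimes_{A_0}M^0_0$ lands in degree $n$ for every $n\geq 0$, so even a complex concentrated in degree $0$ would fail to be sent into $\textcat{C}^\uparrow(\textcat{grA}^!\textcat{-Mod})$.
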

\begin{proof}Suppose that we have $(M^\bullet,d_M^\bullet)\in\textcat{C}(\textcat{grA-Mod})$. We consider $A^!$ as a complex $((A^!)^\bullet,d_{A^!}^\bullet)$ in $\textcat{C}({\textcat{grA}}^!\textcat{-Mod})$ and consider 
\begin{align*}
    F(M^\bullet)&=A^!\otimes_{A_0} M^\bullet\in \textcat{A}^!\textcat{-Mod}\\
\intertext{We construct the double complex of $A^!$ modules}
    F(M^\bullet)^{\bullet\bullet}&=A^!_\bullet\otimes_{A_0} M^\bullet
\intertext{with objects}
    F(M^\bullet)^{i,l}&=A^!_{l}\otimes_{A_0} M^i\\
\intertext{and graded by}
F(M^\bullet)^{i,l}_j&=A^!_{l}\otimes_{A_0} M^i_j
\end{align*}

The differentials are defined as follows. We first note that, since $A^!$ is right dualisable,
\begin{equation}\label{internalhomisoA!}
    A_{l}^!\otimes_{A_0} M^i_j\simeq\texthom{Hom}_{\textcat{A}_0\textcat{-Mod}}({}^*(A_{l}^!),M^i_j)
\end{equation}
Hence the $j^{th}$ component of the vertical differential 
\begin{equation*}
    (d_v^{i,l})_j:A_{l}^!\otimes_{A_0} M^i_j\rightarrow{A_{l+1}^!\otimes_{A_0} M^i_{j+1}}
\end{equation*}is $(-1)^{i+j}(D_v^{i,l})_j$ where $(D_v^{i,l})_j$ can be defined using Equation \ref{internalhomisoA!} and the composition 
\begin{equation*}
    {}^*(A_{l+1}^!)\xrightarrow{{}^*\mu_{l,1}^!}{A_1\otimes_{A_0} {}^*(A_{l}^!)}
    \rightarrow{A_1\otimes_{A_0} M^i_j}\rightarrow{M^i_{j+1}}
\end{equation*}We also remark that ${}^*\mu_{l,1}^!$ is one of the maps involved in the definition of the differential for the Koszul complex of $A$ given in Section \ref{koszulcomplexsection}. The $j^{th}$ component of the horizontal differential
\begin{equation*}
    (d_h^{i,l})_j:A_{l}^!\otimes_{A_0} M^i_j\rightarrow{A_{l}^!\otimes_{A_0} M^{i+1}_j}
\end{equation*}is defined by 
\begin{equation*}
    (d_h^{i,l})_j=id_{A^!_{l}}\otimes_{A_0} (d_M^i)_j
\end{equation*}It is easy to check that this is indeed a double complex. In degree $k$, we can visualise the double complex as follows
\begin{equation*}
    \begin{tikzcd}
   & \vdots & \vdots & \vdots \\
    \dots \arrow{r} & A_2^!\otimes_{A_0}M_{2-k}^{i}\arrow{r}\arrow{u} &  A_2^!\otimes_{A_0} M_{2-k}^{i+1}\arrow{u}\arrow{r} & A_2^!\otimes_{A_0} M_{2-k}^{i+2}\arrow{u}\arrow{r} & \dots\\
    \dots \arrow{r} & A_1^!\otimes_{A_0}M_{1-k}^{i} \arrow{r}\arrow{u} & A_1^!\otimes_{A_0} M_{1-k}^{i+1} \arrow{u}\arrow{r} & A_1^!\otimes_{A_0} M_{1-k}^{i+2}\arrow{u}\arrow{r} & \dots\\
     \dots \arrow{r} & M_{-k}^i \arrow{r}\arrow{u} &  M_{-k}^{i+1} \arrow{u}\arrow{r} &  M_{-k}^{i+2}\arrow{u}\arrow{r} & \dots\\
 & 0 \arrow{u}& 0 \arrow{u} & 0 \arrow{u}\\
    \end{tikzcd}
\end{equation*}We see that our double complex is bounded from below since $A^!$ is positively graded. We now consider a kind of `twisted' total complex $FM^\bullet$ with 
\begin{equation*}
    (FM)^n_k=\bigoplus_{i+j=n,k=l-j}A_{l}^!\otimes_{A_0} M^i_j
\end{equation*}and with associated total differential $d_{F}^\bullet=d_h^{\bullet\bullet}+d_v^{\bullet\bullet}$. We see that this complex is clearly a complex of $A^!$-modules.
We therefore have a functor
\begin{equation*}\begin{aligned}
 \textcat{C}F:\textcat{C}(\textcat{grA-Mod})&\rightarrow{ \textcat{C}({\textcat{grA}}^!\textcat{-Mod}) }\\
 M^\bullet &\rightarrow{(FM)^\bullet}
\end{aligned}\end{equation*}
\end{proof}

\begin{prop}\label{prop1}$\textbf{C}F$ induces a functor $\textcat{D}F: \textcat{D}^\downarrow(\textcat{grA-Mod})\rightarrow{ \textcat{D}^\uparrow({\textcat{grA}}^!\textcat{-Mod})   }$.
\end{prop}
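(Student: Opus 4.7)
My plan is to invoke Proposition~\ref{inducedderived} by verifying two things: that $\textcat{C}F$ carries $\textcat{C}^\downarrow(\textcat{grA-Mod})$ into $\textcat{C}^\uparrow(\textcat{grA}^!\textcat{-Mod})$, and that it sends strict quasi-isomorphisms to strict quasi-isomorphisms.

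For the first point, a direct index calculation should suffice. If $M^\bullet \in \textcat{C}^\downarrow(\textcat{grA-Mod})$ vanishes outside the region $i \geq a$ and $i+j \leq b$, then in the formula $(FM)^n_k = \bigoplus_{j \geq -k} A^!_{k+j} \otimes_{A_0} M^{n-j}_j$ a nonzero summand requires $n \leq b$ and $n - j \geq a$. In particular $(FM)^n_k = 0$ whenever $n > b$, and whenever $n+k < a$, since $n+k = (n-j) + (k+j) \geq a + 0$ for any nonzero summand. These are exactly the $\textcat{C}^\uparrow$ vanishing conditions. Moreover, for each fixed internal degree $k$ the same bounds imply that $(FM)^\bullet_k$ is a \emph{bounded} complex in $\mathcal{E}$, vanishing outside $a - k \leq n \leq b$, which will be crucial below.

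For the second point, Proposition~\ref{strictquasiexact} reduces the question to showing that $\textcat{C}F$ preserves strict exactness. So I take a strictly exact $M^\bullet \in \textcat{C}^\downarrow(\textcat{grA-Mod})$ and aim to show $(FM)^\bullet_k$ is strictly exact in $\mathcal{E}$ for each $k$. Recall $(FM)^\bullet_k$ arises as the (twisted) total complex of the double complex $A^!_\bullet \otimes_{A_0} M^\bullet$. For each fixed $l$, the $l$-th row is $A^!_l \otimes_{A_0} M^\bullet$ with horizontal differential $\textnormal{id}_{A^!_l} \otimes d_M^\bullet$; because the pre-Koszul hypothesis forces $A^!_l$ to be projective as an $A_0$-module, and projectives in $\textcat{A}_0\textcat{-Mod}$ are flat, the functor $A^!_l \otimes_{A_0} -$ preserves strict exactness. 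Thus every row is strictly exact in $\mathcal{E}$.

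To deduce strict exactness of the bounded total complex $(FM)^\bullet_k$ from strict exactness of its rows, I transport the problem into the abelian left heart via $I : \mathcal{E} \to \mathcal{LH(E)}$ from Proposition~\ref{leftheartembedding}. By Corollary~\ref{strictexactembedding}, each $I(A^!_l \otimes_{A_0} M^\bullet)_k$ is exact in $\mathcal{LH(E)}$. Since in internal degree $k$ the double complex is bounded in each total degree, the classical bounded-filtration spectral sequence in the abelian category $\mathcal{LH(E)}$ converges and collapses at the $E_1$-page, giving exactness of $I((FM)^\bullet_k)$. A second application of Corollary~\ref{strictexactembedding} returns us to strict exactness of $(FM)^\bullet_k$ in $\mathcal{E}$, and hence of $(FM)^\bullet$ in $\textcat{grA}^!\textcat{-Mod}$, completing the verification.

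I expect the main obstacle to be the spectral sequence step: the quasi-abelian category $\mathcal{E}$ does not itself support the standard convergence machinery, so the argument must route through $\mathcal{LH(E)}$, and this is exactly where the $\textcat{C}^\downarrow$ hypothesis is used---it guarantees the filtration is bounded at each total degree so that convergence is automatic. This same consideration underlies the asymmetric choice of $\textcat{C}^\uparrow$ as the target subcategory and the decision to form the total complex via direct sums rather than products.
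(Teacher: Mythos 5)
Your proposal is correct and follows essentially the same route as the paper: the same index computation showing $\textcat{C}F$ lands in $\textcat{C}^\uparrow(\textcat{grA}^!\textcat{-Mod})$, the same reduction via Propositions \ref{inducedderived} and \ref{strictquasiexact} to preservation of strict exactness, flatness of each $A^!_l$ over $A_0$ to get strictly exact rows, and passage to the left heart via $I$ where a convergent spectral sequence of the double complex forces the total complex to be exact. The only cosmetic difference is that you justify convergence by boundedness of the filtration in each total degree of $(FM)^\bullet_k$, whereas the paper invokes first-quadrant-ness of the double complex in each fixed internal degree; these amount to the same thing here.
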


\begin{proof}
Suppose that $M^\bullet\in\textcat{C}^\downarrow(\textcat{grA-Mod})$ and consider the functor\begin{equation*}\begin{aligned}
 \textcat{C}F:\textcat{C}(\textcat{grA-Mod})&\rightarrow{ \textcat{C}({\textcat{grA}}^!\textcat{-Mod}) }\\
 M^\bullet &\rightarrow{(FM)^\bullet}
\end{aligned}\end{equation*}defined in the previous Lemma. We now see why we specified the boundedness conditions on $M^\bullet$. If $n\gg 0$, then $i+j\gg 0$, and hence, since $M^\bullet\in\textcat{C}^\downarrow(\textcat{grA-Mod})$, we see that $M^i_j=0$. Now, if $n+k\ll 0$, then $i+j+k\ll 0$. Since $A^!$ is positively graded, we know that $l=j+k\geq 0$, and hence $i\ll 0$. Therefore, $M^i_j=0$ once again. We see that in both cases, $(FM)^n_k=0$, and hence $(FM)^\bullet\in\textcat{C}^\uparrow(\textcat{grA}^!\textcat{-Mod})$. We can therefore restrict $\textcat{C}F$ to a functor
\begin{equation*}
    \textcat{C}F: \textcat{C}^\downarrow(\textcat{grA-Mod})\rightarrow{ \textcat{C}^\uparrow({\textcat{grA}}^!\textcat{-Mod})   }
\end{equation*}To show that this functor induces the desired functor, it suffices, by Propositions \ref{inducedderived} and \ref{strictquasiexact}, to show that this functor preserves strict exactness. Indeed, suppose that $M^\bullet$ is a strictly exact complex. We note that, in each grading $j$, our double complex $F(M^\bullet)^{\bullet\bullet}_j$ can be considered as first quadrant since $M^i_j=0$ for $i\ll 0$. This double complex has strictly exact rows since $M^\bullet$ is strictly exact and each $A_i^!$ is a flat $A_0$-module. Under the canonical embedding 
\begin{equation*}
    I:\textcat{grA}^!\textcat{-Mod}\rightarrow{\mathcal{LH}(\textcat{grA}^!\textcat{-Mod})}
\end{equation*}from Proposition \ref{leftheartembedding}, we note that $I(F(M^\bullet)^{\bullet\bullet}_j)$ is a first quadrant double complex with exact rows, since, by Corollary \ref{strictexactembedding}, exactness is preserved by the embedding . 

Hence, we see that, in each degree $j$, associated to our first quadrant double complex $I(F(M^\bullet))_j^{\bullet\bullet}$, there exists a spectral sequence $\{{}^{II}E_{r,j}^{i,l}\}$ for $r\geq 0$ with first term  \begin{equation*}
    {}^{II}E_{1,j}^{i,l}=H_h^i(I(F(M^\bullet))_j^{\bullet, l})
\end{equation*}with differentials ${}^{II}d_{1,j}^{i,l}=H_h^i(I(d_h)^{\bullet,l}_j)$. Furthermore, by comparing the total complex with respect to the grading $(i,l)$ with the twisted total complex with respect to $(i,j)$, we have the convergence
\begin{equation*}
\begin{aligned}
    {}^{II}E_{2,j}^{i,l}=H_v^l(H_h^i( I(F(M^\bullet))_j^{\bullet\bullet}))&\Rightarrow{H^{i+l}(\textnormal{Tot}(I(F(M^\bullet))_j^{\bullet\bullet})^\bullet)}\\
    &\simeq H^{n+k}(I(FM)^{\bullet-k}_k)\\
    &\simeq H^{n}(I(FM)^{\bullet}_k)
\end{aligned}\end{equation*}where $n=i+j,k=l-j$. Since $I(F(M^\bullet))_j^{\bullet\bullet}$ has exact rows, $H_h^{i}(I(F(M^\bullet))_j^{\bullet\bullet})$ is the zero complex. Hence, since $I$ preserves finite products, equivalently finite coproducts, as it is a right adjoint by Proposition \ref{leftheartembedding}, then
\begin{equation*}\bigoplus_{k=l-j}H^{n}(I(FM)^{\bullet}_k)=H^n\bigg(\bigoplus_{k=l-j}I(FM)^{\bullet}_k\bigg)=H^n(I(FM)^\bullet)=0\end{equation*}
Hence, $I(FM)^\bullet=I(\textcat{C}F(M^\bullet))$ is an exact complex in $\mathcal{LH}(\textcat{grA}^!\textcat{-Mod})$. It follows that $\textcat{C}F(M^\bullet)$ is strictly exact in $\textcat{grA}^!\textcat{-Mod}$ by Corollary \ref{strictexactembedding}. Therefore, the functor $\textcat{C}F$ induces a functor
\begin{equation*}
    \textcat{D}F: \textcat{D}^\downarrow(\textcat{grA-Mod})\rightarrow{ \textcat{D}^\uparrow({\textcat{grA}}^!\textcat{-Mod})   }
\end{equation*}
\end{proof}

\begin{lem}\label{CGfunctor}
There exists a functor $\textcat{C}G:\textcat{C}({\textcat{grA}}^!\textcat{-Mod})\rightarrow{ \textcat{C}(\textcat{grA-Mod}) }$. 
\end{lem}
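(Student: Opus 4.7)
The plan is to mirror the construction of $\textcat{C}F$ from Lemma \ref{CFfunctor}, with the tensor functor $A^!\otimes_{A_0}-$ replaced by the internal hom $\texthom{Hom}_{\textcat{A}_0\textcat{-Mod}}(A,-)$. Given $(N^\bullet,d_N^\bullet)\in\textcat{C}({\textcat{grA}}^!\textcat{-Mod})$, I would first assemble a double complex
\[
G(N^\bullet)^{i,l}:=\texthom{Hom}_{\textcat{A}_0\textcat{-Mod}}(A_l,N^i),
\]
with each term carrying a natural graded left $A$-module structure coming from the right $A$-action on $A$ by graded multiplication (precomposition). The horizontal differential is $\texthom{Hom}(\textrm{id}_{A_l},d_N^i)$, while the vertical differential is built by combining the multiplication $\mu_{l,1}:A_l\otimes_{A_0}A_1\to A_{l+1}$ in $A$, the identification $A_1^!\simeq A_1^*$, and the $A^!$-module action $A_1^!\otimes_{A_0}N^i\to N^i$, all woven together via the tensor-hom adjunction of Corollary \ref{tensorhomadjunctioncor} and the evaluation pairing of Section \ref{dualquadraticmonoids}.

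Adopting sign conventions in direct parallel with Lemma \ref{CFfunctor}, bifunctoriality of the internal hom makes $d_h$ and $d_v$ anticommute, and one defines $(GN)^\bullet$ as a twisted total complex indexed in analogy with $(FM)^n_k$, with total differential $d_G=d_h+d_v$. Because the $A$-action on each $\texthom{Hom}_{\textcat{A}_0\textcat{-Mod}}(A_l,N^i)$ operates only on the contravariant slot $A_l$, it commutes with both $d_h$ and $d_v$; therefore $(GN)^\bullet$ genuinely lies in $\textcat{C}(\textcat{grA-Mod})$. Functoriality of the assignment $N^\bullet\mapsto GN^\bullet$ is then inherited from functoriality of the internal hom in its second argument, yielding the desired functor $\textcat{C}G:\textcat{C}({\textcat{grA}}^!\textcat{-Mod})\rightarrow{\textcat{C}(\textcat{grA-Mod})}$.

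The principal technical obstacle, as in the $F$ case, is verifying $d_v^2=0$. Via the tensor-hom adjunction, this reduces to showing that the composite $A_l\otimes_{A_0}A_1\otimes_{A_0}A_1\to A_{l+2}$ factors through $A_l\otimes_{A_0}A_2$ while the corresponding $A^!$-side composite annihilates the kernel of $A_1^*\otimes_{A_0}A_1^*\to R^*$. This is precisely the content of the orthogonality $R^\perp=\textnormal{Ker}(A_1^*\otimes_{A_0}A_1^*\to R^*)$ defining $A^!$, combined with $R=\textnormal{Ker}(A_1\otimes_{A_0}A_1\to A_2)$: the Koszul-duality relation between $A$ and $A^!$ is exactly what forces the vertical differential to square to zero. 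The remaining sign-bookkeeping and the routine verification that the $A$-action is compatible with $d_v$ proceed in direct analogy with Lemma \ref{CFfunctor}.
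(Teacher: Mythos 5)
Your construction coincides with the paper's: the same double complex $\texthom{Hom}_{\textcat{A}_0\textcat{-Mod}}(A_\bullet,N^\bullet)$, with horizontal differential induced by $d_N$, vertical differential obtained by dualising the multiplication of $A$ (using $A_{-l}^*\otimes_{A_0}N^i_j\simeq\texthom{Hom}_{\textcat{A}_0\textcat{-Mod}}(A_{-l},N^i_j)$) and applying the $A^!$-action on $N$ through $A_1^!\simeq A_1^*$, and the same twisted totalisation; the paper merely asserts that this is a double complex of graded $A$-modules, whereas you also sketch the $d_v^2=0$ verification via the orthogonality of $R$ and $R^\perp$, which is correct. The one point to pin down is that the multiplication dualised in $d_v$ should be on the side opposite to the one furnishing the left $A$-module structure by precomposition (the paper uses $\mu_{1,-(l+1)}$, i.e.\ left multiplication by $A_1$), since $d_v$ does act on the contravariant slot and only commutes with the module action by associativity.
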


\begin{proof}
Suppose that we have $(N^\bullet,d_N^\bullet)\in\textcat{C}(\textcat{grA}^!\textcat{-Mod})$. We consider $A$ as a complex $(A^\bullet,d_A^\bullet)$ in $\textcat{C}(\textcat{grA-Mod})$ and consider 
\begin{align*}
    G(N^\bullet)&=\texthom{Hom}_{\textcat{A}_0\textcat{-Mod}}(A,N^\bullet)\in\textcat{A-Mod}\\
\intertext{We construct the double complex}
    G(N^\bullet)^{\bullet\bullet}&=\texthom{Hom}_{\textcat{A}_0\textcat{-Mod}}(A_\bullet,N^\bullet)\\
\intertext{with objects}
    G(N^\bullet)^{i,l}&=\texthom{Hom}_{\textcat{A}_0\textcat{-Mod}}(A_{-l},N^i)\\
\intertext{and graded by}
G(N^\bullet)^{i,l}_j&=\texthom{Hom}_{\textcat{A}_0\textcat{-Mod}}(A_{-l},N^i_j)
\end{align*}The differentials are defined as follows. We note that, since $A$ is left dualisable 
\begin{equation}\label{thisequation}
    \texthom{Hom}_{\textcat{A}_0\textcat{-Mod}}(A_{-l},N^i_j)\simeq A_{-l}^*\otimes_{A_0} N^i_j
\end{equation}by Proposition \ref{dualtensorhomprop}. Hence, the $j^{th}$ component of the vertical differential
\begin{equation*}
    (d_v^{i,l})_j:\texthom{Hom}_{\textcat{A}_0\textcat{-Mod}}(A_{-l},N^i_j)\rightarrow{\texthom{Hom}_{\textcat{A}_0\textcat{-Mod}}(A_{-(l+1)},N^i_{j+1})}
\end{equation*}is $(-1)^{i+j}(D_v^{i,l})_j$ where $(D_v^{i,l})_j$ can be defined, using Equation \ref{thisequation} and the composition
\begin{equation*}
    A^*_{-l}\otimes_{A_0} N^i_{j}\xrightarrow{{}\mu_{1,-(l+1)}^*\otimes_{A_0} id_{N^i_j}}{A^*_{-(l+1)}\otimes_{A_0}A_1\otimes_{A_0} N^i_{j}}
    \rightarrow{A^*_{-(l+1)}\otimes_{A_0} N^i_{j+1}}
\end{equation*}The $j^{th}$ component of the horizontal differential 
\begin{equation*}
     (d_h^{i,l})_j:\texthom{Hom}_{\textcat{A}_0\textcat{-Mod}}(A_{-l},N^i_j)\rightarrow{\texthom{Hom}_{\textcat{A}_0\textcat{-Mod}}(A_{-l},N^{i+1}_{j})}
\end{equation*}is defined by the composition 
\begin{equation*}
    A_{-l}\rightarrow{N^i_j}\xrightarrow{(d_N^i)_j}{N^{i+1}_j}
\end{equation*}It is easy to check that this is indeed a double complex. In degree $k$, we can visualise the double complex as follows\begin{equation*}
\begin{tikzcd}[column sep = 6]
    & 0 & 0 & 0 \\
    \dots\arrow{r} & N^i_{-k}\arrow{u} \arrow{r} & N^{i+1}_{-k} \arrow{r} \arrow{u} & N^{i+2}_{-k} \arrow{r}\arrow{u} & \dots\\
    \dots\arrow{r} & \textnormal{\underline{Hom}}_{A_0}(A_{1},N^i_{-k-1}) \arrow{u} \arrow{r} & \textnormal{\underline{Hom}}_{A_0}(A_{1},N^{i+1}_{-k-1}) \arrow{r} \arrow{u} &  \textnormal{\underline{Hom}}_{A_0}(A_{1},N^{i+2}_{-k-1})\arrow{r}\arrow{u} & \dots\\
    \dots\arrow{r} & \textnormal{\underline{Hom}}_{A_0}(A_{2},N^i_{-k-2}) \arrow{u} \arrow{r} &  \textnormal{\underline{Hom}}_{A_0}(A_{2},N^{i+1}_{-k-2})\arrow{r} \arrow{u} & \textnormal{\underline{Hom}}_{A_0}(A_{2},N^{i+2}_{-k-2})  \arrow{r}\arrow{u} & \dots\\
    &\vdots\arrow{u} &\vdots \arrow{u} &\vdots \arrow{u}
    \end{tikzcd}
 \end{equation*}   
We see that our double complex is bounded from above since $A$ is positively graded. We now consider a kind of `twisted' total complex $GN^\bullet$ with
\begin{equation*}
     (GN)^n_k=\bigoplus_{i+j=n,k=l-j} \texthom{Hom}_{\textcat{A}_0\textcat{-Mod}}(A_{-l},N^i_j)
\end{equation*}and with associated total differential $d_G^\bullet=d_h^{\bullet\bullet}+d_v^{\bullet\bullet}$. We see that this complex is clearly a complex of $A$-modules. We therefore have a functor
\begin{equation*}\begin{aligned}
    \textcat{C}G:\textcat{C}({\textcat{grA}}^!\textcat{-Mod})&\rightarrow{ \textcat{C}(\textcat{grA-Mod}) }\\
    N^\bullet &\rightarrow{(GN)^\bullet}
\end{aligned}\end{equation*}

\end{proof}

\begin{prop}\label{prop2}
$\textcat{C}G$ induces a functor $\textcat{D}G: \textcat{D}^\uparrow(\textcat{grA}^!\textcat{-Mod})\rightarrow{ \textcat{D}^\downarrow({\textcat{grA-Mod})   }}$.
\end{prop}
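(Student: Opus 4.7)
The plan is to mirror the proof of Proposition \ref{prop1} almost verbatim, interchanging the roles of $A$ and $A^!$ and replacing tensor product over $A_0$ by internal hom over $A_0$. By Propositions \ref{inducedderived} and \ref{strictquasiexact}, it suffices to show that $\textcat{C}G$ restricts to a functor $\textcat{C}^\uparrow({\textcat{grA}}^!\textcat{-Mod})\to \textcat{C}^\downarrow(\textcat{grA-Mod})$ and preserves strict exactness. For the boundedness, a summand of $(GN)^n_k=\bigoplus_{i+j=n,\,l=k+j} \texthom{Hom}_{\textcat{A}_0\textcat{-Mod}}(A_{-l},N^i_j)$ can be nonzero only when $l\le 0$, since $A$ is positively graded; if $n\ll 0$ then $i+j=n\ll 0$ and the $\textcat{C}^\uparrow$ condition forces $N^i_j=0$, and if $n+k\gg 0$ then $i=n+k-l\ge n+k\gg 0$, again forcing $N^i_j=0$.

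For strict exactness, assume $N^\bullet$ is strictly exact, apply Schneiders' embedding $I$, and for each weight $k$ regard $I(G(N^\bullet))^{\bullet\bullet}_k$ as a double complex indexed by $(i,l)$ with entries $\texthom{Hom}_{\textcat{A}_0\textcat{-Mod}}(A_{-l},I(N^i_{l-k}))$. Its support lies in the region $l\le 0$, $i$ bounded above, $i+l$ bounded below, which is the mirror image of the first-quadrant region appearing in the proof of Proposition \ref{prop1}, so the associated column-to-row spectral sequence converges in the analogous manner. The row at level $l$ is $\texthom{Hom}_{\textcat{A}_0\textcat{-Mod}}(A_{-l},I(N^\bullet_{l-k}))$, which by Proposition \ref{dualtensorhomprop} and left dualisability of $A_{-l}$ is isomorphic to $A_{-l}^*\otimes_{A_0} I(N^\bullet_{l-k})$. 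Since $A_{-l}$ is dualisable, the tensor-hom adjunction combined with the identification $\texthom{Hom}_{\textcat{A}_0\textcat{-Mod}}(A_{-l},-)\simeq A_{-l}^*\otimes_{A_0}-$ exhibits $A_{-l}^*\otimes_{A_0}-$ as both left and right adjoint to $-\otimes_{A_0} A_{-l}$, so it preserves both kernels and cokernels, and hence strict exactness. The rows are therefore exact, the spectral sequence collapses at $E_2=0$, and $I(\textcat{C}G(N^\bullet))^\bullet$ is exact in $\mathcal{LH}(\textcat{grA-Mod})$. Corollary \ref{strictexactembedding} then yields strict exactness of $\textcat{C}G(N^\bullet)$ in $\textcat{grA-Mod}$, which is what was needed.

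The main obstacle is the one already faced in Proposition \ref{prop1}, namely running a convergent spectral sequence argument in a quasi-abelian setting; as before, this is handled by passing to the abelian left heart via $I$ and applying Corollary \ref{strictexactembedding} to return to $\mathcal{E}$. The only genuinely new ingredient is the exactness of $\texthom{Hom}_{\textcat{A}_0\textcat{-Mod}}(A_{-l},-)$, which replaces the flatness of $A^!_l$ used previously and follows from left dualisability of $A_{-l}$ together with Proposition \ref{dualtensorhomprop}, via the biadjunction $-\otimes_{A_0}A_{-l}\dashv -\otimes_{A_0}A_{-l}^*\dashv -\otimes_{A_0}A_{-l}$ induced by the duality.
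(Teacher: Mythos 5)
Your proof is correct and follows essentially the same route as the paper: the same boundedness check on $(GN)^n_k$, followed by passage to the left heart and a spectral sequence argument for the third-quadrant double complex with exact rows. The only minor divergence is the justification for exactness of the rows: the paper cites projectivity of $A_{-l}$ over $A_0$, whereas you derive it from left dualisability via $\texthom{Hom}_{\textcat{A}_0\textcat{-Mod}}(A_{-l},-)\simeq A_{-l}^*\otimes_{A_0}-$, which is if anything the more careful argument, since exactness of the \emph{internal} hom does not follow from projectivity alone.
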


\begin{proof}
Suppose that $N^\bullet\in\textcat{C}^\uparrow(\textcat{grA}^!\textcat{-Mod})$, and consider the functor 
\begin{equation*}\begin{aligned}
    \textcat{C}G:\textcat{C}({\textcat{grA}}^!\textcat{-Mod})&\rightarrow{ \textcat{C}(\textcat{grA-Mod}) }\\
    N^\bullet &\rightarrow{(GN)^\bullet}
\end{aligned}\end{equation*} defined in the previous Lemma. We now see why we specified the boundedness conditions on $N^\bullet$. If $n\ll 0$, then $i+j\ll 0$, and hence, since $N^\bullet\in \textcat{C}^\uparrow({\textcat{grA}}^!\textcat{-Mod})$, we see that $N^i_j=0$. Now, if $n+k\gg 0$, then $i+j+k\gg 0$. Since $A$ is positively graded, we know that $-(j+k)\geq 0$, and hence $i\gg 0$. Therefore, $N^i_j=0$ once again. We see that in both cases, $(GN)^n_k=0$ and therefore, $ (GN)^\bullet\in \textcat{C}^\downarrow(\textcat{grA-Mod})$. We can therefore restrict $\textcat{C}G$ to a functor
\begin{equation*}
    \textcat{C}G: \textcat{C}^\uparrow(\textcat{grA}^!\textcat{-Mod})\rightarrow{ \textcat{C}^\downarrow({\textcat{grA-Mod})   }}
\end{equation*}To show that this functor induces the desired functor it suffices to show that this functor preserves strict exactness. Indeed, suppose that $N^\bullet$ is a strictly exact complex. We note that our double complex $G(N^\bullet)^{\bullet\bullet}$ is third quadrant since $N^i_j=0$ for $i\gg 0$. This double complex has strictly exact rows since  each $A_{-l}$ is a projective $A_0$-module, and hence $\texthom{Hom}_{\textcat{A}_0\textcat{-Mod}}(A_{-l},-)$ is an exact functor. Under the canonical embedding
\begin{equation*}
    I:\textcat{grA-Mod}\rightarrow{\mathcal{LH}(\textcat{grA-Mod}})
\end{equation*}we note that $I(G(N^\bullet)^{\bullet\bullet}_j)$ is a third quadrant double complex with exact rows. 

Hence, we see that, in each degree $j$, associated to our third quadrant double complex $I(G(N^\bullet))^{\bullet\bullet}_j$, there exists a spectral sequence $\{{}^{II}E_{r,j}^{i,l}\}$ for $r\geq 0$ with first term  \begin{equation*}
    {}^{II}E_{r,j}^{i,l}=H_h^i(I(G(N^\bullet))^{\bullet, l}_j)
\end{equation*}with differentials ${}^{II}d_{1,j}^{i,l}=H_h^i(I(d_h)^{\bullet,l}_j)$. Furthermore, by comparing the total complex with respect to the grading $(i,l)$ with the total twisted complex with respect to $(i,j)$, we have the convergence
\begin{equation*}
   \begin{aligned}
    {}^{II}E_{2,j}^{i,l}=H_v^l(H_h^i( I(G(N^\bullet))_j^{\bullet\bullet}))&\Rightarrow{H^{i+l}(\textnormal{Tot}(I(G(N^\bullet))_j^{\bullet\bullet})^\bullet)}\\
    &\simeq H^{n+k}(I(GN)^{\bullet-k}_k)\\
    &\simeq H^{n}(I(GN)^{\bullet}_k)
\end{aligned}\end{equation*}where $n=i+j,k=l-j$. Since $I(G(N^\bullet))_j^{\bullet\bullet}$ has exact rows, $H_h^{i}(I(G(N^\bullet))_j^{\bullet\bullet})$ is the zero complex. Now, since $I$ preserves finite products, equivalently finite coproducts, as it is a right adjoint by Proposition \ref{leftheartembedding}, then
\begin{equation*}\bigoplus_{k=l-j}H^{n}(I(GN)^{\bullet}_k)=H^n\bigg(\bigoplus_{k=l-j}I(GN)^{\bullet}_k\bigg)=H^n(I(GN)^\bullet)=0\end{equation*}
Hence, $I(GN)^\bullet=I(\textcat{C}G(N^\bullet))$ is an exact complex in $\mathcal{LH}(\textcat{grA}^!\textcat{-Mod})$. It follows that $\textcat{C}G(N^\bullet)$ is strictly exact in $\textcat{grA}^!\textcat{-Mod}$ by Corollary \ref{strictexactembedding}. Therefore, the functor $\textcat{C}G$ induces a functor
\begin{equation*}
    \textcat{D}G: \textcat{D}^\uparrow(\textcat{grA}^!\textcat{-Mod})\rightarrow{ \textcat{D}^\downarrow({\textcat{grA}}\textcat{-Mod})   }
\end{equation*}
\end{proof}

We note that, by Corollary \ref{tensorhomcor2}, for any modules $M\in\textcat{A-Mod}$ and $N\in\textcat{A}^!\textcat{-Mod}$, there is a chain of isomorphisms
\begin{equation*}\label{niceisosyay}
    \textnormal{Hom}_{\textcat{A}^!\textcat{-Mod}}(A^!\otimes_{A_0}M,N)\simeq \textnormal{Hom}_{\textcat{A}_0\textcat{-Mod}}(M,N)\simeq\textnormal{Hom}_{\textcat{A-Mod}}(M,\texthom{Hom}_{\textcat{A}_0\textcat{-Mod}}(A,N))
\end{equation*}natural in $M$ and $N$.

\begin{lem}
    Suppose that $M^\bullet\in\textcat{C}(\textcat{grA-Mod})$ and $N^\bullet\in \textcat{C}(\textcat{grA}^!\textcat{-Mod})$. There is an isomorphism
    \begin{equation*}
        \textnormal{Hom}_{\textcat{grA}^!\textcat{-Mod}}((FM)^\bullet,N^\bullet)\simeq \textnormal{Hom}_{\textcat{grA-Mod}}(M^\bullet,(GN)^\bullet)
    \end{equation*}
\end{lem}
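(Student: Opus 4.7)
The plan is to prove the isomorphism in two stages: first establish a bijection bidegree by bidegree using the adjunction chain displayed just before the lemma, then verify that the bijection intertwines the total differentials on the two sides.

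First I would unpack both sides. A morphism $f \in \textnormal{Hom}_{\textcat{grA}^!\textcat{-Mod}}((FM)^\bullet, N^\bullet)$ is a family of graded $A^!$-module homomorphisms $f^n : (FM)^n \to N^n$ commuting with the total differentials of $\textcat{C}F(M^\bullet)$ and $N^\bullet$. Since $(FM)^n = \bigoplus_{i+j=n} A^!\otimes_{A_0} M^i_j$, where $A^!_l \otimes_{A_0} M^i_j$ sits in internal degree $l-j$, such an $f^n$ decomposes into graded $A^!$-module maps $A^!\otimes_{A_0} M^i_j \to N^n$ preserving internal degree. Symmetrically, an element of $\textnormal{Hom}_{\textcat{grA-Mod}}(M^\bullet, (GN)^\bullet)$ decomposes into graded $A$-module maps $M^i \to \texthom{Hom}_{\textcat{A}_0\textcat{-Mod}}(A, N^{i+j})$. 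Applying the adjunctions of Corollary \ref{tensorhomcor2} summand by summand produces, for each triple $(i,j,l)$, a natural bijection between $A^!$-module maps $A^!\otimes_{A_0} M^i_j \to N^n$, $A_0$-module maps $M^i_j \to N^n$, and $A$-module maps $M^i_j \to \texthom{Hom}_{\textcat{A}_0\textcat{-Mod}}(A, N^n)$, all in matching internal degree. Assembling over all indices gives the desired bijection at the level of graded modules.

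The remaining and main task is to verify that $f$ commutes with the total differential of $(FM)^\bullet$ and $d_N$ if and only if the corresponding family $g$ commutes with $d_M$ and the total differential of $(GN)^\bullet$. Both total differentials split into a horizontal part, built from $d_M$ or $d_N$, and a vertical Koszul-type part, built from ${}^*\mu^!_{l,1}$ in the case of $\textcat{C}F$ and from $\mu^*_{1,-(l+1)}$ in the case of $\textcat{C}G$. Compatibility with the horizontal parts is immediate from naturality of the adjunctions in $M$ and $N$, since these differentials act only through the outer variables and the signs $(-1)^{i+j}$ have been arranged symmetrically in Lemmas \ref{CFfunctor} and \ref{CGfunctor}.

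The hard part will be compatibility with the vertical Koszul differentials. The point is that under the adjunction the dualised multiplication ${}^*\mu^!_{l,1} : {}^*(A^!_{l+1}) \to A_1 \otimes_{A_0} {}^*(A^!_l)$ appearing in $\textcat{C}F$ must be matched with the dualised multiplication $\mu^*_{1,-(l+1)} : A^*_{-l} \to A^*_{-(l+1)} \otimes_{A_0} A_1$ appearing in $\textcat{C}G$. This matching is precisely the duality between $A$ and $A^!$: using that $A_1$ is dualisable and that $A^!_1 \simeq A^*_1$ (and more generally $A^!_l \simeq {}^*(A^!_l)^*$ since $A^!$ is right dualisable), the evaluation and coevaluation for $A_1$ convert one map into the other. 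Concretely, I would fix a bidegree and trace an $A_0$-linear $\varphi : M^i_j \to N^n_{l-j}$ around the two composites $\varphi \mapsto \varphi \circ ({}^*\mu^!_{l,1})_* \mapsto \text{adjoint}$ and $\varphi \mapsto \text{adjoint} \mapsto (\mu^*_{1,-(l+1)})^* \circ \varphi$, and check they agree using the triangle identities of the left/right dualisability of $A_1$ together with Proposition \ref{dualtensorhomprop}. Once this compatibility is verified in a single tensor factor, it extends to all $l$ because both vertical differentials are built iteratively from the same elementary piece, completing the proof.
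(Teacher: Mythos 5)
Your first stage is essentially the paper's proof, and it is the \emph{whole} proof: the lemma asserts an isomorphism of graded module homomorphism groups only, where $(FM)^\bullet$ and $N^\bullet$ are regarded as the graded modules $\bigoplus_n (FM)^n$ and $\bigoplus_n N^n$ (as set up at the start of the section), with no condition on the differentials. The paper takes the underlying isomorphism from the displayed chain $\textnormal{Hom}_{\textcat{A}^!\textcat{-Mod}}(A^!\otimes_{A_0}M,N)\simeq\textnormal{Hom}_{\textcat{A}_0\textcat{-Mod}}(M,N)\simeq\textnormal{Hom}_{\textcat{A-Mod}}(M,\texthom{Hom}_{\textcat{A}_0\textcat{-Mod}}(A,N))$ and then checks, exactly as you do, that $f^n$ respects the internal grading if and only if the corresponding $g^m$ does, by decomposing both sides over the indices $(i,j,l)$. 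So you have over-read the statement: the compatibility with the total differentials, which you correctly identify as the substantive issue and sketch via matching ${}^*\mu^!_{l,1}$ against $\mu^*_{1,-(l+1)}$ through the triangle identities, is not part of this lemma but is precisely the content of the proposition that follows it (upgrading the isomorphism to an adjunction $\textcat{C}F\dashv\textcat{C}G$ on categories of complexes). There the paper avoids your explicit duality computation: it reduces commutativity of $f^n_k$ (resp.\ $g^{n+k}_{-k}$) with the differentials to commutativity of one and the same outer square $M^{n+k}_{-k}\to M^{n+k+1}_{-k}\to N^{n+1}_k$ versus $M^{n+k}_{-k}\to N^n_k\to N^{n+1}_k$, by precomposing with the inclusion of $M^{n+k}_{-k}$ into $(FM)^n_k$ and postcomposing with the evaluation out of $(GN)^{n+k+1}_{-k}$. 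Your route through the evaluation/coevaluation identities for $A_1$ would also work but is left as a sketch and would require a genuine computation; for the statement actually asked, nothing beyond your first paragraph is needed.
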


\begin{proof}
{We note that we have an isomorphism
\begin{equation*}
    \textnormal{Hom}_{\textcat{A}^!\textcat{-Mod}}((FM)^\bullet,N^\bullet)\simeq \textnormal{Hom}_{\textcat{A}^!\textcat{-Mod}}(M^\bullet,(GN)^\bullet)
\end{equation*}We will show that this isomorphism respects the grading. Since
\begin{equation*}\begin{aligned}
\textnormal{Hom}_{\textcat{A}^!\textcat{-Mod}}((FM)^n_k,N^n_k)&=\textnormal{Hom}_{\textcat{A}^!\textcat{-Mod}}(\bigoplus_{i+j=n,k=l-j}A^!_l\otimes_{A_0}M^i_j,N^n_k)\\
    &\simeq\bigoplus_{i+j=n}\textnormal{Hom}_{\textcat{A}_0\textcat{-Mod}}(M^i_j,\bigoplus_{k=l-j}\texthom{Hom}_{\textcat{A}^!\textcat{-Mod}}(A_l^!,N^n_k))\\
\end{aligned}
\end{equation*}We see that a morphism $f^n:(FM)^n\rightarrow{N^n}$ respects the grading if and only if each morphism $M^i\rightarrow{\texthom{Hom}_{\textcat{A}^!\textcat{-Mod}}(A^!,N^n)}\simeq N^n$ sends pieces graded in degree $j$ to pieces graded in degree $-j$ for all $i+j=n$.
Similarly, since 
\begin{equation*}\begin{aligned}
    \textnormal{Hom}_{\textcat{A}\textcat{-Mod}}(M^i_q,(GN)^i_q)&=\textnormal{Hom}_{\textcat{A}\textcat{-Mod}}(M^i_q,\bigoplus_{n-j=i,q=l+j}\texthom{Hom}_{\textcat{A}_0\textcat{-Mod}}(A_{-l},N^n_{-j}))\\
    &\simeq\bigoplus_{n-j=i}\textnormal{Hom}_{\textcat{A}_0\textcat{-Mod}}(\bigoplus_{q=l+j}A_{-l}\otimes_{A}M^i_q, N^n_{-j})
\end{aligned}
\end{equation*}We see that a morphism $g^i:M^i\rightarrow{(GN)^i}$ respects the grading if and only if each morphism $M^i\simeq A\otimes_{A}M^i\rightarrow{N^n}$ sends pieces graded in degree $j$ to pieces graded in degree $-j$ for all $n=i+j$. Therefore, we see that $f^n$ preserves the grading for all $n$ if and only if $g^{m}$ preserves the grading for all $m$.} 
\end{proof}

\begin{prop}
   The above isomorphisms induce an adjunction 
   \begin{equation*}
       \textcat{C}F: \textcat{C}(\textcat{grA-Mod})\leftrightarrows{ \textcat{C}({\textcat{grA}}^!\textcat{-Mod})   }:\textcat{C}G
   \end{equation*}
\end{prop}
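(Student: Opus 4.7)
The plan is to promote the graded Hom bijection established in the previous lemma to a bijection between cochain-map groups, and then to observe that this promoted bijection is natural in both arguments; the resulting natural isomorphism is exactly the claimed adjunction.

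First I would fix, for each pair $(M^\bullet,N^\bullet)$, the bijection
\begin{equation*}
\Phi_{M^\bullet,N^\bullet}:\textnormal{Hom}_{\textcat{grA}^!\textcat{-Mod}}((FM)^\bullet,N^\bullet)\xrightarrow{\sim} \textnormal{Hom}_{\textcat{grA-Mod}}(M^\bullet,(GN)^\bullet)
\end{equation*}
supplied by the preceding lemma. It is built bidegree-by-bidegree out of the tensor--hom adjunction of Corollary \ref{tensorhomcor2} together with the internal-hom identifications (\ref{internalhomisoA!}) and (\ref{thisequation}), so naturality of $\Phi$ in both $M^\bullet$ and $N^\bullet$ is inherited, componentwise, from the naturality of those isomorphisms.

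The only nontrivial point is that $\Phi$ restricts to a bijection between \emph{cochain} morphisms, that is, for a bidegree-preserving $f:(FM)^\bullet\to N^\bullet$ one has $d_N\circ f=f\circ d_{FM}$ if and only if $d_{GN}\circ\Phi(f)=\Phi(f)\circ d_{M}$. The total differentials $d_{FM}$ and $d_{GN}$ each split as the sum of a horizontal piece (coming from $d_M$ resp.\ $d_N$) and a sign-twisted vertical piece (coming from ${}^*\mu_{l,1}^!$ and the action of $A$ on $M$, resp.\ from $\mu_{1,-l-1}^*$ and the action of $A^!$ on $N$). Compatibility of $\Phi$ with the horizontal pieces is immediate from naturality of $\Phi$ applied to $d_M$ and to $d_N$. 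Compatibility with the vertical pieces reduces to the statement that, under the adjunction, the dualised Koszul comultiplication ${}^*\mu^!_{l,1}:{}^*(A^!_{l+1})\to A_1\otimes_{A_0}{}^*(A^!_l)$ is transposed to $\mu^*_{1,-l-1}:A_{-l}^*\to A_{-(l+1)}^*\otimes_{A_0} A_1$; this is essentially tautological once one uses the identifications $A_1^!\simeq A_1^*$, ${}^*(A_l^!)\simeq P_l$ of Proposition \ref{pullbackiso}, and left-dualisability of $A$ to write both sides as transposes of the same underlying map. The common sign twist $(-1)^{i+j}$ built into both vertical differentials is arranged precisely so that horizontal and vertical pieces anticommute identically on both sides, and thus the two halves of the comparison add without spurious signs.

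The main obstacle is purely the bookkeeping: one must keep track simultaneously of the $(i,l)$ double-complex bidegree and the twisted total bidegree $(n,k)$ with $n=i+j$ and $k=l-j$, and check that the sign conventions chosen in the definitions of $d_{FM}$ and $d_{GN}$ agree under $\Phi$. Once the vertical-differential compatibility is verified by this diagram chase and combined with the horizontal piece, $\Phi$ restricts to a bijection between cochain morphisms, and its componentwise naturality then promotes this bijection to the natural isomorphism witnessing the adjunction $\textcat{C}F\dashv\textcat{C}G$.
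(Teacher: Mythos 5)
Your strategy is sound and would work, but it is organised differently from the paper's proof. You decompose the total differentials $d_F$ and $d_G$ into horizontal and vertical pieces and check that the degreewise isomorphism $\Phi$ intertwines each piece separately: the horizontal half by naturality in $M^\bullet$ and $N^\bullet$, the vertical half by matching ${}^*\mu^!_{l,1}$ against $\mu^*_{1,-(l+1)}$. The paper never splits the differential at all; instead it observes that a graded $A^!$-module map $f^\bullet$ out of $(FM)^\bullet$ is determined by its restriction along the inclusion $M^{n+k}_{-k}\hookrightarrow (FM)^n_k$, and dually that a graded $A$-module map $g^\bullet$ into $(GN)^\bullet$ is determined by its composite with the evaluation onto $N^{n+1}_k$, and then shows that \emph{each} of the two cochain conditions is equivalent to the commutativity of one and the same ``outer square'' relating $(d_M^{n+k})_{-k}$ and $(d_N^n)_k$. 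That generation/cogeneration trick lets the paper avoid your bidegree bookkeeping and sign analysis entirely, at the cost of being less explicit about where the Koszul-type vertical differentials go; your route makes the mechanism visible but pays for it in the diagram chase. One caution on your version: calling the vertical compatibility ``essentially tautological'' undersells it. The vertical differential of $FM$ is the composite of ${}^*\mu^!_{l,1}$ with the action $A_1\otimes M^i_j\to M^i_{j+1}$, and that of $GN$ is the composite of $\mu^*_{1,-(l+1)}$ with the $A^!_1$-action on $N$; so besides transposing the two (co)multiplication maps you must also use the $A^!$-linearity of $f^\bullet$ and the $A$-linearity of $g^\bullet$ to carry the module actions across the adjunction. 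That step is exactly what the paper's restriction-to-generators argument packages away, and it needs to be spelled out in your approach before the two vertical pieces can be declared to correspond.
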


\begin{proof}By the previous lemma, it remains to show that the isomorphism
    \begin{equation*}
        \textnormal{Hom}_{\textcat{grA}^!\textcat{-Mod}}((FM)^\bullet,N^\bullet)\simeq \textnormal{Hom}_{\textcat{grA-Mod}}(M^\bullet,(GN)^\bullet)
    \end{equation*}
respects cochain maps for all $M^\bullet\in\textcat{C}(\textcat{grA-Mod})$ and $N^\bullet\in \textcat{C}(\textcat{grA}^!\textcat{-Mod})$. Indeed, suppose that there exists an element $f^\bullet\in\textnormal{Hom}_{\textcat{C}(\textcat{grA}^!\textcat{-Mod})}((FM)^\bullet,N^\bullet)$. Let its image be $g^\bullet\in\textnormal{Hom}_{\textcat{C}(\textcat{grA-Mod})}(M^\bullet,(GN)^\bullet) $. We need to show that $f^\bullet$ commutes with the differentials if and only if $g^\bullet$ does. We note that, by the previous Lemma, $f^n_k$ induces morphisms $M^{n+k}_{-k}\rightarrow{N^n_k}$. We consider the following diagram 
\begin{equation*}
      \begin{tikzcd}
      M^{n+k}_{-k}\arrow[hook]{d} \arrow{r}{(d^{n+k}_M)_{-k}} & M^{n+k+1}_{-k}\arrow[hook]{d}\\
        \bigoplus_{i+j=n,k=l-j}A^!_{l}\otimes_{A_0} M^i_j\arrow{r}{(d_F^{n})_k} \arrow{d}{f^{n}_{k}} & \bigoplus_{i+j=n+1,k=l-j}A^!_{l}\otimes_{A_0} M^{i}_j\arrow{d}{f^{n+1}_{k}}\\
        N^{n}_{k} \arrow{r}{(d_N^{n})_{k}} & N^{n+1}_{k}
    \end{tikzcd}
\end{equation*}We easily see that the top square of the diagram commutes. Commutativity of the bottom square, when composed with the inclusion $M^{n+k}_{-k}\rightarrow{\bigoplus_{i+j=n,k=l-j}A^!_{l}\otimes_{A_0} M^i_j}$ is equivalent to the commutativity on $M^{n+k}_{-k}$ of $f^n_k$ with the differentials. Therefore, $f^n_k$ commutes with the differentials for all $n,k$ if and only if the outer square commutes. Similarly, we obtain the following diagram 
\begin{equation*}
    \begin{tikzcd}
        M^{n+k}_{-k}\arrow{d}{g^{n+k}_{-k}}\arrow{r}{(d^{n+k}_M)_{-k}} & M^{n+k+1}_{-k}\arrow{d}{g^{n+k+1}_{-k}}\\
        \bigoplus_{i+j=n+k,-k=l-j}\texthom{Hom}_{\textcat{A}_0\textcat{-Mod}}(A_{-l},N^i_j)\arrow{r}{(d_G^{n+k})_{-k}} \arrow{d} & \bigoplus_{i+j=n+k+1,-k=l-j}\texthom{Hom}_{\textcat{A}_0\textcat{-Mod}}(A_{-l},N^i_j)\arrow{d}\\
        N^{n}_{k} \arrow{r}{(d_N^{n})_{k}} & N^{n+1}_{k}
    \end{tikzcd}
\end{equation*}
We note that the bottom square of this diagram commutes. Commutativity of the top square when composed with the evaluation morphism $\bigoplus_{i+j=n+k+1,-k=l-j}\texthom{Hom}_{\textcat{A}_0\textcat{-Mod}}(A_{-l},N^i_j)\rightarrow{N^{n+1}_k}$ is equivalent to the commutativity of $g^{n+k}_{-k}$ with the differentials. Therefore, $g^{n+k}_{-k}$ commutes with the differentials for all $n,k$ if and only if the outer square commutes. 

Hence, we see that commutativity of $f^n_k$ and $g^{n+k}_{-k}$ with the differentials is equivalent to commutativity of the following diagram 
\begin{equation*}
    \begin{tikzcd}
        M^{n+k}_{-k}\arrow{r}{(d_M^{n+k})_{-k}}\arrow{d} & M^{n+k+1}_{-k} \arrow{d}\\
        N^n_k\arrow{r}{(d^n_N)_k} & N^{n+1}_k
    \end{tikzcd}
\end{equation*}Therefore, $f^n_k$ commutes with the differentials for all $n,k$ if and only if $g^m_l$ does for all $m,l$.

\end{proof}

Consider the counit 
\begin{equation*}
    \varepsilon:\textcat{C}F\circ\textcat{C}G\rightarrow{id_{\textcat{C}(\textcat{grA}^!\textcat{-Mod})}}
\end{equation*}of the adjunction. Suppose that $N^\bullet\in\textcat{C}(\textcat{grA}^!\textcat{-Mod})$. Then, we may consider the double complex $(F\circ\textcat{C}G)(N^\bullet)^{\bullet\bullet}$ with 
\begin{equation*}
    (F\circ \textcat{C}G)(N^\bullet)^{i,l}= A^!_{l}\otimes_{A_0} \textcat{C}G(N^\bullet)^i=A^!_l\otimes_{A_0} \bigoplus_{p+q=i}\texthom{Hom}_{\textcat{A}_0\textcat{-Mod}}(A,N^p_q)
\end{equation*} and graded by 
\begin{equation*}
    (F\circ \textcat{C}G)(N^\bullet)^{i,l}_j
    =A^!_{l}\otimes_{A_0} \bigoplus_{p+q=i,j=r-q}\texthom{Hom}_{\textcat{A}_0\textcat{-Mod}}(A_{-r},N^p_q)
\end{equation*}
Since $A_{l}^!$ is right dualisable, we see that, by Corollary \ref{rightdualisableiso},
\begin{equation*}\begin{aligned}
    A_{l}^!\otimes_{A_0} \texthom{Hom}_{\textcat{A}_0\textcat{-Mod}}(A_{-r},N^p_q)&\simeq\texthom{Hom}_{\textcat{A}_0\textcat{-Mod}}({}^*(A_{l}^!),\texthom{Hom}_{\textcat{A}_0\textcat{-Mod}}(A_{-r},N^p_q))\\
    &\simeq \texthom{Hom}_{\textcat{A}_0\textcat{-Mod}}(A_{-r}\otimes_{A_0} {}^*(A_{l}^!),N^p_q)
\end{aligned}\end{equation*}and hence
\begin{equation*}
    (F\circ\textcat{C}G)(N^\bullet)^{i,l}_j\simeq \bigoplus_{p+q=i,j=r-q}\texthom{Hom}_{\textcat{A}_0\textcat{-Mod}}(A_{-r}\otimes_{A_0} {}^*(A_{l}^!),N^p_q)
\end{equation*}The horizontal differential is given by 
\begin{equation*}
    (d_h^{i,l})_j=id_{A^!_{l}}\otimes_{A_0} (d_G^i)_j
\end{equation*}where $(d_G^i)_j$ is the total differential defined in Lemma \ref{CGfunctor}. The vertical differential 
\begin{equation*}
    \begin{aligned}
    (d_v^{i,l})_j:&\bigoplus_{p+q=i,j=r-q}\texthom{Hom}_{\textcat{A}_0\textcat{-Mod}}(A_{-r}\otimes_{A_0} {}^*(A_{l}^!),N^p_q)\\&\rightarrow{\bigoplus_{p+q=i,j=r-q}\texthom{Hom}_{\textcat{A}_0\textcat{-Mod}}(A_{-(r+1)}\otimes_{A_0} {}^*(A_{l+1}^!),N^p_q)}
\end{aligned}
\end{equation*}is given by $(-1)^{i}(D_v^{i,l})_j$ where $(D_v^{i,l})_j$ can be defined by taking the coproduct of the maps
\begin{equation*}
    \texthom{Hom}_{\textcat{A}_0\textcat{-Mod}}(A_{-r}\otimes_{A_0} {}^*(A_{l}^!),N^p_q)\rightarrow{\texthom{Hom}_{\textcat{A}_0\textcat{-Mod}}(A_{-(r+1)}\otimes_{A_0} {}^*(A_{l+1}^!),N^p_q)}
\end{equation*}given by the composition 
\begin{equation*}
 A_{-(r+1)}\otimes_{A_0} {}^*(A_{l+1}^!)\xrightarrow{d^{l+1}_{l-r}}{A_{-r}\otimes_{A_0}{}^*(A_{l}^!)}\rightarrow{N^p_q}
\end{equation*}where $d^\bullet$ is the differential from the Koszul complex of $A$. Taking the total complex with respect to $i$ and $j$ we obtain 
\begin{equation*}\begin{aligned}
    (\textcat{C}F\circ\textcat{C}G)(N^\bullet)^n_k&=\bigoplus_{i+j=n,k=l-j}A^!_{l}\otimes_{A_0}\bigoplus_{p+q=i,j=r-q}\textnormal{\underline{Hom}}_{A_0}(A_{-r},N^p_q)\\
     &=\bigoplus_{i+j=n,k=l-j}\bigoplus_{p+q=i,j=r-q}\texthom{Hom}_{\textcat{A}_0\textcat{-Mod}}(A_{-r}\otimes_{A_0}{}^*(A^!_{l}),N^p_q)
\end{aligned}\end{equation*}with differential given by the total differential. 
\begin{lem}
The counit $\varepsilon$ is a split strict epimorphism. 
\end{lem}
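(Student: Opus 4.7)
The plan is to construct an explicit right inverse $\sigma_{N^\bullet} : N^\bullet \to (\textcat{C}F \circ \textcat{C}G)(N^\bullet)$ to the counit as a chain map in $\textcat{C}(\mathcal{E})$; once this is done, applying Corollary \ref{rightinverseepimorphism} componentwise will immediately yield that each $\varepsilon^n_{N^\bullet}$ is a strict epimorphism in $\mathcal{E}$, hence in $\textcat{grA}^!\textcat{-Mod}$, so $\varepsilon$ is a strict epimorphism of cochain complexes admitting a chain-level right inverse.

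Using the explicit formula
\begin{equation*}
(\textcat{C}F \circ \textcat{C}G)(N^\bullet)^n_k = \bigoplus A^!_l \otimes_{A_0} \texthom{Hom}_{A_0}(A_{-r}, N^p_q),
\end{equation*}
I single out the diagonal summand indexed by $(l, r) = (0, 0)$, for which the indexing constraints force $(p, q) = (n, k)$. The unit isomorphisms $A^!_0 \cong A_0$ and $\texthom{Hom}_{A_0}(A_0, N^n_k) \cong N^n_k$ identify this summand canonically with $N^n_k$. I define $\sigma_{N^\bullet}$ componentwise as this canonical inclusion. From the description of $\varepsilon$ read off from the adjunction, namely $a^! \otimes f \mapsto a^! \cdot f(1_A)$, one sees immediately that $\varepsilon_{N^\bullet} \circ \sigma_{N^\bullet} = \mathrm{id}_{N^\bullet}$.

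The main step is to verify that $\sigma_{N^\bullet}$ is a chain map. The total differential on $(\textcat{C}F \circ \textcat{C}G)(N^\bullet)$ decomposes into three pieces: the differential $d_N$ of $N^\bullet$, the Koszul $G$-vertical differential $d_v^G$ (which increases $r$ by one), and the Koszul $F$-vertical differential $d_v^F$ (built from the $A$-module structure on $\textcat{C}G(N^\bullet)$ together with ${}^*\mu^!$). On the diagonal summand $d_v^G$ lands in $\texthom{Hom}_{A_0}(A_{-1}, -) = 0$, which vanishes since $A$ is positively graded. For $d_v^F$, the key ingredient is the $A_1$-action on the subobject $\texthom{Hom}_{A_0}(A_0, N^p_q) \hookrightarrow \texthom{Hom}_{A_0}(A, N^p)$; for $a \in A_1$ and $\iota(f)$ supported on $A_0$, the formula $(a \cdot \iota(f))(x) = \iota(f)(xa)$ vanishes for every $x \in A$, because $xa$ has strictly positive grading while $\iota(f)$ is supported in grading zero. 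Hence both Koszul differentials annihilate the image of $\sigma_{N^\bullet}$, and only the $d_N$-component remains, matching $\sigma_{N^\bullet} \circ d_N$.

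The main obstacle is the careful bookkeeping of the three indices in the triple complex together with the verification of the vanishing of both Koszul differentials on the boundary summand; both reductions ultimately follow from the positive grading of $A$. With $\sigma_{N^\bullet}$ established as a right inverse in $\textcat{C}(\mathcal{E})$, Corollary \ref{rightinverseepimorphism} applied in each cochain degree concludes that $\varepsilon$ is a split strict epimorphism.
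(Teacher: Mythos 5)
Your proposal is correct and follows essentially the same route as the paper: you construct the same section $\sigma_{N^\bullet}$, namely the inclusion of the $(l,r)=(0,0)$ summand $A_0^!\otimes_{A_0}\texthom{Hom}_{\textcat{A}_0\textcat{-Mod}}(A_0,N^n_k)\simeq N^n_k$, and check $\varepsilon\circ\sigma=\mathrm{id}$. You are in fact somewhat more careful than the paper, which leaves implicit both the verification that $\sigma$ is a cochain map (your observation that the two Koszul pieces of the total differential land in terms involving $A_{-1}=0$ because $A$ is positively graded) and the appeal to Corollary \ref{rightinverseepimorphism} to extract strictness from the existence of a right inverse.
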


\begin{proof}
It is easy to see that the counit is the strict natural transformation with components $\varepsilon_{N^\bullet}$ with
\begin{equation*}
    (\varepsilon_{N^\bullet}^n)_k: (\textcat{C}F\circ\textcat{C}G)(N^\bullet)^n_k=\bigoplus_{i+j=n,k=l-j}A^!_{l}\otimes_{A_0} \bigoplus_{p+q=i,j=r-q}\texthom{Hom}_{\textcat{A}_0\textcat{-Mod}}(A_{-r},N^p_q)\rightarrow{N^n_k}
\end{equation*}given by the composition of the following strict maps
\begin{equation*}
     \begin{aligned}
     &\bigoplus_{i+j=n,k=l-j}A^!_{l}\otimes_{A_0} \bigoplus_{p+q=i,j=r-q}\texthom{Hom}_{\textcat{A}_0\textcat{-Mod}}(A_{-r},N^p_q)\otimes_{A_0} A_0\\
    &\xrightarrow{id_{A^!}\otimes_{A_0} ev}{\bigoplus_{i+j=n,k=l-j}A_{l}^!\otimes_{A_0} N_{-j}^{i+j}}\\
    &\rightarrow{N^n_{l-j}}=N^n_k
    \end{aligned}
\end{equation*}where the last map is the one from Proposition \ref{strictmodulemap}. We consider the map 
\begin{equation*}
    \sigma_{N^\bullet}:N^\bullet\rightarrow{(\textcat{C}F\circ\textcat{C}G)(N^\bullet)}
\end{equation*}given in degree $n$ by the inclusion map 
\begin{equation*}\begin{aligned}
    N^n_k&\simeq A_0\otimes_{A_0} \texthom{Hom}_{\textcat{A}_0\textcat{-Mod}}(A_0,N^n_k)\\
    &\rightarrow{\bigoplus_{i+j=n,k=l-j}A^!_{l}\otimes_{A_0} \bigoplus_{p+q=i,j=r-q}\texthom{Hom}_{\textcat{A}_0\textcat{-Mod}}(A_{-r},N^p_q)}
\end{aligned}\end{equation*}We note that the map $((\varepsilon\circ\sigma)^n_{N^\bullet})_k$ is equivalent to the map 
\begin{equation*}
    N^n_k\simeq A_0\otimes_{A_0}\texthom{Hom}_{\textcat{A}_0\textcat{-Mod}}(A_0,N^n_k)\xrightarrow[ev]{\simeq }N^n_k
\end{equation*}which is the identity map. Hence, $\varepsilon\circ\sigma=id$ and so $\varepsilon$ is a split epimorphism.
\end{proof}

\begin{lem}\label{compquasi}
Let $f^\bullet:Y^\bullet\rightarrow{X^\bullet}$ be a strict cochain map in a quasi-abelian category $\mathcal{E}$. Suppose that there exists a strict map $g^\bullet:X^\bullet\rightarrow{Y^\bullet}$ such that $f^\bullet\circ g^\bullet$ is a strict quasi-isomorphism. Then, $f^\bullet$ is a strict quasi-isomorphism if and only if $g^\bullet$ is. 
\end{lem}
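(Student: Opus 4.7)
The plan is to reduce the statement to the standard two-out-of-three property of quasi-isomorphisms in an abelian category, by embedding the complexes into the left heart via the exact fully faithful functor $I:\mathcal{E}\to \mathcal{LH(E)}$ from Proposition \ref{leftheartembedding}. This is the natural strategy because cohomology is not well-defined in $\mathcal{E}$, but it is in $\mathcal{LH(E)}$, and strict quasi-isomorphisms in $\mathcal{E}$ correspond to genuine quasi-isomorphisms in $\mathcal{LH(E)}$ by Corollary \ref{Iquasiiso}.

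First, I would apply Corollary \ref{Iquasiiso} to each of $f^\bullet$, $g^\bullet$, and $f^\bullet\circ g^\bullet$. Using the functoriality $I(f^\bullet\circ g^\bullet) = I(f^\bullet)\circ I(g^\bullet)$, the entire claim reduces to the following assertion in the abelian category $\mathcal{LH(E)}$: if $\tilde f\circ \tilde g$ is a quasi-isomorphism, then $\tilde f$ is a quasi-isomorphism if and only if $\tilde g$ is. Here $\tilde f = I(f^\bullet)$ and $\tilde g = I(g^\bullet)$.

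For the abelian statement, apply the cohomology functor $H^n$ in each degree to obtain the equality
\begin{equation*}
H^n(\tilde f)\circ H^n(\tilde g) = H^n(\tilde f\circ \tilde g),
\end{equation*}
whose right-hand side is an isomorphism in $\mathcal{LH(E)}$ by hypothesis. If $H^n(\tilde g)$ is an isomorphism for every $n$, then $H^n(\tilde f) = H^n(\tilde f\circ \tilde g)\circ H^n(\tilde g)^{-1}$ is a composite of isomorphisms, hence an isomorphism, so $\tilde f$ is a quasi-isomorphism. Conversely, if $H^n(\tilde f)$ is an isomorphism for every $n$, then $H^n(\tilde g) = H^n(\tilde f)^{-1}\circ H^n(\tilde f\circ \tilde g)$ is an isomorphism, so $\tilde g$ is a quasi-isomorphism. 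Translating back through $I$ using Corollary \ref{Iquasiiso} concludes the argument.

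There is no serious obstacle: the heavy lifting has already been done in setting up Schneiders' embedding and the correspondence between strict quasi-isomorphisms and quasi-isomorphisms in the left heart. Note that the strictness hypothesis on $f^\bullet$ and $g^\bullet$ plays no essential role in this argument, since the cone criterion defining a strict quasi-isomorphism makes sense for arbitrary cochain morphisms; it is presumably retained in the statement only because it is the setting in which the lemma will later be applied.
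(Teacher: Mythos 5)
Your argument is correct, but it takes a genuinely different route from the paper. The paper stays inside $\mathcal{E}$: it works in the triangulated homotopy category $\textnormal{\textbf{K}}(\mathcal{E})$, builds the three exact triangles on $f^\bullet$, $g^\bullet$, and $f^\bullet\circ g^\bullet$, and invokes the octahedral axiom to produce the triangle $cone(g^\bullet)^\bullet\rightarrow cone(f^\bullet\circ g^\bullet)^\bullet\rightarrow cone(f^\bullet)^\bullet\rightarrow cone(g^\bullet)^\bullet[1]$, concluding via the fact that the cone of a morphism between strictly exact complexes is strictly exact. You instead push everything into the left heart via Corollary \ref{Iquasiiso} and the functoriality $I(f^\bullet\circ g^\bullet)=I(f^\bullet)\circ I(g^\bullet)$, reducing to the elementary observation that if $H^n(\tilde f)\circ H^n(\tilde g)$ is an isomorphism then $H^n(\tilde f)$ is an isomorphism if and only if $H^n(\tilde g)$ is. Your reduction is legitimate and arguably more elementary: it avoids the octahedral axiom and the auxiliary fact about cones of maps between strictly exact complexes, at the cost of leaning on Schneiders' embedding, which the paper has already set up and uses heavily in Section \ref{koszuldualityresult} anyway. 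The paper's argument has the virtue of being intrinsic to $\mathcal{E}$ and of exhibiting the cone of $f^\bullet$ explicitly as a cone of a map of cones, which is sometimes useful information in its own right. Your closing remark is also accurate: neither argument actually uses the strictness of $f^\bullet$ or $g^\bullet$, since the cone criterion (and the embedding $I$) applies to arbitrary cochain morphisms.
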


\begin{proof}
Consider the triangulated homotopy category $\textcat{K}(\mathcal{E})$ and consider the following exact triangles in $\textcat{K}(\mathcal{E})$
\begin{equation*}
    \begin{aligned}
    X^\bullet &\xrightarrow{g^\bullet}{Y^\bullet}\rightarrow{cone(g^\bullet)^\bullet}\rightarrow{X^\bullet[1]}\\
    Y^\bullet &\xrightarrow{f^\bullet}{X^\bullet}\rightarrow{cone(f^\bullet)^\bullet}\rightarrow{Y^\bullet[1]}\\
    X^\bullet &\xrightarrow{f^\bullet\circ g^\bullet}{X^\bullet}\rightarrow{cone(f^\bullet\circ g^\bullet)^\bullet} \rightarrow{X^\bullet[1]}
\end{aligned}
\end{equation*} We note that, by the octahedral axiom for triangulated categories, there exists an exact triangle
\begin{equation}\label{octahedraltriangle}
     cone(g^\bullet)^\bullet\rightarrow{cone(f^\bullet\circ g^\bullet)^\bullet}\rightarrow{cone(f^\bullet)^\bullet}\rightarrow{cone(g^\bullet)^\bullet[1]}
\end{equation}in $\textcat{K}(\mathcal{E})$. Hence, 
\begin{equation*}
    cone(f^\bullet)^\bullet\simeq cone(cone(g^\bullet)\rightarrow{cone(f^\bullet\circ g^\bullet)^\bullet})
\end{equation*}Suppose that $g^\bullet$ is a strict quasi-isomorphism. Since $g^\bullet$ and $f^\bullet\circ g^\bullet$ are both strict quasi-isomorphisms, $cone(g^\bullet)^\bullet$ and $cone(f^\bullet\circ g^\bullet)^\bullet$ are both strictly exact complexes. Hence, since the cone of a morphism between strictly exact complexes is strictly exact (see \cite[Lemma 1.1]{neeman89}), $cone(f^\bullet)^\bullet$ is strictly exact. Therefore, $f^\bullet$ is a strict quasi-isomorphism. 

Now, we shift the exact triangle in Equation \ref{octahedraltriangle} to obtain the exact triangle
\begin{equation*}
  {cone(f^\bullet\circ g^\bullet)^\bullet}\rightarrow{cone(f^\bullet)^\bullet}\rightarrow{cone(g^\bullet)^\bullet[1]}\rightarrow{cone(f^\bullet\circ g^\bullet)^\bullet[1]}
\end{equation*} in $\textcat{K}(\mathcal{E})$. Using a similar reasoning to before, if $f^\bullet$ is a strict quasi-isomorphism, then $cone(g^\bullet)^\bullet[1]$ is strictly exact. Hence, $cone(g^\bullet)^\bullet$ is strictly exact, and so $g^\bullet$ is a strict quasi-isomorphism.
\end{proof}

\begin{prop}
There is an equivalence $\textcat{D}F\circ \textcat{D}G\rightarrow{id_{\textcat{D}^\uparrow(\textcat{grA}^!\textcat{-Mod})}}$.
\end{prop}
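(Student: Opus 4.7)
The plan is to show that for every $N^\bullet \in \textcat{C}^\uparrow(\textcat{grA}^!\textcat{-Mod})$ the counit
\begin{equation*}
\varepsilon_{N^\bullet}\colon (\textcat{C}F \circ \textcat{C}G)(N^\bullet) \longrightarrow N^\bullet
\end{equation*}
is a strict quasi-isomorphism; the induced natural transformation $\textcat{D}F \circ \textcat{D}G \Rightarrow id$ is then an isomorphism in the derived category by the universal property of the localisation. First, unpack the bicomplex: by right dualisability of each $A^!_l$ (Corollary \ref{rightdualisableiso}),
\begin{equation*}
A^!_l \otimes_{A_0} \texthom{Hom}_{\textcat{A}_0\textcat{-Mod}}(A_{-r}, N^p_q) \simeq \texthom{Hom}_{\textcat{A}_0\textcat{-Mod}}(A_{-r} \otimes_{A_0} {}^*(A^!_l), N^p_q) = \texthom{Hom}_{\textcat{A}_0\textcat{-Mod}}(\mathcal{K}^l_{l-r}, N^p_q),
\end{equation*}
so $(\textcat{C}F \circ \textcat{C}G)(N^\bullet)$ assembles to an internal-hom pairing between the Koszul complex $\mathcal{K}^\bullet$ of $A$ and $N^\bullet$; a direct check using the explicit formulas for the vertical differentials shows that the Koszul differential sits in the $l$-direction of the bicomplex, and that $\varepsilon_{N^\bullet}$ is induced by the augmentation $\mathcal{K}^\bullet \twoheadrightarrow A_0$.

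Since $A$ is Koszul, Theorem \ref{koszulresolutiontheorem} makes $\mathcal{K}^\bullet \to A_0$ into a strict projective resolution whose terms $\mathcal{K}^l_{l-r} = A_{-r} \otimes_{A_0} {}^*(A_l^!)$ are projective $A_0$-modules (Proposition \ref{gradedtensor}). Consequently, for each fixed $(p,q)$ the complex $\texthom{Hom}_{\textcat{A}_0\textcat{-Mod}}(\mathcal{K}^\bullet, N^p_q)$ is strictly exact except at the augmentation, where it computes $N^p_q$. By Corollary \ref{Iquasiiso} it suffices to show that $I(\varepsilon_{N^\bullet})$ is a quasi-isomorphism in the abelian left heart $\mathcal{LH}(\textcat{grA}^!\textcat{-Mod})$. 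I would filter the embedded bicomplex by the Koszul $l$-direction and run the associated double-complex spectral sequence in $\mathcal{LH(E)}$, in direct analogy with Propositions \ref{prop1} and \ref{prop2}. The exactness observation above collapses the $E_1$-page onto the augmentation row $I(N^\bullet)$, and the edge morphism is identified with $I(\varepsilon_{N^\bullet})$, completing the verification.

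The main technical point is convergence of the spectral sequence: because the total complex is \emph{twisted}, mixing the homological grading of $N^\bullet$ with the internal gradings of $A$ and $A^!$, the bicomplex is not literally first-quadrant. However, the boundedness $N^\bullet \in \textcat{C}^\uparrow$ together with positivity of the gradings on $A$ and $A^!$ forces only finitely many bidegrees to contribute to each total degree $n$ and internal grading $k$, which yields strong convergence in the left heart. As an alternative route avoiding spectral sequences, one can exploit that $\varepsilon$ is already a split strict epimorphism with splitting $\sigma$ from the previous lemma and apply Lemma \ref{compquasi} to the identity $\varepsilon \circ \sigma = id$: the problem then reduces to showing that $\sigma$ is a strict quasi-isomorphism, which follows from the very same Koszul-resolution picture.
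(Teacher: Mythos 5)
Your proposal is correct and follows essentially the same strategy as the paper: embed into the left heart, run the column-filtration spectral sequence of the double complex, and use the Koszul resolution $\mathcal{K}^\bullet\rightarrow{A_0}$ (Theorem \ref{koszulresolutiontheorem}) to collapse the $E_1$-page onto the row $I(N^\bullet)$. The only difference is one of emphasis: the paper takes your ``alternative route'' as its main one, first reducing to the splitting $\sigma$ via Lemma \ref{compquasi} and then comparing the two spectral sequences along the morphism of double complexes induced by $\sigma$, which sidesteps the need to identify the edge morphism with $I(\varepsilon_{N^\bullet})$ directly.
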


\begin{proof}
Since strict quasi-isomorphisms become isomorphisms in $\textcat{D}^\uparrow(\textcat{grA}^!\textcat{-Mod})$, it suffices to prove that, for $N^\bullet\in\textcat{C}^\uparrow(\textcat{grA}^!\textcat{-Mod})$, the counit
\begin{equation*}
    \epsilon_{N^\bullet}:(\textbf{C}F\circ \textbf{C}G)(N^\bullet)\rightarrow{N^\bullet}
\end{equation*} is a strict quasi-isomorphism. However, since this map is a split epimorphism, it suffices, by Lemma \ref{compquasi}, to show that the map \begin{equation*}
    \sigma_{N^\bullet}:N^\bullet\rightarrow{(\textbf{C}F\circ \textbf{C}G)(N^\bullet)}
\end{equation*} is a strict quasi-isomorphism. Once again, we consider the canonical embedding
\begin{equation*}
    I:\textcat{grA}^!\textcat{-Mod}\rightarrow{\mathcal{LH}(\textcat{grA}^!\textcat{-Mod})}
\end{equation*} We note that, by Corollary \ref{Iquasiiso}, $\sigma_{N^\bullet}$ is a strict quasi-isomorphism if and only if $I(\sigma_{N^\bullet})$ is a quasi-isomorphism. We want to show that, for each $n$, $I(\sigma_{N^\bullet}^n)$ induces an isomorphism 
\begin{equation*}
    H^n(I(N^\bullet))\simeq H^n(I((\textcat{C}F\circ \textcat{C}G)(N^\bullet)))
\end{equation*}in cohomology. The image of the first quadrant double complex $(F\circ \textcat{C}G)(N^\bullet)^{\bullet\bullet}$ under the embedding is a first quadrant double complex $I((F\circ \textcat{C}G)(N^\bullet))^{\bullet\bullet}$. We consider the spectral sequence $\{{}^IE_{r}^{i,l}\}$ for $r\geq 0$, with differentials ${}^Id_{r}^{i,l}$, such that 
\begin{equation*}
    {}^I E_{0}^{i,l}=I((F\circ \textcat{C}G)(N^\bullet)^{i,l})
\end{equation*}and the maps ${}^I d_{0}^{i,l}$ are just the vertical differentials $I(d_v^{i,l})$. We also consider
\begin{equation*}
    {}^IE_{1}^{i,l}= H_v^{l}(I((F\circ \textcat{C}G)(N^\bullet))^{i,\bullet})
\end{equation*}with differentials ${}^Id_{1}^{i,l}=H_v^{l}(I(d_v)^{i,\bullet})$. We have that 
\begin{align*}
  H_v^{l}(I((F\circ \textcat{C}G)(N^\bullet))^{i,\bullet})&=H_v^{l}\bigg(I\bigg(\bigoplus_{p+q=i}\texthom{Hom}_{\textcat{A}_0\textcat{-Mod}}(A\otimes_{A_0} {}^*(A^!_{\bullet}),N^p_q)\bigg)\bigg)\\
  &\simeq \bigoplus_{p+q=i}H_v^{l}(I(\texthom{Hom}_{\textcat{A}_0\textcat{-Mod}}(A\otimes_{A_0} {}^*(A^!_{\bullet}),N^p_q)))
  \intertext{Now, since $A$ is Koszul, the complex $A\otimes_{A_0} {}^*(A_{\bullet}^!)$ is a projective resolution of $A_0$. Therefore,}
  &\simeq \begin{cases}\bigoplus_{p+q=i}I(\texthom{Ext}^0_{\textcat{A}_0\textcat{-Mod}}(A_0, N^p_q)) \quad & \text{if $l=0$}\\
  0 \quad & \text{otherwise}\end{cases}\\
  &\simeq \begin{cases}\bigoplus_{p+q=i}I(\texthom{Hom}_{\textcat{A}_0\textcat{-Mod}}(A_0, N^p_q)) \quad & \text{if $l=0$}\\
  0 \quad & \text{otherwise}\end{cases}\\
  &\simeq \begin{cases}\bigoplus_{p+q=i}I(N^p_q) \quad & \text{if $l=0$}\\
  0 \quad & \text{otherwise}\end{cases}
  \end{align*}
  
The differentials are given by \begin{equation*}
    {}^Id_{1}^{i,l}=\begin{cases}\bigoplus_{p+q=i}I((d^p_N)_q)\quad & \text{if $l=0$}\\
  0 \quad & \text{otherwise}\end{cases}
\end{equation*}We have the convergence
\begin{equation*}
\begin{aligned}
    {}^IE_{2}^{i,l}=H_h^i(H_v^{l}(I((F\circ \textcat{C}G)(N^\bullet))^{i,\bullet}))&\Rightarrow{H^{i+l}(\textnormal{Tot}(I((F\circ \textbf{C}G)(N^\bullet))^{\bullet\bullet})^\bullet)}\\
    &\simeq H^{n}(I((\textcat{C}F\circ \textbf{C}G)(N^\bullet))^{\bullet})
\end{aligned}
\end{equation*}where $n=i+l$.

We may view $N^\bullet$ as a double complex $N^{\bullet\bullet}$ with \begin{equation*}
    N^{i,l}=\begin{cases}\bigoplus_{p+q=i}N^p_q\quad &\text{if $l=0$}\\
0\quad &\text{otherwise}\end{cases}
\end{equation*}The horizontal differentials are given by \begin{equation*}d^{i,l}=
    \begin{cases}\bigoplus_{p+q=i}(d^p_N)_q\quad & \text{if $l=0$}\\
  0 \quad & \text{otherwise}\end{cases}
\end{equation*}and the vertical differentials are zero. The total complex is 
\begin{equation*}
    \textnormal{Tot}(N^{\bullet\bullet})^n=\bigoplus_{i+l=n}N^{i,l}=\bigoplus_{p+q=n}N^p_q
\end{equation*}We note that, since $N^\bullet\in\textcat{C}^\uparrow(\textcat{grA}^!\textcat{-Mod})$, then $N^{\bullet\bullet}$, and hence $I(N^{\bullet\bullet})$, is a first quadrant double complex. We construct another spectral sequence $\{{}^I\overline{E}_{r}^{i,l}\}$ for $r\geq 0$, with differentials ${}^I\overline{d}_{r}^{i,l}$, such that 
\begin{equation*}
    {}^I\overline{E}_{0}^{i,l}=I(N)^{i,l}
\end{equation*}and the maps ${}^I\overline{d}_{0}^{i,l}$ are just the corresponding differentials of $N^{\bullet\bullet}$. We have
\begin{equation*}
    {}^I\overline{E}_{1}^{i,l}=H_v^l(I(N^{i,\bullet}))=\begin{cases}\bigoplus_{p+q=i}I(N^p_q) \quad & \text{if $l=0$}\\
  0 \quad & \text{otherwise}\end{cases}
\end{equation*}We have the convergence
\begin{equation*}
    {}^I\overline{E}_2^{i,l}\Rightarrow{H^{i+l}(\textnormal{Tot}(I(N)^{\bullet\bullet})^\bullet)}=H^{n}(I(N^\bullet))
\end{equation*}
The map $\sigma$ induces a morphism of double complexes. Hence, we see that, since $\{{}^I\overline{E}_{r,j}^{i,l}\}$ and $\{{}^I{E}_{r,j}^{i,l}\}$ have the same first page and the differentials on the first page are the same, these spectral sequences must be equal for $r\geq 1$. Hence, 
\begin{equation*}
    H^{n}(I(N^{\bullet}))=H^{n}(I((\textbf{C}F\circ \textbf{C}G)(N^\bullet)^\bullet))
\end{equation*}and therefore $I(\sigma_{N^\bullet})$ is a quasi-isomorphism. It follows that $\sigma$ is a strict quasi-isomorphism, and hence so is $\varepsilon$.

\end{proof}

We now consider the unit
\begin{equation*}
    \eta:id_{\textcat{C}(\textcat{grA}^!\textcat{-Mod})}\rightarrow{\textcat{C}G\circ\textcat{C}F}
\end{equation*}of the adjunction. Suppose that $M^\bullet$ is in $\textcat{C}(\textcat{grA-Mod})$. Then, we may consider the double complex $(G\circ\textcat{C}F)(M^\bullet)^{\bullet\bullet}$ with 
\begin{equation*}\begin{aligned}
    (G\circ\textcat{C}F)(M^\bullet)^{i,l}=\texthom{Hom}_{\textcat{A}_0\textcat{-Mod}}(A_{-l},\textcat{C}F(M^\bullet)^{i})=\texthom{Hom}_{\textcat{A}_0\textcat{-Mod}}(A_{-l},\bigoplus_{p+q=i}A^!\otimes_{A_0} M^p_q)
\end{aligned}
\end{equation*}and graded by
\begin{equation*}\begin{aligned}
    (G\circ\textcat{C}F)(M^\bullet)^{i,l}_j&=\texthom{Hom}_{\textcat{A}_0\textcat{-Mod}}(A_{-l},\textcat{C}F(M^\bullet)^{i}_j)\\&=\texthom{Hom}_{\textcat{A}_0\textcat{-Mod}}(A_{-l},\bigoplus_{p+q=i,j=r-q}A^!_r\otimes_{A_0} M^p_q)
\end{aligned}
\end{equation*}
Since $A_{-l}$ is left dualisable, we see that 
\begin{equation*}\begin{aligned}
    (G\circ\textcat{C}F)(M^\bullet)^{i,l}_j&\simeq A_{-l}^*\otimes_{A_0}\bigg(\bigoplus_{p+q=i,j=r-q}A_r^!\otimes_{A_0} M^p_q\bigg)\\&\simeq \bigoplus_{p+q=i,j=r-q} A_{-l}^*\otimes_{A_0} A^!_r\otimes_{A_0} M^p_q
\end{aligned}\end{equation*}The horizontal differential is given by 
\begin{equation*}
    (d_h^{i,l})_j=id_{A^*_{-l}}\otimes_{A_0} (d_F^i)_j
\end{equation*}where $(d_F^i)_j$ is the total differential defined in Lemma \ref{CFfunctor}. The vertical differential 
\begin{equation*}
    (d_v^{i,l})_j:\bigoplus_{p+q=i,j=r-q} A_{-l}^*\otimes_{A_0} A_{r}^!\otimes_{A_0} M^p_q\rightarrow{\bigoplus_{p+q=i,j=r-q} A_{-(l+1)}^*\otimes_{A_0} A_{r+1}^!\otimes_{A_0} M^p_q}
\end{equation*}is given by $(-1)^i(D_v^{i,l})_j$, where $(D_v^{i,l})_j$ can be defined by taking the coproduct of the maps 
\begin{equation*}
    A_{-l}^*\otimes_{A_0} A_{r}^!\otimes_{A_0} M^p_q\xrightarrow{d^{-l}_{r-l}\otimes_{A_0} id_{M^p_q}}{{ A_{-(l+1)}^*\otimes_{A_0} A_{r+1}^!\otimes_{A_0} M^p_q}}
\end{equation*}where $d^\bullet$ is the differential from the Koszul complex of $A^!$. Taking the total complex we obtain
\begin{equation*}
    (\textcat{C}G\circ\textcat{C}F)(M^\bullet)^{n}_k=  \bigoplus_{i+j=n,k=l-j}\bigoplus_{p+q=i,j=r-q} A_{-l}^*\otimes_{A_0} A_{r}^!\otimes_{A_0} M^p_q
\end{equation*}with differential given by the total differential. 

\begin{lem}
The unit $\eta$ is a split strict monomorphism.
\end{lem}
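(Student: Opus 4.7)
The plan is to mirror the preceding lemma about the counit by constructing an explicit morphism $\tau_{M^\bullet}: (\textcat{C}G \circ \textcat{C}F)(M^\bullet) \rightarrow M^\bullet$ in $\textcat{C}(\textcat{grA-Mod})$ that is a left inverse to $\eta_{M^\bullet}$; by Corollary \ref{rightinverseepimorphism} this immediately forces $\eta_{M^\bullet}$ to be a strict monomorphism, establishing the lemma.

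First I would unwind the adjunction isomorphism established earlier to obtain a concrete formula for $\eta_{M^\bullet}$. Using Proposition \ref{dualtensorhomprop} to identify $\texthom{Hom}_{\textcat{A}_0\textcat{-Mod}}(A_{-l}, A^!_r \otimes_{A_0} M^p_q) \simeq A^*_{-l} \otimes_{A_0} A^!_r \otimes_{A_0} M^p_q$, I index the summands of $(\textcat{C}G \circ \textcat{C}F)(M^\bullet)^n_k$ by quadruples $(p, q, a, r)$ with $a, r \geq 0$, $p + r = n$, $q - a - r = k$ (where $a = -l$), the corresponding summand being $A^*_a \otimes_{A_0} A^!_r \otimes_{A_0} M^p_q$. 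The adjoint-transpose calculation then shows that, for $m \in M^n_k$, the element $\eta_{M^\bullet}(m)$ is the morphism $A \rightarrow A^! \otimes_{A_0} M^n$, $a' \mapsto 1 \otimes (a' \cdot m)$, whose restriction to $A_a$ lands in $A^!_0 \otimes_{A_0} M^n_{k+a}$; consequently $\eta_{M^\bullet}(m)$ lies precisely in the direct sum of the summands indexed by $(n, k+a, a, 0)$ for $a \geq 0$.

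Next I take $\tau_{M^\bullet}$ to be the direct-sum projection onto the summand with $(p, q, a, r) = (n, k, 0, 0)$, identified with $M^n_k$ via Lemma \ref{usefulfacts} (using $A^*_0 \simeq A_0 \simeq A^!_0$). This is manifestly strict, graded, and $A$-linear, so the only nontrivial point is that it commutes with the differentials. The total differential on $(\textcat{C}G \circ \textcat{C}F)(M^\bullet)$ decomposes into three components whose effects on a summand $(p, q, a, r)$ are, respectively, $(p+1, q, a, r)$ from $d_M$, $(p, q+1, a, r+1)$ from the vertical differential of $\textcat{C}F$, and $(p, q, a-1, r+1)$ from the vertical differential of $\textcat{C}G$. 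Starting from the $(n, k, 0, 0)$ summand, only the first contribution stays in the $(a, r) = (0, 0)$ layer and coincides with $d_M$ under the identification with $M^n_k$, while the other two land in summands with $r + 1 \geq 1$ which $\tau_{M^\bullet}$ kills; starting from any other summand all three targets still satisfy $(a', r') \neq (0, 0)$ and are annihilated. Hence $\tau_{M^\bullet} \circ d = d_M \circ \tau_{M^\bullet}$, and $\tau_{M^\bullet} \circ \eta_{M^\bullet}$ evaluated on $m$ picks out the $a = 0$ piece of the formula, corresponding to $1 \otimes 1 \otimes m$, which is sent to $m$ by $\tau_{M^\bullet}$. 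The principal obstacle is the explicit identification of $\eta_{M^\bullet}$ on each summand together with the careful bookkeeping of the three differential components; once this combinatorial setup is in place the retraction and its chain-map property are essentially automatic.
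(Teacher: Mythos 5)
Your proof is correct and follows essentially the same route as the paper: the paper also splits $\eta$ by the projection $\rho_{M^\bullet}$ onto the $A_0^*\otimes_{A_0}A_0^!\otimes_{A_0}M^n_k$ summand and checks $\rho\circ\eta=id$. Your version is in fact slightly more careful, since you verify explicitly that the retraction is a cochain map (tracking the three components of the total differential) and you record that the honest unit has components in all summands $(n,k+a,a,0)$ rather than only the single one the paper displays — neither point affects the conclusion.
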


\begin{proof}
It is easy to see that the unit is the strict natural transformation with components $(\eta^n_{M^\bullet})_k$, with 
\begin{equation*}
    (\eta^n_{M^\bullet})_k:M^n_k\rightarrow{\bigoplus_{i+j=n,k=l-j}\bigoplus_{p+q=i,j=r-q} A_{-l}^*\otimes_{A_0} A_{r}^!\otimes_{A_0} M^p_q}=(\textcat{C}G\circ\textcat{C}F)(M^\bullet)^{n}_k
\end{equation*}given in degree $n$ by the inclusion map
\begin{equation*}
    M^n_k\simeq{A_0\otimes_{A_0}A_0\otimes_{A_0} M^n_k}\hookrightarrow{\bigoplus_{i+j=n,k=l-j}\bigoplus_{p+q=i,j=r-q} A_{-l}^*\otimes_{A_0} A_{r}^!\otimes_{A_0} M^p_q}
\end{equation*}We consider the map 
\begin{equation*}
    \rho_{M^\bullet}:(\textcat{C}G\circ\textcat{C}F)(M^\bullet)\rightarrow{M^\bullet}
\end{equation*} given in degree $n$ by the projection map
\begin{equation*}
    \bigoplus_{i+j=n,k=l-j}\bigoplus_{p+q=i,j=r-q} A_{-l}^*\otimes_{A_0} A_{r}^!\otimes_{A_0} M^p_q\rightarrow{A_0\otimes_{A_0} A_0\otimes_{A_0} M^n_k}\simeq M^n_k
\end{equation*}We note that the map $((\rho\circ \eta)^n_{M^\bullet})_k$ is equivalent to the map 
\begin{equation*}
    M^n_k\simeq A_0\otimes_{A_0} A_0\otimes_{A_0} M^n_k\xrightarrow{\simeq} M^n_k
\end{equation*}which is the identity map. Hence, $\rho\circ\eta=id$, and so $\eta$ is a split monomorphism. 

\end{proof}

\begin{prop}
There is an equivalence $id_{\textcat{D}^\downarrow(\textcat{grA-Mod})}\rightarrow{\textcat{D}G\circ \textcat{D}F}$.
\end{prop}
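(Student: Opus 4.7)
The plan is to mirror the argument just given for the counit, with the Koszul resolution of $A_0$ over $A^!$ playing the role of the Koszul resolution over $A$. Since $\eta$ is a split strict monomorphism with retraction $\rho$ satisfying $\rho_{M^\bullet}\circ\eta_{M^\bullet}=\textnormal{id}$, Lemma \ref{compquasi} reduces the claim that $\eta_{M^\bullet}$ is a strict quasi-isomorphism to the claim that $\rho_{M^\bullet}:(\textcat{C}G\circ\textcat{C}F)(M^\bullet)\to M^\bullet$ is one. By Corollary \ref{Iquasiiso}, this may be checked after applying the embedding $I:\textcat{grA-Mod}\to\mathcal{LH}(\textcat{grA-Mod})$.

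I would then examine, in each internal grading $j$, the double complex $I((G\circ\textcat{C}F)(M^\bullet))^{\bullet\bullet}_j$. Because $A$ is positively graded we have $A^*_{-l}=0$ for $l>0$, and because $M^\bullet\in\textcat{C}^\downarrow$ the index $i$ is bounded above, so this double complex lies in a quadrant for which the vertically-filtered spectral sequence converges. Setting ${}^{I}E_0^{i,l}=I((G\circ\textcat{C}F)(M^\bullet)^{i,l}_j)$ with ${}^{I}d_0^{i,l}=I(d_v^{i,l})$, I would observe that for each fixed summand $(p,q)$ the vertical complex agrees, up to the diagonal shift in $j$, with the Koszul complex $A^!\otimes_{A_0}A^*_\bullet$ of $A^!$ tensored with $M^p_q$. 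Since $A^!$ is Koszul by Proposition \ref{A!iskoszul}, Theorem \ref{koszulresolutiontheorem} gives that this Koszul complex is a strict resolution of $A_0$; hence its vertical cohomology collapses to $M^p_q$ in a single row. It follows that ${}^{I}E_1^{i,l}$ is concentrated at one value of $l$ and equals $\bigoplus_{p+q=i}I(M^p_q)\simeq I(M^i)$, that the induced differential ${}^{I}d_1$ coincides with $I(d_M^\bullet)$, and so ${}^{I}E_2^{i,l}$ collapses to a single row equal to $H^i(I(M^\bullet))$.

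To conclude, I would compare this spectral sequence with the trivial spectral sequence associated to the double complex having $I(M^\bullet)$ in its $l=0$ row. The natural transformation $\rho$ is defined componentwise as the projection of the Koszul resolution onto its $A_0$-component, so by naturality it induces a morphism of spectral sequences that is already an isomorphism on ${}^{I}E_1$, hence on abutments. Thus $I(\rho_{M^\bullet})$ is a quasi-isomorphism, and so $\rho_{M^\bullet}$, and therefore $\eta_{M^\bullet}$, is a strict quasi-isomorphism. The main technical obstacle is the diagonal bookkeeping: the vertical differential $d_v$ shifts both $l$ and $j$ by $+1$, so the identification of the columns with grading-shifted Koszul complexes of $A^!$, and the convergence argument itself, must be carried out in each internal grading $j$ separately and then reassembled using the finite-coproduct preservation of $I$, exactly as in Proposition \ref{prop1} and in the preceding counit argument.
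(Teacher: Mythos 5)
Your proposal is correct and follows essentially the same route as the paper: reduce via the splitting $\rho\circ\eta=\textnormal{id}$ and Lemma \ref{compquasi} to showing $\rho_{M^\bullet}$ is a strict quasi-isomorphism, pass to the left heart via Corollary \ref{Iquasiiso}, and run the column-filtration spectral sequence of the bounded double complex, using that the Koszul complex of $A^!$ (Koszul by Proposition \ref{A!iskoszul} and Theorem \ref{koszulresolutiontheorem}) collapses the vertical cohomology to $\bigoplus_{p+q=i}I(M^p_q)$ in a single row, then comparing with the trivial spectral sequence of $I(M^\bullet)$. Your phrasing of the last step as a morphism of spectral sequences that is an isomorphism on $E_1$, and your explicit grading-by-grading bookkeeping for the twisted total complex, are if anything slightly more careful than the paper's write-up, but the argument is the same.
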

\begin{proof}
Since strict quasi-isomorphisms become isomorphisms in $\textcat{D}^\downarrow(\textcat{grA-Mod})$, it suffices to prove that for $M^\bullet\in\textcat{C}^\downarrow(\textcat{grA-Mod})$, the unit
\begin{equation*}
    \eta_{M^\bullet}:M^\bullet\rightarrow{(\textcat{C}G\circ\textcat{C}F)(M^\bullet)}
\end{equation*}is a strict quasi-isomorphism. However, since this map is a split monomorphism, it suffices, by Lemma \ref{compquasi}, to show that the map 
\begin{equation*}
    \rho_{M^\bullet}:(\textcat{C}G\circ\textcat{C}F)(M^\bullet)\rightarrow{M^\bullet}
\end{equation*}is a strict quasi-isomorphism. Once again, we consider the canonical embedding 
\begin{equation*}
    I:\textcat{grA-Mod}\rightarrow{\mathcal{LH}(\textcat{grA-Mod})}
\end{equation*}We note that, by Corollary \ref{Iquasiiso}, $\rho_{M^\bullet}$ is a strict quasi-isomorphism if and only if $I(\rho_{M^\bullet})$ is a quasi-isomorphism. We want to show that, for each $n$, $I(\rho_{M^\bullet}^n)$ induces an isomorphism
\begin{equation*}
     H^n(I((\textcat{C}G\circ\textcat{C}F)(M^\bullet)))\simeq H^n(I(M^\bullet))
\end{equation*}in cohomology. The image of the third quadrant double complex $(G\circ\textcat{C}F)(M^\bullet)^{\bullet\bullet}$ under the embedding is a third quadrant double complex $I((G\circ\textcat{C}F)(M^\bullet))^{\bullet\bullet}$. We consider the spectral sequence $\{{}^I E_r^{i,l}\}$, for $r\geq 0$, with differentials ${}^Id_r^{i,l}$ such that 
\begin{equation*}
    {}^IE_0^{i,l}=I((G\circ\textcat{C}F)(M^\bullet)^{i,l})
\end{equation*}and the maps ${}^I d_0^{i,l}$ are just the vertical differentials $I(d_v^{i,l})$. We also consider
\begin{equation*}
    {}^IE_1^{i,l}=H_v^l(I((G\circ\textcat{C}F)(M^\bullet))^{i,\bullet})
\end{equation*}with differentials ${}^Id_1^{i,l}=H_v^l(I(d_v^{i,\bullet}))$. We have that 
\begin{align*}
H_v^{l}(I((G\circ \textcat{C}F)(M^\bullet))^{i,\bullet})&=H_v^{l}\bigg(I\bigg(\bigoplus_{p+q=i}A^*_{\bullet}\otimes_{A_0} A^!\otimes_{A_0} M^p_q\bigg)\bigg)\\
  \intertext{Since $I$ preserves finite coproducts,}
  &\simeq \bigoplus_{p+q=i} H_v^l(I(A^*_{\bullet}\otimes_{A_0} A^!\otimes_{A_0} M^p_q))
\intertext{Now, as $A^!$ is Koszul, the complex $A_\bullet^*\otimes_{A_0} A^!$ is a projective resolution of $A_0$. Therefore,}
&\simeq \begin{cases}
  \bigoplus_{p+q=i}I(\texthom{Tor}^0_{\textcat{A}_0\textcat{-Mod}}(A_0,M^p_q)) \quad & \text{if $l=0$}\\
  0 \quad & \text{otherwise}
\end{cases}\\
&\simeq \begin{cases}
  \bigoplus_{p+q=i}I(A_0\otimes_{A_0} M^p_q) \quad & \text{if $l=0$}\\
  0 \quad & \text{otherwise}
\end{cases}\\
&\simeq \begin{cases}
  \bigoplus_{p+q=i}I( M^p_q) \quad & \text{if $l=0$}\\
  0 \quad & \text{otherwise}
\end{cases}
\end{align*}
The differentials are given by \begin{equation*}
    {}^Id_1^{i,l}=\begin{cases}
  \bigoplus_{p+q=i}I((d_M)^p_q) \quad &\text{if $l=0$}\\
  0 \quad &\text{otherwise}
\end{cases}
\end{equation*}We have the convergence
\begin{equation*}
\begin{aligned}
    {}^IE_{2}^{i,l}=H_h^i(H_v^l(I((G\circ \textcat{C}F)(M^\bullet))^{i,\bullet}))&\Rightarrow{H^{i+l}(\textnormal{Tot}(I((G\circ \textbf{C}F)(M^\bullet))^{\bullet\bullet})^\bullet)}\\
    &=H^{n}(I((\textbf{C}G\circ \textbf{C}F)(M^\bullet))^\bullet)
\end{aligned}
\end{equation*}where $n=i+l$. 

We may view $M^\bullet$ as a double complex $M^{\bullet\bullet}$ with 
\begin{equation*}
    M^{i,l}=\begin{cases}
      \bigoplus_{p+q=i} M^p_q \quad &\text{if $l=0$}\\
      0\quad &\text{otherwise}
    \end{cases}
\end{equation*}The horizontal differentials are given by 
\begin{equation*}
    d^{i,l}=\begin{cases}
    \bigoplus_{p+q=i} (d_N^p)_q\quad & \text{if $l=0$}\\
    0\quad & \text{otherwise}
    \end{cases}
\end{equation*}and the vertical differentials are zero. The total complex is 
\begin{equation*}
 \textnormal{Tot}(M^{\bullet\bullet})^n=\bigoplus_{i+l=n}M^{i,j}=\bigoplus_{p+q=n}M^p_q
\end{equation*}We note that, since $M^\bullet\in\textcat{C}^{\downarrow}(\textcat{grA-Mod})$, then $M^{\bullet\bullet}$, and hence $I(M^{\bullet\bullet})$, is a third quadrant double complex. We construct another spectral sequence $\{{}^I\overline{E}_r^{i,l}\}$ for $r\geq 0$, with differentials ${}^I\overline{d}_r^{i,l}$, such that 
\begin{equation*}
    {}^I\overline{E}_0^{i,l}=I(M)^{i,l}
\end{equation*}and the maps ${}^I\overline{d}_0^{i,l}$ are just the corresponding differentials of $M^{\bullet\bullet}$. We have
\begin{equation*}
    {}^I\overline{E}_1^{i,l}=H_v^l(I(M^{i,\bullet}))=\begin{cases}\bigoplus_{p+q=i} I(M^p_q) \quad &\text{if $l=0$}\\
    0\quad &\text{otherwise}
    \end{cases}
\end{equation*}We have the convergence
\begin{equation*}
    {}^I\overline{E}_2^{i,l}\Rightarrow{H^{i+l}(\textnormal{Tot}(I(M)^{\bullet\bullet})^\bullet)}=H^{n}(I(M^\bullet))
\end{equation*}
The map $\rho$ induces a morphism of double complexes. Hence, we see that, since $\{{}^I\overline{E}_r^{i,l}\}$ and $\{{}^I{E}_r^{i,l}\}$ have the same first page and the differentials on the first page are the same, these spectral sequences must be equal for $r\geq 1$. Hence, 
\begin{equation*}
    H^{n}(I(M^{\bullet}))=H^{n}(I((\textbf{C}G\circ \textbf{C}F)(M^\bullet)^\bullet))
\end{equation*}and therefore $I(\rho_{N^\bullet})$ is a quasi-isomorphism. It follows that $\rho$ is a strict quasi-isomorphism, and hence so is $\eta$.

\end{proof}

\begin{proof}[Proof of Theorem \ref{bigmaintheorem} ] We have seen, by Propositions \ref{prop1} and \ref{prop2}, that there exist functors $\textcat{C}F$ and $\textcat{C}G$ inducing functors
\begin{equation*}
    \textcat{D}F: \textcat{D}^\downarrow(\textcat{grA-Mod})\rightarrow{ \textcat{D}^\uparrow({\textcat{grA}}^!\textcat{-Mod})   }\,\text{ and }\,\textcat{D}G: \textcat{D}^\uparrow(\textcat{grA}^!\textcat{-Mod})\rightarrow{ \textcat{D}^\downarrow({\textcat{grA-Mod})   }}
\end{equation*}The functors $\textcat{C}F$ and $\textcat{C}G$ are adjoint, and the unit and counit coming from this adjunction induce natural isomorphisms
\begin{equation*}
    \textcat{D}F\circ \textcat{D}G\rightarrow{id_{\textcat{D}^\uparrow(\textcat{grA}^!\textcat{-Mod})}}\quad\text{and}\quad id_{\textcat{D}^\downarrow(\textcat{grA-Mod})}\rightarrow{\textcat{D}G\circ \textcat{D}F}
\end{equation*}Hence, there is an equivalence of categories
\begin{equation*}
    \textcat{D}^\downarrow(\textcat{grA-Mod})\simeq{ \textcat{D}^\uparrow({\textcat{grA}}^!\textcat{-Mod})  }
\end{equation*}

\end{proof}

\begin{appendices}

\label{appendix}

\section{Banach Spaces}\label{banachspaces}

\begin{defn}
A \textbf{valued field} $k$ is a field, $k$, equipped with a multiplicative norm 
\begin{equation*}
    |-|:k\rightarrow{\mathbb{R}_{\geq 0}}
\end{equation*}such that, for all $\lambda,\mu\in k$,
\begin{itemize}
    \item $|\lambda+\mu|\leq |\lambda|+|\mu|$
    \item $|\lambda|=0$ if and only if $\lambda=0$
    \item $k$ is complete with respect to the metric defined by the norm.
\end{itemize}
\end{defn}
 
We say that $k$ is \textbf{non-Archimedean} if $|\lambda+\mu|\leq max\{|\lambda|,|\mu|\}$ and is \textbf{Archimedean} otherwise.

\begin{defn}
A \textbf{Banach space} $V$ is a $k$-vector space $V$ equipped with a norm 
\begin{equation*}
    ||-||:V\rightarrow{\mathbb{R}_{\geq 0}}
\end{equation*}such that, for all $\lambda\in k$ and $v,w\in V$, we have 
\begin{itemize}
    \item $||\lambda v||=|\lambda|\cdot||v||$,
    \item $||v+w||\leq ||v||+||w||$, 
    \item $V$ is a complete metric space with respect to the metric defined by the norm.
\end{itemize}
\end{defn}
We say that $V$ is \textbf{non-Archimedean} if $||v+w||\leq max\{||v||,||w||\}$ and is \textbf{Archimedean} otherwise. 
\begin{defn}
A linear map $f:V\rightarrow{W}$ between Banach spaces is \textbf{bounded} if there exists a constant $C>0$ such that, for all $v\in V$,
\begin{equation*}
    ||f(v)||_W\leq C||v||_V
\end{equation*}
\end{defn}

We denote the category of Banach spaces, equipped with bounded linear maps as its morphisms, by $\textcat{Ban}_k$. When a distinction is necessary, we will denote the category of Archimedean (resp. non-Archimedean) Banach spaces over an Archimedean (resp. non-Archimedean) field $k$ by $\textcat{Ban}^A_k$ (resp. $\textcat{Ban}^{nA}_k$).

\begin{defn}
A \textbf{Banach ring} is a unital commutative ring $R$ equipped with a norm
\begin{equation*}
    |-|:R\rightarrow{\mathbb{R}_{\geq 0}}
\end{equation*}such that, for all $r,s\in R$,
\begin{itemize}
    \item $|r|=0$ precisely if $r=0$, 
    \item $|r+s|\leq |r|+|s|$,
    \item $|rs|\leq |r|\cdot|s|$,
    \item $R$ is a complete metric space with respect to the metric defined by the norm.
    \end{itemize}
\end{defn}

We say $R$ is \textbf{non-Archimedean} if $|r+s|\leq max\{|r|,|s|\}$ and is \textbf{Archimedean} otherwise.

\begin{defn}
Suppose that $R$ is a Banach ring. A \textbf{Banach $R$-module} is an $R$-module $M$ together with a map \begin{equation*}||-||:M\rightarrow{\mathbb{R}_{\geq 0}}\end{equation*} such that, for all $r\in R$, and for all $m,n\in M$,
\begin{itemize}
    \item $||m||=0$ if and only if $m=0$, 
    \item $||m+n||\leq ||m||+||n||$, 
    \item $||rm||\leq |r|\cdot||m||$, 
    \item $M$ is complete with respect to the metric defined by $||-||$.
\end{itemize}

\end{defn}

\begin{defn}\label{boundedbanachmap}
A homomorphism $f:M\rightarrow{N}$ between Banach $R$-modules is \textbf{bounded} if there exists a constant $C>0$ such that, for all $m\in M$,
\begin{equation*}
    ||f(m)||_N\leq C||m||_M
\end{equation*}
\end{defn}

The category of Banach modules over an Archimedean (resp. non-Archimedean) Banach ring $R$ will be denoted $Ban_R^A$ (resp. $Ban_R^{nA}$). The morphisms are bounded $R$-linear maps.

\section{Ind-Objects}\label{inductiveappendix}

Suppose that we are working within a category $\mathcal{C}$. In $\mathcal{C}$ we will denote limits by $\varprojlim$ and colimits by $\varinjlim$.
\begin{defn}\cite[Definition 2.1]{bambozzibenbassat15} If $I$ is a small filtered category, then an \textbf{Ind-object} of $\mathcal{C}$ is a functor $X:I\rightarrow{\mathcal{C}}$. This is denoted by $X=(X_i)_{i\in I}$. 
\end{defn}

 We denote by $\textcat{Ind}\mathcal{C}$ the collection of Ind-objects $X=(X_i)_{i\in I}$ with morphisms defined as follows.

\begin{defn}
Suppose that we have two Ind-objects $X,Y$. Then, the collection of \textbf{morphisms of inductive systems} $\textnormal{Hom}_{\textcat{Ind}\mathcal{C}}(X,Y)$ is given by  
\begin{equation*}
    \textnormal{Hom}_{\textcat{Ind}\mathcal{C}}(X,Y)\simeq \varprojlim_{i\in I}\varinjlim_{j\in J}\textnormal{Hom}(X_i,Y_j)
\end{equation*}
\end{defn}

We consider the Yoneda embedding $Y:\mathcal{C}\rightarrow{\textcat{PSh}(\mathcal{C})}$. We denote by $``\underset{i\in I}{\varinjlim}" X_i$ the presheaf
\begin{equation*}
    ``\varinjlim_{i\in I}" X_i=\varinjlim_{i\in I} Y(X_i)
\end{equation*}We note that the map
\begin{equation*}
    X\rightarrow{``\varinjlim_{i\in I}" X_i}
\end{equation*} induces an equivalence of categories between $\textcat{Ind}\mathcal{C}$ and the full subcategory of $\textcat{PSh}(\mathcal{C})$ consisting of presheaves isomorphic to filtrant inductive limits of representable functors. We will interchange between the notation $X$ and $``\underset{i\in I}{\varinjlim}" X_i$ frequently.

\begin{defn}
An object $X\in\textcat{Ind}\mathcal{C}$ is \textbf{essentially monomorphic} if all the morphisms in the corresponding inductive system $``\underset{i\in I}{\varinjlim}" X_i$ are monomorphic.
\end{defn}

\section{Bornological Spaces}\label{bornologyappendix}

Bornological spaces possess enough structure to consider questions of boundedness, and thus are an ideal setting for bringing together homological algebra and functional analysis. We recall the following details, for more see \cite{bambozzibenbassat15} or \cite{meyer07}.

\begin{defn}
Let $X$ be a set. A \textbf{bornology} on $X$ is a collection $\mathcal{B}$ of subsets of $X$ such that 
\begin{itemize}
    \item $\mathcal{B}$ covers $X$, i.e.  for every $x\in X$ there exists some $B\in \mathcal{B}$ such that $x\in \mathcal{B}$, 
    \item $\mathcal{B}$ is stable under inclusions, i.e. for every inclusion $A\subset B\in \mathcal{B}$, we have $A\in\mathcal{B}$,
    \item $\mathcal{B}$ is stable under finite unions, i.e. for each $n\in\mathbb{N}$ and $B_1,\dots,B_n\in\mathcal{B}$, we have $\cup_{i=1}^nB_i\in\mathcal{B}$.
\end{itemize}
\end{defn}
The pair $(X,\mathcal{B})$ is called a \textit{bornological set}, and the elements of $\mathcal{B}$ are called bounded subsets of $X$. A family of subsets $A\subset\mathcal{B}$ is called a \textit{basis} for $\mathcal{B}$ if, for any $B\in\mathcal{B}$, there exist $A_1,\dots,A_n\in A$ such that $B\subset A_1\cup \dots\cup A_n$. A \textit{morphism of bornological sets} is any map which sends bounded subsets to bounded subsets. We remark that the category of bornological sets is complete and cocomplete.

Suppose that we have a complete non-trivially valued field $k$.
\begin{defn}
A \textbf{bornological vector space} over $k$ is a $k$-vector space $V$ with a bornology on the underlying set of $V$ such that the maps $(\lambda,v)\rightarrow{\lambda v}$ and $(v,w)\rightarrow{v+w}$ are bounded. 
\end{defn}

We will now detail certain categories of bornological vector spaces. We let $k^\circ=\{\lambda\in k\mid |\lambda|\leq 1\} $. A subset $W$ of a vector space $V$ is \textit{convex} if, for every  $v,w\in W$ and $t\in [0,1]$, we have that $(1-t)v+tw\in W$, and $W$ is \textit{balanced} if for every $\lambda\in k^\circ$, $\lambda W\subset W$.

\begin{defn}
Let $V$ be a $k$-vector space. A subset $W\subset V$ is called \textbf{absolutely convex (or a disk)} if
\begin{itemize}
    \item for $k$ Archimedean, $W$ is convex and balanced. 
    \item for $k$ non-Archimedean, $W$ is a $k^\circ$-submodule of $V$.
\end{itemize}
\end{defn}

\begin{defn}
A bornological vector space is said to be of \textbf{convex type} if it has a basis made up of absolutely convex subsets. We denote the category of bornological $k$-vector spaces of convex type, equipped with bounded linear maps, by $\textcat{Born}_k$.
\end{defn}

\begin{defn}
A bornological vector space over $k$ is said to be \textbf{separated} if its only bounded vector subspace is the trivial subspace $\{0\}$. We denote the full subcategory of $\textcat{Born}_k$ consisting of separated bornological $k$-vector spaces by $\textcat{SBorn}_k$.
\end{defn}

\begin{defn}
A separated bornological $k$-vector space $V$ is said to be \textbf{complete} if there exists a small filtered category $I$, a functor $I\rightarrow{\textcat{Ban}}_k$ and an isomorphism 
\begin{equation*}
    V\simeq \varinjlim_{i\in I}V_i
\end{equation*}for a filtered colimit of Banach spaces over $k$ for which the system morphisms are all injective and the colimit is calculated in $\textcat{Born}_k$. We denote the full subcategory of $\textcat{Born}_k$ consisting of complete bornological $k$-vector spaces by $\textcat{CBorn}_k$. 
\end{defn}

\begin{prop}\label{dissectionfunc}\cite[Proposition 3.60]{bambozzibenbassat15}There is a functor, the dissection functor, \begin{equation*}\begin{aligned}
    \textnormal{diss}:\textcat{CBorn}_k&\rightarrow{\textcat{IndBan}_k}\\
    V&\rightarrow{``\varinjlim\limits_{i\in I}"V_i}
\end{aligned}\end{equation*} which defines an equivalence of $\textcat{CBorn}_k$ with the full subcategory of essentially monomorphic objects of $\textcat{IndBan}_k$.
\end{prop}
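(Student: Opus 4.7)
The plan is to construct the functor $\mathrm{diss}$ canonically and then verify full faithfulness and essential surjectivity onto the subcategory of essentially monomorphic ind-Banach spaces. First I would fix the canonical representation. For $V \in \textcat{CBorn}_k$, let $\mathcal{D}(V)$ be the filtered poset of bounded absolutely convex subsets (disks) $B \subset V$ whose associated normed space $V_B := \bigcup_{\lambda \in k^\times} \lambda B$, equipped with the Minkowski gauge of $B$, is a Banach space. Completeness of $V$ guarantees that $\mathcal{D}(V)$ is cofinal in the poset of all bounded disks and that $V \simeq \varinjlim_{B \in \mathcal{D}(V)} V_B$ in $\textcat{Born}_k$ with injective system morphisms. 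Set $\mathrm{diss}(V) := \text{``}\varinjlim_{B \in \mathcal{D}(V)}\text{''}\, V_B$ in $\textcat{IndBan}_k$. A bounded $k$-linear map $f : V \to W$ in $\textcat{CBorn}_k$ sends each $B \in \mathcal{D}(V)$ into some disk lying in $\mathcal{D}(W)$; this produces a compatible system and hence a morphism $\mathrm{diss}(f)$ of inductive systems, using the formula $\textnormal{Hom}_{\textcat{IndBan}_k}(\mathrm{diss}(V), \mathrm{diss}(W)) = \varprojlim_B \varinjlim_C \textnormal{Hom}_{\textcat{Ban}_k}(V_B, W_C)$. Functoriality is immediate, and the image lies in the essentially monomorphic subcategory by construction.

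Next I would verify full faithfulness. Given $\varphi : \mathrm{diss}(V) \to \mathrm{diss}(W)$, the universal property of colimits in $\textcat{Born}_k$ combined with the fact that the bornology of $V$ is determined by its disks $V_B$ promotes $\varphi$ to a unique bounded linear map $V \to W$; conversely any bounded map induces a unique compatible system. The injectivity of the transition morphisms of $\mathrm{diss}(W)$ is essential here, because it makes the inner colimit $\varinjlim_C \textnormal{Hom}_{\textcat{Ban}_k}(V_B, W_C)$ compute, for each fixed $B$, exactly the bounded maps $V_B \to W$ whose image lands in some Banach subspace, which is all of them since $V_B$ is bounded in $V$ and $f$ sends bounded sets into bounded disks.

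For essential surjectivity, take any essentially monomorphic object $X = \text{``}\varinjlim_{i \in I}\text{''}\, X_i \in \textcat{IndBan}_k$. Form the colimit $V := \varinjlim_{i \in I} X_i$ computed in $\textcat{Born}_k$. Because all transition maps $X_i \to X_j$ are injective bounded maps of Banach spaces, $V$ is separated and, by the very definition of completeness recalled in the excerpt, $V \in \textcat{CBorn}_k$. Then I would show $\mathrm{diss}(V) \simeq X$ in $\textcat{IndBan}_k$: each $X_i$ embeds as a Banach disk $V_{B_i}$ of $V$, exhibiting $\{X_i\}_{i \in I}$ as a cofinal subsystem of $\mathcal{D}(V)$, which forces the two ind-Banach objects to be isomorphic.

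The main obstacle will be the cofinality argument in the last step: one must show that every bounded Banach disk of $V = \varinjlim X_i$ is dominated by some $X_i$, i.e.\ that no exotic Banach disks appear in the colimit beyond those coming from the system. This requires using that $V$ is separated and that bounded subsets of a filtered colimit in $\textcat{Born}_k$ (with injective transitions) are contained in the image of some $X_i$, a property that follows from the explicit description of the bornology on a filtered bornological colimit. Once this cofinality is established, the unit $V \to \mathrm{diss}(V)$ and counit $\mathrm{diss}(V) \to V$ of the adjunction between the bornological colimit functor and $\mathrm{diss}$ become isomorphisms on both sides, yielding the desired equivalence of categories.
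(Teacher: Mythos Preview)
The paper does not prove this proposition; it is stated in the appendix with a citation to \cite[Proposition 3.60]{bambozzibenbassat15} and no argument is given. There is therefore nothing in the paper to compare your proposal against.

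On its own merits, your outline is the standard one and is essentially correct: represent $V$ by its cofinal system of Banach disks $V_B$, use the hom-formula for ind-objects to get full faithfulness, and for essential surjectivity form the bornological colimit of an essentially monomorphic system and invoke the definition of completeness. You have also correctly identified the one nontrivial point, namely that every bounded Banach disk of $\varinjlim_i X_i$ is absorbed by some $X_i$; this is exactly the statement that in $\textcat{Born}_k$ the bornology on a filtered colimit with injective transitions is the union bornology, so a bounded set lies in the image of some stage. With that fact in hand, the cofinality and hence the isomorphism $\mathrm{diss}(V)\simeq X$ follow.
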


\end{appendices}

\bibliographystyle{plain}
\bibliography{references}

\end{document}